\newtheorem{theorem}{Theorem}
\newtheorem{definition}{Definition}
\newtheorem{lemma}{Lemma}
\theoremstyle{remark}
\newtheorem{remark}{Remark}
\newenvironment{proof}[1][Proof]{\noindent\textbf{#1.} }{\ \rule{0.5em}{0.5em}}
\newcommand{\eps}{\varepsilon}
\renewcommand{\marginpar}[1]{}
\newcommand{\correctiona}[1]{#1}
\begin{document}

\title{Arnold Diffusion, Quantitative Estimates and Stochastic Behavior in the Three-Body Problem}
\author{Maciej J. Capi\'nski \smallskip\\Faculty of Applied Mathematics\\AGH University of Science and Technology \\al. Mickiewicza 30, 30-059 Krak\'ow, Poland \bigskip\\Marian Gidea \smallskip\\Department of Mathematical Sciences\\Yeshiva University\\New York, NY 10016, USA
}
\maketitle

\begin{abstract}

We consider a class of autonomous Hamiltonian systems subject to
small, time-periodic perturbations. When the perturbation parameter is set to
zero, the energy of the system is preserved. This is no longer the case when
the perturbation parameter is non-zero.

We describe a topological method to establish orbits which diffuse in energy for every suitably small perturbation parameter $\varepsilon>0$.
The method yields quantitative
estimates:
(i)~The existence of orbits along which the energy  drifts by an amount independent of $\varepsilon$.
The time required by such orbits to drift is $O(1/\varepsilon)$;
(ii)~The existence of orbits along which the energy makes chaotic excursions;
(iii)~Explicit estimates for the  Hausdorff dimension of the set of such chaotic orbits;
(iv)~\correctiona{The existence of orbits
along which the time evolution of energy approaches a stopped  diffusion process (Brownian motion with drift)}, as  $\varepsilon$ tends to $0$. \correctiona{For each $\varepsilon$ fixed, the set of initial conditions of the orbits that yield the diffusion process has positive Lebesgue measure, and in the limit the measure of these sets approaches zero. }

Moreover, we can obtain any desired values of the drift and  variance for the limiting Brownian motion, for appropriate sets of initial conditions.

A key feature of our topological method is that it can be implemented in
computer assisted proofs.
\correctiona{We give an application  to a concrete model of the planar elliptic restricted three-body problem, on the motion of an infinitesimal body relative to the Neptune-Triton system.}

\end{abstract}
\newpage
\tableofcontents


\section{Introduction}

\subsection{Motivation}

%
The Arnold diffusion problem concerns global instability in Hamiltonian systems under arbitrarily small perturbations,
which yield the existence of orbits undergoing large effects over time. While Arnold illustrated such an instability mechanism  in a particular example \cite{Arnold64},  he conjectured that it \emph{``is applicable to the general case (for example, to the problem of three bodies)''}. Quantitative properties of Arnold's mechanism were first explored by Chirikov \cite{Chirikov79}, who conjectured that the energy of perturbed orbits can follow a diffusion process. For this reason he coined  the term  ``Arnold diffusion'' to describe this behaviour.


\correctiona{In this paper, we develop a topological method to study Arnold diffusion in concrete models,
under verifiable conditions.
We consider an autonomous Hamiltonian system, in which the energy is preserved,
and study its dynamics under small, time dependent perturbations.
We show that for every  value of the perturbation parameter within some range,
there exist orbits that drift in energy, as well as orbits that undergo chaotic excursions in energy, over an explicit energy range which is
independent of the size of the perturbation. We will refer to them as `diffusing orbits'}. Further,  we extract quantitative information on the diffusing orbits: an explicit estimate on the diffusion time; \correctiona{an  explicit estimate on the Lebesgue measure  of  orbits that drift in energy for every fixed perturbation parameter};
an explicit estimate on the Hausdorff dimension of chaotic orbits; and  an explicit description of the limiting stochastic process -- as the perturbation parameter tends to zero -- associated to diffusing orbits. 


\correctiona{Some }of the existing results on Arnold diffusion are devoted to arbitrarily small perturbations  of  `generic type' in `generic systems'. The novelty of our work is that we obtain a mechanism of diffusion that can also be applied to \correctiona{concrete systems}, for concrete perturbations, and for perturbation parameters  from arbitrarily close to zero up to some physically relevant value.

Besides deriving our results via a traditional mathematical proof, we produce a method that 
\correctiona{is implementable in }computer assisted proofs which use validated numerical methods.
\correctiona{We develop }a topological method which  allows us to obtain --  by verifying only a finite number of explicit conditions up to finite precision -- asymptotic results for infinitely many parameter values (an interval with one endpoint at zero), and for infinite sets of initial conditions (of positive Lebesgue measure, or of large Hausdorff dimension).


\correctiona{We apply this method to study diffusion in the elliptic restricted three-body problem (PER3BP), on the motion of a small body (e.g., asteroid or spaceship) under the gravity exerted by two larger bodies moving on Keplerian ellipses.
We regard the eccentricity of the orbits of the} large bodies as a perturbation parameter, where  the unperturbed case corresponds to the planar circular restricted three-body problem (PCR3BP). \correctiona{ We   consider the PER3BP for physically relevant values of the mass ratio of the large bodies and of the eccentricity of their orbits.}
In particular, our results 
\correctiona{ hold for the observed values in the Neptune-Triton system (which has the smallest eccentricity in the Solar System).
We expect that with extra work we can extend the range of applicability. For simplicity, in the rest of the paper we will describe results for the PER3BP with the same mass ratio and  eccentricity as the Neptune-Triton system. Of course, since our results are
quite robust, it is to be expected that they also apply to models that incorporate more small effects (such as, the unrestricted three body problem with small mass of the small body).}

While  in this work we focus on the PER3BP, 
the underlying topological method can be applied 
to other models, for instance, to time-dependent, generic perturbations of the geodesic flow, the so called
``Mather acceleration" problem.

\subsection{Brief description of the main results}\label{sec:brief_results}

Consider a class of  Hamiltonian systems of the form \[H_\varepsilon(x,y,I,\theta)=H_0(x,y,I,\theta)+\varepsilon H_1(x,y,I,\theta,t;\eps),\]
where $(x,y,I,\theta)$ takes values in some sub-domain of $\mathbb{R}^{n_u}\times \mathbb{R}^{n_s}\times  \mathbb{R}^{n_c}\times \mathbb{T}^{n_c}$,  $n_u=n_s>0$, $n_c>0$, and $t\in \mathbb{T}^1$. Here  $\mathbb{T}^k$, $k\geq 1$, stands for the $k$-dimensional torus, and the symplectic form is $\omega=dy\wedge dx + dI\wedge d\theta$.

Suppose that, for the unperturbed system, when $\varepsilon=0$, there exists a $(2n_c)$-dimensional normally hyperbolic invariant manifold $\Lambda_0$, corresponding to $x=y=0$, which can be described via action-angle coordinates $(I,\theta)$, where $I$ is an integral of motion; that is, $I=\textrm{const.}$ along  each trajectory in $\Lambda_0$. Suppose that the stable and unstable manifolds $W^s(\Lambda_0)$ and $W^u(\Lambda_0)$ of $\Lambda_0$ intersect transversally. The energy $H_0$ is preserved along trajectories; in particular, each homoclinic orbit is bi-asymptotic to the same action level set $I=\textrm{const.}$ in $\Lambda_0$.  There is no diffusion in the action variable $I$.

When we add the perturbation, i.e., we let $\varepsilon\in(0,\varepsilon_0]$, with $\varepsilon_0$  sufficiently small, we have persistence of the normally hyperbolic invariant manifold  $\Lambda_0$ to some manifold   $\Lambda_\varepsilon$.  The stable and unstable manifolds $W^s(\Lambda_\varepsilon)$ and $W^u(\Lambda_\varepsilon)$ of $\Lambda_\varepsilon$ continue intersecting transversally. Take a neighborhood of  $\Lambda_\varepsilon$  where the action variable $I$ is well defined,  and a family of return maps to that neighborhood. To show the existence of diffusing orbits, that is, orbits along which $I$ changes by $O(1)$, as well as symbolic dynamics relative to $I$, it is sufficient to  show that these properties can be achieved by iterating the return maps in suitable ways.

The class of systems considered above, when the unperturbed Hamiltonian has periodic/quasiperiodic orbits whose stable and unstable manifolds intersect transversally, is sometimes referred to as \emph{a priori chaotic}.

When we let $\varepsilon>0$ small, 
\correctiona{the system is not autonomous, so the energy is no longer preserved.}
There exist diffusing orbits that follow $W^u(\Lambda_\eps)$, $W^s(\Lambda_\eps)$  and return to a neighborhood of $\Lambda_\eps$ with an $O(\varepsilon)$-change in the action coordinate $I$, and implicitly an $O(\varepsilon)$-change in their energy. The effects of the small perturbation can accumulate over time. The existence of orbits whose $I$-coordinate explores a $O(1)$-region of the action-space  amounts to the Arnold diffusion phenomenon in this setting.

Our methods allow us to prove the following results:
\begin{itemize}
\item  There exists  an explicit  constant $C_{h_0}>0$, independent of the perturbation, such that, for each $\varepsilon\in (0,\varepsilon_0]$, there exist orbits for which $\|I(t(\varepsilon))-I(0)\|\geq C_{h_0}$, for some $t(\varepsilon)>0$. We provide estimates on the Lebesgue measure of such orbits.
    Moreover, we show that the corresponding diffusion time $t(\varepsilon)$ satisfies $t(\varepsilon)\leq T/\varepsilon$ for some explicit constant $T>0$. This order of  time for diffusion is optimal for the class of a priori chaotic systems considered in this paper.
\item Given a sequence of action level sets $(I^\sigma)_{\sigma\geq 0}$, with $\|I^{\sigma+1}-I^\sigma\|>2\eta\varepsilon$, for some suitable $\eta>0$, there exists an orbit with $\|I(t^\sigma)-I^\sigma\|<\eta\varepsilon$, for some increasing sequence of times  $t^\sigma>0$ and all $\sigma\geq 0$;
\item The set of all initial points whose orbits $(\eta\varepsilon)$-shadow, in terms of the action variable $I$, any prescribed sequence of action level sets  $(I^\sigma)_{\sigma\geq 0}$ as above, has Hausdorff dimension  strictly greater than $4$.
\item Consider the stochastic process
$X_{t}^{\varepsilon }(z)$ given by the  energy path along an orbit starting from $z$, with appropriately rescaled time $t$.
Then, for every choice of $\mu\in \mathbb{R}$, $\sigma\in\mathbb{R}^+$, there exists a set of points $z\in\Omega_\eps$, of positive Lebesgue measure, such that $X_{t}^{\varepsilon}$ converges  to $\mu t+\sigma W_{t}$ as $\eps\to 0$, where $W_{t}$ is the standard Brownian motion. The convergence is in the sense of the functional central limit theorem (see Section \ref{sec:stochastic}).
\end{itemize}

The situation described above is present in the PER3BP.
In this case $n_u=n_s=n_c=1$. The PER3BP can be described as a time-periodic Hamiltonian  perturbation of the autonomous  Hamiltonian associated to the PCR3BP, with the perturbation parameter $\varepsilon$ equal to the eccentricity of the orbits of the primaries. In the PCR3BP,
we choose one of the equilibrium points of center-saddle type.
The typical geometric picture is the following.
There is  a family of Lyapunov periodic orbits around this point that  forms a $2$-dimensional   normally hyperbolic invariant manifold (NHIM) $\Lambda_0$.
The dynamics restricted to $\Lambda_0$ is integrable and can be described in action-angle coordinates $(I,\theta)$ on $\Lambda_0$; each Lyapunov orbit corresponds to a unique value of $I$. The action coordinate $I$ is uniquely determined by the energy of the PCR3BP.
The stable and unstable manifolds $W^s(\Lambda_0)$ and $W^u(\Lambda_0)$ turn around the main bodies and intersect
along transverse homoclinic orbits. After the perturbation $\Lambda_0$ is perturbed to $\Lambda_{\varepsilon}$, with a Cantor set of Lyapunov orbits surviving as KAM tori in the extended phase space \cite{CapinskiGL14}.

\subsection{Methodology}

\correctiona{Our argument has several components  that involve different tools.
We first use use topological methods
to identify a mechanism that, given some conditions verifiable by finite computations
in a concrete system, leads to the existence of orbits which diffuse and for which we have symbolic dynamics in energy. We also use some stochastic analysis and geometry to
show that the topological description leads to a set of
orbits -- with initial conditions of positive Lebesgue measure for finite time approximation and a fixed perturbation parameter, 
and of positive Hausdorff dimension for arbitrarily long time -- whose dynamics can be described as a stochastic process. In other words, we use traditional
mathematical methods to show that some structures verifiable with finite precision,
lead to dynamical, stochastic and geometric consequences. 


In a second part, we use
computer assisted proofs to verify these conditions in a concrete model of
astronomical interest, namely the Neptune-Triton system.
The code and the documentation for the computer assisted proof is available on the web page of the first author.}

\subsection{Overview}
Our argument relies on topological methods and their implementation into computer assisted proofs. The main topological tool is \emph{correctly aligned windows } with \emph{cone conditions}; see \cite{GideaZ04a,Zcc}.

A window is  a multidimensional rectangle with some distinguished `topologically unstable' and `topologically stable' directions. Two windows are correctly aligned under some mapping if the image of the first window stretches across the second window along its `topologically unstable' directions. Given a family of cones, a mapping is said to satisfy a cone condition if it maps  cones in the family inside  cones in the family. The correct alignment of windows with cone conditions can be validated using rigorous numerics with interval arithmetic.

We construct windows inside some surfaces of sections  along the  NHIM and along its hyperbolic stable and unstable manifolds mentioned in Section \ref{sec:brief_results}. The NHIM and its hyperbolic stable and unstable  manifolds  induce stable and unstable hyperbolic coordinates as well as centre coordinates on the aforementioned surfaces of sections. The constructed windows have their `topologically unstable' directions aligned with the hyperbolic unstable directions, and their `topologically stable' directions aligned with the hyperbolic stable directions and centre directions combined. The correct alignment of windows is achieved by the appropriate section-to-section mappings.

To control the behavior of the windows in the centre directions,  in our theoretical arguments we consider topological discs inside the windows, which are aligned with their `topologically unstable' directions. These discs also satisfy cone conditions, they  are uniformly bounded in the centre directions. Given a finite sequence of correctly aligned windows with cone conditions, for every such disc inside the first window there exists a disc inside the last window, which is the image of a subset of the first disc. Consequently, there exist orbits that `shadow' the given sequence of windows. If the resulting disc in the last window of a sequence can be fitted inside the first window of another  sequence of windows, the `shadowing' can be continued through this new sequence of windows. In our constructions, we continue this `shadowing' process indefinitely by concatenating only a finite collection of finite sequences of correctly aligned windows. Using these discs is a novel contribution to the method of correctly aligned windows, which allows us  to obtain infinite `shadowing' for systems with center coordinates by using only finitely many windows.

In our construction for our diffusion results, certain sequences of correctly aligned windows from the collection yield a  growth in the action variable by $O(\varepsilon)$. Other sequences yield a  decay in the action by $O(\varepsilon)$. Checking the growth (resp. decay) in action by $O(\varepsilon)$ is done by computing the derivative with respect to $\varepsilon$ of the action along the underlying composition of section-to-section mappings, and showing that this derivative is larger than some positive constant (rep. smaller than some negative constant). The verification can be done via rigorous numerics. This is the key step that allows us to quantify the change in action along sequences of correctly aligned windows, for all $\varepsilon$ (including values arbitrarily close to $0$).

We can follow repeatedly the sequences of  windows that yield   growth in action until we obtain a growth of order $O(1)$. We can also alternate sequences of  windows that yield   growth in  action with sequences that achieve decay in   action to obtain symbolic dynamics, hence chaotic orbits.  The conclusions is the existence of diffusing orbits and symbolic dynamics, for every value of the perturbation parameter $\varepsilon \in(0,\varepsilon_0]$. We emphasize that our procedure involves the construction of only finitely many windows, and the rigorous verification of only finitely many correct alignments with cone conditions.

The orbits that achieve diffusion and symbolic dynamics have initial points lying on  $C^0$-families of discs. This allows us to obtain information about the Lebesgue measure and the Hausdorff dimension of such orbits.

Moreover, we use the symbolic dynamics to show the existence of orbits whose time-evolution of the action variable follows a random walk. For $\varepsilon>0$ fixed, the set of initial points that follow such a random walk for finitely many steps has positive Lebesgue measure. When we let $\varepsilon\to 0$, we obtain in the limit a stopped  diffusion process (Brownian motion with drift). Moreover, we can obtain any Brownian motion with drift that we wish, for suitable choices of sets of initial points.

\subsection{Related works}\label{sec:related_works}

In the recent years, the Arnold diffusion problem has taken a central role in the study of Hamiltonian dynamics.
Significant works,  using variational methods or geometric methods,  include, e.g.,
\cite{ChierchiaG94,Bessi96,BertiB02a,BolotinTreschev1999,Treschev02a,Treschev04,Treschev12,
DelshamsLS00,FontichM01,DelshamsLS2006,DelshamsLS2006b,GelfreichT2008,Piftankin2006,Kaloshin2003,
KaloshinBZ11,KaloshinZ12a,KaloshinZ12b,Zhang11,Bounemoura12,ChengY2004,ChengY2009,Cheng12,ChengX2015,DelshamsLS16,
GideaR13,gidea2019general,marco2016arnold,marco2016chains,gidea_marco_2017}.
There are also numerical approaches, including, e.g.,  \cite{tennyson1980diffusion,Tennyson82,Lieberman83,Lichtenberg92,Laskar93,
zaslavsky2005hamiltonian,GuzzoLF05,FroeschleLG06,guzzo2009numerical}.
Part of the interest in Arnold diffusion is owed to  the seminal  work of John Mather in the field \cite{Mather04,Mather12}, as well as to  possible applications to celestial mechanics, dynamical astronomy, particle accelerators, and plasma confinement; see, e.g., \cite{cincotta2002arnold,laskar2004long,LuqueP17}.

In particular, various mechanisms for Arnold diffusion in the $N$-body problem have been examined in several papers, including, e.g.,   \cite{xia1993arnold,moeckel1996transition,galante2011destruction,FejozGKR11,capinski2011transition,DelshamsGR13,xue2014arnold,CapinskiGL14,Delshams2018}.
The present work is closely related to \cite{CapinskiGL14}, in which Arnold diffusion is shown in the PERTBP, using a shadowing lemma for NHIM's (\cite{gidea2019general}) and numerical arguments. The paper \cite{CapinskiGL14} assumes the existence of a NHIM, and does not provide quantitative estimates on diffusion; also, the numerical experiments are non-rigorous. In the present paper we develop  an entirely new methodology, which  does not need to assume or to verify the existence of a NHIM. Moreover, we provide quantitative estimates via computer assisted proofs.
In particular, we show that  diffusing orbits exist in the concrete system under consideration, with the given mass parameter and  up to the true value of the eccentricity.

Several works, including e.g. \cite{BertiB02b,BertiBB03,BessiCV01,Treschev04,GideaL06,LochakM05,Piftankin2006,GelfreichT2008,Zhang11,GideaL13,Guzzo2018}, obtain estimates on the diffusion time.
A novelty of our results is that we provide optimal estimates on the diffusing time
with explicit constants,  and for the full range of parameters under consideration.

Estimates on the Hausdorff dimension of the set of initial conditions for unstable orbits appear for example in \cite{GorodetskiK11}, where they study oscillatory motions in the Sitnikov problem and in the PCR3BP.
In this paper we provide estimates on the Lebesgue measure of  the set of initial conditions for orbits  that drift on energy, and on the Hausdorff dimension of  the set of initial conditions for orbits undergoing symbolic dynamics.

Related works that provide analytic results on the stochastic process followed by diffusing orbits in random iterations of the maps appear in, e.g., \cite{sauzin2006ergodicity,sauzin2006exemples,castejon2017random,kaloshin2015normally}.
There are also heuristic arguments and numerical work,   e.g., \cite{Chirikov79,lichtenberg2013regular,KaloshinRoldan}.
A novelty of this paper is that we provide rigorous results, via a computer assisted proof,  on the stochastic process underlying diffusing orbits,  as the perturbation parameter tends to zero.
Moreover,  we can obtain in the limit \emph{any Brownian motion with drift}, that is, any value of the drift and variance.


\section{Statement of the main theorem}
\label{section:main_theorem}
Our results on Arnold diffusion in the PER3BP are stated in Theorem \ref%
{th:main-3bp}. They are derived from the general results obtained in this paper
-- Theorems \ref{thm:diffusion}, \ref{th:symbolic-dynamics-from-covering}, %
\ref{th:Hausdorff-dim} and \ref{th:diffusion-process}. These general results
and the underlying methodology can be applied to other models, for instance
to time-dependent, generic perturbations of the geodesic flow \cite%
{DelshamsLS00,DelshamsLS2006b,GideaL13}.

\subsection{Model}

We first briefly introduce the model and then state the main result.
Our model describes the motion of a massless particle (e.g., an asteroid or
a spaceship), under the gravitational pull of two large bodies, which we
call primaries. The primaries rotate in a plane along Keplerian elliptical
orbits with eccentricity $\varepsilon $, while the massless particle moves
in the same plane and has no influence on the orbits of the primaries. We
use normalized units, in which the masses of the primaries are $\mu $ and $%
1-\mu $. We consider a frame of `pulsating' coordinates that rotates
together with the primaries, making their position fixed on the horizontal
axis \cite{S}. The motion of the massless particle is described via the
Hamiltonian $H_{\varepsilon }:\mathbb{R}^{4}\times \mathbb{T\rightarrow R}$
\begin{equation}\label{eq:H-pre3bp}\begin{split}
H_{\varepsilon }\left( X,Y,P_{X},P_{Y},\theta \right) &=\frac{\left(
P_{X}+Y\right) ^{2}+\left( P_{Y}-X\right) ^{2}}{2}-\frac{\Omega \left(
X,Y\right) }{1+\varepsilon \cos (\theta )}, \\
\Omega \left( X,Y\right) & =\frac{1}{2}\left( X^{2}+Y^{2}\right) +\frac{%
\left( 1-\mu \right) }{r_{1}}+\frac{\mu }{r_{2}}, \\
r_{1}^{2}& =\left( X-\mu \right) ^{2}+Y^{2}, \\
r_{2}^{2}& =\left( X-\mu +1\right) ^{2}+Y^{2}.
\end{split}
\end{equation}

The corresponding Hamilton equations are:
\begin{equation}
\begin{array}{lll}
\displaystyle\frac{dX}{d\theta }=\frac{\partial H_{\varepsilon }}{\partial
P_{X}},\smallskip & \qquad \qquad & \displaystyle\frac{dP_{X}}{d\theta }=-%
\frac{\partial H_{\varepsilon }}{\partial X}, \\
\displaystyle\frac{dY}{d\theta }=\frac{\partial H_{\varepsilon }}{\partial
P_{Y}}, & \qquad \qquad & \displaystyle\frac{dP_{Y}}{d\theta }=-\frac{%
\partial H_{\varepsilon }}{\partial Y},%
\end{array}
\label{eq:PER3BP-ode}
\end{equation}%
where $X,Y\in \mathbb{R}$ are the position coordinates of the massless
particle, and $P_{X},P_{Y}\in \mathbb{R} $ are the associated linear
momenta. The variable $\theta \in \mathbb{T}$ is the true anomaly of the
Keplerian orbits of the primaries, where $\mathbb{T}$ denotes the $1$%
-dimensional torus. The system is non-autonomous, thus we consider it in the
extended phase space, of dimension $5$, which includes $\theta $ as an
independent variable. We use the notation $\Phi _{t}^{\varepsilon }$ to
denote the flow of (\ref{eq:PER3BP-ode}) in the extended phase space, which
includes $\theta \in \mathbb{T}$.

When $\varepsilon =0$ the corresponding Hamiltonian $H_{0}$ describes the
PCR3BP, and the variable $\theta $ represents the physical time. An
important feature of our model is that the Hamiltonian $H_{0}$ is
autonomous, hence the energy is preserved. The energy $H_{0}$ is no longer
preserved when $\varepsilon >0$. The main objective of this paper is to
investigate the changes in the energy when $\varepsilon >0$.

There are certain geometric structures that organize the dynamics. The
Hamiltonian vector field of $H_{0}$ has 5 equilibrium points. One of these
points, denoted by $L_{1}$, is located between the primaries, and is of
center-stable linear stability type. There exists a family of Lyapunov
periodic orbits about $L_{1}$. Each Lyapunov periodic orbit is uniquely
characterized by some fixed value of the energy $H_{0}$. The family of
Lyapunov orbits form a normally hyperbolic invariant manifold (NHIM) with
boundary in the phase space, which we denote as $\Lambda _{0}$. The NHIM has
associated stable and unstable manifolds, $W^{s}(\Lambda _{0}) $, $%
W^{u}(\Lambda _{0})$, respectively. For certain values of the mass ratio and
energy, one can observe numerically that $W^{s}(\Lambda _{0})$ and $%
W^{u}(\Lambda _{0})$ intersect transversally.

The NHIM $\Lambda _{0}$ and the homoclinic orbits to $\Lambda _{0}$ are
the geometric objects on which the computer assisted proof is based. We
emphasize that in the computer assisted proof we do not need an explicit
knowledge of these objects. We only use numerical approximations of these
objects to construct the tools -- correctly aligned windows with cone
condition -- which are used in the computer assisted proof.

\subsection{Main theorem}

Below we state our main theorem.

\begin{theorem}
\label{th:main-3bp} Consider the Neptune-Triton PER3BP, with mass ratio $\mu
=2.089\cdot 10^{-4}$, and orbit eccentricity $\eps_{0}=1.6\cdot 10^{-5}$.
Let $h_{0}$ be a fixed energy level, specified below, and $I=H_{0}-h_{0}$ be
the rescaled energy. We have the following results:

\begin{enumerate}
\item[(1)] (Diffusing orbits)\label{pt:main-diffusion} There exist $%
C_{h_{0}}>0$ and $T>0$ such that for every $\varepsilon \in (0,\eps_{0}]$,
there exists a point $z\left( \varepsilon \right) $ and a time $%
t(\varepsilon )\in \left( 0,T/\varepsilon \right) $, such that
\begin{equation}
I\left( \Phi _{t\left( \varepsilon \right) }^{\varepsilon }\left( z(\eps%
)\right) \right) -I\left( z(\eps)\right) >C_{h_{0}}.
\label{eq:diffusion-3bp}
\end{equation}%
The Lebesgue measure of the set of points $z$ satisfying %
\eqref{eq:diffusion-3bp} has a lower bound given by Theorem~\ref%
{thm:diffusion}, where the respective constants are defined in Theorem~\ref%
{th:3bp-C-bounds} and (\ref{eq:Su-choice}).

\item[(2)] (Symbolic dynamics)\label{pt:main-symbolic} There exists $\eta >0$
such that for every $\varepsilon \in (0,\eps_{0}]$ and every sequence $%
\left\{ I^{n}\right\} _{n\in \mathbb{N}}$, $I^{n}\in \left[ 2\eta
\varepsilon ,C_{h_{0}}-2\eta \varepsilon \right] $ with $\left\vert
I^{n+1}-I^{n}\right\vert >2\eta \varepsilon $ there exists a point $z$ and
an increasing sequence of times $t^{n}>0$, for $n\in \mathbb{N}$, such that
\begin{equation*}
\left\vert I\left( \Phi _{t^{n}}^{\varepsilon }\left( z\right) \right)
-I^{n}\right\vert <\eta \varepsilon \qquad \text{for all }n\in \mathbb{N}.
\end{equation*}

\item[(3)] (Hausdorff dimension)\label{pt:main-Hausdorff} Given $\left\{
I^{n}\right\} _{n\in \mathbb{N}}$ as in (2), the Hausdorff dimension of the
set
\begin{equation*}
\{z : \exists (t^{n})_{n\in\mathbb{N}}\text{ positive and increasing, s.t. }%
\forall n\in \mathbb{N},\, \left\vert I\left( \Phi _{t^{n}}^{\varepsilon
}\left( z\right) \right) -I^{n}\right\vert <\eta \varepsilon\}
\end{equation*}
is strictly greater than $4$ (in the $5$ dimensional extended phase space).

\item[(4)] (Stochastic behavior)\label{pt:main-stochastic} Let $\gamma >%
\frac{3}{2}$. For each ${X_{0}}\in \left( 0,C_{h_{0}}\right) $, $\mu \in
\mathbb{R}$, $\sigma >0$, consider the stochastic processes
\begin{equation*}
X_{t}^{0}:=X_{0}+\mu t+\sigma W_{t},\qquad \text{for }t\in \left[ 0,1\right]
.
\end{equation*}%
For $\varepsilon >0$ and a given point $z$ define the energy path \footnote{%
Alternatively we could define the energy paths as $X_{t}^{\varepsilon
}(z):=I(\Phi _{\gamma \varepsilon ^{-3/2}t}^{\varepsilon }(z))$, by taking
sufficiently large $\gamma >0$. The explicit size of such $\gamma $ and the
related details are outlined in the footnotes in Section \ref{sec:stochastic}%
.}
\begin{equation*}
X_{t}^{\varepsilon }(z):=I\left( \Phi _{t\varepsilon ^{-\gamma
}}^{\varepsilon }\left( z\right) \right) ,\qquad \text{for }t\in \left[ 0,1%
\right] .
\end{equation*}%
Define the stopping time
\begin{equation*}
\tau =\tau (X^{\varepsilon }):=\inf \left\{ t:X_{t}^{\varepsilon }\geq
C_{h_{0}}\text{ or }X_{t}^{\varepsilon }\leq 0\right\} .
\end{equation*}%
Then for every $0<\varepsilon <\eps_{0}$ there exits a set $\Omega
_{\varepsilon }$ of positive Lebesgue measure, such that the process $X_{t}^{%
\eps}:\Omega _{\eps}\rightarrow \mathbb{R}$, for $t\in \left[ 0,1\right] $,
has the following limit
\begin{equation*}
\lim_{\varepsilon \rightarrow 0}X_{t\wedge \tau }^{\varepsilon }=X_{t\wedge
\tau }^{0},
\end{equation*}
where $t\wedge \tau =\min (t,\tau )$. Above, $\Omega _{\varepsilon }$ is
endowed with with the sigma field of Borel sets and the normalized Lebsegue
measure (i.e., $\mathbb{P}_{\varepsilon }\left( \Omega _{\varepsilon
}\right) =1$), and the limit is in the sense of the functional central limit
theorem (see Section \ref{sec:stochastic}).
\end{enumerate}

The constants in the statements (1)-(4) above can be chosen as
\begin{equation*}
h_{0}=-1.5050906397016,\quad C_{h_{0}}=10^{-6},\quad T=\correctiona{\frac{1}{4}},\quad
\eta =10^{-2}.
\end{equation*}
\end{theorem}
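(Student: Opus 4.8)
The plan is to exhibit the Neptune--Triton PER3BP as a concrete instance of the \emph{a priori chaotic} framework of Section~\ref{sec:brief_results}, and then to invoke the four abstract results---Theorems~\ref{thm:diffusion}, \ref{th:symbolic-dynamics-from-covering}, \ref{th:Hausdorff-dim} and \ref{th:diffusion-process}---whose hypotheses are, by design, a \emph{finite} list of correct alignments with cone conditions together with one sign-definite estimate on the $\partial_{\varepsilon}$-derivative of the action. Concretely, I fix $\mu=2.089\cdot 10^{-4}$ and the energy level $h_{0}=-1.5050906397016$, and work with $I=H_{0}-h_{0}$. For $\varepsilon=0$ the $L_{1}$ Lyapunov family forms a $2$-dimensional NHIM $\Lambda_{0}$ carrying action--angle coordinates $(I,\theta)$, and $W^{u}(\Lambda_{0})$, $W^{s}(\Lambda_{0})$ are observed to meet transversally along homoclinic channels winding around the primaries. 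The first step is therefore to fix surfaces of section transverse to $\Lambda_{0}$ and to its invariant manifolds, to set up the associated section-to-section maps (the inner map along $\Lambda_{\varepsilon}$ and the homoclinic-excursion maps whose compositions build the return map), and to produce, by rigorous integration of \eqref{eq:PER3BP-ode} with interval arithmetic, $C^{1}$ enclosures of these maps and of their derivatives in both the phase variables \emph{and} the parameter $\varepsilon$, valid uniformly for $\varepsilon\in[0,\varepsilon_{0}]$ with $\varepsilon_{0}=1.6\cdot 10^{-5}$. Note that the existence of the NHIM and its manifolds is not assumed: only their numerical approximations enter, and solely to place the windows.

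The second, and central, step is the window construction together with the action estimate. I build a finite collection of windows---each a box with its one topologically unstable direction aligned with the unstable hyperbolic direction and the remaining directions (stable hyperbolic, the two centre directions $(I,\theta)$, and the time circle) topologically stable---placed along $\Lambda_{0}$ and along a homoclinic channel, together with a family of cones, and then verify by interval arithmetic: (i) a finite set of correct alignments with cone conditions under the section-to-section maps; and (ii) that two distinguished kinds of finite window sequences change the action monotonically in $\varepsilon$, namely $\partial_{\varepsilon}I\ge c_{u}>0$ along ``growth'' sequences and $\partial_{\varepsilon}I\le -c_{s}<0$ along ``decay'' sequences, with $c_{u},c_{s}$ explicit. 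Item~(ii)---obtained by differentiating the action coordinate along the composition of the relevant section-to-section maps---is precisely the mechanism that makes the conclusions uniform down to $\varepsilon=0$, and it must hold with margins at the true eccentricity. Since only finitely many windows and alignments are involved, concatenating ``growth'' sequences $O(C_{h_{0}}/\varepsilon)$ times yields an $O(1)$ drift of $I$ realized within time $<T/\varepsilon$, while alternating ``growth'' and ``decay'' sequences according to a prescribed symbol sequence $\{I^{n}\}$ realizes the $\eta\varepsilon$-shadowing of statement~(2). The explicit values $C_{h_{0}}=10^{-6}$, $T=\tfrac14$, $\eta=10^{-2}$ then follow from arithmetic with the validated constants of Theorem~\ref{th:3bp-C-bounds} and \eqref{eq:Su-choice}; the actual verification is the computer-assisted proof whose code and documentation accompany the paper.

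The remaining statements are obtained by carrying this topological information through the $C^{0}$-families of correctly aligned discs supported in the windows: for a sequence of correctly aligned windows with cone conditions, every such disc in the first window contains a subset whose image is a disc in the last window, these discs being uniformly bounded in the centre directions, so the shadowing iterates indefinitely using only the finite window collection. Projecting the set of admissible initial points onto the window coordinates and using the product structure of the windows gives the lower bound on the Lebesgue measure in statement~(1), via Theorem~\ref{thm:diffusion}; a Moran-type covering argument on the product of the Cantor set of admissible symbol sequences with the continuum of free (centre, stable, time) directions gives the bound $>4$ on the Hausdorff dimension in statement~(3), via Theorem~\ref{th:Hausdorff-dim}. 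For statement~(4) I identify inside the symbolic dynamics a biased-random-walk structure for the rescaled action: for each fixed small $\varepsilon$, a positive-measure set $\Omega_{\varepsilon}$ of initial points whose action path over physical time $\varepsilon^{-\gamma}$ performs many $O(\varepsilon)$-increments with prescribed probabilities, the step sizes and biases being tunable so that any prescribed values of the drift and of the variance can be hit, and then apply an invariance principle (functional CLT) to obtain $X^{\varepsilon}_{t\wedge\tau}\to X^{0}_{t\wedge\tau}$, the Brownian motion with drift stopped at the exit from $(0,C_{h_{0}})$; the precise rescaling and increment bookkeeping are those of Theorem~\ref{th:diffusion-process} and Section~\ref{sec:stochastic}.

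The hard part is two-fold. First, the rigorous verification: obtaining $C^{1}$ enclosures of the PER3BP section-to-section maps that are sharp enough---for a flow singular near the primaries and followed along long homoclinic excursions---that the finitely many correct alignments with cone conditions hold \emph{and}, above all, that the sign-definite $\partial_{\varepsilon}$-action estimate holds with a genuine margin for all $\varepsilon\in(0,\varepsilon_{0}]$ up to the true value $\varepsilon_{0}=1.6\cdot 10^{-5}$; this is the real content of the theorem, and the reason $C_{h_{0}}$ is taken small (an $\varepsilon$-independent drift of $10^{-6}$ still requires $O(10^{-6}/\varepsilon)$ window sequences, unboundedly many as $\varepsilon\to0$) is precisely to keep these margins. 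Second, even granting the finite check, one must confirm that the windows, cones and discs constructed in PER3BP coordinates meet verbatim the structural hypotheses---hyperbolicity rates, transversality, centre-direction bounds---required by Theorems~\ref{thm:diffusion}--\ref{th:diffusion-process}, and that the stochastic step delivers the \emph{functional} limit rather than merely convergence of finite-dimensional distributions. I expect no conceptual surprise in the latter, but the quantitative matching of all the constants is delicate, and that is exactly what the computer-assisted proof is for.
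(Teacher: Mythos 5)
Your proposal follows essentially the same route as the paper: fix the sections along the Lyapunov orbit and a single homoclinic excursion, build finitely many correctly aligned windows with cone conditions between two energy strips (one where the action gains $\geq c\varepsilon$ per return, one where it loses $\geq c\varepsilon$), validate these alignments, the cone conditions, and the sign-definite $\partial_\varepsilon$-bounds on the action by interval arithmetic (the paper's conditions \textbf{C1}--\textbf{C2}, Theorems~\ref{th:3bp-i-iii} and \ref{th:3bp-C-bounds}), and then quote Theorems~\ref{thm:diffusion}, \ref{th:symbolic-dynamics-from-covering}, \ref{th:Hausdorff-dim}, \ref{th:diffusion-process}, with the constants $C_{h_0}$, $T$, $\eta$ obtained by the same arithmetic with $c$, $C$, $r$, $a_I$ and the per-transition time bound.

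The one place where your sketch does not close is statement (3). Theorem~\ref{th:Hausdorff-dim} only gives dimension $>n_s+2=3$ \emph{inside the $4$-dimensional section} $\{Y=0\}$, and only under the additional hypothesis \textbf{C3}, which your proposal never verifies; "via Theorem~\ref{th:Hausdorff-dim}" therefore does not by itself yield the claimed bound $>4$. The paper supplies two extra arguments that you would need: (i) \textbf{C3} is checked non-computationally, by observing that orbits in the domain of $F_{\ell_2,\varepsilon}\circ F_{\ell_1,\varepsilon}$ with $\ell_1,\ell_2\in L^{uu}$ make a different number of turns around the Lyapunov orbit between homoclinic excursions than those with $\ell_1'\in L^{ud}$, $\ell_2'\in L^{du}$, so the domains are disjoint (this is what makes the Cantor-set/Moran construction produce genuinely separated branches); (ii) the passage from $>3$ in the section to $>4$ in the $5$-dimensional extended phase space is not a free "product with the time direction" of the abstract theorem but an additional robustness argument: by smooth dependence on initial conditions the covering relations, cone conditions and energy bounds persist for connecting sequences starting and ending on the nearby sections $\{Y=\mathcal{Y}\}$, $|\mathcal{Y}|<\mathcal{Y}_0$, and only then does the dimension in the full phase space exceed $4$. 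With these two points added, your argument matches the paper's proof.
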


\begin{remark}
In the statement of Theorem \ref{th:main-3bp}, $\eps_{0}=1.6\cdot 10^{-5}$
represents the true value of the eccentricity of the orbit of Triton. This
is among the lowest values of orbital eccentricities among the planets and
moons in our solar system. Triton is believed to had been captured by
Neptune about 1 billion years ago in a highly eccentric orbit, which decayed
in time to the currently observed value \cite{agnor2006neptune}.

\correctiona{The PRE3BP is a simplified model, }which neglects the effects of the other bodies in the solar system on the small particle. In the case of the Neptune-Triton system such effects are 
\correctiona{non-negligible. }
We therefore do not claim that our results are fully physical. They should be viewed as a mathematical model of some planet-moon or star-planet system, which is isolated from other effects. We have chosen the Neptune-Triton system due to its small eccentricity.
When eccentricity is small, it is hard to observe diffusion directly by integrating the system and measuring the change in energy along the trajectory. 
\end{remark}

\begin{remark}
The constant $h_{0}$ in the statement of Theorem \ref{th:main-3bp} is the
energy of some Lyapunov orbit, depicted in Figure \ref{fig:LyapOrb}.
This is merely a choice of convenience.

Statement (1) of Theorem \ref{th:main-3bp} shows the existence of orbits
that diffuse in energy for the whole range of eccentricities from zero up to
the current value of the eccentricity of the orbit of Triton. In particular,
there exist orbits whose energy changes by at least $C_{h_{0}}=10^{-6}$, in
a time at most $T/\eps_{0}=\correctiona{\frac{5}{32}\cdot10^{5}}$, in normalized units. In real units these
values are physically significant. Indeed, the energy drift $C_{h_{0}}$
corresponds to a distance of order $1$ km between two Lyapunov orbits whose
energy differs by $C_{h_{0}}$. Since the orbital period of Triton is $6$
days, which corresponds to $2\pi $ in our model, the time $T/\varepsilon _{0}
$ needed to achieve this change is under \correctiona{$42$ }years. Yet the Lebesgue
measure of the set of initial points of diffusing orbits which we establish
is exponentially small in $\varepsilon $.

Statement (2) of Theorem \ref{th:main-3bp} shows that any sequence of energy
levels within the range $(0,C_{h_{0}})$ can be $O(\eps)$-shadowed by true
orbits. There is a mild condition that these level sets should be chosen $O(%
\eps)$ apart from one another as well as from the endpoints of the energy
range.

Statement (4) of Theorem \ref{th:main-3bp} shows that for any prescribed
Brownian motion with drift, there exist sets of initial conditions $\Omega _{%
\eps}$ for which the corresponding energy paths approach asymptotically, as $%
\eps\rightarrow 0$, the given Brownian motion with drift. This fact is
consistent with Chirikov's empirical observations in \cite{Chirikov79} that
different sets of initial conditions yield different stochastic processes.
The obtained sets $\Omega _{\eps}$ have positive Lebesgue measure which goes
to $0$ as $\eps\rightarrow 0$. The broader question on characterizing the
limiting stochastic process for a fixed set of initial points of positive
measure is an open question.

We emphasize that the
methodology in this paper  can be used to obtain results similar to those in
Theorem~\ref{th:main-3bp} for other concrete three-body problems, with
higher values of the eccentricity.
\end{remark}

\begin{remark}
The diffusion time of order $O(1/\eps)$ in (1) is optimal for a priori chaotic
Hamiltonian systems, in the sense that  
the energy $H_{0}(z(t))$, hence the action $I$, cannot grow in time faster than
$O(\eps)$ (see, e.g., \cite{GelfreichT2008,GideaL13}). 
Indeed,  
if $z(t)$ is a solution of a Hamiltonian of the form 
\[H_\varepsilon(z,t)=H_0(z)+\varepsilon H_1(z,t; \varepsilon),\] then we have  
\begin{equation*}
\frac{d}{dt} H_{0}(z(t))= \varepsilon [H_0,H_1] (z(t),t;\varepsilon),
\end{equation*}
where $[\cdot,\cdot]$ denotes the Poisson bracket. 
In our setting, both  $H_0$ and $H_1$  and  their derivatives are bounded  since we restrict to $z$ in a compact neighborhood of some homoclinic orbit, $t\in\mathbb{T}^1$, and $\varepsilon\in[0,\varepsilon_0]$; see Section~\ref{sec:proof_main_theorem}\correctiona{, and in particular Figure \ref{fig:LyapOrb}}.
This implies that there is a constant $A>0$ such that
\[ |H_0(z(t)) - H_0(z(0)) |\leq \varepsilon At,\quad\textrm{for all $t\geq 0$}, \]
hence the energy  $H_0(z(t))$ cannot grow faster than linearly in time,
with  an $O(\varepsilon)$ growth rate.
\end{remark}


\section{Tools}

\correctiona{Our diffusion mechanism will be based on some abstract topological results} -- Theorems \ref{thm:diffusion}, \ref{th:symbolic-dynamics-from-covering}, \ref{th:Hausdorff-dim} and \ref{th:diffusion-process}, -- from which the proof of Theorem \ref{th:main-3bp} will follow.
These results are formulated in the general context of a non-autonomous,
parameter dependent $C^{r}$-smooth Hamiltonian $H_{\varepsilon }:\mathbb{R}%
^{n_{u}+n_{s}+2n_{c}}\times \mathbb{T\rightarrow R}$ of the form
\begin{equation}
H_{\varepsilon }\left( z,t\right) =H_{0}\left( z\right) +\varepsilon
H_{1}\left( z,t;\varepsilon \right) ,  \label{eqn:Hamiltonian_eps}
\end{equation}%
where $r\geq 3$, $n_{u}=n_{s}\geq 1$, $n_{c}\geq 1$, and $\varepsilon \in
\lbrack 0,\varepsilon _{0}]$. The associated ODE is
\begin{equation}
z^{\prime }(t)=J\nabla H_{\varepsilon }\left( z(t),t\right) ,
\label{eq:general-ode}
\end{equation}%
where $J$ is the standard almost complex structure on $\mathbb{R}%
^{n_{u}+n_{s}+2n_{c}}$. Although $n_{u}=n_{s}$, we will keep the two
notations separate for convenience. (In the sequel, $n_{u}$ will play the
role of the dimension of the `unstable' variables, $n_{s}$ of the `stable'
variables, and $\correctiona{2}n_{c}$ of the `center' ones).

We denote by $\Phi _{t}^{\varepsilon }\left( z,\theta \right) $ the flow
induced by (\ref{eq:general-ode}) in the extended phase space, where $\left(
z,\theta \right) \in \mathbb{R}^{n_{u}+n_{s}+2n_{c}}\times \mathbb{T}$, and $%
\theta^{\prime}(t)=1$. Note that for $\varepsilon =0$ the system %
\eqref{eqn:Hamiltonian_eps} is autonomous, so $H_{0}$ is preserved along the
solutions.

In the context of the PER3BP it is enough to consider $n_{u}=n_{s}=n_{c}=1$,
but we formulate the general results in the higher dimensional setting when $%
n_{u}=n_{s}\geq 1$ and $n_{c}=1$, since this can be done at no expense. The
results can also be generalized to $n_{c}\geq 1$. We refrain from doing so
here since the statements become too technical. We comment on such
generalization in Sections \ref{sec:diffusion}, \ref{sec:symbolic}, \ref%
{sec:Hausdorff} and \ref{sec:stochastic} where we prove the four general
results.

\subsection{Iterated function systems\label{sec:IFS}}

In our general results which lead to the proof of the Main Theorem \correctiona{\ref{th:main-3bp}}, we use a
system of maps as defined below, rather \correctiona{than }the flow.

Let
\begin{equation}
\{(\Sigma _{\ell ,0}\ldots ,\Sigma _{\ell ,k_{\ell }})\}_{\ell \in L}
\label{eqn:section_ell}
\end{equation}
be a finite collection of sections in the extended phase space $\mathbb{R}%
^{n_{u}+n_{s}+2}\times \mathbb{T}$, where $L$ is a finite set, and $%
k_{\ell}\geq 0$ for each $\ell \in L$. We assume that each section $\Sigma
_{\ell,i}$ is (locally) transverse to the flow $\Phi _{t}^{\varepsilon }$,
and is
\correctiona{homeomorphic to $\mathbb{R}^{n_{u}}\times \mathbb{R}^{n_{s}}\times \mathbb{R}\times \mathbb{T}$,} \marginpar{\color{red} \hfill 14.}
\begin{equation}
\Sigma _{\ell ,i}\simeq \mathbb{R}^{n_{u}}\times \mathbb{R}^{n_{s}}\times
\mathbb{R}\times \mathbb{T},  \label{eqn:sections}
\end{equation}
with local coordinates
\begin{equation}
v=(x,y,I,\theta ),\, x\in \mathbb{R}^{n_{u}},\, y\in \mathbb{R}^{n_{s}},\,
(I,\theta )\in \mathbb{R}\times \mathbb{T}^{1}.  \label{eqn:v_coordinates}
\end{equation}

The coordinate $I$ on each section $\Sigma _{\ell ,i}$ is defined by $%
I=H_{0}-h_{0}$, where $H_{0}$ is the energy of the unperturbed Hamiltonian
when $\eps=0$ in \eqref{eqn:Hamiltonian_eps}, and $h_{0}$ is some initial
level of the energy $H_{0}$.

Further, we assume that
\begin{equation*}
\Sigma _{\ell ,0}=\Sigma _{\ell ,k_{\ell }}=\Sigma _{0},
\end{equation*}%
where $\Sigma _{0}$ is a fixed section for all $\ell \in L$. The sections
are assumed to be independent of $\varepsilon $.

For a given $\ell \in L$ and $i\in \{1,\ldots ,k_{\ell }\}$ we define $\tau
_{\ell ,i}:\Sigma _{\ell ,i-1}\rightarrow \mathbb{R}$ by
\begin{equation*}
\tau _{\ell ,i}(z,\theta ):=\inf \left\{ t>0:(z,\theta )\in \Sigma _{\ell
,i-1}\text{ and }\Phi _{t}^{\varepsilon }\left( z,\theta \right) \in \Sigma
_{\ell ,i}\right\} .
\end{equation*}

Then we define the family of maps $f_{\ell ,i,\varepsilon }:\Sigma _{\ell
,i-1}\rightarrow \Sigma _{\ell ,i}$, for $i\in \{1,\ldots ,k_{\ell }\}$, to
be the section-to-section mappings along the flow, i.e., $f_{\ell
,i,\varepsilon }=\Phi _{\tau _{\ell ,i}}^{\varepsilon }$.

Note that $\tau_{\ell ,i}$ and $f_{\ell,i,\varepsilon }$ are only locally
defined.

We use the section-to-section mappings to define return maps $%
F_{\ell,\varepsilon}:\Sigma_0\to\Sigma_0$ associated to each family of
sections associated to $\ell\in L$, i.e.,
\begin{equation}  \label{eq:Fl-eps-map}
F_{\ell,\varepsilon}=f_{\ell,k_\ell,\varepsilon}\circ\ldots \circ
f_{\ell,1,\varepsilon}.
\end{equation}

The dynamics of primary interest is that of the iterated function system
(IFS)
\begin{equation}
\mathscr{F}_{\varepsilon }=\{F_{\ell ,\varepsilon }\}_{\ell \in L},
\label{eq:IFS}
\end{equation}%
that depends on $\varepsilon \in \lbrack 0,\varepsilon _{0}]$. For a fixed $%
\varepsilon $, an orbit of a point $z_{0}$ under the IFS is given by
\begin{equation}  \label{eq:IFS-orbit}
z_{n}=F_{\ell _{n},\varepsilon }\circ \ldots \circ F_{\ell _{1},\varepsilon
}(z_{0}),
\end{equation}
for some choice of $\ell _{1},\ldots ,\ell _{n}\in L$. Note that the same
point $z_{0}$ can yield different orbits depending on the choice of
successive maps that are applied.

We express the maps $f_{\ell ,i,\varepsilon }$ as well as $F_{\ell
,\varepsilon }$ in the local coordinates of the sections, i.e.,
\begin{equation*}
\begin{split}
f_{\ell ,i,\varepsilon }& :\mathbb{R}^{n_{u}}\times \mathbb{R}^{n_{s}}\times
\mathbb{R}\times \mathbb{T\rightarrow R}^{n_{u}}\times \mathbb{R}%
^{n_{s}}\times \mathbb{R}\times \mathbb{T},\qquad \text{ for }i\in
\{1,\ldots ,k_{\ell }\}, \\
F_{\ell ,\varepsilon }& :\mathbb{R}^{n_{u}}\times \mathbb{R}^{n_{s}}\times
\mathbb{R}\times \mathbb{T\rightarrow R}^{n_{u}}\times \mathbb{R}%
^{n_{s}}\times \mathbb{R}\times \mathbb{T},\qquad \text{ for }\ell \in L.
\end{split}%
\end{equation*}

Since $I$ is defined by $I=H_0-h_0$, it is a first integral of the
Hamiltonian vector field of $H_0$. A change in $I$ is equivalent to a
change in the energy $H_{0}$. We will refer to $I$ as the `action variable'
and to the $\theta$ as the `angle' variable. We point out that we do not
require the coordinate systems $(x,y,I,\theta )$ to be symplectic though.

For the unperturbed system, with $\varepsilon =0$, the ODE which drives our
IFS is autonomous, and hence the energy $H_0$, and implicitly the action $I$%
, are both preserved by the maps, i.e.,
\begin{equation*}
\pi _{I}f_{\ell,i,\varepsilon =0}\left( z\right) =\pi _{I}z.
\end{equation*}

\begin{remark}
{\ \phantom{.} }

\begin{itemize}
\item[1.] In the case of the PER3BP, in the definition of the action $%
I=H_{0}-h_{0}$, the energy $h_{0}$ is chosen as the energy level of some
Lyapunov orbit.

\item[2.] In the case of the PER3BP, for each index $\ell \in L$, we
construct a sequence of sections $(\Sigma _{\ell ,0}\ldots ,\Sigma _{\ell
,k_{\ell }})$ so that some of the sections are positioned along a homoclinic
orbit to $\Lambda _{0}$, and some other sections are positioned along the
NHIM $\Lambda _{0}$ itself. In our constructions we will always use the same
number of sections along the homoclinic orbit, but we will use a varying
number of sections along the NHIM. 
(For example, after a homoclinic excursion we can cross different numbers of sections around the normally hyperbolic invariant manifold in order to return to the manifold at different angles.)
Different sequences of sections, corresponding to different indices
$\ell$, may share some common sections. Overall there is a finite number of
sections, therefore a finite number of sequences of sections that we use.

The index $\ell$ plays the role of a selector for which particular sequence
of sections we use to achieve a desired effect on the dynamics.

More generally, if we use more than one homoclinic orbit, we can associate
different sets of indices $\ell$ to sequences of sections associated to
different homoclinic orbits.

For the general results, we consider the case of using more than one
homoclinic orbit, therefore we allow multiple sets of sections,
corresponding to different homoclinics.


\item[3.] The dynamics of interest for establishing diffusion is that of the
maps $F_{\ell,\varepsilon}$ in the IFS. The section-to-section maps $f_{\ell
,i,\varepsilon }$ play a technical role in the computer assisted proof. They
allow for shorter integration times between respective sections. This helps
in the setting of strong expansion along the unstable coordinate and
improves the accuracy of the estimates.
\end{itemize}
\end{remark}

\subsection{Cone conditions}

Let $\mathbb{R}^{n}=\mathbb{R}^{n_{1}}\times \mathbb{R}^{n_{2}}$, and $Q:%
\mathbb{R}^{n_{1}}\times \mathbb{R}^{n_{2}}\rightarrow \mathbb{R}$ be a
\correctiona{function }given by
\begin{equation}
Q\left( z_1,z_2\right) =\left\Vert z_1\right\Vert _{n_1}^{2}-\left\Vert
z_2\right\Vert _{n_2}^{2},  \label{eq:cone-Q-0}
\end{equation}
where $\left\Vert \cdot \right\Vert _{n_i}$, are some norms on $\mathbb{R}%
^{n_{i}}$, for $i=1,2$. For a point $z\in \mathbb{R}^{n}$ we define the $Q$%
-cone at $z$ as the set $\left\{ z^{\prime}\in\mathbb{R}^{n} :Q\left(
z-z^{\prime}\right) >0\right\}$.

\begin{definition}
Let $Q_{1},Q_{2}:\mathbb{R}^{n_{1}}\times \mathbb{R}^{n_{2}}\rightarrow
\mathbb{R}$ be cones of the form (\ref{eq:cone-Q-0}). We say that a
continuous map $f:\mathbb{R}^{n}\rightarrow \mathbb{R}^{n}$ satisfies a $%
(Q_{1},Q_{2})$-cone condition if
\begin{equation}
Q_{1}(z-z^{\prime })>0\qquad \text{implies}\qquad Q_{2}(f(z)-f(z^{\prime
}))>0  \label{eq:cc}
\end{equation}%
for all $z,z^{\prime }$.

When $Q_{1}=Q_{2}=Q$, then we will simply say that $f$ satisfies a $Q$-cone
condition.
\end{definition}

Condition (\ref{eq:cc}) means that if $z^{\prime }$ is inside the $Q_{1}$%
-cone based at $z$, then $f(z^{\prime })$ is inside the $Q_{2}$-cone based
at $f(z)$.%

For the general results, in \eqref{eq:cone-Q-0} we will take $n_{1}=n_{u}$, $%
n_{2}=n_{s}+2$, $z=(z_{1},z_{2})\in \mathbb{R}^{n_{u}+n_{s}+2}$ with $%
z_{1}=x\in \mathbb{R}^{n_{u}}$, $z_{2}=(y,I,\theta )\in \mathbb{R}^{n_{s}+2}$%
, and the norms
\begin{equation*}
\Vert z_{1}\Vert _{n_{1}}=\Vert x\Vert \text{ and }\Vert z_{2}\Vert
_{n_{2}}=\max \left\{ \frac{1}{a_{y}}\left\Vert y\right\Vert ,\frac{1}{%
\varepsilon a_{I}}\left\Vert I\right\Vert ,\frac{1}{a_{\theta }}\left\Vert
\theta \right\Vert \right\} ,
\end{equation*}%
where $\Vert \cdot \Vert $ denotes the Euclidean norm and $a_{y}$, $%
a_{\theta }$, $a_{I}>0$ are constants independent of $\varepsilon $. Then
the mapping defining the corresponding cone is given by
\begin{equation}
Q^{\varepsilon }\left( x,y,I,\theta \right) =\left\Vert x\right\Vert
^{2}-\left( \max \left\{ \frac{1}{a_{y}}\left\Vert y\right\Vert ,\frac{1}{%
\varepsilon a_{I}}\left\Vert I\right\Vert ,\frac{1}{a_{\theta }}\left\Vert
\theta \right\Vert \right\} \right) ^{2}.  \label{eq:cone-Q}
\end{equation}%
Note that $Q^{\varepsilon }$ represents a family of functions, parameterized
by $\varepsilon >0$. If we also want to emphasize the dependence on $%
a=\left( a_{y},a_{I},a_{\theta }\right) \in \mathbb{R}_{+}^{3}$, we write $%
Q_{a}^{\varepsilon }$ instead of $Q^{\varepsilon }$.

We shall keep in mind that $Q^{\varepsilon }\left( x,y,I,\theta \right) >0$
implies%
\begin{equation}
a_{y}\left\Vert x\right\Vert \geq \left\Vert y\right\Vert ,\qquad a_{\theta
}\left\Vert x\right\Vert \geq \left\Vert \theta \right\Vert ,\qquad
\varepsilon a_{I}\left\Vert x\right\Vert \geq \left\Vert I\right\Vert .
\label{eq:cone-slopes-assumption}
\end{equation}

\subsection{Correctly aligned windows}

We shall write $B^{n}\left( z,r\right) $ to denote a ball in $\mathbb{R}^{n}$
of radius $r$ centered at $z$, $\bar{B}^{n}\left( z,r\right) $ for its
closure, $\partial B^{n}\left( z,r\right) $ for its boundary, and $B^{n}$
for a unit ball in $\mathbb{R}^{n}$ centered at $0$. Here the balls are
considered under some norms on $\mathbb{R}^{n}$.

A window in $\mathbb{R}^{n}=\mathbb{R}^{n_{1}}\times \mathbb{R}^{n_{2}}$ is
a set of the form $N=\bar{B}^{n_{1}}\left( z_{1},r_{1}\right) \times \bar{B}%
^{n_{2}}\left( z_{2},r_{2}\right) \subseteq \mathbb{R}^{n}$ with a choice of
an `exit set' and `entry set' respectively given by
\begin{align*}
N^{-}& :=\partial \bar{B}^{n_{1}}\left( z_{1},r_{1}\right) \times \bar{B}%
^{n_{2}}\left( z_{2},r_{2}\right) , \\
N^{+}& :=\bar{B}^{n_{1}}\left( z_{1},r_{1}\right) \times \partial \bar{B}%
^{n_{2}}\left( z_{2},r_{2}\right) .
\end{align*}

\begin{definition}
\label{def:covering}(\cite[Definition 6]{GideaZ04a}) Assume that $N$ and $M$
are windows in $\mathbb{R}^n$, and let $f:N\rightarrow \mathbb{R}^{n}$ be a
continuous mapping.

We say that $N$ is correctly aligned with $M$, and write
\begin{equation*}
N\overset{f}{\Longrightarrow }M
\end{equation*}
if the following conditions are satisfied:

\begin{enumerate}
\item There exists a continuous homotopy $\chi:[0,1]\times N\rightarrow
\mathbb{R}^{n_{1}}\times \mathbb{R}^{n_{2}}$, such that the following
conditions hold true%
\begin{align*}
\chi_{0}& =f, \\
\chi\left( \left[ 0,1\right] ,N^{-}\right) \cap M& =\emptyset , \\
\chi\left( \left[ 0,1\right] ,N\right) \cap M^{+}& =\emptyset .
\end{align*}

\item There exists a linear map $A:\mathbb{R}^{n_{1}}\rightarrow \mathbb{R}%
^{n_{1}}$ such that
\begin{align*}
\chi_{1}\left( z_1,z_2\right) & =\left( A z_1,0\right) ,\qquad \text{for all
}(z_1,z_2)\in N \subset \mathbb{R}^{n_{1}}\times \mathbb{R}^{n_{2}}, \\
A\left( \partial B^{n_{1}}\right) & \subset \mathbb{R}^{n_{1}}\setminus \bar{%
B}^{n_{1}}.
\end{align*}
\end{enumerate}
\end{definition}

Intuitively, Definition \ref{def:covering} states that $f(N)$ is
topologically aligned with $M$ as in Figure \ref{fig:covering}. The
coordinate $z_1$ corresponds to the `topologically unstable' directions, and
$z_2$ to the `topologically stable' directions.

The terminology of a `window' and `correctly aligned windows' was introduced
by Easton in \cite{easton1981orbit}. An alternative terminology which refers
to `windows' as `h-sets', and to `correct alignment' as `covering relation',
has been introduced in \cite{GideaZ04a}, where the method of \cite%
{easton1981orbit} has been generalized.

\begin{definition}
If $N$ is correctly aligned with $M$, and the function $f$ also satisfies a $%
(Q_{1},Q_{2})$-cone condition, then we say that we have a correct alignment
with cone conditions, and denote this by
\begin{equation*}
\left( N,Q_{1}\right) \overset{f}{\Longrightarrow }(M,Q_{2}).
\end{equation*}
\end{definition}

\correctiona{
\begin{remark} Correct alignment of windows and cone conditions are robust. When they hold for $f$, then they will also hold for functions that are sufficiently $C^1$-close to $f$. \end{remark}
}\marginpar{\color{red} 16.}

For the general results we will consider windows of the form:
\begin{equation*}
N=\bar{B}^{n_{u}}\times \bar{B}^{n_{s}}\times \lbrack I^{1},I^{2}]\times
[\theta ^{1},\theta ^{2}]\subset \mathbb{R}^{n_{u}}\times \mathbb{R}%
^{n_{s}}\times \mathbb{R\times T}^{1},
\end{equation*}%
where $I^{1},I^{2}\in \mathbb{R} ,$ $I^{1}<I^{2}$, $\theta ^{1},\theta
^{2}\in \lbrack 0,2\pi ),$ $\theta ^{1}<\theta ^{2}$.

In terms of coordinates, $z=(z_{1},z_{2})\in N$ if $z_{1}=x\in \bar{B}%
^{n_{u}}$, $z_{2}=(y,I,\theta )$, where $y\in \bar{B}^{n_{s}}$, $I\in
\lbrack I^{1},I^{2}]$, and $\theta \in \lbrack \theta ^{1},\theta ^{2}].$
The coordinate $x$ is the topologically unstable coordinate, and $y,I,\theta
$ the topologically stable coordinates.

The exit set of $N$ is defined by
\begin{equation*}
N^{-}=\partial (\bar{B}^{n_u})\times \bar{B}^{n_s}\times \lbrack
I^{1},I^{2}]\times \lbrack \theta ^{1},\theta ^{2}],
\end{equation*}%
and the entry set by
\begin{equation*}
N^{+}=\bar{B}^{n_u}\times \partial \left( \bar{B}^{n_s}\times \lbrack
I^{1},I^{2}]\times \lbrack \theta ^{1},\theta ^{2}]\right) .
\end{equation*}

\begin{figure}[t]
\begin{center}
\includegraphics[width=8.5cm]{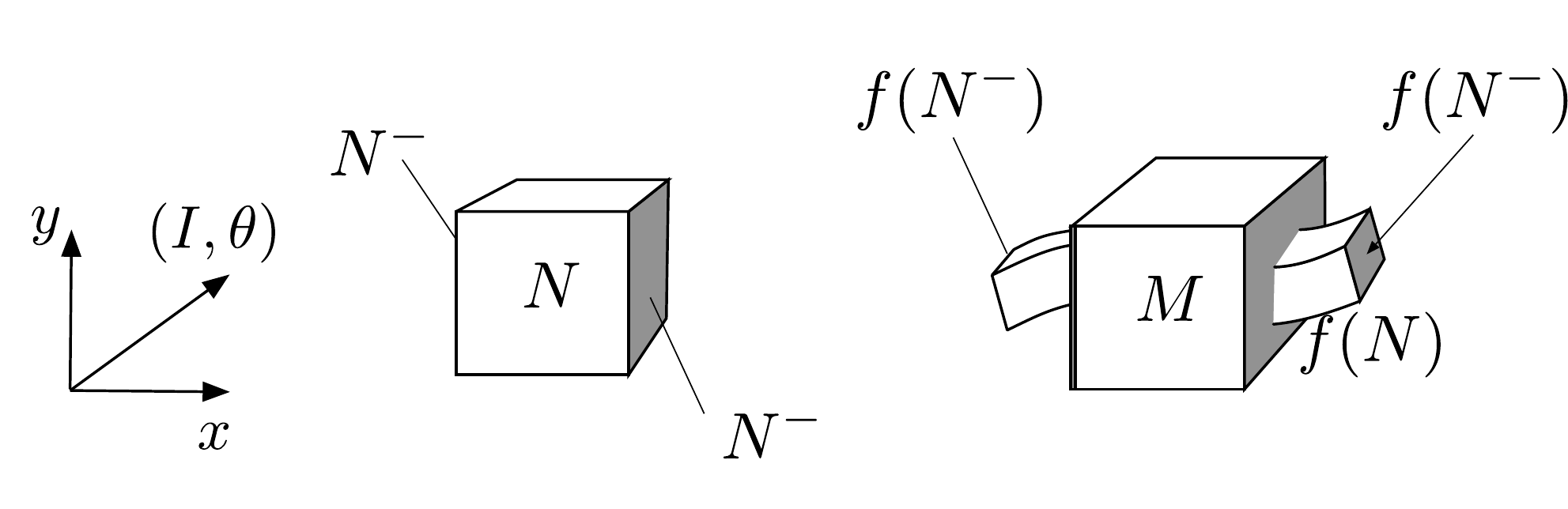}
\end{center}
\caption{The correct alignment of windows $N\protect\overset{f}{%
\Longrightarrow }M$.}
\label{fig:covering}
\end{figure}

\begin{remark}
In the context of the PER3BP, we will construct windows contained in the
Poincar\'e sections $\Sigma_{\ell,i}$, which are correctly aligned under the
section-to-section mappings $f_{\ell,i,\varepsilon}:
\Sigma_{\ell,i-1}\to\Sigma_{\ell,i}$. The sections and the
section-to-section mappings are as in Section~\ref{sec:IFS}. Via the local
coordinates of the sections, the windows are given as subsets of $\mathbb{R}%
^{n_u+n_s+2}$ and the section-to-section mappings as maps $%
f_{\ell,i,\varepsilon}: \mathbb{R}^{n_u+n_s+2}\to \mathbb{R}^{n_u+n_s+2}$.
Relative to these local coordinates, we will verify that the coordinate $x$
is expanded and the coordinate $y$ is contracted by the maps $%
f_{\ell,i,\varepsilon}$. The coordinates $I,\theta$ play the role of centre
coordinates, as they remain `neutral' under the maps $f_{\ell,i,\varepsilon}$%
, which means that the expansion and contraction rates in $I$, $\theta$ are
dominated by those in $x$, $y$. In the context of the windowing method, we
treat the $x$-coordinate as topologically unstable, and the $(y,I,\theta)$
as topologically stable. In order to achieve topological stability in $%
(I,\theta)$, at each step of the correct alignment we choose the successive
windows $N$ and $M$ such that the $(I,\theta)$-component of $M $ contains
inside it the projection of the image of $N$ onto the $(I,\theta)$%
-coordinates.
\end{remark}

\subsection{Connecting sequences\label{sec:connecting-sequences}}

We work in the framework of the IFS introduced in Section \ref{sec:IFS}.

Consider two sets in $\Sigma_0$, which we shall refer to as energy strips:
\begin{equation}
\mathbf{S}^{u}=\bar{B}^{n_u}\times \bar{B}^{n_s}\times \mathbb{R}\times
S_{\theta }^{u}\text{ \qquad and\qquad\ }\mathbf{S}^{d}=\bar{B}^{n_u}\times
\bar{B}^{n_s}\times \mathbb{R}\times S_{\theta }^{d},
\label{eq:strips-Sa-Sb}
\end{equation}%
where
\begin{equation*}
S_{\theta }^{u}=[\theta^{u}_{1},\theta^{u}_{2}]\text{ and }S_{\theta
}^{d}=[\theta^{d}_{1},\theta^{d}_{2}].
\end{equation*}

The indices $u,d$ are meant to suggest that when an orbit starts from $%
\mathbf{S}^{u}$ and returns to $\mathbf{S}^{u}$, the action variable $I$
goes \underline{u}p, and when it starts from $\mathbf{S}^{d}$ and returns to
$\mathbf{S}^{d}$, the action variable $I$ goes \underline{d}own.


\begin{definition}
(Connecting sequence)\label{def:connecting-seq}

\begin{itemize}
\item[1.] A \emph{connecting sequence} consists of a sequence of windows
\begin{equation*}
N_{\ell ,0},N_{\ell ,1},\ldots ,N_{\ell ,k_{\ell }-1},N_{\ell ,k_{\ell }},
\end{equation*}%
a sequence of cones
\begin{equation*}
Q_{\ell ,0}^{\varepsilon },Q_{\ell ,1}^{\varepsilon },\ldots ,Q_{\ell
,k_{\ell }-1}^{\varepsilon },Q_{\ell ,k_{\ell }}^{\varepsilon },
\end{equation*}%
and a sequence of maps
\begin{equation*}
f_{\ell ,i,\varepsilon },\quad i=1,\ldots ,k_{\ell },
\end{equation*}%
for some $\ell \in L$, such that the following correct alignments with cone
conditions hold:
\begin{equation}
\left( N_{\ell ,0},Q_{\ell ,0}^{\varepsilon }\right) \overset{f_{\ell
,1,\varepsilon }}{\Longrightarrow }\left( N_{\ell ,1},Q_{\ell
,1}^{\varepsilon }\right) \overset{f_{\ell ,2,\varepsilon }}{\Longrightarrow
}\ldots \overset{f_{\ell ,k_{\ell },\varepsilon }}{\Longrightarrow }\left(
N_{\ell ,k_{\ell }},Q_{\ell ,k_{\ell }}^{\varepsilon }\right) .
\label{eq:connecting-sequence}
\end{equation}

Above, we assume that the cone $Q_{\ell ,0}^{\varepsilon }$ corresponding to
the initial window $N_{\ell ,0}$ and the cone $Q_{\ell ,k}^{\varepsilon }$
corresponding to the final window $N_{\ell ,k}$ are the same cone, i.e.,
\begin{equation}
Q_{\ell ,0}^{\varepsilon }=Q_{\ell ,k}^{\varepsilon }=Q^{\varepsilon }
\label{eq:cones-same}
\end{equation}%
with $Q^{\varepsilon }$ independent of $\ell \in L$.


\item[2.]
Let $\kappa _{1},\kappa _{2}\in \{u,d\}$. A \emph{connecting sequence from }$%
\mathbf{S}^{\kappa _{1}}$\emph{\ to }$\mathbf{S}^{\kappa _{2}}$ is a
connecting sequence as above, such that
\begin{equation*}
N_{\ell ,0}\subseteq \mathbf{S}^{\kappa _{1}},\,N_{\ell ,k_{\ell }}\subseteq
\mathbf{S}^{\kappa _{2}},\,\pi _{x,y}N_{\ell ,0}=\pi _{x,y}\mathbf{S}%
^{\kappa _{1}},\,\text{ and }\pi _{x,y}N_{\ell ,k_{\ell }}=\pi _{x,y}\mathbf{%
S}^{\kappa _{2}}.
\end{equation*}
\end{itemize}

To simplify notation we refer to a connecting sequence (\ref%
{eq:connecting-sequence}) by writing out the sequence of sets $%
(N_{\ell,0},\ldots,N_{\ell,k_\ell})$.
\end{definition}


\begin{definition}
We will say that the orbit of a point $z\in N_{\ell,0}$ \emph{passes through
a connecting sequence} $(N_{\ell,0},\ldots,N_{\ell,k_\ell})$, for $\ell\in L$%
, if
\begin{align*}
f_{\ell,i,\varepsilon}\circ\ldots\circ f_{\ell,1,\varepsilon}\left( z\right)
& \in N_{\ell,i} ,\qquad\text{for }i=1,\ldots,k_\ell.
\end{align*}
\end{definition}

For each connecting sequence as in (\ref{eq:connecting-sequence}), we have a
map $F_{\ell ,\varepsilon }$ in the IFS defined as in \eqref{eq:Fl-eps-map}
Each $F_{\ell,\varepsilon }$ is associated with a connecting sequence from $%
\mathbf{S}^{\kappa _{1}}$ to $\mathbf{S}^{\kappa _{2}}$ for some $\kappa
_{1},\kappa _{2}\in \left\{ u,d\right\} $. Note that $F_{\ell,\varepsilon
}\left( z\right) $ is well defined for any $z$ which passes through the
connecting sequence. For a fixed $\varepsilon$, an orbit of the IFS starting
from a point $z_{0}$ can be expressed using the functions $%
F_{\ell,\varepsilon }$ as in \eqref{eq:IFS-orbit}.


\section{Master theorems}

\subsection{Master theorem for establishing diffusion}

Consider an energy strip $\mathbf{S}^{u}$ as in (\ref{eq:strips-Sa-Sb}), and
assume that we have an IFS with a finite set $L$ of connecting sequences of
the form (\ref{eq:connecting-sequence}), satisfying (\ref{eq:cones-same}),
with $Q^{\varepsilon }$ independent of $\ell \in L$.

We will assume below that the following condition holds:

\medskip

\textbf{Condition \hypertarget{cond:C1}{C1}.}

\begin{itemize}
\item[(C1.i)] \hypertarget{cond:C1.i}{F}or each $\ell \in L$ and $%
\varepsilon \in (0,\varepsilon _{0}]$ there is a connecting sequence $%
(N_{\ell ,0},\ldots ,N_{\ell ,k_{\ell }})$ from $\mathbf{S}^{u}$ to $\mathbf{%
S}^{u}$;

\item[(C1.ii)] \hypertarget{cond:C1.ii}{T}he projection of the initial
windows $N_{\ell,0}$ onto the $(I,\theta)$-coordinates covers $\left[0,1%
\right] \times S_{\theta }^{u}$, i.e.,
\begin{equation}
\bigcup \limits_{\ell\in L}\pi_{I,\theta}\left( N_{\ell,0}\right) = \left[0,1%
\right] \times S_{\theta }^{u};  \label{eqn:C1_ii}
\end{equation}

\item[(C1.iii)] \hypertarget{cond:C1.iii}{W}henever $N_{\ell,0}\cap
N_{\ell^\prime,0}\neq \emptyset $, for every $\left( I^{\ast },\theta ^{\ast
}\right) \in \pi _{I,\theta }(N_{\ell,0}\cap N_{\ell^{\prime },0})$ the
multi-dimensional rectangle
\begin{equation*}
\bar{B}^{n_u}\times \bar{B}^{n_s}\times \left (\bar{B} \left( I^{\ast
},\varepsilon _{0}a_{I}\right) \cap \left[ 0,1\right]\right) \times \left(%
\bar{B}\left( \theta ^{\ast },a_{\theta }\right) \cap S_{\theta }^{u}\right)
\end{equation*}
is contained in $N_{\ell,0}$ or in $N_{\ell^{\prime },0}$, where $%
a_I,a_\theta$ are associated to $Q^\varepsilon=Q^\varepsilon_a$; 

\item[(C1.iv)] \hypertarget{cond:C1.iv}{T}here exists a fixed constant $c>0$
such that, for each each $z\in N_{\ell ,0}$ which passes through the
connecting sequence we have
\begin{equation}
\varepsilon \cdot c<\pi _{I}F_{\ell ,\varepsilon }(z)-\pi _{I}(z).
\label{eqn:C1_iv}
\end{equation}
\end{itemize}

\correctiona{\begin{remark}\label{rem:r-a-issue}
In the above, we require that the constant $c$ from condition (\ref{eqn:C1_iv}) is
independent from the choice of a connecting sequence. 

Also, in condition \correctiona{\hyperlink{cond:C1.iii}{(C1.iii)}}, 
when $\bar{B}^{n_u}$ and $\bar{B}^{n_s}$ are balls of radius $r$ instead of
unit balls, then we require that $\bar{B}^{n_u}\times \bar{B}^{n_s}\times
\left(\bar{B}\left( I^{\ast },\varepsilon _{0}a_{I}r\right) \cap \left[ 0,1%
\right]\right) \times \left(\bar{B}\left( \theta ^{\ast },a_{\theta
}r\right) \cap S^u_{\theta}\right) $ is contained in $N_{\ell,0}$ or $%
N_{\ell^{\prime },0}$.
\end{remark}}

\begin{theorem}[Existence of diffusing orbits]
\label{thm:diffusion} Assume that condition \hyperlink{cond:C1}{\textbf{C1}} holds. Then for each
$\varepsilon \in (0,\varepsilon _{0}]$, there exist $z\in \mathbf{S}^{u}$
and a sequence of functions $F_{\ell _{1},\varepsilon },\ldots ,$ $F_{\ell
_{m},\varepsilon }$ of the form (\ref{eq:Fl-eps-map}), such that $\tilde{z}%
=(F_{\ell _{m},\varepsilon }\circ \ldots \circ F_{\ell _{1},\varepsilon
})(z)\in \mathbf{S}^{u}$ satisfies
\begin{equation}
\left\Vert \pi _{I}\left( \tilde{z}\right) -\pi _{I}\left( z\right)
\right\Vert \geq 1,  \label{eqn:diffusionC}
\end{equation}%
for some
\begin{equation*}
m\leq \frac{1}{\varepsilon c}.
\end{equation*}%
In other words, there exists an orbit of the IFS with a change in $I$ of
order $O(1)$.

Moreover, if $\alpha \in \mathbb{R}$ is a number such that
\begin{equation}
\alpha >\sup_{\ell\in L,\varepsilon \in \left[ 0,\varepsilon _{0}\right] }
\left\{ \left\Vert DF_{\ell ,\varepsilon}(z)\right\Vert :z\in N_{\ell
,0}\right\} ,  \label{eq:expansion-bound}
\end{equation}
then for every $\varepsilon \in (0,\varepsilon _{0}]$ the set
\begin{equation*}
\Omega :=\left\{ z\,:\,\pi _{I}z\in \lbrack 0,1/3]\text{ and }\left\Vert \pi
_{I}(F_{\ell _{m},\varepsilon }\circ \ldots \circ F_{\ell _{1},\varepsilon
})(z)-\pi _{I}\left( z\right) \right\Vert \geq \frac{1}{3}\right\}
\end{equation*}%
has positive Lebesgue measure $\mu (\Omega )$ lower bounded by
\begin{equation*}
\mu (\Omega )\geq \alpha ^{-2n_{u}/\left( 3\varepsilon c\right) }\mu \left(
\mathbf{S}^{u}\cap \left\{ I\in \left[ 0,1/3\right] \right\} \right) .
\end{equation*}
\end{theorem}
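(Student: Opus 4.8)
The plan is to build a diffusing orbit greedily by repeatedly applying the connecting sequences from $\mathbf{S}^u$ to $\mathbf{S}^u$ and tracking the action coordinate. First I would fix $\varepsilon\in(0,\varepsilon_0]$ and start from any point $z=z_0$ with $\pi_I(z_0)\in[0,1]$ and $\pi_\theta(z_0)\in S^u_\theta$; by \hyperlink{cond:C1.ii}{(C1.ii)} there is $\ell_1\in L$ with $z_0\in N_{\ell_1,0}$. By \hyperlink{cond:C1.i}{(C1.i)} the window $N_{\ell_1,0}$ is the first in a connecting sequence from $\mathbf{S}^u$ to $\mathbf{S}^u$; the shadowing property of correctly aligned windows (the topological-disc mechanism described in the Methodology section) guarantees that for the point $z_0$ — provided we are allowed to adjust it within its unstable fiber — there is an orbit passing through the whole sequence, landing at $z_1=F_{\ell_1,\varepsilon}(z_0)\in N_{\ell_1,k_{\ell_1}}\subseteq\mathbf{S}^u$. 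Condition \hyperlink{cond:C1.iv}{(C1.iv)} then gives $\pi_I(z_1)-\pi_I(z_0)>\varepsilon c$. I would iterate: as long as $\pi_I(z_n)<1$, the pair $(\pi_I(z_n),\pi_\theta(z_n))$ lies in $[0,1]\times S^u_\theta$, so by \hyperlink{cond:C1.ii}{(C1.ii)} it lies in some $\pi_{I,\theta}(N_{\ell_{n+1},0})$, and we continue. Each step increases $\pi_I$ by more than $\varepsilon c$, so after at most $m\le 1/(\varepsilon c)$ steps we reach $\pi_I(z_m)\ge 1$, which (since the $I$-drift per step is also bounded above) is an $O(1)$ change; this gives \eqref{eqn:diffusionC}.

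The technical subtlety in the greedy construction — and what I expect to be the main obstacle — is the bookkeeping of which point can be taken as the initial point of the next connecting sequence. The shadowing via correctly aligned windows does not fix $z_0$ exactly: it produces, for a prescribed family of topological discs in the first window, a sub-disc in the last window. So rather than tracking a single point I would track a topological disc $D_n\subseteq N_{\ell_n,k_{\ell_n}}\subseteq\mathbf{S}^u$ that is correctly aligned with the unstable direction and satisfies the cone condition with the common cone $Q^\varepsilon$; conditions \hyperlink{cond:C1.iii}{(C1.iii)} (with the radius-$r$ modification from Remark~\ref{rem:r-a-issue}) and the fact that all connecting sequences share the same cone $Q^\varepsilon=Q^\varepsilon_a$ are exactly what ensures that such a disc, once its $(I,\theta)$-footprint meets an overlap $N_{\ell,0}\cap N_{\ell',0}$, still fits inside one of the two windows, so the concatenation can proceed. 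This is the "infinite shadowing by finitely many windows" mechanism: the disc is uniformly bounded in the center directions by the cone condition, so it never degenerates. At the end of $m$ steps I pick any point $z$ in the preimage disc inside the very first window; its orbit under the chosen sequence $F_{\ell_1,\varepsilon},\dots,F_{\ell_m,\varepsilon}$ realizes the claimed drift.

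For the measure estimate, the idea is to run the same construction but restrict attention to the sub-strip $\{\pi_I\in[0,1/3]\}$ of $\mathbf{S}^u$ and aim only for a drift of $1/3$. By the argument above, every point whose orbit realizes the drift lies in a preimage, under the composition $G:=F_{\ell_m,\varepsilon}\circ\dots\circ F_{\ell_1,\varepsilon}$, of a set of positive measure inside $\mathbf{S}^u$; here $m\le 1/(3\varepsilon c)$ since we only need to climb by $1/3$. The key inequality is a volume-distortion bound: because $DG$ is a composition of at most $m$ maps each of operator norm at most $\alpha$ (by \eqref{eq:expansion-bound}), and the relevant directions that get expanded are the $n_u$ unstable ones, the Jacobian of $G$ restricted to the relevant set is at most $\alpha^{n_u m}$ in each of... more carefully, one controls $|\det DG|\le \alpha^{2n_u m}$ on the $(x,y)$-block (the $x$-block expands by at most $\alpha^{n_u m}$, and correspondingly the $y$-block, giving the exponent $2n_u$), while $(I,\theta)$ are neutral. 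Hence for any measurable $V\subseteq\mathbf{S}^u\cap\{I\in[0,1/3]\}$ contained in the image, $\mu(G^{-1}(V))\ge \alpha^{-2n_u m}\mu(V)\ge \alpha^{-2n_u/(3\varepsilon c)}\mu(V)$. Taking $V$ to be (the part of) the target strip that the shadowing disc covers — which by \hyperlink{cond:C1.ii}{(C1.ii)} and the disc's full spread in the unstable directions is all of $\mathbf{S}^u\cap\{I\in[0,1/3]\}$ up to the standard windowing caveats — yields $\mu(\Omega)\ge \alpha^{-2n_u/(3\varepsilon c)}\mu(\mathbf{S}^u\cap\{I\in[0,1/3]\})$. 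The main obstacle here is making the "$G^{-1}$ of the target strip has positive measure and contains the claimed set $\Omega$" step rigorous: one must verify that the topological shadowing actually produces a set of initial conditions of full-dimensional positive measure (not just a disc), which uses that the correct alignment with cone conditions persists under $C^1$-perturbations and that the construction can be carried out for an open set of initial $x$-fibers; I would handle this by the standard argument that a correctly aligned window with cone conditions contains a positive-measure set of points whose forward orbit stays in the window sequence, then apply the distortion bound to that set.
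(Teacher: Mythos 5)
Your first half (the existence of a drifting orbit in at most $1/(\varepsilon c)$ steps) is essentially the paper's argument: one propagates a horizontal disc satisfying the $Q^{\varepsilon}$-cone condition through connecting sequences, uses the slimness of such discs in the $(I,\theta)$-directions together with \hyperlink{cond:C1.ii}{(C1.ii)}--\hyperlink{cond:C1.iii}{(C1.iii)} to re-enclose the image disc in one of the initial windows at each return, and uses \hyperlink{cond:C1.iv}{(C1.iv)} to gain more than $\varepsilon c$ in $I$ per return; the nested preimage discs then give the initial point. (The paper also notes the small case where the propagated disc leaves $\{I\le 1\}$ early, in which case one simply stops, but your greedy scheme is in substance the same.)

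The measure estimate, however, has a genuine gap, and this is where your route diverges from (and falls short of) the paper. Your key inequality rests on the Jacobian bound $|\det DG|\le \alpha^{2n_u m}$, but the hypothesis \eqref{eq:expansion-bound} only bounds the operator norm $\|DF_{\ell,\varepsilon}\|$ by $\alpha$; the resulting determinant bound is $\alpha^{(n_u+n_s+2)m}$, not $\alpha^{2n_u m}$ (the $(I,\theta)$-directions are ``neutral'' only in a soft sense and nothing in \textbf{C1} controls their derivative entries or the off-diagonal blocks), so even if the rest were repaired you would obtain a strictly weaker constant than the one claimed. Moreover the set-theoretic bookkeeping does not close: the composition $G=F_{\ell_m,\varepsilon}\circ\cdots\circ F_{\ell_1,\varepsilon}$ is chosen adaptively (the labels depend on where the propagated disc lands), so there is no single $G$ whose preimage of a target strip is contained in $\Omega$; and the ``target set covered by the shadowing disc'' is the image of an $n_u$-dimensional disc, hence of measure zero, so it cannot serve as the set $V$ in a change-of-variables bound. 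The paper avoids all of this with a fiber-wise argument: for each fixed $(y_0,I_0,\theta_0)$ with $I_0\in[0,1/3]$ one propagates the straight disc $h_0^{\varepsilon}(x)=(x,y_0,I_0,\theta_0)$ until its image has $\pi_I>2/3$, which takes $m\le 2/(3\varepsilon c)$ steps (the climb may be as large as $2/3$ since one aims at an absolute level, not a relative drift of $1/3$ — this is where the factor $2$ comes from, not from a $y$-block); since the image disc is a graph over the full unit ball $\bar B^{n_u}$, the mean value theorem with $\|DG\|\le\alpha^{m}$ shows the successful set inside that fiber contains an $x$-ball of radius $\alpha^{-m}$, hence has measure at least $\alpha^{-mn_u}\mu(B^{n_u})$; integrating over $(y_0,I_0,\theta_0)\in\bar B^{n_s}\times[0,1/3]\times S^u_\theta$ (Fubini) gives $\mu(\Omega)\ge\alpha^{-2n_u/(3\varepsilon c)}\mu\left(\mathbf{S}^u\cap\{I\in[0,1/3]\}\right)$. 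To fix your proof you should replace the distortion argument by this fiber-wise expansion estimate.
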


The proof is given in section \ref{sec:diffusion}.

\begin{remark}
The second part of Theorem~\ref{thm:diffusion} provides an explicit lower
bound estimate for the Lebesgue measure of a set of points whose action
changes by $O(1)$. This lower bound is clearly positive but exponentially
small in $\varepsilon $.
\end{remark}

\subsection{Master theorem for establishing symbolic dynamics}

Consider two energy strips $\mathbf{S}^{u}$ and $\mathbf{S}^{d}$ as in (\ref%
{eq:strips-Sa-Sb}).

We assume that there is a finite collection of connecting sequences as in %
\eqref{eq:connecting-sequence},
with $\ell $ from a set of labels $L=L^{uu}\cup L^{ud}\cup L^{du}\cup L^{dd}$%
, with $L^{\alpha \beta }$, $\alpha ,\beta \in \{u,d\}$, mutually disjoint.
We assume that the connecting sequences for $\ell \in L^{\alpha \beta }$ are
from $\mathbf{S}^{\alpha }$ to $\mathbf{S}^{\beta }$ for $\alpha ,\beta \in
\{u,d\}$, and satisfy (\ref{eq:cones-same}), with $Q^{\varepsilon }$
independent of $\ell \in L$.

We assume the following condition:

\medskip

\textbf{Condition \hypertarget{cond:C2}{C2}.}

\begin{itemize}
\item[(C2.i)] \hypertarget{cond:C2.i}{F}or each $\ell \in L^{\alpha \beta }$%
, $\alpha ,\beta \in \{u,d\},$ and each $\varepsilon \in (0,\varepsilon
_{0}] $, there is a connecting sequence $(N_{\ell ,0},\ldots ,N_{\ell
,k_{\ell }})$ from $\mathbf{S}^{\alpha }$ to $\mathbf{S}^{\beta }$;

\item[(C2.ii)] \hypertarget{cond:C2.ii}{W}e have
\begin{equation*}
\bigcup \limits_{\ell\in L^{\alpha\beta}}\pi_{I,\theta}\left(
N_{\ell,0}\right) = \left[0,1\right] \times S_{\theta }^{\alpha},\qquad\text{%
for }\alpha,\beta\in\left\{ u,d\right\} ;
\end{equation*}

\item[(C2.iii)] \hypertarget{cond:C2.iii}{W}henever $N_{\ell,0}\cap
N_{\ell^{\prime },0}\neq \emptyset $ for $\ell,\ell^{\prime }\in
L^{\alpha\beta},$ for every $\left( I^{\ast },\theta ^{\ast }\right) \in \pi
_{I,\theta }(N_{\ell,0}\cap N_{\ell^{\prime },0})$ the multi-dimensional
rectangle
\begin{equation*}
\bar{B}^{n_u}\times \bar{B}^{n_s}\times \left(\bar{B}\left( I^{\ast
},\varepsilon _{0}a_{I}\right) \cap \left[0,1\right] \right) \times \left(%
\bar{B}\left(\theta ^{\ast },a_{\theta }\right) \cap S_{\theta
}^{\alpha}\right)
\end{equation*}
is contained in $N_{\ell,0}$ or $N_{\ell^{\prime },0}$;
\item[(C2.iv)] \hypertarget{cond:C2.iv}{T}here exists a constant $C>0$, such
that, if the orbit of $z$ passes through a connecting sequence then
\begin{equation*}
\left\vert \pi _{I}F_{\ell ,\varepsilon }(z)-\pi _{I}(z)\right\vert
<\varepsilon \cdot C\qquad \text{for }\ell \in L^{uu},L^{ud},L^{du},L^{dd};
\end{equation*}

\item[(C2.v)] \hypertarget{cond:C2.v}{T}here exists a $c>0$ such that, if
the orbit of $z$ passes through a connecting sequence then
\begin{align*}
\varepsilon \cdot c& <\pi _{I}F_{\ell ,\varepsilon }(z)-\pi _{I}(z)\qquad
\text{if }\ell \in L^{uu}, \\
\varepsilon \cdot c& <\pi _{I}(z)-\pi _{I}F_{\ell ,\varepsilon }(z)\qquad
\text{if }\ell \in L^{dd}.
\end{align*}
\end{itemize}

\correctiona{\begin{remark}\label{rem:r-a-issue-2}
In the above, we require that the constant $c$ and $C$ are independent of the choice of a
connecting sequence.

Also, in condition \correctiona{\hyperlink{cond:C2.iii}{(C2.iii)}}, 
when $\bar{B}^{n_u}$ and $\bar{B}^{n_s}$ are balls of radius $r$ instead of
unit balls, then we require that $\bar{B}^{n_u}\times \bar{B}^{n_s}\times
\left(\bar{B}\left( I^{\ast },\varepsilon _{0}a_{I}r\right) \cap \left[ 0,1%
\right]\right) \times \left( \bar{B}\left( \theta ^{\ast },a_{\theta
}r\right) \cap S^{\alpha}_{\theta}\right) $ is contained in $N_{\ell,0}$ or $%
N_{\ell^{\prime },0}$.
\end{remark}
}

\begin{theorem}[Symbolic dynamics]
\label{th:symbolic-dynamics-from-covering} Let $\eta >0$ be a constant
satisfying $\eta \geq 2a_{I}+C$. Assume that
condition \hyperlink{cond:C2}{\textbf{C2}} holds. Then, for every $\varepsilon \in (0,\varepsilon
_{0}]$ and every infinite sequence of $I$-level sets $(I^{n})_{n\in \mathbb{N%
}}$ with $2\eta \varepsilon \leq I^{n}\leq 1-2\eta \varepsilon $ and $%
|I^{n+1}-I^{n}|>2\eta \varepsilon $, there exist an orbit $(z_{n})_{n\in
\mathbb{N}}$ of the IFS (\ref{eq:IFS-orbit}),
\begin{equation}  \label{eqn:z_n}
z_{n}=F_{\ell _{n},\varepsilon }\circ \ldots \circ F_{\ell _{1},\varepsilon
}\left( z_{0}\right) ,
\end{equation}%
where $\{\ell _{i}\}_{i\in \mathbb{N}}\subset L$, and an increasing sequence
$k_{n}$ such that, for every $n\in \mathbb{N}$ we have
\begin{equation}
|\pi _{I}z_{k_{n}}-I^{n}|<\eta \varepsilon .  \label{eq:epsilon-shadowing}
\end{equation}
\end{theorem}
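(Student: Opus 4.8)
The plan is to build the desired orbit by concatenating connecting sequences, choosing the label $\ell_i$ at each stage according to whether the target level set $I^n$ lies above or below the current action value, and then invoking Theorem~\ref{thm:diffusion}'s underlying shadowing machinery (correct alignment with cone conditions, via the topological discs described in the methodology) to produce an actual orbit. First I would fix $\varepsilon\in(0,\varepsilon_0]$ and a sequence $(I^n)_{n\in\mathbb{N}}$ as in the hypothesis, and rescale so that all level sets lie in $[2\eta\varepsilon,1-2\eta\varepsilon]$. I then process the targets one at a time. Suppose after some number of IFS steps the orbit is in $\mathbf{S}^{u}$ with $\pi_I$-value $I^{\ast}$ and the next target is $I^{n}$. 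If $I^{n}>I^{\ast}$, I repeatedly apply connecting sequences with label in $L^{uu}$: by \hyperlink{cond:C2.v}{(C2.v)} each such step increases $\pi_I$ by more than $\varepsilon c$, and by \hyperlink{cond:C2.iv}{(C2.iv)} by less than $\varepsilon C$; hence after finitely many $L^{uu}$-steps the action enters the interval $(I^{n}-\varepsilon C, I^{n}+\varepsilon C)$ — more precisely, within a band controlled by $2a_I\varepsilon + C\varepsilon = \eta\varepsilon$ of $I^n$, using $\eta\geq 2a_I+C$ to absorb also the slack coming from the width $\varepsilon a_I$ of the windows in the $I$-direction (condition \hyperlink{cond:C2.iii}{(C2.iii)} and the cone slope \eqref{eq:cone-slopes-assumption}). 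If instead $I^n<I^\ast$, I do the symmetric thing with $L^{dd}$. Once the action is within $\eta\varepsilon$ of $I^n$ while the orbit is in, say, $\mathbf{S}^u$, I must also make sure I can transition to whichever strip the next phase requires — this is where labels in $L^{ud}$ and $L^{du}$ enter: by \hyperlink{cond:C2.iv}{(C2.iv)} such a transition moves $\pi_I$ by less than $\varepsilon C<\eta\varepsilon$, so it does not spoil the shadowing estimate at step $n$, and it repositions the orbit in the correct strip to begin approaching $I^{n+1}$.

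The second and more delicate half is upgrading this combinatorial/arithmetic bookkeeping to a genuine orbit. The point is that at each stage I am free to pick, via \hyperlink{cond:C2.ii}{(C2.ii)}, a connecting sequence whose initial window's $(I,\theta)$-projection contains the current $(\pi_I,\pi_\theta)$-data of the orbit; condition \hyperlink{cond:C2.iii}{(C2.iii)} guarantees that when two admissible initial windows overlap, one of them actually contains a full rectangle of the right size around the current point, so there is always a window one can legitimately "enter". This means the sequence of connecting sequences I have selected forms an infinite chain of correctly aligned windows with cone conditions. I then invoke the topological shadowing result for such infinite chains — the disc-propagation argument sketched in the Methodology section (and presumably proved in Section~\ref{sec:diffusion}): given a $C^0$-family of discs inside the first window, aligned along the unstable directions and satisfying the cone conditions, the image of a subset of such a disc is again such a disc inside every later window, and in particular the chain is never empty. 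Taking a nested intersection over the (finitely many distinct, but infinitely iterated) connecting sequences yields a point $z_0$ whose orbit passes through every window in the chain; setting $k_n$ to be the index at which the chain has finished processing target $I^n$ gives \eqref{eq:epsilon-shadowing}.

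The main obstacle I anticipate is \textbf{controlling the overshoot and keeping the action within the band $\eta\varepsilon$ across strip-transitions}, i.e., verifying that the constant $\eta\geq 2a_I+C$ is genuinely large enough to absorb: (i) the $<\varepsilon C$ jump of the final $L^{uu}$ (or $L^{dd}$) step that lands near $I^n$; (ii) the $\varepsilon a_I$-scale width of the windows in the $I$-direction, which enters because the orbit point is only known to lie \emph{inside} a window, not at its center, and appears twice (once for the window we exit, once for the window we enter) hence the factor $2a_I$; and (iii) the additional $<\varepsilon C$ jump of the $L^{ud}$ or $L^{du}$ transition step. One must also check the monotone-progress argument terminates: since each $L^{uu}$ step increases $\pi_I$ by at least $\varepsilon c>0$ and the gap $|I^n-I^\ast|$ is finite, only finitely many steps are needed to reach the target band for each $n$ — this is exactly the reasoning already used in Theorem~\ref{thm:diffusion} to bound $m\leq 1/(\varepsilon c)$, adapted here to a moving target. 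A minor technical point is that the "orbit of the IFS" $(z_n)$ is indexed by applications of the return maps $F_{\ell,\varepsilon}$, so the claimed increasing sequence $k_n$ is a subsequence of these indices; one should state clearly that $z_{k_n}\in\mathbf{S}^{u}$ or $\mathbf{S}^{d}$ and that $\pi_I z_{k_n}$ is the quantity being compared to $I^n$.
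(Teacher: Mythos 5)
Your plan follows essentially the same route as the paper: the paper first proves an abstract disc-propagation version (Theorem~\ref{th:symbolic-dynamics-general}, under Condition \hyperlink{cond:A2}{\textbf{A2}}) and then derives Theorem~\ref{th:symbolic-dynamics-from-covering} from \hyperlink{cond:C2}{\textbf{C2}} via Theorem~\ref{th:hor-disc-propagation}; the content is exactly your monotone $L^{uu}$/$L^{dd}$ stepping, the $L^{ud}$/$L^{du}$ switches, the termination bound from \hyperlink{cond:C2.v}{(C2.v)}, and the nested-compact-intersection of preimage discs to extract the actual orbit and the indices $k_n$.

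Two places where your write-up has a real hole, though. First, you never use the hypothesis $2\eta\varepsilon\le I^{n}\le 1-2\eta\varepsilon$, and the argument needs it: condition \hyperlink{cond:C2.ii}{(C2.ii)} only provides initial windows over $[0,1]\times S_{\theta}^{\alpha}$, so at every stage of your chain you must check that the propagated $Q^{\varepsilon}$-disc has not left $\{I\in[0,1]\}$; otherwise there is no admissible window to ``enter'' and the concatenation cannot be continued. The paper verifies this explicitly: when a disc sits in the $\eta\varepsilon$-band of some $I^{n}$ it lies in $\{I\in(\varepsilon\eta,1-\varepsilon\eta)\}$, a switching step changes $I$ by less than $C\varepsilon<\eta\varepsilon$, so the disc stays in $(0,1)$, and the monotone phases keep it between consecutive bands. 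Second, the factor $2a_{I}$ in $\eta$ is not a window width counted twice; it is the $I$-diameter of a $Q^{\varepsilon}$-disc, which by the cone inequalities \eqref{eq:cone-slopes-assumption} is at most $2\varepsilon a_{I}$ (Remark~\ref{rem:discs-slim}). The stopping rule is that \emph{one} point of the current disc lands within $C\varepsilon$ of $I^{n}$; slimness of the disc then gives $|\pi_{I}z-I^{n}|<(2a_{I}+C)\varepsilon\le\eta\varepsilon$ for \emph{every} point of the disc, hence for the orbit point produced by the nested intersection. Your heuristic would not produce this constant, and it is precisely the cone condition on discs (not correct alignment alone) that keeps the central coordinates controlled through infinitely many concatenations. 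With these two repairs your plan coincides with the paper's proof.
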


The proof is given in section \ref{sec:symbolic}.
\correctiona{\begin{remark}\label{rem:eta-r} In Theorem \ref{th:symbolic-dynamics-from-covering}, when $\bar{B}^{n_{u}}$ and $\bar{B}^{n_{s}}$ are balls of radius $r$ instead
of unit balls, it is enough that $\eta \geq 2ra_{I}+C$ (instead of $\eta \geq 2a_{I}+C$).
\end{remark}}

\subsection{Master theorem for estimating the Hausdorff dimension of orbits
that undergo symbolic dynamics}

\medskip

\textbf{Condition \hypertarget{cond:C3}{C3}. }If
\begin{equation*}
\ell _{1},\ell _{2}\in L^{uu},\ell _{1}^{\prime }\in L^{ud},\ell
_{2}^{\prime }\in L^{du},\qquad \text{or}\qquad \ell _{1},\ell _{2}\in
L^{dd},\ell _{1}^{\prime }\in L^{du},\ell _{2}^{\prime }\in L^{ud},
\end{equation*}%
then the domain of $F_{\ell _{2},\varepsilon }\circ F_{\ell _{1},\varepsilon
}$ is disjoint from the domain of $F_{\ell _{2}^{\prime },\varepsilon }\circ
F_{\ell _{1}^{\prime },\varepsilon }$.\medskip

The theorem below gives us the lower bound on the Hausdorff dimension of a
set of orbits which follow the symbolic dynamics from Theorem~\ref%
{th:symbolic-dynamics-from-covering}.

\begin{theorem}[Hausdorff dimension]
\label{th:Hausdorff-dim} Assume that condition \hyperlink{cond:C2}{\textbf{C2}} holds, $\eta \geq
2a_{I}+C$, and $(I^{n})_{n\in \mathbb{N }}$ is such that $2\eta \varepsilon
\leq I^{n}\leq 1-2\eta \varepsilon $ and $|I^{n+1}-I^{n}|>2\eta \varepsilon $%
.

Then the set
\begin{equation}  \label{eqn:Hausdorff_set}
\{z_0 : \exists (z_{n})_{n\in\mathbb{N}} \text{ as in \eqref{eqn:z_n}},
\exists (k_{n})_{n\in\mathbb{N}} \text{ increasing s.t. } \forall n,\, |\pi
_{I}z_{k_{n}}-I^{n}|<\eta \varepsilon \}
\end{equation}
has Hausdorff dimension greater or equal to $n_{s}+2$.

If in addition $n_{u}=n_{s}=1$ and condition \hyperlink{cond:C3}{\textbf{C3}} holds, then the
Hausdorff dimension of the set (\ref{eqn:Hausdorff_set}) is strictly greater
than $n_{s}+2=3$.
\end{theorem}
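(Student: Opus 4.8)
The plan is to upgrade the single shadowing orbit furnished by Theorem~\ref{th:symbolic-dynamics-from-covering} into a thick set of initial conditions, and then to bound its Hausdorff dimension from below. For the non-strict conclusion, I would fix the itinerary $(\ell _{i})_{i\in \mathbb{N}}\subset L$ that the proof of Theorem~\ref{th:symbolic-dynamics-from-covering} attaches to the prescribed sequence $(I^{n})_{n}$, and note that the disc-propagation and disc-transfer mechanism behind that proof — together with the standard stable-disc lemmas for correctly aligned windows with cone conditions — does more than produce one point: pulling back a trivial vertical disc from the $k$-th window of the concatenation by the finitely many section-to-section maps yields, by the cone conditions, a vertical disc over the full $(y,I,\theta)$-cross-section of $N_{\ell _{1},0}$ with a uniform Lipschitz constant. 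These discs are nested in $k$ and of uniformly bounded slope, so a limiting $(n_{s}+2)$-dimensional Lipschitz disc survives inside the set \eqref{eqn:Hausdorff_set}, all of whose points realize \eqref{eq:epsilon-shadowing} along this itinerary. Since topological dimension is a lower bound for Hausdorff dimension, this gives $\dim _{H}\geq n_{s}+2$ for every $n_{u}=n_{s}\geq 1$.

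For the strict inequality, assume $n_{u}=n_{s}=1$, so the stable cross-section is $3$-dimensional and the unstable variable $x$ is a single real coordinate; the idea is to thicken the disc above by a Cantor set of positive dimension in the $x$-direction. The source of this Cantor direction is a genuine binary branching available in the construction: at infinitely many checkpoints one may continue the prescribed shadowing either by inserting a block that begins with two $L^{uu}$-connecting sequences (followed by a short descent back to level through an $L^{ud}$-, some $L^{dd}$-, and an $L^{du}$-sequence), or by inserting a block that begins with an $L^{ud}$-sequence followed immediately by an $L^{du}$-sequence — and symmetrically in $\mathbf{S}^{d}$. Using the slack $\eta \varepsilon $ in \eqref{eq:epsilon-shadowing} together with the one-step bounds $\varepsilon c<\pi _{I}F_{\ell ,\varepsilon }-\pi _{I}$ on $L^{uu}$ (resp.\ $L^{dd}$) from \hyperlink{cond:C2.v}{(C2.v)} and $|\pi _{I}F_{\ell ,\varepsilon }-\pi _{I}|<\varepsilon C$ on the remaining labels from \hyperlink{cond:C2.iv}{(C2.iv)}, each such block can be arranged to be a near-identity excursion in $I$, so that both choices keep the orbit within tolerance of $(I^{n})$; meanwhile Condition~\hyperlink{cond:C3}{C3} says precisely that the two blocks have disjoint domains. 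Iterating over the branching checkpoints produces a dyadic tree of pairwise-disjoint sub-families, each still containing a $3$-dimensional Lipschitz disc.

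To turn the tree into a dimension bound I would pull every node back to $N_{\ell _{1},0}$. Because only finitely many connecting sequences occur, the maps $F_{\ell ,\varepsilon }$ are $C^{1}$-bounded with uniform expansion $>1$ in $x$ and uniform contraction in $y$ (from correct alignment with cone conditions), so at each branching level a parent box splits into at least two children, each a $3$-dimensional Lipschitz graph, whose $x$-width is a fixed fraction $\mu >0$ of the parent's width and which are separated, in the $x$-direction, by a gap at least a fixed fraction $\nu >0$ of that width — the last uniform ratio being the quantitative content of Condition~\hyperlink{cond:C3}{C3} transported backward, which is clean here because $n_{u}=1$ makes the distortion of the one-dimensional unstable direction bounded. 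The nested intersection then contains, in $x$, a Moran--Hutchinson Cantor set whose Hausdorff dimension is bounded below by an explicit $\delta =\delta (\mu ,\nu )>0$, while remaining $3$-dimensional in the stable directions. Either by a bi-Lipschitz straightening of the $x$-fibration followed by the product inequality $\dim _{H}(A\times B)\geq \dim _{H}A+\dim _{H}B$, or by putting a mass distribution $m$ on the nested family with $m(B(z,r))\lesssim r^{3+\delta }$ and invoking the mass distribution principle, one concludes $\dim _{H}$ of \eqref{eqn:Hausdorff_set} $\geq 3+\delta >3=n_{s}+2$.

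The main obstacle I expect is the interface between combinatorics and geometry: first, verifying that the binary insertions can genuinely be made at a positive proportion of the checkpoints without breaking the $(\eta \varepsilon )$-shadowing, i.e.\ keeping the running $I$-budget within tolerance using only the crude estimates \hyperlink{cond:C2.iv}{(C2.iv)}--\hyperlink{cond:C2.v}{(C2.v)}; and second, extracting level-independent constants $\mu ,\nu >0$ for the transverse Cantor structure, which forces one to combine the uniform hyperbolicity supplied by correct alignment with a quantitative, uniform-in-$\varepsilon $ version of the disjointness in Condition~\hyperlink{cond:C3}{C3}. Once these uniform constants are in hand, the remaining Hausdorff-dimension bookkeeping (Moran construction, mass distribution principle) and the general $n_{u}=n_{s}\geq 1$ case are routine.
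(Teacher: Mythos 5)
Your branching mechanism for the strict inequality is essentially the paper's: the two alternatives (a block beginning with two $L^{uu}$ transitions versus a block beginning with an $L^{ud}$ followed by an $L^{du}$, and the $dd$-analogue) are exactly the two ways of propagating a disc used in the paper, and Condition \hyperlink{cond:C3}{\textbf{C3}} enters in the same role. However, the quantitative step you yourself flag as the ``main obstacle'' is a genuine gap, not a routine verification. Conditions \hyperlink{cond:C2}{\textbf{C2}}--\hyperlink{cond:C3}{\textbf{C3}} together with compactness only give an upper bound $\lambda$ on $\Vert D(F_{\ell_n,\varepsilon}\circ\ldots\circ F_{\ell_1,\varepsilon})\Vert$ for compositions of at most $N$ maps and a positive distance $\delta$ between the two relevant domains; transported backward this yields only \emph{absolute} gap lower bounds $\delta_k=\lambda^{-k}\delta$ at generation $k$, and no lower bound whatsoever on the $x$-width of a child relative to its parent. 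Hence the level-independent Moran ratios $\mu,\nu$ your Hutchinson-type estimate needs are not available, and neither is the bi-Lipschitz straightening of the $x$-fibration (or the uniform transverse control needed for a mass distribution with $m(B(z,r))\lesssim r^{3+\delta}$). The paper avoids precisely this by using Falconer's gap-based bound for Cantor sets (Lemma \ref{lem:Hausdorff-Cantor}), which requires only the number of pieces and the absolute gap sizes $\delta_k$ and gives $\dim_H\geq\log_{\lambda}2>0$ on each unstable fiber, and then the Fubini-type inequality for Hausdorff measures (Lemma \ref{lem:Hausdorff-product}) to add $n_s+2$ from the stable/center directions, with no regularity of the fiber dependence required. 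Without replacing your uniform-ratio step by such a gap-based argument (or adding a bounded-distortion hypothesis that is not part of \textbf{C2}--\textbf{C3}), your proof of $\dim_H>3$ does not close.

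For the non-strict bound $\dim_H\geq n_s+2$ your route is also shakier than necessary. Fixing a single itinerary $(\ell_i)$ ``attached to $(I^n)$'' is not legitimate: in the shadowing construction the choice of connecting sequence at each stage depends on the current $(I,\theta)$-position of the propagated disc (which window $N_{\ell,0}$ encloses it), so different fibers require different itineraries; moreover, backward propagation of ``vertical'' discs through the maps is not among the tools established here (only forward propagation of horizontal discs, Theorem \ref{th:hor-disc-propagation}). The paper needs much less: for every straight horizontal disc $h^{\varepsilon}_{y^{\ast},I^{\ast},\theta^{\ast}}(x)=(x,y^{\ast},I^{\ast},\theta^{\ast})$ there is at least one point on it in the set, so the projection of the set onto $(y,I,\theta)$ contains $\bar{B}^{n_s}\times(I^0-\varepsilon\eta,I^0+\varepsilon\eta)\times(S^u_{\theta}\cup S^d_{\theta})$, and one concludes with \eqref{eq:Haus-proj-bound}. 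You should replace your Lipschitz-disc claim by this fiberwise, itinerary-varying argument.
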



The proof is given in section \ref{sec:Hausdorff}. Note that different
points $z_{0}$ in the set \eqref{eqn:Hausdorff_set} correspond to different
orbits of the IFS that achieve the shadowing of the prescribed level sets of
the action.


\correctiona{
\begin{remark}
\label{rem:epsilon-subdivision}
We can divide $(0,\varepsilon _{0}]$ into finitely many subintervals
\begin{equation*} (0,\varepsilon _{0}]=(0,\varepsilon _{1}]\cup \lbrack \varepsilon_{1},\varepsilon _{2}]\cup \ldots \cup \lbrack \varepsilon_{k-1},\varepsilon _{k}]
\end{equation*}%
with $\varepsilon _{k}=\varepsilon _{0}$, and validate \hyperlink{cond:C1}{\textbf{C1}}, \hyperlink{cond:C2}{\textbf{C2}}, \hyperlink{cond:C3}{\textbf{C3}} on each sub-interval. In the computer assisted proof the connecting sequences used for each subinterval $[\varepsilon _{k-1},\varepsilon _{k}]$ are independent of $\varepsilon $, but may be different from one subinterval to another.
\end{remark}}

\subsection{Master theorem for establishing stochastic behavior\label%
{sec:master-stochastic}}

We start by introducing some notation. We write $W_{t}$ for the standard
Brownian motion and assume that it is defined on a probability space $\left(
\Omega ,\mathbb{F},\mathbb{P}\right) .$ By $C[0,1]$ we denote the space of
continuous real functions on $[0,1].$ We endow $C[0,1]$ with the Borel $%
\sigma$-field denoted as $\mathcal{C}$, generated by open sets with topology
induced by the supremum norm.

Recall that for a stochastic process $X_{t}:\Omega \rightarrow \mathbb{R}$
and for $\omega \in \Omega $, a path is the function $t\mapsto X_{t}(\omega
) $, for $t\in \lbrack 0,1]$. All stochastic processes which we shall deal
with have continuous paths. We can therefore view a stochastic process $%
X_{t} $ as $X:\Omega \rightarrow C[0,1]$ by assigning to $\omega \in \Omega $
the corresponding path. See \cite{Billingsley}.

Let $\left( \Omega _{\varepsilon },\mathbb{F}_{\varepsilon },\mathbb{P}%
_{\varepsilon }\right) $ be a family of probability spaces parameterized by $%
\varepsilon \in (0,\varepsilon _{0}]$. We say that stochastic processes $%
X^{\varepsilon }:\Omega _{\varepsilon }\rightarrow C[0,1]$ converge to $%
X:\Omega \rightarrow C[0,1]$ in distribution on $C[0,1]$ as $\varepsilon
\rightarrow 0$, if for every $A\in \mathcal{C}$ with $\mathbb{P}(X\in
\partial A)=0$, we have $\mathbb{P}_{\varepsilon }(X^{\varepsilon }\in
A)\rightarrow \mathbb{P}(X\in A)$ as $\varepsilon \rightarrow 0$. (In our
setting all considered $X^{\varepsilon }:\Omega _{\varepsilon }\rightarrow
C[0,1]$ and $X:\Omega \rightarrow C[0,1]$ will be measurable.)

We now state our theorem concerning the convergence of energy paths to a
diffusion process.

\begin{theorem}[Stochastic behaviour]
\label{th:diffusion-process}Let $\gamma >\frac{3}{2}$. For every ${X_{0}}\in
\left( 0,1\right) $, $\mu \in \mathbb{R}$ and $\sigma >0$, consider the
stochastic processes
\begin{equation*}
X_{t}^{0}:=X_{0}+\mu t+\sigma W_{t},\qquad \text{for }t\in \left[ 0,1\right]
.
\end{equation*}%
%
%
%
%
%
%
%
%
Let $I=H_{0}-h_{0}$. For $\varepsilon >0$ and a given point $z$ define the
energy path \footnote{%
Alternatively we could define the energy paths as $X_{t}^{\varepsilon
}(z):=I(\Phi _{\gamma \varepsilon ^{-3/2}t}^{\varepsilon }(z))$, by taking
sufficiently large $\gamma >0$. The explicit size of such $\gamma $ and the
related details are outlined in the footnotes in Section \ref{sec:stochastic}%
.}
\begin{equation*}
X_{t}^{\varepsilon }(z):=I\left( \Phi _{t\varepsilon ^{-\gamma
}}^{\varepsilon }\left( z\right) \right) ,\qquad \text{for }t\in \left[ 0,1%
\right] .
\end{equation*}%
Define the stopping times
\begin{equation*}
\tau =\tau (X^{\varepsilon }):=\inf \left\{ t:X_{t}^{\varepsilon }\geq 1%
\text{ or }X_{t}^{\varepsilon }\leq 0\right\} .
\end{equation*}%
If the condition \hyperlink{cond:C2}{\textbf{C2}} is satisfied, then for every $0<\varepsilon
\leq \varepsilon _{0}$ there exists a set $\Omega _{\varepsilon }\subset
\mathbb{R}^{n_{u}+n_{s}+2}\times \mathbb{T}$ of positive Lebesgue measure
such that

\begin{enumerate}
\item $\Omega _{\varepsilon }$ projects onto the $y,I,\theta $ coordinates
as $\pi _{y,I,\theta }\Omega_{\varepsilon }=\pi _{y,I,\theta } \left(
\mathbf{S}^{u}\cup \mathbf{S}^{d}\right)\cap\{I\in
[X_0-\varepsilon,X_0+\varepsilon]\}.$

\item Let $\Omega _{\varepsilon }$ be endowed with with the sigma field of
Borel sets and the normalized Lebesgue measure (i.e., $\mathbb{P}%
_{\varepsilon }\left( \Omega _{\varepsilon }\right) =1$). Then the family of
processes $X^{\eps}:\Omega _{\eps}\rightarrow C[0,1]$ has the following
limit in distribution on $C[0,1]$
\begin{equation}
\lim_{\varepsilon \rightarrow 0}X_{t\wedge \tau }^{\varepsilon }=X_{t\wedge
\tau }^{0},  \label{eq:limit-diffusion-process-master-thm}
\end{equation}%
where $t\wedge \tau =\min (t,\tau )$.
\end{enumerate}
\end{theorem}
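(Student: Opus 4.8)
\textbf{Proof proposal for Theorem \ref{th:diffusion-process}.}

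The plan is to realize the limiting diffusion $X^0_t$ as the scaling limit of a random walk that is genuinely carried by true orbits of the system, using the symbolic dynamics of Theorem \ref{th:symbolic-dynamics-from-covering} (equivalently Condition \hyperlink{cond:C2}{\textbf{C2}}) as the bridge. First I would fix a mesh size for the action: partition $[0,1]$ into steps of width $\delta_\varepsilon := \eta\varepsilon$ times a suitable fixed integer multiple (so that consecutive target levels are more than $2\eta\varepsilon$ apart, as required by \hyperlink{cond:C2}{\textbf{C2}}), and let $n_\varepsilon \sim 1/\delta_\varepsilon$ be the number of such levels. Given a fixed drift $\mu$ and variance $\sigma^2$, choose a biased random walk $(S_k)_{k\ge 0}$ on this grid whose one-step increments are $\pm\delta_\varepsilon$ with probabilities $p_\varepsilon = \tfrac12(1 + \mu\delta_\varepsilon/\sigma^2 \cdot(\text{time per step}))$, tuned so that after the parabolic rescaling (time rescaled by $\varepsilon^{-\gamma}$, which is why $\gamma>3/2$ is needed: each elementary step of size $\eta\varepsilon$ in $I$ takes $O(1/\varepsilon)$ continuous time, and $1/(\eta\varepsilon)$ such steps to cross an $O(1)$ window, so the natural diffusive time scale is $\varepsilon^{-1}\cdot\varepsilon^{-1}=\varepsilon^{-2}$, dominated by $\varepsilon^{-\gamma}$) Donsker's theorem gives $S_{\lfloor n_\varepsilon^2 t\rfloor}$ converging to $X_0 + \mu t + \sigma W_t$. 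Each increment $+\delta_\varepsilon$ is realized by applying a block of maps $F_\ell$ with $\ell\in L^{uu}$ (when starting in $\mathbf S^u$), $-\delta_\varepsilon$ by a block with $\ell\in L^{dd}$, and the $L^{ud}, L^{du}$ labels are used to switch strips so that one is always in the correct strip to continue; condition \hyperlink{cond:C2}{\textbf{C2}}(iv)-(v) guarantees each such block moves $I$ by an amount in $[\varepsilon c, \varepsilon C]$ in the prescribed direction, so a bounded number of blocks realizes each $\pm\delta_\varepsilon$ step up to error $<\eta\varepsilon$.

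Next I would use the $C^0$-families of discs inside the windows (the mechanism described in the Methodology section, underlying Theorem \ref{th:symbolic-dynamics-from-covering}) to produce, for each \emph{finite} realization $\omega$ of the random walk up to the stopping time $\tau$, not just a single shadowing orbit but a full \emph{positive-measure} set of initial conditions in $\mathbf S^u\cup\mathbf S^d$ whose orbit shadows that walk: the correct alignment with cone conditions, applied along the finite concatenation of connecting sequences coding $\omega$, carries a subrectangle of the initial window onto a disc in the terminal window, and the preimage of that subrectangle has positive $(n_u+n_s+2)$-dimensional Lebesgue measure. Crucially the walk is \emph{stopped} at $\tau$ (the first exit from $(0,1)$ in rescaled energy), so only finitely many blocks are ever used for $\mathbb P$-a.e.\ path, and the measure lower bound, though it degenerates as $\varepsilon\to0$ (exponentially, exactly as in Theorem \ref{thm:diffusion}), stays positive for each fixed $\varepsilon$. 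I would then define $\Omega_\varepsilon$ as the union over the grid of these sets restricted to $\{I\in[X_0-\varepsilon,X_0+\varepsilon]\}$ so that statement (1) on the projection holds, equip it with normalized Lebesgue measure $\mathbb P_\varepsilon$, and define the map $z\mapsto \omega(z)$ sending an initial condition to the symbolic itinerary its orbit follows; pushing $\mathbb P_\varepsilon$ forward along $z\mapsto$ (the piecewise-linear interpolation of $\pi_I$ along the shadowed grid levels) should reproduce, up to $O(\eta\varepsilon)=O(\varepsilon)$ sup-norm error and a negligible-measure exceptional set, the law of the stopped rescaled random walk.

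Finally, I would assemble the convergence in distribution on $C[0,1]$ in three pieces: (a) the law of the stopped rescaled random walk converges to the law of $X^0_{t\wedge\tau}$ by Donsker's invariance principle together with continuity of the stopping functional $X\mapsto X_{\cdot\wedge\tau(X)}$ at paths $X$ with $\mathbb P(X_{\cdot}$ hits $\{0,1\}$ tangentially$)=0$ (true for Brownian motion with drift) — this is where I would invoke the continuous mapping theorem on $C[0,1]$ as set up in the paragraph before the theorem, citing \cite{Billingsley}; (b) the energy path $X^\varepsilon_t(z)$ differs from the piecewise-linear interpolation of its values at the grid-crossing times by at most $O(\varepsilon)$ in sup norm, because between returns $I$ changes by $O(\varepsilon)$ and the continuous-time excursions between sections are uniformly short after rescaling (using boundedness of $H_0,H_1$ and derivatives on the compact region, as in the optimality remark); (c) therefore $X^\varepsilon_{t\wedge\tau}$ and the stopped rescaled walk are within $O(\varepsilon)$ in sup norm on $\Omega_\varepsilon$, and combining with (a) and a standard $\varepsilon/3$ argument for weak convergence gives \eqref{eq:limit-diffusion-process-master-thm}. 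The main obstacle I anticipate is (c) together with the time-synchronization bookkeeping: one must control the \emph{random} continuous time elapsed along a block of connecting sequences — it is $O(1/\varepsilon)$ per elementary $\eta\varepsilon$-step but with fluctuations — and show that after dividing by $\varepsilon^{-\gamma}$ with $\gamma>3/2$ these timing fluctuations wash out, so that the continuous time parameter $t$ and the combinatorial step-count parameter stay matched up to $o(1)$ uniformly; handling this cleanly is what forces the rescaling exponent to exceed $3/2$ rather than equal $2$, and it interacts delicately with the choice of how many blocks realize one $\pm\delta_\varepsilon$ step, so I would isolate it as a separate timing lemma before proving the theorem.
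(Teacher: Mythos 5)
Your overall strategy (shadow a random walk via the symbolic dynamics of Condition \hyperlink{cond:C2}{\textbf{C2}}, get positive-measure sets of initial conditions from the disc/window mechanism, apply Donsker plus a continuity argument for the stopping functional) is the right flavor, and your treatment of the stopped convergence in part (a) is in fact more explicit than the paper's. But two steps of your plan would fail as written, and they are exactly the points where the paper's proof (Section \ref{sec:stochastic}, Theorem \ref{th:diffusion-process-all-strip} and Lemmas \ref{lem:propagation-const}, \ref{lem:discs-travel-up}) does something structurally different.

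First, the scaling. Your walk has increments $\pm\delta_\varepsilon=O(\varepsilon)$, and you claim each elementary $\eta\varepsilon$-step costs $O(1/\varepsilon)$ continuous time, so that the diffusive scale $\varepsilon^{-2}$ is ``dominated by $\varepsilon^{-\gamma}$''. Both claims are wrong: by \hyperlink{cond:C2.v}{(C2.v)} together with the uniform bound on return times, a single application of $F_{\ell,\varepsilon}$ takes time $O(1)$ and already moves $I$ by at least $c\varepsilon$, so an $\eta\varepsilon$-step costs $O(1)$ time; and for $\gamma\in(\tfrac32,2)$ one has $\varepsilon^{-\gamma}\ll\varepsilon^{-2}$, so within the allotted time $\varepsilon^{-\gamma}$ a walk with $O(\varepsilon)$ steps accumulates standard deviation only $O(\varepsilon^{1-\gamma/2})\to 0$, and your limit would have zero variance unless $\gamma=2$. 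The paper resolves this with a different step structure: time $[0,\varepsilon^{-\gamma}]$ is cut into $K_\varepsilon=\lfloor\varepsilon^{-1}\rfloor$ deterministic blocks of length $\varepsilon^{-\gamma+1}$; in each block the energy moves by $\mu\varepsilon\pm\sigma\sqrt{\varepsilon}$, achieved in a burst of $O(\varepsilon^{-1/2})$ transitions (this is precisely where $\gamma>\tfrac32$ enters, via (\ref{eq:place-where-gamma-appears}) in Lemma \ref{lem:discs-travel-up}), after which the disc is ``parked'' near the reached level by oscillating between levels $3\eta\varepsilon$ apart (Lemma \ref{lem:propagation-const}) until the end of the block. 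This parking device also removes the random-time bookkeeping you isolate as your main obstacle: the path is pinned within $O(\varepsilon)$ of the prescribed level at the deterministic times $k\varepsilon^{-\gamma+1}$, so no timing-fluctuation lemma is needed.

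Second, the drift. You propose a biased walk with $p_\varepsilon\neq\tfrac12$, but the probability measure on $\Omega_\varepsilon$ is forced to be normalized Lebesgue measure, and you give no argument that the push-forward of Lebesgue measure assigns your prescribed probabilities to the symbolic itineraries; those probabilities are dictated by the relative Lebesgue measures of the realizing sets, which the dynamics does not let you tune. The paper avoids this entirely: the sets $\tilde A^{\varepsilon}_{\omega}$ of initial conditions realizing the itineraries are explicitly trimmed to have \emph{equal} Lebesgue measure for all $\omega$ of the same length (condition (\ref{eq:Aw-same-measure})), so the coordinate variables $Y^\varepsilon_n$ are exactly i.i.d.\ symmetric Bernoulli under $\mathbb{P}_\varepsilon$, and the drift $\mu$ is built in deterministically, as the shift $k\mu\varepsilon$ of the target level at block $k$ in (\ref{eq:Ak-omega}), rather than through a bias of the walk. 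Without either the equal-measure trimming or some substitute control of itinerary probabilities, your step from ``push $\mathbb{P}_\varepsilon$ forward'' to ``reproduces the law of the biased walk'' is a genuine gap.
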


The proof is given in Section \ref{sec:stochastic}.


\section{Proof of the main theorem}\label{sec:proof_main_theorem}

\subsection{Computer assisted proofs\label{sec:cap-description}}

It is well known that numerical integration of differential equations
inherently carries numerical errors, due to computer rounding or to the
numerical methods involved. Therefore, the results obtained through
numerical experiments are in general non-rigorous. Computer assisted proofs
provide methods that use numerical experiments to produce rigorous results.
Instead of a numerical computation of an approximate solution, the computer
is utilized to return an enclosure, which is a set containing the true
solution.

One basic tool that we use in this paper is interval arithmetic. This
involves enclosing numbers in intervals that account for round-off errors,
and performing arithmetic operation on these intervals. The output of these
operations are intervals as well, which account for the numerical error and
contain the true result. Interval arithmetic methods have been extended to
operations with elementary functions, to solving linear systems, and to
computing high order derivatives of functions \cite{Rump}. Combined with the
Lohner algorithm \cite{Lohner,Lohner1,Lohner2} interval arithmetic can be
used to obtain rigorous enclosures of solutions of ordinary differential
equations, together with their partial derivatives with respect to initial
conditions, up to any given order. These methods can also be used to obtain
enclosures of images of Poincar\'e maps as well as of their partial
derivatives. All these have been implemented in the CAPD\footnote{%
Computer Assisted Proofs in Dynamics: http://capd.ii.uj.edu.pl/}
library \cite{WilczakCNSNS}, which is our tool of choice for our computer assisted proof\footnote{The code and the documentation for the computer assisted proof is available on the web page of the first author.}.


\correctiona{To obtain a computer assisted proof, we use interval arithmetic methods to validate the assumptions of the mathematical theorems. This is done by a finite number of computations. The use of interval arithmetic eliminates the problem of having to control rounding errors resulting from numerical methods or from the floating point computer representation of numbers.\footnote{This methodology was also used to obtain a computer assisted proof of Smale's 14th Problem in \cite{tucker2002rigorous}}}\marginpar{\color{red}\hfill 19.}

Computer assisted proofs are usually conducted in a two-step procedure:

\begin{itemize}
\item first, one computes via standard (non-rigorous) numerical experiments
some geometric objects of interest, in order to obtain an intuitive
understanding of the properties of the system,

\item second, one performs rigorous, interval arithmetic-based estimates to
validate the assumptions of the appropriate theorems that establish the
properties observed in the non-rigorous numerical experiments.
\end{itemize}


\correctiona{For the computer assisted proof in this paper we verify rigorously the conditions \hyperlink{cond:C1}{\textbf{C1}} and \hyperlink{cond:C2}{\textbf{C2}}, which are sufficient for Theorems \ref{thm:diffusion}, \ref{th:symbolic-dynamics-from-covering}, \ref{th:Hausdorff-dim}, \ref{th:diffusion-process}. These conditions are formulated in terms of:}

\begin{itemize}
\item correctly aligned windows,

\item cone conditions,

\item estimates on the change of energy along orbits.
\end{itemize}

In the subsequent sections we discuss in detail how these are validated.
Here we just mention that correct alignment of windows can be validated by
checking inequalities between various projections of images of sets by
section-to-section maps. Our validation of cone conditions and of \hyperlink{cond:C1.iv}{(C1.iv)},
\hyperlink{cond:C2.iv}{(C2.iv}--\hyperlink{cond:C2.v}{C2.v)} is based on the enclosures of partial derivatives of
section-to-section maps, with respect to the initial conditions and with
respect to the parameter. Computation of enclosures of both the images and
partial derivatives of such maps with respect to the initial conditions are
a part of the CAPD library. Partial derivatives which include the parameter
can also be automatically obtained in CAPD by adding the parameter as one of
the variables of the system.

\subsection{Outline of the proof}

In Section \ref{sec:main-proof-theoretical} we give the theoretical tools
which we use for the validation of correct alignment of windows, for
validation of cone conditions, and for establishing estimates on the change
of energy along orbits.

In the next three subsections we describe how we choose the local maps for
the connecting sequences used in the proof of the main theorem. We choose
these maps to be section-to-section maps along the flow, expressed in
appropriate local coordinates. Section \ref{sec:main-proof-local-coord}
describes how we choose the local coordinates. In Section \ref%
{sec:main-proof-sections} we discuss how we choose the sections along the
flow. In Section \ref{sec:main-proof-connecting-sequences} we describe how
we choose our connecting sequences.

It is important to emphasize that our computer assisted proof is conducted
in the two-step procedure outlined in Section \ref{sec:cap-description}. The
choices described in Sections \ref{sec:main-proof-local-coord}, \ref%
{sec:main-proof-sections}, \ref{sec:main-proof-connecting-sequences} are
associated with the first step (non-rigorous numerics). The local maps for
connecting sequences are chosen based on the non-rigorous numerical
investigation of the properties of the system. Once these choices are made,
we conduct rigorous, interval arithmetic-based estimates to validate \hyperlink{cond:C1}{\textbf{%
C1}} and \hyperlink{cond:C2}{\textbf{C2}}, which leads to the proof of our main theorem. This is
done in Section \ref{sec:validation_connecting_sequences}.

\subsection{Tools for validating correct alignment of windows, cone
conditions and changes of energy along trajectories\label%
{sec:main-proof-theoretical}}

\label{sec:energy_change}\label{sec:validation_windows}\label%
{sec:validation_cones} We start by introducing some notation. For $E\subset
\mathbb{R}$ and $N\subset \mathbb{R}^{n}$ consider a family of $(k\times n)$%
-matrices $B(\eps,x)$, with $\eps\in E$ and $x\in N$. We will use the
following notation for the following subsets of $\mathbb{R}^{k\times n}$
\begin{eqnarray}
\left[ B\left( E,N\right) \right] &=&\left\{ B:B_{ji}\in \left[
\inf_{\varepsilon \in E,x\in N}B\left( \varepsilon ,x\right)
,\sup_{\varepsilon \in E,x\in N}B\left( \varepsilon ,x\right) \right]
\right\} ,  \label{eq:interval-derivative} \\
\left[ B\left( E,N\right) \right] &=&\left\{ B:B_{ji}\in \left[
\inf_{\varepsilon \in E,x\in N}B\left( \varepsilon ,x\right)
,\sup_{\varepsilon \in E,x\in N}B\left( \varepsilon ,x\right) \right]
\right\} .  \label{eq:interval-derivative-2}
\end{eqnarray}

We refer to a subset of $\mathbb{R}^{k\times n}$ which on each projection is
a closed interval as an interval matrix. For an interval matrix $\mathbf{A}%
\subset \mathbb{R}^{k\times n}$ we shall write%
\begin{equation*}
\left\Vert \mathbf{A}\right\Vert =\sup \left\{ \left\Vert A\right\Vert :A\in
\mathbf{A}\right\} .
\end{equation*}%
For a matrix $A\in \mathbb{R}^{k\times k}$ we write%
\begin{equation*}
m\left( A\right) =\left\{
\begin{array}{ll}
\inf_{\Vert x\Vert =1}\left\Vert Ax\right\Vert =\left\Vert A^{-1}\right\Vert
^{-1} & \qquad \text{if det}(A)\neq 0, \\
0 & \qquad \text{otherwise.}%
\end{array}%
\right.
\end{equation*}%
%
%
%
%
%
%
%
%
%
%
%
%
%
%
%
%
%
%
%
%
%
%
%
%
%
%
For an interval matrix $\mathbf{A}\subset \mathbb{R}^{k\times k}$ we define%
\begin{equation*}
m\left( \mathbf{A}\right) =\inf \left\{ m\left( A\right) :A\in \mathbf{A}%
\right\} .
\end{equation*}

We now describe how we validate correct alignment of windows. To simplify
the discussion, here we restrict to the simple case when the dimension of
expanding coordinate is $1$. (This is the setting we encounter in the
PER3BP.)

For $z=\left( z_{1},z_{2}\right) \in \mathbb{R}^{n_{1}}\times \mathbb{R}%
^{n_{2}}$ we shall write $\pi _{z_{1}}$ and $\pi _{z_{2}}$ for the
projections onto the coordinate $z_{1}$ and $z_{2}$, respectively. To
validate correct alignment, in our computer assisted proof we use the
following lemma.

\begin{lemma}
\label{lem:covering}Let $N,M$ be two windows in $\mathbb{R}^{n_{1}}\times
\mathbb{R}^{n_{2}}$, with $n_{1}=1$, of the form%
\begin{equation*}
N=M=\left[ -1,1\right] \times \bar{B}^{n_{2}}.
\end{equation*}%
If $\pi _{z_{2}}f_{\varepsilon }\left( N\right) \subset B^{n_{2}}$, $\pi
_{z_{1}}f_{\varepsilon}\left( \left\{ -1\right\} \times \bar{B}^{n_{2}}\right) <-1$ and $%
\pi _{z_{1}}f_{\varepsilon}\left( \left\{ 1\right\} \times \bar{B}^{n_{2}}\right) >1, $
then $N\overset{f_{\varepsilon}}{\implies }M.$
\end{lemma}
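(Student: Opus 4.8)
The plan is to verify directly the two conditions in Definition~\ref{def:covering}, exploiting the particularly simple product form $N = M = [-1,1] \times \bar B^{n_2}$ (so $n_1 = 1$, $z_1$ the unstable coordinate, $z_2$ the stable coordinates). Throughout write $f = f_\varepsilon$ for brevity.

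First I would produce the homotopy required in part~(1) of Definition~\ref{def:covering}. The natural choice is the straight-line homotopy that simultaneously pushes the $z_1$-component towards the linear model and collapses the $z_2$-component to zero:
\begin{equation*}
\chi(s, z) = \bigl( (1-s)\,\pi_{z_1} f(z) + s\, A\,\pi_{z_1}(z),\ (1-s)\,\pi_{z_2} f(z) \bigr),
\end{equation*}
where $A : \mathbb{R} \to \mathbb{R}$ is the linear map $A t = 3t$ (any constant $>1$ works). Then $\chi_0 = f$ and $\chi_1(z_1,z_2) = (A z_1, 0)$ with $A(\partial B^1) = \{-3,3\} \subset \mathbb{R}\setminus \bar B^1$, so part~(2) holds. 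It remains to check the two disjointness conditions. For $\chi([0,1], N) \cap M^+ = \emptyset$: a point of $M^+ = \bar B^1 \times \partial \bar B^{n_2}$ has $\|\pi_{z_2}\| = 1$, whereas $\pi_{z_2}\chi(s,z) = (1-s)\pi_{z_2} f(z)$ has norm $\le \|\pi_{z_2} f(z)\| < 1$ by the hypothesis $\pi_{z_2} f(N) \subset B^{n_2}$; hence no intersection. For $\chi([0,1], N^-) \cap M = \emptyset$, where $N^- = \{-1,1\} \times \bar B^{n_2}$: I would argue componentwise in $z_1$. On the face $z_1 = 1$, the hypothesis gives $\pi_{z_1} f(z) > 1$, and $A\,\pi_{z_1}(z) = 3 > 1$, so the convex combination $(1-s)\pi_{z_1} f(z) + s\cdot 3 > 1$ for all $s \in [0,1]$; since $M = [-1,1]\times \bar B^{n_2}$ requires $\pi_{z_1} \in [-1,1]$, the image stays outside $M$. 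Symmetrically, on the face $z_1 = -1$ one has $\pi_{z_1} f(z) < -1$ and $A\,\pi_{z_1}(z) = -3 < -1$, so the combination is $< -1$ throughout. This establishes $N \overset{f}{\Longrightarrow} M$.

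The one point needing a little care — and the step I would flag as the main (mild) obstacle — is continuity and well-definedness of $\chi$ as a map into $\mathbb{R}^{n_1}\times\mathbb{R}^{n_2}$: $f$ is only assumed continuous (and, in the IFS setting, only locally defined), so I would note at the outset that $N$ is contained in the domain of $f$ and that $\chi$ is manifestly continuous on $[0,1]\times N$ as a composition of continuous maps. No smoothness or injectivity of $f$ is used. Also worth remarking: the hypotheses are exactly the coordinatewise inequalities one checks with interval arithmetic — $\pi_{z_2} f(N)$ is enclosed in $B^{n_2}$, $\pi_{z_1} f$ on the left exit face is enclosed in $(-\infty,-1)$, and on the right exit face in $(1,\infty)$ — which is the point of stating the lemma in this form. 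One final bookkeeping remark: the statement as written has a reversed labeling relative to the general windows of Section~\ref{sec:connecting-sequences} (there the exit set is $N^- = \partial\bar B^{n_u}\times\cdots$, consistent with $n_1 = n_u$ being the unstable direction), so in applying the lemma one identifies $z_1$ with the unstable coordinate $x$; the proof above respects this identification, since the expansion hypothesis is placed on $\pi_{z_1} f$ over the faces $z_1 = \pm 1$.
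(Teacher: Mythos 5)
Your proof is correct and follows essentially the same route as the paper: the paper's proof uses exactly this straight-line homotopy $\chi(\lambda,z_1,z_2)=(1-\lambda)f_\varepsilon(z_1,z_2)+(\lambda\,2z_1,0)$ (your choice of the constant $3$ instead of $2$ is immaterial) and simply states that the conditions of Definition~\ref{def:covering} follow directly, whereas you spell out the verification. The componentwise checks you give are exactly the intended ones, so there is nothing to add.
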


\begin{proof}
Let $\chi :\left[ 0,1\right] \times N\rightarrow \mathbb{R}^{n_{1}}\times
\mathbb{R}^{n_{2}}$ be defined as
\begin{equation*}
\chi \left( \lambda ,z_{1},z_{2}\right) =\left( 1-\lambda \right) f_{\varepsilon}\left(
z_{1},z_{2}\right) +\left( \lambda 2z_{1},0\right) .
\end{equation*}%
It follows directly that $\chi $ satisfies the conditions from Definition %
\ref{def:covering}.
\end{proof}

For the case when $n_{1}>1$ there are established methods for the validation
of correct alignment, and we direct the reader to \cite{GideaZ04a,Zcc}.

We now describe the method with which we validate cone conditions. Consider
cones $Q_{a}^{\varepsilon }$ defined as in \eqref{eq:cone-Q} with $a=\left(
a_{y},a_{I},a_{\theta }\right) $. Let $E$ be an interval of parameters in $%
\mathbb{R}$. We consider a $C^{2}$-function
\begin{equation*}
f:E\times \mathbb{R}^{u}\times \mathbb{R}^{s}\times \mathbb{R}\times \mathbb{%
T\rightarrow R}^{u}\times \mathbb{R}^{s}\times \mathbb{R}\times \mathbb{T}.
\end{equation*}%
(Here it will be more convenient to write $f\left( \varepsilon ,z\right) $
than $f_{\varepsilon }(z)$; for $\varepsilon \in E$.) We consider
\begin{equation*}
N=\bar{B}^{u}\times \bar{B}^{s}\times J\times S,
\end{equation*}%
where $J$ is a closed interval in $\mathbb{R}$ and $S$ is a closed interval
in $[0,2\pi )$. Our objective will be to find $b=\left(
b_{y},b_{I},b_{\theta }\right) $ so that $Q_{a}^{\varepsilon }\left(
z-z^{\prime }\right) >0$ will imply $Q_{b}^{\varepsilon }(f(\varepsilon
,z)-f(\varepsilon ,z^{\prime }))>0$.

We use the following notation (below are interval matrices defined as in (%
\ref{eq:interval-derivative}--\ref{eq:interval-derivative-2}))
\begin{equation*}
\left[ \frac{\partial f_{\kappa }}{\partial h}\right] =\left[ \frac{\partial
f_{\kappa }}{\partial h}\left( E,N\right) \right] ,\qquad \left[ \frac{%
\partial ^{2}f_{\kappa }}{\partial \varepsilon \partial h}\right] =\left[
\frac{\partial ^{2}f_{\kappa }}{\partial \varepsilon \partial h}\left(
E,N\right) \right] ,\qquad \text{for }h,\kappa \in \left\{
x,y,I,\theta\right\} .
\end{equation*}

Below lemma is the main tool for validating cone conditions.

\correctiona{\begin{lemma}
\label{th:param-dep-cones} If for $b=\left( b_{y},b_{I},b_{\theta }\right)
\in \mathbb{R}_{+}^{3}$ we have
\begin{align*}
0& <\frac{\left\Vert \left[ \frac{\partial f_{\kappa }}{\partial x}\right]
\right\Vert +\left\Vert \left[ \frac{\partial f_{\kappa }}{\partial y}\right]
\right\Vert a_{y}+\varepsilon _{0}\left\Vert \left[ \frac{\partial f_{\kappa
}}{\partial I}\right] \right\Vert a_{I}+\left\Vert \left[ \frac{\partial
f_{\kappa }}{\partial \theta }\right] \right\Vert a_{\theta }}{m\left( \frac{%
\partial f_{x}}{\partial x}\right) -\left\Vert \left[ \frac{\partial f_{x}}{%
\partial y}\right] \right\Vert a_{y}-\varepsilon _{0}\left\Vert \left[ \frac{%
\partial f_{x}}{\partial I}\right] \right\Vert a_{I}-\left\Vert \left[ \frac{%
\partial f_{x}}{\partial \theta }\right] \right\Vert a_{\theta }}<b_{\kappa
}\qquad \text{for }\kappa \in \left\{ y,\theta \right\} , \\
0& <\frac{\left\Vert \left[ \frac{\partial f_{I}}{\partial \varepsilon
\partial x}\right] \right\Vert +\left\Vert \left[ \frac{\partial f_{y}}{%
\partial \varepsilon \partial y}\right] \right\Vert a_{y}+\left(
1+\varepsilon _{0}\left\Vert \left[ \frac{\partial f_{I}}{\partial
\varepsilon \partial I}\right] \right\Vert \right) a_{I}+\left\Vert \left[
\frac{\partial f_{I}}{\partial \varepsilon \partial \theta }\right]
\right\Vert a_{\theta }}{m\left( \frac{\partial f_{x}}{\partial x}\right)
-\left\Vert \left[ \frac{\partial f_{x}}{\partial y}\right] \right\Vert
a_{y}-\varepsilon _{0}\left\Vert \left[ \frac{\partial f_{x}}{\partial I}%
\right] \right\Vert a_{I}-\left\Vert \left[ \frac{\partial f_{x}}{\partial
\theta }\right] \right\Vert a_{\theta }}<b_{I}.
\end{align*}%
then
\begin{equation}
Q_{a}^{\varepsilon }\left( z-z^{\prime }\right) >0\qquad \implies \qquad
Q_{b}^{\varepsilon }\left( f(\varepsilon ,z)-f(\varepsilon ,z^{\prime
})\right) >0.  \label{eq:param-dep-cone-prop}
\end{equation}
\end{lemma}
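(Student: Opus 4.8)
\textbf{Proof plan for Lemma~\ref{th:param-dep-cones}.}
The plan is to work directly from the definition of the cone condition. Fix $z,z'$ with $Q_a^\varepsilon(z-z')>0$, and write $w=z-z'=(\Delta x,\Delta y,\Delta I,\Delta\theta)$. By \eqref{eq:cone-slopes-assumption}, $Q_a^\varepsilon(w)>0$ gives $\|\Delta y\|\le a_y\|\Delta x\|$, $\|\Delta\theta\|\le a_\theta\|\Delta x\|$, $\|\Delta I\|\le\varepsilon a_I\|\Delta x\|$, and in particular $\Delta x\neq 0$ unless $w=0$. For each component $\kappa\in\{x,y,\theta\}$ I will estimate $f_\kappa(\varepsilon,z)-f_\kappa(\varepsilon,z')$ by the mean value inequality (integrating the derivative along the segment $[z',z]$, using that $N$ is convex), which yields
\begin{equation*}
\|f_\kappa(\varepsilon,z)-f_\kappa(\varepsilon,z')\|\le \Bigl\|\Bigl[\tfrac{\partial f_\kappa}{\partial x}\Bigr]\Bigr\|\,\|\Delta x\|+\Bigl\|\Bigl[\tfrac{\partial f_\kappa}{\partial y}\Bigr]\Bigr\|\,\|\Delta y\|+\Bigl\|\Bigl[\tfrac{\partial f_\kappa}{\partial I}\Bigr]\Bigr\|\,\|\Delta I\|+\Bigl\|\Bigl[\tfrac{\partial f_\kappa}{\partial \theta}\Bigr]\Bigr\|\,\|\Delta\theta\|,
\end{equation*}
and a matching lower bound for the $x$-component using $m(\cdot)$: $\|f_x(\varepsilon,z)-f_x(\varepsilon,z')\|\ge m\bigl(\tfrac{\partial f_x}{\partial x}\bigr)\|\Delta x\|-(\text{the other three terms})$. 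Substituting the slope bounds and using $\varepsilon\le\varepsilon_0$ on the $\Delta I$ terms turns these into the numerator and denominator of the first displayed fraction (with $\|\Delta x\|$ factored out); the hypothesis on $b_\kappa$, $\kappa\in\{y,\theta\}$, then gives exactly $\|f_\kappa(\varepsilon,z)-f_\kappa(\varepsilon,z')\|<b_\kappa\cdot\bigl(m(\tfrac{\partial f_x}{\partial x})-\cdots\bigr)\|\Delta x\|\le b_\kappa\|f_x(\varepsilon,z)-f_x(\varepsilon,z')\|$, which is precisely the $y$- and $\theta$-inequalities needed for $Q_b^\varepsilon(f(\varepsilon,z)-f(\varepsilon,z'))>0$. (One must note the denominator is positive — this is implicit in the hypotheses, since the fractions are asserted to be positive and less than $b_\kappa$.)

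The one genuinely different estimate is the $I$-component, and this is where the main subtlety lies. Here the naive mean value bound would give a term $\|[\tfrac{\partial f_I}{\partial I}]\|\,\|\Delta I\|\le \varepsilon_0 a_I\|[\tfrac{\partial f_I}{\partial I}]\|\,\|\Delta x\|$, which is not small enough: we need $\|f_I(\varepsilon,z)-f_I(\varepsilon,z')\|\le \varepsilon a_I\cdot(\text{something})\|\Delta x\|$ with the factor $\varepsilon$ (not just $\varepsilon_0$) in front, because the $I$-width of the target cone $Q_b^\varepsilon$ carries an $\varepsilon$. The resolution — reflected in the appearance of $\tfrac{\partial^2 f}{\partial\varepsilon\partial h}$ in the hypothesis — is to exploit that at $\varepsilon=0$ the map preserves $I$ exactly (from Section~\ref{sec:IFS}, $\pi_I f_{\ell,i,0}(z)=\pi_I z$). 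Thus $f_I(\varepsilon,z)-z_I = \varepsilon\,g_I(\varepsilon,z)$ for a $C^1$ function $g_I$ with $g_I=\int_0^1 \tfrac{\partial^2 f_I}{\partial\varepsilon\partial\,\cdot}$-type bounds; more precisely, $\partial_x f_I, \partial_y f_I, \partial_\theta f_I$ all vanish at $\varepsilon=0$ and $\partial_I f_I\equiv 1$ at $\varepsilon=0$, so writing $f_I(\varepsilon,z)-f_I(\varepsilon,z')$ and subtracting $\Delta I$, the remainder is controlled by $\varepsilon$ times the mixed second derivatives $[\tfrac{\partial^2 f_I}{\partial\varepsilon\partial h}]$. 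Carrying out this Taylor expansion in $\varepsilon$ carefully, combining the leftover $\Delta I$ term (bounded by $\varepsilon a_I\|\Delta x\|$) with the $\varepsilon$-controlled remainder, and factoring out $\varepsilon a_I\|\Delta x\|$, produces exactly the numerator of the second displayed fraction — note the ``$1+\varepsilon_0\|[\tfrac{\partial^2 f_I}{\partial\varepsilon\partial I}]\|$'' term, where the $1$ comes from the surviving $\Delta I$ contribution and the rest from the second-order correction to $\partial_I f_I$. (I would double-check whether the hypothesis as typeset contains a typo: $[\tfrac{\partial f_I}{\partial\varepsilon\partial x}]$ and $[\tfrac{\partial f_y}{\partial\varepsilon\partial y}]$ should presumably both be mixed second derivatives of $f_I$, i.e. $[\tfrac{\partial^2 f_I}{\partial\varepsilon\partial x}]$ and $[\tfrac{\partial^2 f_I}{\partial\varepsilon\partial y}]$ — the proof needs the $I$-components throughout.)

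With both pieces in hand, the conclusion is assembled as follows: denote $D:=m\bigl(\tfrac{\partial f_x}{\partial x}\bigr)-\|[\tfrac{\partial f_x}{\partial y}]\|a_y-\varepsilon_0\|[\tfrac{\partial f_x}{\partial I}]\|a_I-\|[\tfrac{\partial f_x}{\partial\theta}]\|a_\theta$, which the hypotheses force to be positive. Then $\|f_x(\varepsilon,z)-f_x(\varepsilon,z')\|\ge D\|\Delta x\|$, while for $\kappa\in\{y,\theta\}$, $\tfrac{1}{b_\kappa}\|f_\kappa(\varepsilon,z)-f_\kappa(\varepsilon,z')\|< D\|\Delta x\|$ and $\tfrac{1}{\varepsilon a_I/(\varepsilon_0 a_I)\cdot b_I}\cdots$ — more cleanly, $\tfrac{1}{\varepsilon b_I}\|f_I(\varepsilon,z)-f_I(\varepsilon,z')\| < a_I D\|\Delta x\|\le \ldots$; rescaling so the $b_I$-term carries the correct $\varepsilon$, each of the three quotients $\tfrac{1}{b_y}\|\Delta^f y\|$, $\tfrac{1}{\varepsilon b_I}\|\Delta^f I\|$, $\tfrac{1}{b_\theta}\|\Delta^f\theta\|$ is strictly less than $\|f_x(\varepsilon,z)-f_x(\varepsilon,z')\|$. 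Hence $\max\{\cdots\}<\|f_x(\varepsilon,z)-f_x(\varepsilon,z')\|$, i.e. $Q_b^\varepsilon\bigl(f(\varepsilon,z)-f(\varepsilon,z')\bigr)=\|f_x(\varepsilon,z)-f_x(\varepsilon,z')\|^2-\bigl(\max\{\cdots\}\bigr)^2>0$, which is \eqref{eq:param-dep-cone-prop}. The main obstacle, as indicated, is the correct bookkeeping of the $\varepsilon$ versus $\varepsilon_0$ powers in the $I$-direction, which is exactly why the integral-preservation at $\varepsilon=0$ and the mixed second derivatives must enter; everything else is a routine mean value estimate on a convex set.
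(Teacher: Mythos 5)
Your proposal is correct and follows essentially the same route as the paper's appendix proof: mean-value (integral) estimates combined with the slope bounds from the cone condition for the $x,y,\theta$ components, and for the $I$-component the decomposition $f_I(\varepsilon,z_1)-f_I(\varepsilon,z_2)=\pi_I(z_1-z_2)+\varepsilon\int_0^1\int_0^1\frac{\partial^2 f_I}{\partial\varepsilon\,\partial z}\,du\,ds\,(z_1-z_2)$, which uses exactly your key observation that $I$ is preserved at $\varepsilon=0$ so the extra factor of $\varepsilon$ (rather than $\varepsilon_0$) appears, the leftover $\Delta I$ term producing the ``$1$'' in $\bigl(1+\varepsilon_0\bigl\Vert\bigl[\frac{\partial f_I}{\partial\varepsilon\partial I}\bigr]\bigr\Vert\bigr)a_I$. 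Your suspicion about the typesetting is also right: the terms written as $\bigl[\frac{\partial f_I}{\partial\varepsilon\partial x}\bigr]$ and $\bigl[\frac{\partial f_y}{\partial\varepsilon\partial y}\bigr]$ are meant to be mixed second derivatives of $f_I$, and the same slip propagates into the paper's displayed estimate.
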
}

\begin{proof}
The proof is given in the Appendix.
\end{proof}

Lemma \ref{th:param-dep-cones} is used as follows. When validating cone
conditions for a connecting sequence $(N_{\ell ,0},\ldots ,N_{\ell ,k_{\ell
}})$, for some $\ell \in L$, we start with the cone $Q_{a}^{\varepsilon }$
in $N_{\ell ,0}$. We recall that by (\ref{eq:cones-same}) we take the same $%
a=\left( a_{y},a_{I},a_{\theta }\right) $ for all $\ell \in L$. We take $%
b_{0}=a$ and validate the cone conditions for the maps $f_{\ell
,i,\varepsilon }$, for $i=1,\ldots ,k_{\ell }$, by inductively applying
Lemma \ref{th:param-dep-cones}. This way we obtain a sequence $%
b_{i}=\left( b_{i,y},b_{i,I},b_{i,\theta }\right) $, for $i=0,\ldots
,k_{\ell }$, which imply that if $z,z^{\prime }\in N_{\ell ,i-1}$ and $%
Q_{b_{i-1}}^{\varepsilon }\left( z-z^{\prime }\right) >0$ then
\[Q_{b_{i}}^{\varepsilon }\left( f_{\ell ,i,\varepsilon }\left( z\right)
-f_{\ell ,i,\varepsilon }\left( z^{\prime }\right) \right) >0, \qquad\mbox{ for }
i=1,\ldots ,k_{\ell }.\]
Our objective is to return back to the cone $%
Q_{a}^{\varepsilon }$ in $N_{\ell ,k_{\ell }}$. This is established when $%
b_{k_{\ell },y}\leq a_{y},$ $b_{k_{\ell },I}\leq a_{I},$ and $b_{k_{\ell
},\theta }\leq a_{\theta }$, since in such case $Q_{b_{k_{\ell
}}}^{\varepsilon }\left( z-z^{\prime }\right) >0$ implies $%
Q_{a}^{\varepsilon }\left( z-z^{\prime }\right) >0.$

We now give two lemmas which we use to verify conditions \hyperlink{cond:C1.iv}{(C1.iv)}, \hyperlink{cond:C2.iv}{(C2.iv}--\hyperlink{cond:C2.v}{C2.v)}.

\begin{lemma}
\label{lem:c-bound-2}Consider two families of functions $f_{1,\varepsilon }$%
, $f_{0,\varepsilon }$ for $\varepsilon \in \left[ 0,\varepsilon _{0}\right]
.$ Consider also two windows $N_{0}$ and $N_{1}$. If for any $q_{0}\in N_{0}$
and $q_{1}\in N_{1}$ and any $\varepsilon \in \left[ 0,\varepsilon _{0}%
\right] $%
\begin{align*}
\pi _{I}q_{0}+\varepsilon c_{0}& <\pi _{I}f_{0,\varepsilon }(q_{0})<\pi
_{I}q_{0}+\varepsilon C_{0} \\
\pi _{I}q_{1}+\varepsilon c_{1}& <\pi _{I}f_{1,\varepsilon }(q_{1})<\pi
_{I}q_{1}+\varepsilon C_{1},
\end{align*}%
then for any $q_{0}\in N_{0},$ for which $f_{0,\varepsilon }(q_{0})\in
N_{1}, $ and for any $\varepsilon \in \left[ 0,\varepsilon_0 \right] $,
\begin{equation*}
\pi _{I}q_{0}+\varepsilon c_{0}+\varepsilon c_{1}<\pi _{I}f_{1,\varepsilon
}\circ f_{0,\varepsilon }(q_{0})<\pi _{I}q_{0}+\varepsilon C_{0}+\varepsilon
C_{1}.
\end{equation*}
\end{lemma}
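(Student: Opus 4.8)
The plan is to chain together the two given inequalities via a direct substitution, using the hypothesis that $f_{0,\varepsilon}(q_0) \in N_1$. First I would fix an arbitrary $\varepsilon \in [0,\varepsilon_0]$ and an arbitrary $q_0 \in N_0$ for which $q_1 := f_{0,\varepsilon}(q_0) \in N_1$. Applying the first displayed hypothesis to $q_0 \in N_0$ gives
\begin{equation*}
\pi_I q_0 + \varepsilon c_0 < \pi_I f_{0,\varepsilon}(q_0) < \pi_I q_0 + \varepsilon C_0,
\end{equation*}
that is, $\pi_I q_0 + \varepsilon c_0 < \pi_I q_1 < \pi_I q_0 + \varepsilon C_0$.

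Next, since $q_1 \in N_1$ by construction, I would apply the second displayed hypothesis to $q_1$, yielding
\begin{equation*}
\pi_I q_1 + \varepsilon c_1 < \pi_I f_{1,\varepsilon}(q_1) < \pi_I q_1 + \varepsilon C_1.
\end{equation*}
Now $f_{1,\varepsilon}(q_1) = f_{1,\varepsilon} \circ f_{0,\varepsilon}(q_0)$, so combining the two chains of inequalities gives the lower bound $\pi_I f_{1,\varepsilon}\circ f_{0,\varepsilon}(q_0) > \pi_I q_1 + \varepsilon c_1 > (\pi_I q_0 + \varepsilon c_0) + \varepsilon c_1$ and symmetrically the upper bound $\pi_I f_{1,\varepsilon}\circ f_{0,\varepsilon}(q_0) < \pi_I q_1 + \varepsilon C_1 < \pi_I q_0 + \varepsilon C_0 + \varepsilon C_1$, which is exactly the claimed conclusion.

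There is essentially no obstacle here: the only point requiring a moment's care is that the second hypothesis must be invoked at the point $q_1 = f_{0,\varepsilon}(q_0)$, and this is legitimate precisely because of the standing assumption $f_{0,\varepsilon}(q_0) \in N_1$, which places $q_1$ in the domain where the second estimate is valid. The statement then follows by transitivity of the strict inequalities. (The analogous statement for compositions of more than two maps follows by an immediate induction, applying this two-map version repeatedly, which is how the lemma will be used when estimating $\pi_I F_{\ell,\varepsilon}$ along a connecting sequence $f_{\ell,k_\ell,\varepsilon}\circ\cdots\circ f_{\ell,1,\varepsilon}$.)
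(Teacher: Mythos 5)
Your proposal is correct and follows exactly the paper's own argument: set $q_1 = f_{0,\varepsilon}(q_0)$, use the hypothesis $f_{0,\varepsilon}(q_0)\in N_1$ to invoke the second estimate at $q_1$, and chain the strict inequalities, with the upper bound obtained by the mirror computation. Nothing further is needed.
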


\begin{proof}
Taking $q_{1}=f_{0,\varepsilon }(q_{0})$ we have%
\begin{multline*}
\pi _{I}f_{1,\varepsilon }\circ f_{0,\varepsilon }(q_{0})=\pi
_{I}f_{1,\varepsilon }(q_{1})>\pi _{I}q_{1}+\varepsilon c_{1} \\
=\pi _{I}f_{0,\varepsilon }(q_{0})+\varepsilon c_{1}>\pi
_{I}q_{0}+\varepsilon c_{0}+\varepsilon c_{1}.
\end{multline*}%
The upper bound follows from mirror computation, but with reversed
inequalities and by using $C_{0}$ and $C_{1}$ instead of $c_{0}$ and $c_{1}$%
, respectively.
\end{proof}

Lemma \ref{lem:c-bound-2} can be iterated by passing through a connecting
sequence. One can verify assumptions of Lemma \ref{lem:c-bound-2} as follows
(denoting $f_{i}\left( \varepsilon ,q\right):=f_{i,\varepsilon }\left(
q\right)$):

\begin{lemma}
\label{lem:c-bound-3}Assume that
\begin{equation}
c<\left[ \min_{\left( \varepsilon ,q\right) \in \left[ 0,\varepsilon _{0}%
\right] \times N}\frac{\partial \pi _{I}f}{\partial \varepsilon }%
(\varepsilon ,q),\max_{\left( \varepsilon ,q\right) \in \left[ 0,\varepsilon_{0}\right] \times N}\frac{\partial \pi _{I}f}{\partial \varepsilon }%
(\varepsilon ,q)\right] <C,  \label{eq:f-eps-c-bound}
\end{equation}
then for every $q\in N$ and every $\varepsilon \in \left[ 0,\varepsilon _{0}%
\right] $
\begin{equation*}
\pi _{I}q+\varepsilon c<\pi _{I}f(\varepsilon ,q)<\pi _{I}q+\varepsilon C.
\end{equation*}
\end{lemma}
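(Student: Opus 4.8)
The statement to prove is Lemma~\ref{lem:c-bound-3}, which asserts that if the partial derivative $\partial \pi_I f/\partial\varepsilon$ is bounded below by $c$ and above by $C$ on the whole set $[0,\varepsilon_0]\times N$, then for every $q\in N$ and every $\varepsilon\in[0,\varepsilon_0]$ we have $\pi_I q + \varepsilon c < \pi_I f(\varepsilon,q) < \pi_I q + \varepsilon C$.

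\textbf{Approach.} The plan is to fix $q\in N$ and apply the mean value theorem (or the fundamental theorem of calculus) to the scalar function $\varepsilon\mapsto \pi_I f(\varepsilon,q)$ on the interval $[0,\varepsilon]$. The first observation needed is the unperturbed identity: when $\varepsilon=0$ the IFS maps preserve the action, so $\pi_I f(0,q)=\pi_I q$ (this is the fact recorded in Section~\ref{sec:IFS}, $\pi_I f_{\ell,i,\varepsilon=0}(z)=\pi_I z$). Hence $\pi_I f(\varepsilon,q) - \pi_I q = \pi_I f(\varepsilon,q) - \pi_I f(0,q)$.

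\textbf{Key steps, in order.} First, I would note that $f$ is $C^2$ (as assumed just before Lemma~\ref{th:param-dep-cones}), so in particular $\varepsilon\mapsto\pi_I f(\varepsilon,q)$ is $C^1$ on $[0,\varepsilon_0]$. Second, write
\begin{equation*}
\pi_I f(\varepsilon,q) - \pi_I q = \pi_I f(\varepsilon,q) - \pi_I f(0,q) = \int_0^\varepsilon \frac{\partial \pi_I f}{\partial\varepsilon}(s,q)\,ds.
\end{equation*}
Third, invoke hypothesis \eqref{eq:f-eps-c-bound}: for every $s\in[0,\varepsilon_0]$ and every $q\in N$ one has $c < \partial\pi_I f/\partial\varepsilon(s,q) < C$ (the bracket in \eqref{eq:f-eps-c-bound} is the interval enclosure of the derivative over the whole domain, so strict inequalities against $c$ and $C$ hold pointwise). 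Integrating these pointwise inequalities over $s\in[0,\varepsilon]$, with $\varepsilon>0$, gives
\begin{equation*}
\varepsilon c < \int_0^\varepsilon \frac{\partial\pi_I f}{\partial\varepsilon}(s,q)\,ds < \varepsilon C,
\end{equation*}
and combining with the previous display yields $\pi_I q + \varepsilon c < \pi_I f(\varepsilon,q) < \pi_I q + \varepsilon C$, which is the claim. For $\varepsilon=0$ both bounds degenerate to the identity $\pi_I f(0,q)=\pi_I q$, which is consistent (the strict inequalities become equalities only at this boundary point, and the lemma is applied with $\varepsilon>0$ in practice).

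\textbf{Expected main obstacle.} There is essentially no analytic obstacle here; the proof is a one-line application of the fundamental theorem of calculus. The only point requiring a little care is the justification that $\pi_I f(0,q)=\pi_I q$ — one must recall that the section-to-section maps $f_{\ell,i,\varepsilon}$ reduce at $\varepsilon=0$ to flow maps of the autonomous Hamiltonian $H_0$, along which $I=H_0-h_0$ is conserved, and hence so does the return map $F_{\ell,\varepsilon}$. A secondary subtlety, worth a sentence in the writeup, is that in the actual computer-assisted verification the quantity $f$ here stands for a composition of section-to-section maps and the derivative bound \eqref{eq:f-eps-c-bound} is checked via interval arithmetic enclosures, so the constants $c,C$ are genuinely uniform over $(\varepsilon,q)\in[0,\varepsilon_0]\times N$; this uniformity is exactly what makes the integration step legitimate. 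I expect the proof to be two or three lines long, and then a remark that Lemma~\ref{lem:c-bound-3} together with the iteration in Lemma~\ref{lem:c-bound-2} supplies the hypotheses \eqref{eqn:C1_iv} and \hyperlink{cond:C2.iv}{(C2.iv)}--\hyperlink{cond:C2.v}{(C2.v)} along an entire connecting sequence.
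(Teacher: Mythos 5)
Your proposal is correct and follows essentially the same route as the paper: use $\pi_I f(0,q)=\pi_I q$ (preservation of $I$ at $\varepsilon=0$), write the difference as an integral of $\partial\pi_I f/\partial\varepsilon$ via the fundamental theorem of calculus, and bound the integrand by the uniform interval enclosure in \eqref{eq:f-eps-c-bound}; the paper merely parametrizes the integral as $\varepsilon\int_0^1\frac{\partial}{\partial\varepsilon}\pi_I f(s\varepsilon,q)\,ds$ instead of $\int_0^\varepsilon$, which is the same computation after a change of variables.
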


\begin{proof}
For any $q\in N$ and any $\varepsilon \in \left[ 0,\varepsilon_0 \right] $
\begin{multline*}
\pi _{I}f(\varepsilon ,q)=\pi _{I}f(0,q)+\int_{0}^{1}\frac{d}{ds}\pi
_{I}f(s\varepsilon ,q)ds=\pi _{I}q+\varepsilon \int_{0}^{1}\frac{\partial }{%
\partial \varepsilon }\pi _{I}f(s\varepsilon ,q)ds \\
\in \pi _{I}q+\varepsilon \left[ \frac{\partial \pi _{I}f}{\partial
\varepsilon }(\left[ 0,\varepsilon_0 \right] ,N)\right] \in \left( \pi
_{I}q+\varepsilon c,\pi _{I}q+\varepsilon C\right) ,
\end{multline*}%
which completes the proof.
\end{proof}

\subsection{Construction of local coordinates\label%
{sec:main-proof-local-coord}}

\label{sec:local-coordinates}

Here we discuss the local coordinates which we use for our maps. At any
given point $q^{\ast }=\left( X^{\ast },Y^{\ast },P_{X}^{\ast },P_{Y}^{\ast
},\theta ^{\ast }\right) \in \mathbb{R}^{4}\times \mathbb{T}$, we define a $%
4 $-dimensional section $\Sigma =\Sigma \left( q^{\ast }\right) \subset
\mathbb{R}^{4}\times \mathbb{T}$ as in \eqref{eqn:sections}. Let $%
F_{Y}^{\ast }:=P_{X}^{\ast }+Y^{\ast }$, $F_{Y}^{\ast }:=P_{Y}^{\ast
}-X^{\ast }$, which correspond to the vector field (\ref{eq:PER3BP-ode}) at $%
q^{\ast}$ along the $X,Y$ coordinates, respectively. We consider two cases:

\begin{description}
\item[Case 1.] If $\left\vert F_{X}^{\ast }\right\vert >\left\vert
F_{Y}^{\ast }\right\vert $, let $a:=-F_{Y}^{\ast }/F_{X}^{\ast }$ and define
\begin{equation}
\Sigma :=\left\{ \left( X(Y),Y,P_{X},P_{Y},\theta \right) |X\left( Y\right)
=a\left( Y-Y^{\ast }\right) +X^{\ast }\right\} .  \label{eqn:section_Y}
\end{equation}%
This means that $\Sigma $ is parameterized by $\left( Y,P_{X},P_{Y},\theta
\right) $. On this section we define local coordinates $v=\left(
x,y,I,\theta \right) \in \mathbb{R}^{3}\times \mathbb{T}$ as in %
\eqref{eqn:v_coordinates}. These are given by
\begin{equation}
\left( X,Y,P_{X},P_{Y},\theta \right) =\Psi _{\varepsilon }\left( v\right)
:=R_{1}\left( p+\varepsilon w+Av\right) ,  \label{eq:change-case1}
\end{equation}%
where $p,w\in \mathbb{R}^{3}\times \mathbb{T}$ and $A$ is a suitable $%
4\times 4$ matrix (the particular choices of $p,w$ and $A$ we make are given
by \eqref{eq:qh-form}, \eqref{eq:w-form}, and \eqref{eq:A-coord-change-form}%
), and where
\begin{align*}
R_{1}\left( Y,P_{Y},h,\theta \right) & :=\left\{
\begin{array}{lll}
\left( X\left( Y\right) ,Y,\psi _{1}\left( Y,P_{Y},h\right) -Y,P_{Y},\theta
\right)  &  & \text{if }F_{X}^{\ast }>0, \\
\left( X(Y),Y,-\psi _{1}\left( Y,P_{Y},h\right) -Y,PY,\theta \right)  &  &
\text{otherwise,}%
\end{array}%
\right.  \\
\psi _{1}\left( Y,P_{Y},h\right) & :=\sqrt{2\left( h+\Omega \left(
X(Y),Y\right) \right) -\left( P_{Y}-X(Y)\right) ^{2}}.
\end{align*}%
Note that the inverse of $R_{1}$ is $R_{1}^{-1}:\Sigma \rightarrow \mathbb{R}%
^{3}\times \mathbb{T}$%
\begin{equation}
R_{1}^{-1}\left( X,Y,P_{X},P_{Y},\theta \right) =\left( Y,P_{Y},H_{0}\left(
X,Y,P_{X},P_{Y}\right) ,\theta \right) .  \label{eq:energy-prop-Q1}
\end{equation}

\item[Case 2.] If $\left\vert F_{X}^{\ast }\right\vert \leq \left\vert
F_{Y}^{\ast }\right\vert $ then we let $a:=-F_{X}^{\ast }/F_{Y}^{\ast }$ and
define the section as
\begin{equation}
\Sigma =\left\{ \left( X,Y(X),P_{X},P_{Y},\theta \right) |Y(X)=a\left(
X-X^{\ast }\right) +Y^{\ast }\right\} .  \label{eqn:section_X}
\end{equation}%
This means that $\Sigma $ is parameterized by $\left( X,P_{X},P_{Y},\theta
\right) $. We define local coordinates $v=\left( x,y,I,\theta \right) $ on $%
\Sigma $ as%
\begin{equation}
\left( X,Y,P_{X},P_{Y},\theta \right) =\Psi _{\varepsilon }\left( v\right)
:=R_{2}\left( p+\varepsilon w+Av\right) ,  \label{eq:Psi-coord-change}
\end{equation}%
where $A$ is some $4\times 4$ matrix, $p,w$ are some points in $\mathbb{R}%
^{3}\times \mathbb{T}$ and%
\begin{align*}
R_{2}\left( X,P_{X},h,\theta \right) & :=\left\{
\begin{array}{lll}
\left( X,Y(X),P_{X},\psi _{2}\left( X,P_{X},h\right) +X,\theta \right)  &  &
\text{if }F_{Y}^{\ast }>0, \\
\left( X,Y(X),P_{X},-\psi _{2}\left( X,P_{X},h\right) +X,\theta \right)  &
& \text{otherwise,}%
\end{array}%
\right.  \\
\psi _{2}\left( X,P_{X},h\right) & :=\sqrt{2\left( h+\Omega \left(
X,Y(X)\right) \right) -\left( P_{X}+Y(X)\right) ^{2}}.
\end{align*}%
Note that the inverse of $R_{2}$ is $R_{2}^{-1}:\Sigma \rightarrow \mathbb{R}%
^{3}\times \mathbb{T}$%
\begin{equation}
R_{2}^{-1}\left( X,Y,P_{X},P_{Y},\theta \right) =\left( X,P_{X},H_{0}\left(
X,Y,P_{X},P_{Y}\right) ,\theta \right) .  \label{eq:energy-prop-Q2}
\end{equation}
\end{description}

When dealing with Case 1, the section $\Sigma $ is transverse to the flow at
$q^{\ast }$ since the vector $\left( F_{X}^{\ast },F_{Y}^{\ast }\right) $ is
orthogonal to the line $\left( X(Y),Y\right) $ in the $(X,Y)$-plane. The
coordinate change $R_{1}\left( Y,P_{Y},h,\theta \right) $ allows us to use
the energy $h$ instead of the coordinate $P_{X}$. The function $R_{1}$ was
chosen so that%
\begin{equation}
H_{0}\left( R_{1}\left( Y,P_{Y},h,\theta \right) \right) =h.
\label{eq:H-Q1-property}
\end{equation}%
Similarly, when dealing with Case 2, the section $\Sigma $ is transverse to
the flow since $\left( F_{X}^{\ast },F_{Y}^{\ast }\right) $ is orthogonal to
$\left( X,Y(X)\right) ,$ and $R_{2}\left( X,P_{X},h,\theta \right) $ allows
us to use $h$ instead of $P_{Y}$. We also have%
\begin{equation}
H_{0}\left( R_{2}\left( X,P_{X},h,\theta \right) \right) =h.
\label{eq:H-Q2-property}
\end{equation}

\begin{remark}
\label{rem:sections_rigorous} We stress that a section $\Sigma $ as in %
\eqref{eqn:section_Y} (resp. \eqref{eqn:section_X}) is rigorously defined,
as it is obtained by fixing one coordinate $Y$ (resp. $X$) and solving for
the remaining coordinates $X,P_{X},P_{Y},\theta $ (resp. $%
Y,P_{X},P_{Y},\theta $). These coordinates are later transformed into the
coordinates $v=(x,y,I,\theta )$, which are then used in the numerical
computations.
\end{remark}

In all of our coordinate changes, we will always choose%
\begin{equation}
p=\left\{
\begin{array}{lll}
\left( Y^{\ast },P_{Y}^{\ast },H_{0}\left( q^{\ast }\right) ,\theta ^{\ast
}\right) & \qquad & \text{if }q^{\ast }\text{ is as in Case 1,} \\
\left( X^{\ast },P_{X}^{\ast },H_{0}\left( q^{\ast }\right) ,\theta ^{\ast
}\right) &  & \text{if }q^{\ast }\text{ is as in Case 2,}%
\end{array}%
\right.  \label{eq:qh-form}
\end{equation}%
always take $w$ of the form
\begin{equation}
w=\left( w_{x},w_{y},0,0\right)  \label{eq:w-form}
\end{equation}%
and always choose $A$ of the form%
\begin{equation}
A=\left(
\begin{array}{llll}
a_{11} & a_{12} & a_{13} & 0 \\
a_{21} & a_{22} & a_{23} & 0 \\
0 & 0 & 1 & 0 \\
0 & 0 & 0 & 1%
\end{array}%
\right) .  \label{eq:A-coord-change-form}
\end{equation}

\begin{lemma}
\label{lem:H-I-relation}If $p,$ $w$ and $A$ are of the form (\ref{eq:qh-form}%
--\ref{eq:A-coord-change-form}) then
\begin{equation*}
H_{0}(\Psi _{\varepsilon }\left( x,y,I,\theta \right) )=H_{0}(q^{\ast })+I.
\end{equation*}
\end{lemma}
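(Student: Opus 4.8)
The plan is to directly track how the coordinate change $\Psi_\varepsilon$ interacts with the unperturbed energy $H_0$, exploiting the structure of $p$, $w$, and $A$ forced by \eqref{eq:qh-form}--\eqref{eq:A-coord-change-form}. First I would observe that in both Case 1 and Case 2 the map $\Psi_\varepsilon$ factors as $\Psi_\varepsilon(v) = R_j(p + \varepsilon w + A v)$ for $j\in\{1,2\}$, and that the key identities \eqref{eq:H-Q1-property} and \eqref{eq:H-Q2-property} say precisely that $R_j$ is built so that $H_0\circ R_j$ equals the third coordinate of its argument. So the whole computation reduces to identifying the third component of $p + \varepsilon w + A v$.

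Next I would compute that third component explicitly. Writing $v = (x,y,I,\theta)$, the form of $A$ in \eqref{eq:A-coord-change-form} shows that its third row is $(0,0,1,0)$, so $(A v)_3 = I$. The form of $w$ in \eqref{eq:w-form} gives $w_3 = 0$, so $\varepsilon w$ contributes nothing to the third coordinate. Finally, the form of $p$ in \eqref{eq:qh-form} gives $p_3 = H_0(q^\ast)$ in either case. Adding these, the third coordinate of $p + \varepsilon w + A v$ is exactly $H_0(q^\ast) + I$. Combining with \eqref{eq:H-Q1-property} or \eqref{eq:H-Q2-property} (according to whether $q^\ast$ is as in Case 1 or Case 2), we get
\begin{equation*}
H_0(\Psi_\varepsilon(x,y,I,\theta)) = H_0(R_j(\,\cdot\,, \,\cdot\,, H_0(q^\ast)+I, \,\cdot\,)) = H_0(q^\ast) + I,
\end{equation*}
which is the claim.

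There is essentially no obstacle here: the lemma is a bookkeeping statement, and the only thing to be careful about is checking the two cases separately and confirming that the sign conventions in the definitions of $R_1$, $R_2$ (the $\pm\psi_i$ branches) do not affect the third coordinate — indeed they only modify the $P_X$ (resp. $P_Y$) slot, not the energy slot, so \eqref{eq:energy-prop-Q1} and \eqref{eq:energy-prop-Q2} still hold as stated. Thus the proof is just the substitution above, and I would present it in a few lines.
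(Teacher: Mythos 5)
Your proof is correct and follows essentially the same route as the paper: both arguments reduce the claim to reading off the third component of $p+\varepsilon w+Av$ (which is $H_0(q^\ast)+0+I$ by the forms \eqref{eq:qh-form}--\eqref{eq:A-coord-change-form}) and then invoke the defining property of $R_1$, $R_2$ that this slot is the energy — you cite the forward identities \eqref{eq:H-Q1-property}, \eqref{eq:H-Q2-property} while the paper equivalently uses the inverse formulas \eqref{eq:energy-prop-Q1}, \eqref{eq:energy-prop-Q2}. No gap; nothing further is needed.
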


\begin{proof}
In Case 1, by (\ref{eq:change-case1}), (\ref{eq:qh-form}) and (\ref%
{eq:A-coord-change-form}) we have%
\begin{equation*}
\pi _{I}R_{1}^{-1}\left( \Psi _{\varepsilon }\left( x,y,I,\theta \right)
\right) =\pi _{I}\left( p+\varepsilon w+A\left( x,y,I,\theta \right) \right)
=H_{0}\left( q^{\ast }\right) +0+I,
\end{equation*}%
and the result follows from (\ref{eq:energy-prop-Q1}). In Case 2 the proof
is identical, using $R_{2}$ instead of $R_{1}$ and (\ref{eq:energy-prop-Q2})
instead of (\ref{eq:energy-prop-Q1}).
\end{proof}

From now on we assume that all subsequent coordinate changes defined by (\ref%
{eq:change-case1}) or (\ref{eq:Psi-coord-change}), involving $p,w$ and $A$,
take them of the form (\ref{eq:qh-form}--\ref{eq:A-coord-change-form}). The
choice of the coefficients of $w$ and $A$ can depend on the choice of $%
q^{\ast }$.

\begin{theorem}
\label{th:return-change-H}Consider a sequence of points $q_{0}^{\ast
},\ldots ,q_{k}^{\ast }$ with $q_{k}^{\ast }=q_{0}^{\ast }$ at which we
position sections $\Sigma _{0},\ldots ,\Sigma _{k},$ with $\Sigma
_{k}=\Sigma _{0}$. Consider section-to-section maps along the flow $\mathcal{%
P}_{i}^{\varepsilon }:\Sigma _{i-1}\rightarrow \Sigma _{i}$ together with
local maps $f_{i,\varepsilon }=\Psi _{i,\varepsilon }^{-1}\circ \mathcal{P}%
_{i}^{\varepsilon }\circ \Psi _{i-1,\varepsilon }$, for $i=1,\ldots ,k$. If $%
w_{k}=w_{0}$ and $A_{k}=A_{0}$, then for $q=\Psi _{0,\varepsilon }\left(
v\right) $%
\begin{equation*}
H_{0}\left( \mathcal{P}_{k}^{\varepsilon }\circ \ldots \circ \mathcal{P}%
_{1}^{\varepsilon }\left( q\right) \right) -H_{0}\left( q\right) =\pi
_{I}f_{k,\varepsilon }\circ \ldots \circ f_{1,\varepsilon }(v)-\pi _{I}v.
\end{equation*}
\end{theorem}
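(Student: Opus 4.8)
The plan is to chain together the energy identities that were already established for individual sections and for the return maps. The key observation is Lemma~\ref{lem:H-I-relation}, which says that in the local coordinates attached at $q^\ast$, the action coordinate $I$ is exactly the offset of $H_0$ from $H_0(q^\ast)$: $H_0(\Psi_\varepsilon(x,y,I,\theta))=H_0(q^\ast)+I$. Equivalently, $\pi_I \Psi_{i,\varepsilon}^{-1}(q) = H_0(q)-H_0(q_i^\ast)$ for any $q\in\Sigma_i$. So the coordinate change is "calibrated" to read off the energy relative to the base point of the section.

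First I would write $g_i := \pi_I \circ \Psi_{i,\varepsilon}^{-1} : \Sigma_i \to \mathbb{R}$, so that $g_i(q) = H_0(q) - H_0(q_i^\ast)$ by Lemma~\ref{lem:H-I-relation}. Then, since $f_{i,\varepsilon} = \Psi_{i,\varepsilon}^{-1}\circ \mathcal{P}_i^\varepsilon \circ \Psi_{i-1,\varepsilon}$, the composition $f_{k,\varepsilon}\circ\cdots\circ f_{1,\varepsilon}$ telescopes: all the intermediate $\Psi_{i,\varepsilon}\circ \Psi_{i,\varepsilon}^{-1}$ pairs cancel, leaving
\begin{equation*}
f_{k,\varepsilon}\circ\cdots\circ f_{1,\varepsilon} = \Psi_{k,\varepsilon}^{-1}\circ \mathcal{P}_k^\varepsilon\circ\cdots\circ\mathcal{P}_1^\varepsilon\circ \Psi_{0,\varepsilon}.
\end{equation*}
Applying $\pi_I$ and using $g_k = \pi_I\circ\Psi_{k,\varepsilon}^{-1}$, for $q=\Psi_{0,\varepsilon}(v)$ we get
\begin{equation*}
\pi_I f_{k,\varepsilon}\circ\cdots\circ f_{1,\varepsilon}(v) = g_k\!\left(\mathcal{P}_k^\varepsilon\circ\cdots\circ\mathcal{P}_1^\varepsilon(q)\right) = H_0\!\left(\mathcal{P}_k^\varepsilon\circ\cdots\circ\mathcal{P}_1^\varepsilon(q)\right) - H_0(q_k^\ast).
\end{equation*}
Here I use the hypothesis $q_k^\ast = q_0^\ast$ (and $\Sigma_k=\Sigma_0$, $w_k=w_0$, $A_k=A_0$, which guarantee $\Psi_{k,\varepsilon}=\Psi_{0,\varepsilon}$, hence that $g_k=g_0$ is a genuine function on the common section $\Sigma_0$). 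Similarly $\pi_I v = \pi_I \Psi_{0,\varepsilon}^{-1}(q) = g_0(q) = H_0(q) - H_0(q_0^\ast)$. Subtracting and using $H_0(q_k^\ast)=H_0(q_0^\ast)$ cancels the base-point terms and yields exactly
\begin{equation*}
H_0\!\left(\mathcal{P}_k^\varepsilon\circ\cdots\circ\mathcal{P}_1^\varepsilon(q)\right) - H_0(q) = \pi_I f_{k,\varepsilon}\circ\cdots\circ f_{1,\varepsilon}(v) - \pi_I v.
\end{equation*}

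The only genuinely delicate point — and the one I would be most careful about — is making sure the maps are composable as stated, i.e. that $f_{k,\varepsilon}$ lands in the same chart on $\Sigma_0$ from which $f_{1,\varepsilon}$ starts, so that the telescoping cancellation $\Psi_{i,\varepsilon}\circ\Psi_{i,\varepsilon}^{-1}=\mathrm{id}$ is valid on the relevant (locally defined) domains. This is exactly where the hypotheses $q_k^\ast=q_0^\ast$, $w_k=w_0$, $A_k=A_0$ are used: they force $\Psi_{k,\varepsilon}$ and $\Psi_{0,\varepsilon}$ to be literally the same coordinate chart, so the first and last charts agree and the cancellation goes through. Everything else is the bookkeeping of composition and the direct application of Lemma~\ref{lem:H-I-relation}; no estimates or analytic input are needed beyond what is already in place.
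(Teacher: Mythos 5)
Your proposal is correct and follows essentially the same route as the paper's proof: use the hypotheses $q_k^\ast=q_0^\ast$, $w_k=w_0$, $A_k=A_0$ to conclude $\Psi_{k,\varepsilon}=\Psi_{0,\varepsilon}$, telescope the composition $f_{k,\varepsilon}\circ\cdots\circ f_{1,\varepsilon}=\Psi_{0,\varepsilon}^{-1}\circ\mathcal{P}_k^{\varepsilon}\circ\cdots\circ\mathcal{P}_1^{\varepsilon}\circ\Psi_{0,\varepsilon}$, and apply Lemma~\ref{lem:H-I-relation} so the base-point energies $H_0(q_0^\ast)$ cancel. The only cosmetic difference is that you apply $\pi_I\circ\Psi_{k,\varepsilon}^{-1}$ to the flow side while the paper applies $\Psi_{0,\varepsilon}$ to the local-map side before evaluating $H_0$, which is the same computation.
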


\begin{proof}
Since $q_{k}^{\ast }=q_{0}^{\ast },$ $w_{k}=w_{0}$ and $A_{k}=A_{0}$ we see
that $\Psi _{k,\varepsilon }=\Psi _{0,\varepsilon }$, so from the definition
of the maps $f_{i,\varepsilon }$ it follows that
\begin{equation*}
\Psi _{0,\varepsilon }\circ f_{k,\varepsilon }\circ \ldots \circ
f_{1,\varepsilon }(v)=\mathcal{P}_{k}^{\varepsilon }\circ \ldots \circ
\mathcal{P}_{1}^{\varepsilon }\circ \Psi _{0,\varepsilon }(v).
\end{equation*}%
From the fact that $q=\Psi _{0,\varepsilon }\left( v\right) $ and by
applying Lemma \ref{lem:H-I-relation} we see that
\begin{eqnarray*}
H_{0}\left( \mathcal{P}_{k}^{\varepsilon }\circ \ldots \circ \mathcal{P}%
_{1}^{\varepsilon }\left( q\right) \right) -H_{0}\left( q\right)
&=&H_{0}\left( \Psi _{0,\varepsilon }\left( f_{k,\varepsilon }\circ \ldots
\circ f_{1,\varepsilon }(v)\right) \right) -H_{0}\left( \Psi _{0,\varepsilon
}(v)\right) \\
&=&H_{0}(q_{0}^{\ast })+\pi _{I}f_{k,\varepsilon }\circ \ldots \circ
f_{1,\varepsilon }(v)-H_{0}(q_{0}^{\ast })-\pi _{I}v \\
&=&\pi _{I}f_{k,\varepsilon }\circ \ldots \circ f_{1,\varepsilon }(v)-\pi
_{I}v,
\end{eqnarray*}%
as required.
\end{proof}

\begin{remark}
Theorem \ref{th:return-change-H} ensures that regardless of the choices of
the particular coefficients of $w_{i}$ and $A_{i}$, if we return to the same
local coordinates, then the change in $I$ in the local coordinates
corresponds precisely to the change in $H_{0}$ in the original coordinates.
\end{remark}

Let $q_{1}^{\ast }$ and $q_{2}^{\ast }$ be two points which lie on a
trajectory of the flow. Let $\Sigma _{1}$ and $\Sigma _{2}$ be two sections
at $q_{1}^{\ast }$ and $q_{2}^{\ast },$ respectively, and let $\mathcal{P}%
^{\varepsilon }:\Sigma _{1}\rightarrow \Sigma _{2}$ be a section-to-section
map along the flow. We will choose the matrices $A_{1},A_{2}$ of the form (%
\ref{eq:A-coord-change-form}) to define local coordinate changes $\Psi
_{1,\varepsilon },\Psi _{2,\varepsilon }$ respectively, to have coefficients
so that for the local map
\begin{equation}
f_{\varepsilon }:=\Psi _{2,\varepsilon }^{-1}\circ \mathcal{P}^{\varepsilon
}\circ \Psi _{1,\varepsilon },  \label{eq:local-map-form}
\end{equation}%
we obtain%
\begin{equation}
Df_{\varepsilon =0}(0)=\left(
\begin{array}{cccc}
\lambda & 0 & 0 & 0 \\
0 & \frac{1}{\lambda } & 0 & 0 \\
0 & 0 & 1 & 0 \\
\Theta _{1} & \Theta _{2} & \Theta _{3} & 1%
\end{array}%
\right) ,  \label{eq:derivative-form-of-local-map}
\end{equation}%
where $\lambda >1$ and $\Theta _{1},$ $\Theta _{2},$ $\Theta _{3},$ are some
numbers. (The presence of $\Theta _{1},$ $\Theta _{2},$ $\Theta _{3}$ is a
result of having zeros in the lower left part of the matrices (\ref%
{eq:A-coord-change-form}).)

A good choice of $w_{1},w_{2}$ of the form (\ref{eq:w-form}) is one for
which
\begin{equation}
\frac{d}{d\varepsilon }\pi _{x,y}f_{\varepsilon }(0)|_{\varepsilon =0}=0,
\label{eq:choice-of-w}
\end{equation}%
or, if this is not possible, one that makes the left hand side of (\ref%
{eq:choice-of-w}) as close to zero as possible.

The reason for (\ref{eq:derivative-form-of-local-map}) and (\ref%
{eq:choice-of-w}) is that then for $v=\left( x,y,I,\theta \right) $ which is
close to zero and for $\varepsilon $ also close to zero we will have $\pi
_{x,y}f_{\varepsilon }(v)\approx \left( \lambda x,y/\lambda \right) .$ This
is useful for the validation of correct alignment of windows.

\begin{remark}
\label{rem:I-is-energy} We choose $A_{i}$ and $w_{i}$, for $i=1,2$, by
solving (\ref{eq:derivative-form-of-local-map}) and (\ref{eq:choice-of-w}).
We do not need though to solve these equations analytically. It is
sufficient to find their solutions by means of non-rigorous numerical
computations. It is then important to compute rigorous enclosures of $\Psi
_{i,\varepsilon }$ and $\Psi _{i,\varepsilon }^{-1}$ (for the particular
choices of $w_{i}$ and $A_{i}$ which we decide on) when performing rigorous,
interval arithmetic based validation of the needed conditions.
\end{remark}

\begin{figure}[tbp]
\begin{center}
\includegraphics[height=4cm]{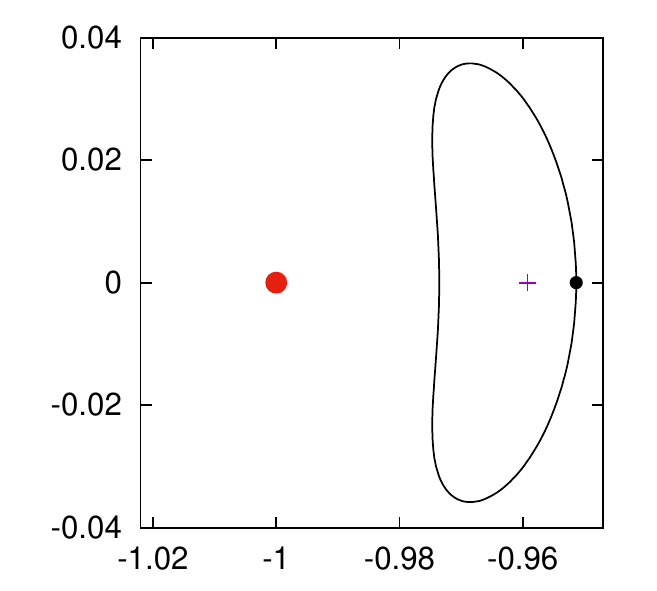}\qquad %
\includegraphics[height=4cm]{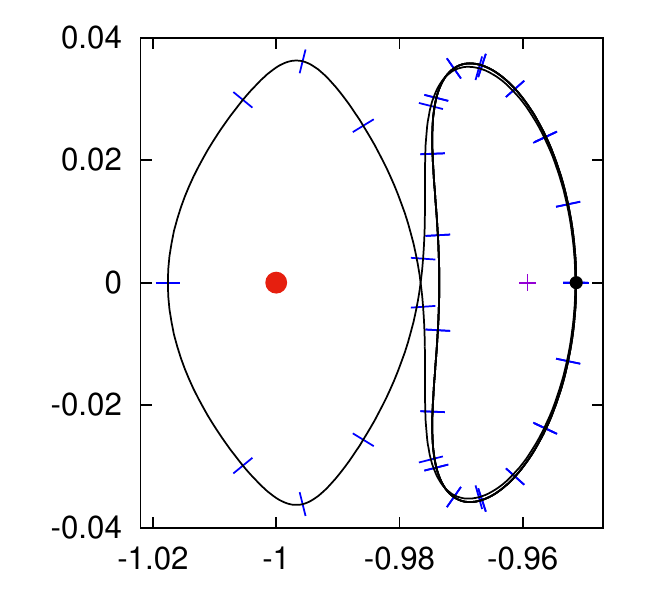}
\end{center}
\caption{The Lyapunov orbit to the left, and its homoclinic orbit to the
right. The point $L_{1}$ is depicted with a cross and the smaller primary is
depicted with a dot. Along the homoclinic we have a sequence of points, at
which we position sections, which are transversal to the flow. The dot on
the Lyapunov orbit indicates where the strips $S_{u}$ and $S_{d}$ are
positioned.}
\label{fig:LyapOrb}
\end{figure}

\subsection{Construction of sections and energy strips\label%
{sec:main-proof-sections}}

\label{sec:choice_of_sections} In this section we describe how we choose a
sequence of points at which we position sections. The section $\Sigma
=\{Y=0\}$ will play a special role in our construction, since on this
section we shall position our energy strips $\mathbf{S}^{u}$ and $\mathbf{S}%
^{d}$ as in (\ref{eq:strips-Sa-Sb}). We define the local coordinates on $%
\Sigma =\{Y=0\}$ by choosing
\begin{eqnarray}
q^{\ast } &=&\left( -0.9513385,0,0,-1.02124587611,0\right) ,
\label{eq:q-star-3bp} \\
p &=&\left( \pi _{X}q^{\ast },\pi _{P_{X}}q^{\ast },H_{0}\left( q^{\ast
}\right) ,0\right) ,  \label{eq:qh-3bp} \\
w &=&\left( 0,0,0,0\right) ,
\end{eqnarray}%
and taking%
\begin{equation}
A=\left(
\begin{array}{llll}
0.377372287914 & 0.377372287914 & 1.53559852923 & 0 \\
0.926061637427 & -0.926061637427 & 0 & 0 \\
0 & 0 & 1 & 0 \\
0 & 0 & 0 & 1%
\end{array}%
\right) .  \label{eq:A-3bp}
\end{equation}

At $q^{\ast }$ the vector field in the direction $X$ is zero, which means
that at the section $\Sigma =\left\{ Y=0\right\} $ we associate the local
coordinates from Case 2. This way we obtain a change to local coordinates at
$\Sigma =\left\{ Y=0\right\} $ defined as
\begin{equation}
\Psi \left( v\right) =R_{2}\left( p+Av\right) .
\label{eq:Psi-change-at-strip}
\end{equation}%
We note that since we take $w=0$ to define $\Psi $, it is independent of $%
\varepsilon $. (This will not always be the case and on other sections which
are used in our construction we can choose $w$ to be non zero.) The
particular coefficients of $A$ in (\ref{eq:A-3bp}) have been chosen so that
we obtain (\ref{eq:derivative-form-of-local-map}) for the local maps used in
the connecting sequences, which are introduced in Section \ref%
{sec:main-proof-connecting-sequences}.

We consider two energy strips in the local coordinates given by $\Psi
_{\varepsilon }$ as
\begin{eqnarray}
\mathbf{S}^{u} &=&\left[ -r,r\right] \times \left[ -r,r\right] \times
\mathbb{R}\times \lbrack \theta _{1}^{u},\theta _{2}^{u}],
\label{eq:Su-choice} \\
\mathbf{S}^{d} &=&\left[ -r,r\right] \times \left[ -r,r\right] \times
\mathbb{R}\times \lbrack \theta _{1}^{d},\theta _{2}^{d}],  \notag
\end{eqnarray}%
where $r=10^{-6}$ and
\begin{equation*}
\begin{array}{lll}
\theta _{1}^{u}=\frac{1}{2}\pi -0.08, & \qquad  & \theta _{2}^{u}=\frac{1}{2}%
\pi +0.08,\smallskip  \\
\theta _{1}^{d}=\frac{3}{2}\pi -0.08, &  & \theta _{2}^{d}=\frac{3}{2}\pi
+0.08.%
\end{array}%
\end{equation*}%
The coordinates in the above Cartesian product represent: expansion,
contraction, energy, angle, respectively. (In this case $n_{u}=n_{s}=n_{c}=1$%
.)

Both strips lie on the same section $\Sigma =\left\{ Y=0\right\} $. We shall
make a distinction though and write
\begin{equation*}
\Sigma _{u}:=\{Y=0\}\cap \{\theta \in \lbrack \theta _{1}^{u},\theta
_{2}^{u}]\}\qquad \text{and\qquad }\Sigma _{d}:=\{Y=0\}\cap \{\theta \in
\lbrack \theta _{1}^{d},\theta _{2}^{d}]\}.
\end{equation*}

The remainder of the points around which we position the sections are on a
homoclinic orbit to the Lyapunov orbit of the PCR3BP. The homoclinic orbit
is computed via non-rigorous numerics, and the corresponding points are
depicted in the right hand plot in Figure \ref{fig:LyapOrb}. We will take
sequences of points $\left\{ q_{i}\right\} _{i=0}^{n}$ on the homoclinic,
with $q_{0}\in \Sigma =\{Y=0\}$, and define a sequence of sections $\left\{
\Sigma _{i}\right\} _{i=0}^{n}$ and corresponding local coordinates as
discussed in Cases 1 and 2 from Section \ref{sec:local-coordinates}. The
sections $\Sigma _{i}$ are rigorously defined as per Remark \ref%
{rem:sections_rigorous}.

\begin{figure}[tbp]
\begin{center}
\includegraphics[width=12cm]{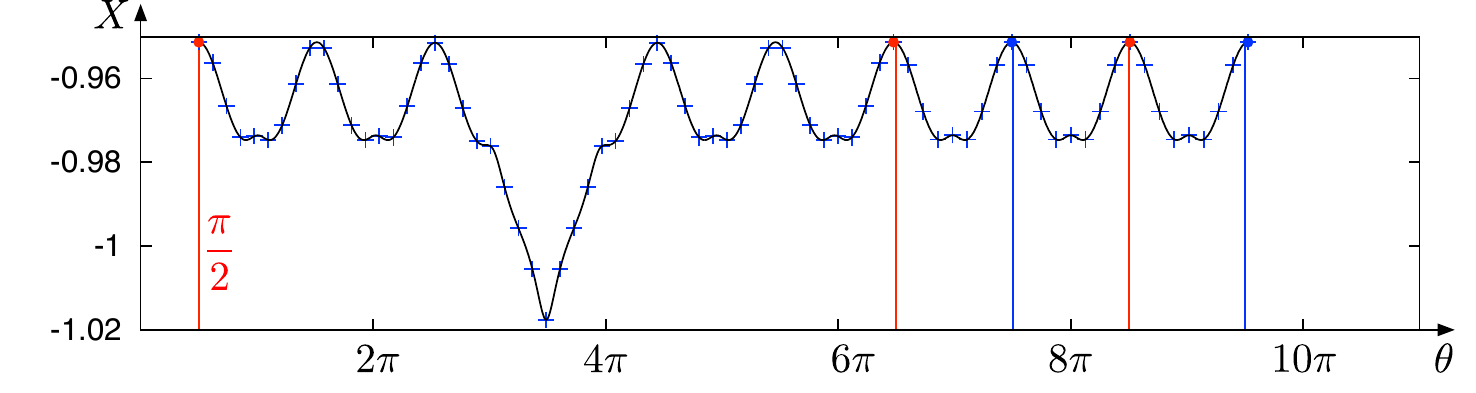}
\par
\includegraphics[height=2.5cm]{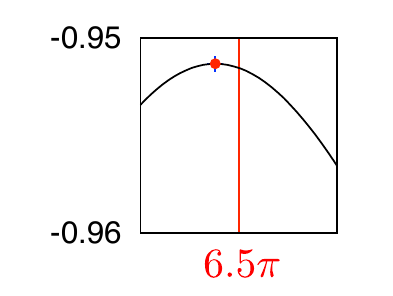} %
\includegraphics[height=2.5cm]{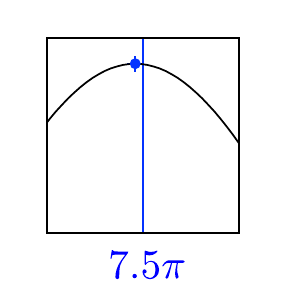} %
\includegraphics[height=2.5cm]{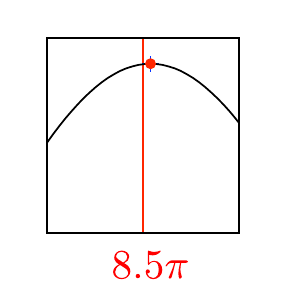} %
\includegraphics[height=2.5cm]{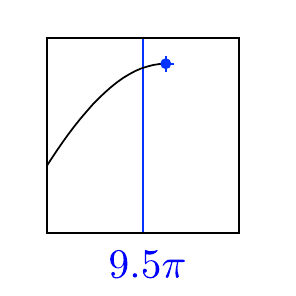}
\end{center}
\caption{\correctiona{The homoclinic orbit which starts from $\Sigma _{u}$. }
The crosses depict the points where the orbit intersects the sections
(compare with Figure \protect\ref{fig:LyapOrb}). The red and blue dots are
where it intersects with $\{Y=0\}$. The red lines are at $\protect\theta =%
\frac{1}{2}\protect\pi $ and the blue lines are at $\protect\theta =\frac{3}{%
2}\protect\pi $, modulo $2\protect\pi $. }
\label{fig:homoclinic_011}
\end{figure}

\correctiona{If we consider the homoclinic orbit in the extended phase space which
includes $\theta $, then we can observe the change of $\theta$ along $\left\{
q_{i}\right\} _{i=0}^{n}$. } We shall use the notation
\begin{equation*}
\theta _{0}=\pi _{\theta }q_{0}.
\end{equation*}%
We have chosen our Lyapunov orbit and its homoclinic from Figure \ref%
{fig:LyapOrb} so that for any choice of $\theta _{0}\in \lbrack 0,2\pi )$ we
have the following properties. (The discussion below is complemented by
Figure \ref{fig:homoclinic_011}, which corresponds to the case when $\theta
_{0}=\pi /2$.)

Depending on the number of turns the homoclinic orbit makes around the
Lyapunov orbit, it returns to $\Sigma =\{Y=0\}$ at a different angle. If we
start from $q_{0}\in \Sigma =\{Y=0\}$ and take $2$ turns around the Lyapunov
orbit, go around the smaller primary, and make $2$ turns around the Lyapunov
orbit again, then we arrive at a point $q_{50}\in \Sigma =\{Y=0\}$, for
\begin{equation}
\pi _{\theta }q_{50}=\theta _{0}-0.0750229.  \label{eq:n_uu_1}
\end{equation} which
(For the case when $\theta _{0}=\pi /2$, this feature is depicted in Figure %
\ref{fig:homoclinic_011}. On the plot, the points $q_{i}$ are depicted by
the crosses. The red and blue vertical lines represent the positions of
center angles of strips $\mathbf{S}^{u}$ and $\mathbf{S}^{d}$ ,
respectively. The projection of the point $q_{50}$ onto the $\theta ,X$
coordinates is depicted by the red dot above $6.5\pi $.)

When we take another turn along the homoclinic, we return to a point $
q_{58}\in \Sigma =\{Y=0\}$, for which%
\begin{equation}
\pi _{\theta }q_{58}=\theta _{0}+\pi -0.0249904.  \label{eq:n_ud_1}
\end{equation}%
(For the case when $\theta _{0}=\pi /2$ the projection of the point $q_{58}$
onto the $\theta ,X$ coordinates is depicted by the blue dot above $7.5\pi $
in Figure \ref{fig:homoclinic_011}.)

After another turn around the Lyapunov orbit we arrive at $q_{66}\in \Sigma
=\{Y=0\}$, for which%
\begin{equation}
\pi _{\theta }q_{66}=\theta _{0}+0.0250421.  \label{eq:n_uu_2}
\end{equation}%
(Red dot above $8.5\pi $ in Figure \ref{fig:homoclinic_011}.)

Yet another turn around the Lyapunov orbit results in $q_{74}\in \Sigma
=\{Y=0\}$, for which%
\begin{equation}
\pi _{\theta }q_{74}=\theta _{0}+\pi +0.0750746.  \label{eq:n_ud_2}
\end{equation}%
(Blue dot above $9.5\pi $ in Figure \ref{fig:homoclinic_011}.)

From (\ref{eq:n_uu_1}), (\ref{eq:n_ud_1}), (\ref{eq:n_uu_2}), and (\ref%
{eq:n_ud_2}) and looking at Figure \ref{fig:homoclinic_011}, we can see that
for $q_{0}\in \Sigma _{u}$ we will have:

\begin{itemize}
\item When $\theta _{0}\geq \pi /2,$ the composition of $n_{1}^{uu}:=50$
section-to-section maps takes the point $q_{0}\in \Sigma _{u}$ to $%
q_{n_{1}^{uu}}\in \Sigma _{u}$.

\item When $\theta _{0}\geq \pi /2,$ the composition of $n_{1}^{ud}:=58$
section-to-section maps takes the point $q_{0}\in \Sigma _{u}$ to $%
q_{n_{1}^{ud}}\in \Sigma _{d}$.

\item When $\theta _{0}\leq \pi /2,$ the composition of $n_{2}^{uu}:=66$
section-to-section maps takes the point $q_{0}\in \Sigma _{u}$ to $%
q_{n_{2}^{uu}}\in \Sigma _{u}$.

\item When $\theta _{0}\leq \pi /2,$ the composition of $n_{2}^{ud}:=74$
section-to-section maps takes the point $q_{0}\in \Sigma _{u}$ to $%
q_{n_{2}^{ud}}\in \Sigma _{d}$.
\end{itemize}

The choices of the superscripts in $n_{1}^{ui},n_{2}^{ui}$, for $i\in
\{u,d\} $ are to indicate that it takes such numbers of section-to-section
maps to get from $\Sigma _{u}$ to $\Sigma _{i}$.

The above was possible because we have chosen our Lyapunov orbit carefully.
Our Lyapunov orbit has a homoclinic orbit such that after we pass through an
excursion along it from $\{Y=0\}$ to $\{Y=0\}$ we ``move with the angle to
the left". Our Lyapunov orbit has a period slightly longer than $\pi $ so by
making turns around it from $\{Y=0\}$ to $\{Y=0\}$, we ``move with the angle
to the right".

\begin{figure}[tbp]
\begin{center}
\includegraphics[width=12cm]{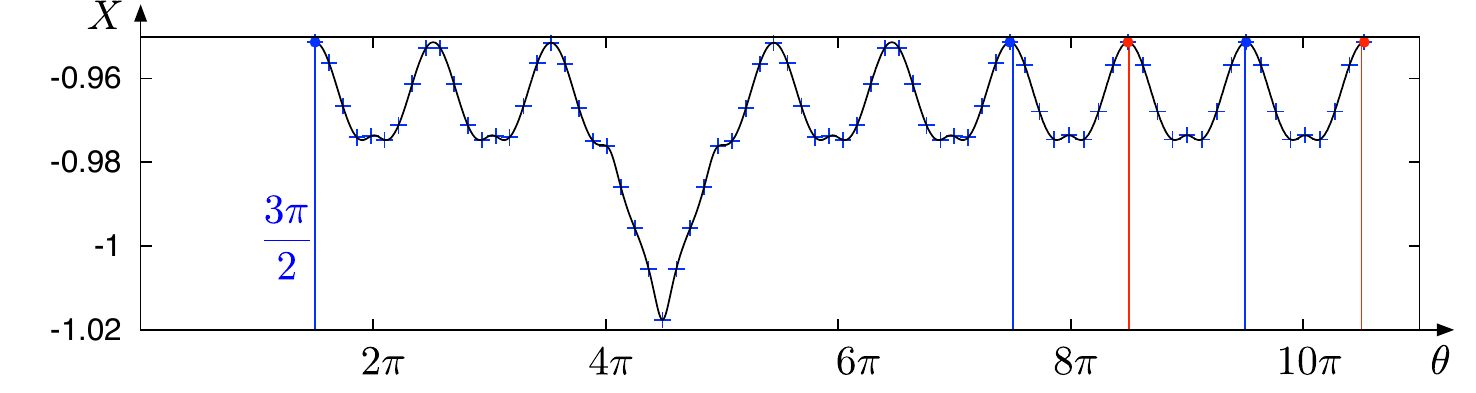}
\par
\includegraphics[height=2.5cm]{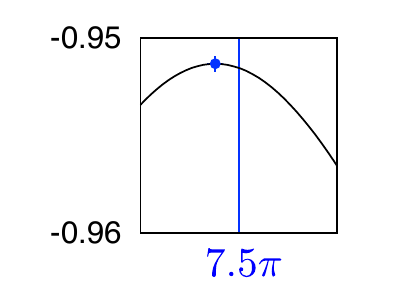} %
\includegraphics[height=2.5cm]{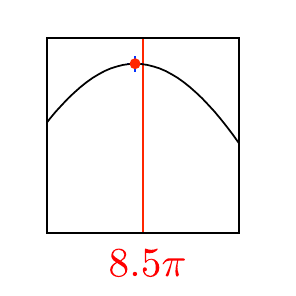} %
\includegraphics[height=2.5cm]{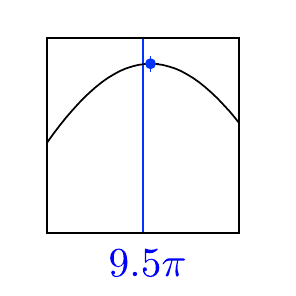} %
\includegraphics[height=2.5cm]{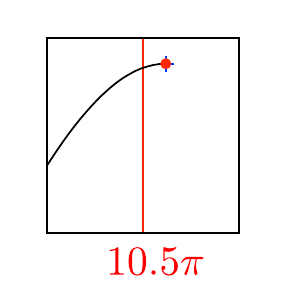}
\end{center}
\caption{\correctiona{The homoclinic orbit starting from $\Sigma _{d}$.}
(Compare with Figures \protect\ref{fig:LyapOrb} and \protect\ref%
{fig:homoclinic_011}). The red and blue dots are where the orbit intersects
with $\{Y=0\}$. The red lines are at $\protect\theta =\frac{1}{2}\protect\pi
$ and the blue lines are at $\protect\theta =\frac{3}{2}\protect\pi $,
modulo $2\protect\pi $.}
\label{fig:homoclinic_101}
\end{figure}

We can also choose $q_{0}\in \Sigma _{d}$. (The case of $\theta _{0}=3\pi /2$
is depicted in Figure \ref{fig:homoclinic_101}.) Then from (\ref{eq:n_uu_1}%
), (\ref{eq:n_ud_1}), (\ref{eq:n_uu_2}) and (\ref{eq:n_ud_2}) and looking at
Figure \ref{fig:homoclinic_101} we see that:

\begin{itemize}
\item When $\theta _{0}\geq 3\pi /2,$ the composition of $n_{1}^{dd}:=50$
section-to-section maps takes the point $q_{0}\in \Sigma _{d}$ to $%
q_{n_{1}^{dd}}\in \Sigma _{d}$.

\item When $\theta _{0}\geq 3\pi /2,$ the composition of $n_{1}^{du}:=58$
section-to-section maps takes the point $q_{0}\in \Sigma _{d}$ to $%
q_{n_{1}^{du}}\in $ $\Sigma _{u}$.

\item When $\theta _{0}\leq 3\pi /2,$ the composition of $n_{2}^{dd}:=66$
section-to-section maps takes the point $q_{0}\in \Sigma _{d}$ to $%
q_{n_{2}^{dd}}\in \Sigma _{d}$.

\item When $\theta _{0}\leq 3\pi /2,$ the composition of $n_{2}^{du}:=74$
section-to-section maps takes the point $q_{0}\in \Sigma _{d}$ to $%
q_{n_{2}^{du}}\in \Sigma _{u}$.
\end{itemize}

\begin{remark}
In the language of the scattering map theory \cite{DelshamsLS08a}, the above 
construction can be expressed by saying that we obtain pseudo-orbits
generated by composing the scattering map from the \correctiona{Lyapunov NHIM} \correctiona{(consisting of a family of Lyapunov orbits over some energy interval)} to itself,
with iterates of inner map restricted the \correctiona{Lyapunov NHIM. }Following the
scattering map yields a shift of the angle $\theta$ to the left, while
following the inner dynamics along the \correctiona{Lyapunov NHIM }yields a shift of the
angle $\theta$ to the right. These pseudo-orbits are shadowed by true orbits
\cite{gidea2019general}. In this paper we find these latter orbits directly,
without constructing the underlying pseudo-orbits.
\end{remark}

\begin{remark}
\correctiona{Our construction admits many generalizations. Here we have chosen to work with a single homoclinic orbit and to combine it with the inner dynamics. For instance, we could combine several homoclinic orbits instead of just one, as it  was done for instance in \cite{CapinskiGL14}}. 
\end{remark}

\begin{remark}
We have chosen $\mathbf{S}^{u}$ and $\mathbf{S}^{d}$ to be at particular
angles for the following reason. It turns out that for $\varepsilon >0$
following the homoclinic orbit from $\mathbf{S}^{u}$ to $\mathbf{S}^{u}$
yields a gain in energy. Following the homoclinic orbit from $\mathbf{S}^{d}$
to $\mathbf{S}^{d}$ yields a reduction in the energy. We establish this
rigorously in our computer assisted proof in Section \ref%
{sec:validation_connecting_sequences}. The change of energy happens as a
trajectory goes around the primary. This part of the excursion is the lowest
`wedge' from the plots in Figures \ref{fig:homoclinic_011} and \ref%
{fig:homoclinic_101}. If the tip is at $2k\pi -\pi /2$, $k\in \mathbb{Z}$,
then we are gaining energy; when it is at $2k\pi +\pi /2$, $k\in \mathbb{Z}$%
, then we are loosing energy. Traveling along the Lyapunov orbits does not
yield significant energy changes.
\end{remark}

\subsection{Construction of connecting sequences\label%
{sec:main-proof-connecting-sequences}}

\label{sec:connecting_sequences} Around the homoclinic points for PCRTBP
described in Section \ref{sec:choice_of_sections} we construct connecting
sequences. As the corresponding homoclinic orbit results from an
intersection of a $2$-dimensional unstable manifold and a $2$-dimensional
stable manifold of a Lyapunov orbit, there are well defined unstable and
stable directions along the homoclinic orbit. These are given by the
corresponding tangent vectors to the unstable and stable leaves from of the
foliations of the manifolds. In the sequel, we only need approximations of
the unstable/stable directions which are obtained via non-rigorous numerics.

We fix a sequence of points $q_{0}^{\ast },\ldots ,q_{n}^{\ast }$ along the
homoclinic orbit, and along these points we position sections $\Sigma
_{0},\ldots ,\Sigma _{n}$, as discussed in Section \ref%
{sec:main-proof-sections}; see Figure \ref{fig:LyapOrb}. We choose this
orbit in the extended phase space which includes $\theta $, by selecting $%
\pi _{\theta }q_{0}^{\ast }=0.$ Recall that $q_{0}^{\ast }\in \Sigma
=\{Y=0\} $ and also that we have chosen our points along the homoclinic so
that for $k\in \left\{ 50,58,66,74\right\} $ we have $q_{k}^{\ast }\in
\Sigma =\left\{ Y=0\right\} $.

Each connecting sequence will always start with a window on the section $%
\Sigma =\{Y=0\}$ and finish also with a window on $\Sigma =\{Y=0\}.$ At the
section $\Sigma =\{Y=0\}$ we use the local coordinate change $\Psi $ given
by (\ref{eq:Psi-change-at-strip}). In our connecting sequences we allow the
local coordinates $\Psi _{\ell ,i,\varepsilon }$ on the intermediate
sections $\Sigma _{i}$, for $i=1,\ldots ,k_{\ell }-1$, to be dependent on
the choice of $\ell, i, \varepsilon$, but we always start and finish at $%
\Sigma =\{Y=0\}$, and always in the same local coordinates given by $\Psi $.
The $A_{\ell ,i}$ and $w_{\ell ,i}$ for the coordinates $\Psi _{\ell
,i,\varepsilon }$ at the sections $\Sigma _{i}$ for $i=1,\ldots ,k_{\ell }-1$
are chosen so that we obtain (\ref{eq:derivative-form-of-local-map}) and (%
\ref{eq:choice-of-w}) along the connecting sequence for the local maps of
the form (\ref{eq:local-map-form}). (Our choices of $A_{\ell ,i}$ and $%
w_{\ell ,i}$ for such local coordinates are established numerically, by
using non-rigorous computations.)

\begin{figure}[tbp]
\begin{center}
\includegraphics[width=6cm]{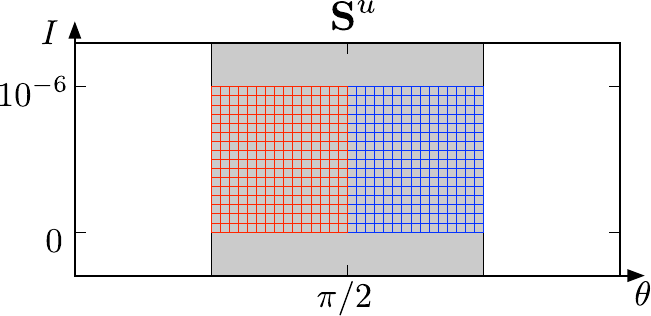}%
\includegraphics[width=6cm]{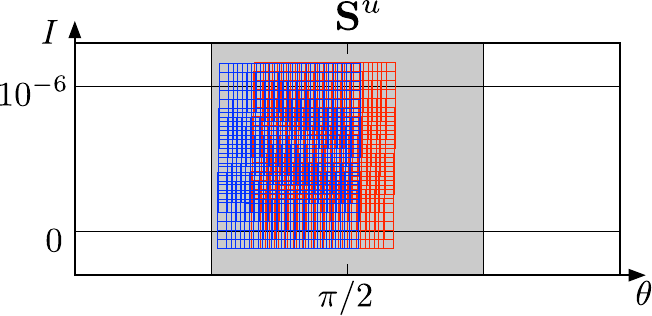}
\end{center}
\caption{Transition from $\mathbf{S}^{u}$ to $\mathbf{S}^{u}$. The strip $%
\mathbf{S}^{u}$ is in grey. On the left hand side plot we see the windows $%
N_{\ell ,0}$, for $\ell \in L_{1}^{uu}$ in blue and for $\ell \in L_{2}^{uu}$
in red. On the right hand side we have the windows $N_{\ell ,k_{\ell }}$.
For $\ell \in L_{1}^{uu}$ we take $k_{\ell }=n_{1}^{uu}=50$ and for $\ell
\in L_{2}^{uu}$ we take $k_{\ell }=n_{2}^{uu}=66$ as the length of the
connecting sequences.}
\label{fig:transition_aa}
\end{figure}

On the strips $\mathbf{S}^{u}$ and $\mathbf{S}^{d}$ at $\Sigma =\{Y=0\}$
(see Section \ref{sec:main-proof-sections}) we consider cones $%
Q_{a}^{\varepsilon }$ as in \eqref{eq:cone-Q}, with $a=\left(
a_{y},a_{I},a_{\theta }\right) $,
\begin{equation*}
a_{y}=10^{-3},\qquad a_{I}=1,\qquad a_{\theta }=10.
\end{equation*}

We construct connecting sequences from $\mathbf{S}^{u}$ to $\mathbf{S}^{u}$
by subdividing $\mathbf{S}^{u}\cap \{I\in \lbrack 0,10^{-6}]\}$ into $%
16\times 30=480$ windows $N_{\ell ,0}$, overlapping along the $I,\theta $
coordinates. Their projections onto $I,\theta $ are depicted on the left
hand side plot in Figure \ref{fig:transition_aa}. We ensure that the windows
overlap so that conditions \hyperlink{cond:C1.iii}{(C1.iii)} and \hyperlink{cond:C2.iii}{(C2.iii)} are fulfilled\footnote{%
Since $r=10^{-6}$, $\varepsilon _{0}=1.6\cdot 10^{-5}$ are very small, the
required overlap of size $\varepsilon _{0}a_{I}r$ and $a_{\theta }r$ (see Remarks \ref{rem:r-a-issue} and \ref{rem:r-a-issue-2})
along $I$ and $\theta $,
respectively, is so small that it is not visible on the plots at this
resolution.}. We take the windows $N_{\ell ,0}$ on the $x,y$ coordinates to
be $\left[ -r,r\right] \times \left[ -r,r\right] $, the same as for the
strip $\mathbf{S}^{u}$. We label the window with $\theta >\pi /2$ with a set
of labels denoted $L_{1}^{uu}$ and the windows with $\theta <\pi /2$ with a
set of labels denoted $L_{2}^{uu}$. For each $\ell \in L_{j}^{uu}$ we
consider a connecting sequence of length $n_{j}^{uu}$, for $j\in \left\{
1,2\right\} $ and construct a connecting sequences
\begin{equation}
(N_{\ell ,0},Q_{a}^{\varepsilon })\overset{f_{\ell ,1,\varepsilon }}{%
\Longrightarrow }\ldots \overset{f_{\ell ,n_{j}^{uu},\varepsilon }}{%
\Longrightarrow }(N_{\ell ,n_{j}^{uu}},Q_{a}^{\varepsilon }),\qquad \ell \in
L_{j}^{uu},\,j\in \left\{ 1,2\right\} .  \label{eqn:validation_connecting_uu}
\end{equation}

\begin{figure}[tbp]
\begin{center}
\includegraphics[width=6cm]{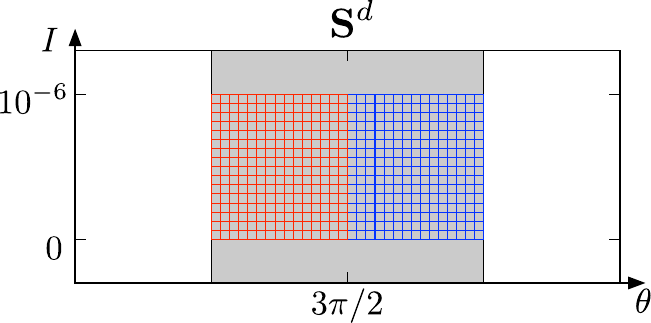}%
\includegraphics[width=6cm]{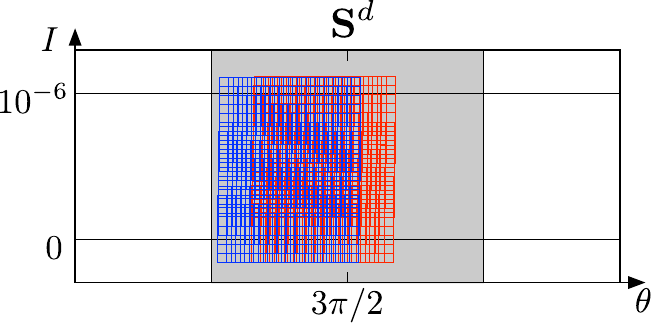}
\end{center}
\caption{Transition from $\mathbf{S}^{d}$ to $\mathbf{S}^{d}$. On the left
we have the windows $N_{\ell ,0}$, for $\ell \in L_{1}^{dd}$ in blue, and
for $\ell \in L_{2}^{dd}$ in red. On the right are the respective windows $%
N_{\ell ,k_{\ell }}$.}
\label{fig:transition_bb}
\end{figure}

In a similar way we construct connecting sequences from $\mathbf{S}^{d}$ to $%
\mathbf{S}^{d}$. These sequences are of the lengths $n_{1}^{dd}$ and $%
n_{2}^{dd},$ for $L_{1}^{dd}$ and $L_{2}^{dd}$, respectively; see Figure \ref%
{fig:transition_bb}.

We construct connecting sequences from $\mathbf{S}^{u}$ to $\mathbf{S}^{d}$
by subdividing $\mathbf{S}^{u}\cap \{I\in \lbrack 0,10^{-6}]\}$ into $%
16\times 4=64$ windows $N_{\ell ,0}$ as in Figure \ref{fig:transition_ab}.
(The reason why we construct fewer windows than for previous transitions
will be explained in Section \ref{sec:validation_connecting_sequences}). We
also construct connecting sequences from $\mathbf{S}^{d}$ to $\mathbf{S}^{u}$
in an analogous fashion; see Figure \ref{fig:transition_ba}.

\begin{figure}[tbp]
\begin{center}
\includegraphics[width=6cm]{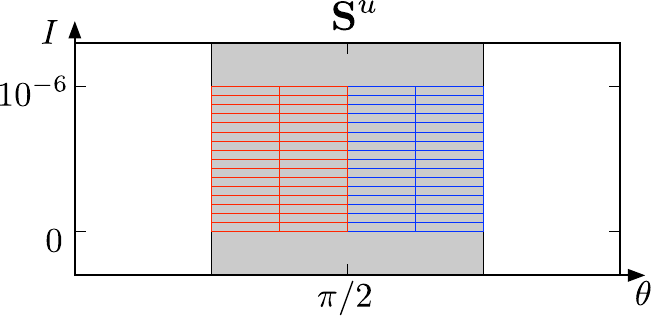}%
\includegraphics[width=6cm]{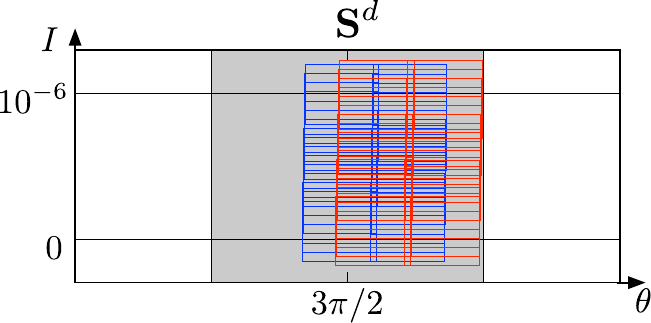}
\end{center}
\caption{Transition from $\mathbf{S}^{u}$ to $\mathbf{S}^{d}$. On the left
we have the windows $N_{\ell ,0}$, for $\ell \in L_{1}^{ud}$ in blue, and
for $\ell \in L_{2}^{ud}$ in red. On the right are the respective windows $%
N_{\ell ,k_{\ell }}$.}
\label{fig:transition_ab}
\end{figure}

\begin{figure}[tbp]
\begin{center}
\includegraphics[width=6cm]{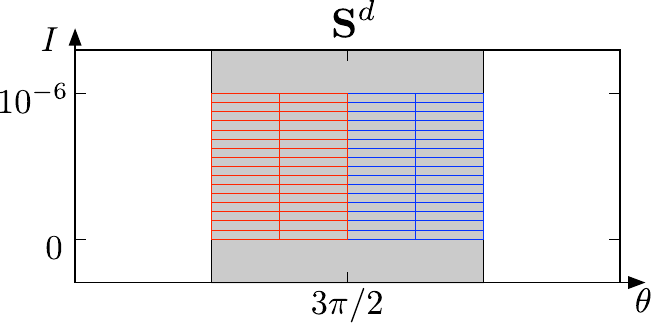}%
\includegraphics[width=6cm]{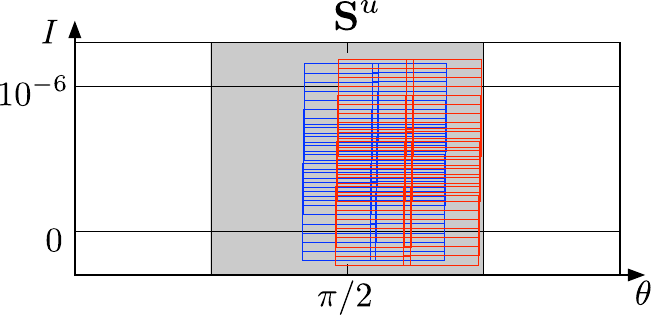}
\end{center}
\caption{Transition from $\mathbf{S}^{d}$ to $\mathbf{S}^{u}$. On the left
we have the windows $N_{\ell ,0}$, for $\ell \in L_{1}^{du}$ in blue, and
for $\ell \in L_{2}^{du}$ in red. On the right are the respective windows $%
N_{\ell ,k_{\ell }}$.}
\label{fig:transition_ba}
\end{figure}

\subsection{Proof of Theorem \protect\ref{th:main-3bp}}

\label{sec:validation_connecting_sequences}\label%
{sec:validation_energy_change} Here we start the rigorous,
interval-arithmetic validation of conditions \hyperlink{cond:C1}{{\bf C1}} and \hyperlink{cond:C2}{{\bf C2}} which lead to the
proof of the main theorem.

Earlier sections provided a description of the choices of local maps \correctiona{(\ref{eq:local-map-form}) }we use
in our program. The key issue is that due to the form of the coordinate
changes we consider, regardless of the particular choices of the
coefficients for our local maps, we are sure that the change in $I$ reflects
precisely the change of the energy $H_{0}$. This is ensured by Theorem \ref%
{th:return-change-H} and the fact that our connecting sequences always start
and finish in the same section $\Sigma =\{Y=0\}$, with the same local
coordinates $\Psi $.

In the computer assisted proof, for each map $f_{\ell ,i,\varepsilon }$
involved in conditions \hyperlink{cond:C1}{{\bf C1}} and \hyperlink{cond:C2}{{\bf C2}} our program validates the
correct alignment of windows, cone conditions, and establishes estimates on
the change in energy. If at any point one of the required conditions would
not be fulfilled, our program would stop and report an error.

Using Lemmas \ref{lem:covering} and \ref{th:param-dep-cones}, we
validate the correct alignment and cone condition along each step of the
connecting sequences constructed in Section \ref{sec:connecting_sequences}.
We do this by
subdividing the parameter interval $[0,\varepsilon_0]$ into eight smaller subintervals of equal length,
and on each of them we perform the validation separately, as described in Remark \ref%
{rem:epsilon-subdivision}. 
The plots from Figure \ref{fig:transition_aa}, %
\ref{fig:transition_bb}, \ref{fig:transition_ab}, \ref{fig:transition_ba}
show in fact the results obtained through the computer assisted proof and
have been validated using rigorous interval arithmetic computations. Note
that the windows which pass through $n_{1}^{uu}$ sections shift in the angle
$\theta $ to the left (blue), and those passing through $n_{2}^{uu}$
sections (red) shift in the angle $\theta $ to the right, just as discussed
in Section \ref{sec:main-proof-sections}.

Thus we obtain a computer assisted proof of the following result.

\begin{theorem}
\label{th:3bp-i-iii}In the Neptune-Triton system, for every $\varepsilon \in
(0,\varepsilon _{0}]$, with $\varepsilon _{0}=1.6\cdot 10^{-5},$ for every $%
\kappa ,r\in \left\{ u,d\right\} $, there exist connecting sequences from $%
\mathbf{S}^{\kappa }$ to $\mathbf{S}^{r}$%
\begin{equation*}
(N_{\ell ,0},Q_{a}^{\varepsilon })\overset{f_{\ell ,1,\varepsilon }}{%
\Longrightarrow }\ldots \overset{f_{\ell ,n_{j}^{\kappa r},\varepsilon }}{%
\Longrightarrow }(N_{\ell ,n_{j}^{\kappa r}},Q_{a}^{\varepsilon }),\qquad
l\in L_{j}^{\kappa r},\,j\in \left\{ 1,2\right\} ,
\end{equation*}%
which fulfill the conditions \hyperlink{cond:C1.i}{(C1.i}--\hyperlink{cond:C1.iii}{C1.iii)} and \hyperlink{cond:C2.i}{(C2.i}--\hyperlink{cond:C2.iii}{C2.iii)}.
\end{theorem}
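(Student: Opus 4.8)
The plan is to assemble Theorem~\ref{th:3bp-i-iii} as a bookkeeping statement over the finitely many connecting sequences constructed in Section~\ref{sec:connecting_sequences}, where the analytic content is already carried by the lemmas of Section~\ref{sec:main-proof-theoretical} and by the choice of coordinates from Section~\ref{sec:main-proof-local-coord}. First I would fix, once and for all, the section $\Sigma=\{Y=0\}$ with the local coordinate change $\Psi$ from \eqref{eq:Psi-change-at-strip}, the strips $\mathbf{S}^u,\mathbf{S}^d$ from \eqref{eq:Su-choice}, and the cone $Q^\varepsilon_a$ with $a=(a_y,a_I,a_\theta)=(10^{-3},1,10)$. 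Then, for each ordered pair $(\kappa,r)\in\{u,d\}^2$ and each $j\in\{1,2\}$, I take the explicit finite family of initial windows $N_{\ell,0}\subset\mathbf{S}^\kappa$ (the $480$, resp.\ $64$, rectangles depicted in Figures~\ref{fig:transition_aa}--\ref{fig:transition_ba}), the explicit sequence of points $q^\ast_0,\dots,q^\ast_{n_j^{\kappa r}}$ on the homoclinic (with $q^\ast_0\in\Sigma$ and $q^\ast_{n_j^{\kappa r}}\in\Sigma$ by \eqref{eq:n_uu_1}--\eqref{eq:n_ud_2}), the associated sections $\Sigma_i$ defined via Cases~1--2 and Remark~\ref{rem:sections_rigorous}, and the intermediate local coordinates $\Psi_{\ell,i,\varepsilon}$ chosen (non-rigorously) to realize \eqref{eq:derivative-form-of-local-map} and \eqref{eq:choice-of-w}.

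Next I would carry out the two rigorous verifications, step by step along each connecting sequence, after subdividing $[0,\varepsilon_0]$ into the eight equal subintervals $E$ as in Remark~\ref{rem:epsilon-subdivision}. For correct alignment of the windows, on each $E$ and each section-to-section map $f_{\ell,i,\varepsilon}$ (with $n_u=1$, so the expanding coordinate is one-dimensional) I invoke Lemma~\ref{lem:covering}: it suffices to check, with interval arithmetic applied to the CAPD enclosures of $f_{\ell,i,\varepsilon}(N_{\ell,i-1})$, that $\pi_{z_2}f_{\ell,i,\varepsilon}(N_{\ell,i-1})\subset \mathrm{int}\,\pi_{z_2}N_{\ell,i}$ while the two $x$-faces map strictly beyond the corresponding faces of $N_{\ell,i}$. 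For the cone conditions I propagate cones inductively with Lemma~\ref{th:param-dep-cones}, starting from $b_0=a$ on $N_{\ell,0}$, obtaining a chain $b_i=(b_{i,y},b_{i,I},b_{i,\theta})$, and checking at the end of each sequence that $b_{n_j^{\kappa r},y}\le a_y$, $b_{n_j^{\kappa r},I}\le a_I$, $b_{n_j^{\kappa r},\theta}\le a_\theta$, so that we return to the cone $Q^\varepsilon_a$ and \eqref{eq:cones-same} holds with $Q^\varepsilon=Q^\varepsilon_a$ uniformly in $\ell$. This establishes the correct alignments with cone conditions in \eqref{eqn:validation_connecting_uu} (and the analogous chains for $dd$, $ud$, $du$), which is exactly what a connecting sequence from $\mathbf{S}^\kappa$ to $\mathbf{S}^r$ requires, so (C1.i) and (C2.i) follow.

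It then remains to verify the covering/overlap bookkeeping conditions \hyperlink{cond:C1.ii}{(C1.ii)}, \hyperlink{cond:C1.iii}{(C1.iii)} and \hyperlink{cond:C2.ii}{(C2.ii)}, \hyperlink{cond:C2.iii}{(C2.iii)}. Here I would argue that the $I$-ranges of the $N_{\ell,0}$ exhaust $[0,10^{-6}]$ and their $\theta$-ranges exhaust $S^\kappa_\theta$ (after the affine rescaling of $I$ by $C_{h_0}=10^{-6}$ to the normalized interval $[0,1]$ used in the master theorems), because by construction we subdivide $\mathbf{S}^\kappa\cap\{I\in[0,10^{-6}]\}$ into a grid that covers the whole rectangle; this gives (C1.ii)/(C2.ii). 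For the overlap condition, I would check that consecutive windows in the grid overlap in $I$ by at least $\varepsilon_0 a_I r$ and in $\theta$ by at least $a_\theta r$ (the $r$-scaled versions from Remarks~\ref{rem:r-a-issue}, \ref{rem:r-a-issue-2}, since our $\bar B^{n_u},\bar B^{n_s}$ have radius $r=10^{-6}$), so that any $(I^\ast,\theta^\ast)$ in a pairwise intersection has its $(\varepsilon_0 a_I r, a_\theta r)$-neighborhood contained in one of the two windows; since on the $(x,y)$-coordinates all $N_{\ell,0}$ equal $[-r,r]^2=\pi_{x,y}\mathbf{S}^\kappa$, the full multidimensional rectangle is contained in $N_{\ell,0}$ or $N_{\ell',0}$, giving (C1.iii)/(C2.iii). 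The main obstacle is not conceptual but computational: ensuring that the CAPD enclosures of the section-to-section maps and their first and ($\varepsilon$-mixed) second derivatives are tight enough, over all eight parameter subintervals and all $\sim$$10^3$ windows through sequences of length up to $74$, that every strict inequality in Lemmas~\ref{lem:covering} and \ref{th:param-dep-cones} is actually validated --- in particular controlling the strong expansion along $x$ (which is why the intermediate sections $\Sigma_i$ were introduced, to keep integration times short) and making the cone-slope denominators $m(\partial f_x/\partial x)-\dots$ stay positive at each step.
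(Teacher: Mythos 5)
Your proposal is correct and follows essentially the same route as the paper: the theorem is obtained by a computer-assisted validation, applying Lemma~\ref{lem:covering} for the correct alignments and iterating Lemma~\ref{th:param-dep-cones} (starting from $b_0=a$ and returning to $Q^\varepsilon_a$) along the connecting sequences of Section~\ref{sec:connecting_sequences}, over the eight $\varepsilon$-subintervals of Remark~\ref{rem:epsilon-subdivision}, with the grid/overlap bookkeeping giving (C1.ii--iii) and (C2.ii--iii) exactly as you describe. The only difference is one of exposition: you spell out details (e.g.\ the rescaling of $I$ to $[0,1]$ and the $\varepsilon_0 a_I r$, $a_\theta r$ overlaps) that the paper relegates to remarks and footnotes.
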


To estimate the change of $I$ along the connecting sequences, we need to
obtain estimates on the derivative of the return map with respect to the
parameter $\varepsilon \in (0,\varepsilon _{0}]$. 
For this we use Lemmas \ref{lem:c-bound-2}, \ref{lem:c-bound-3}, which together with computer assisted validation allows us to prove the following theorem.


\begin{theorem}
\label{th:3bp-C-bounds}Let
\begin{equation*}
c=\correctiona{0.00012},\qquad \text{and\qquad }C=\correctiona{0.0076}.
\end{equation*}%
In the Neptune-Triton system, for every $\varepsilon \in (0,\varepsilon
_{0}] $, with $\varepsilon _{0}=1.6\cdot 10^{-5},$ for every $z$ which
passes through the corresponding connecting sequence we have the following
bounds:
\begin{eqnarray*}
\left. c\varepsilon <\pi _{I}F_{\ell ,\varepsilon }(z)-\pi _{I}(z)\right.
&<&C\varepsilon \qquad \text{for }\ell \in L_{1}^{uu}\cup L_{2}^{uu}, \\
\left. c\varepsilon <\pi _{I}(z)-\pi _{I}F_{\ell ,\varepsilon }(z)\right.
&<&C\varepsilon \qquad \text{for }\ell \in L_{1}^{dd}\cup L_{2}^{dd}, \\
\left\vert \pi _{I}F_{\ell ,\varepsilon }(z)-\pi _{I}(z)\right\vert
&<&C\varepsilon \qquad \text{for }\ell \in L_{1}^{ud}\cup L_{2}^{ud}\cup
L_{1}^{du}\cup L_{2}^{du}.
\end{eqnarray*}
\end{theorem}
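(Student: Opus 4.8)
The plan is to give a computer assisted proof that reduces the bound on the net change of $I$ along each return map $F_{\ell ,\varepsilon }$ to an accumulation of rigorous, one-step enclosures for the section-to-section maps $f_{\ell ,i,\varepsilon }$ that make up the connecting sequence. The first observation is that this net change really is a change of energy: every connecting sequence built in Section~\ref{sec:connecting_sequences} starts and ends on $\Sigma =\{Y=0\}$ in the same local coordinates $\Psi$ of \eqref{eq:Psi-change-at-strip} (for which $w=0$, so $w_{k_\ell }=w_0$ and $A_{k_\ell }=A_0$), so Theorem~\ref{th:return-change-H} applies and gives $\pi_I F_{\ell ,\varepsilon }(z)-\pi_I(z)=H_0(\mathcal P^{\varepsilon}_{k_\ell }\circ\cdots\circ\mathcal P^{\varepsilon}_1(q))-H_0(q)$. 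Thus it is legitimate to estimate the change of the local coordinate $I$, working entirely in the coordinates used by the program (which, by Remark~\ref{rem:sections_rigorous}, are rigorously defined).

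Next, for each $\ell \in L$, each step $i$, and each of the eight parameter subintervals $E\subset[0,\varepsilon_0]$ of Remark~\ref{rem:epsilon-subdivision}, I would compute with interval arithmetic (CAPD, integrating the variational equations with $\varepsilon$ adjoined as a variable) a rigorous enclosure of $\partial_\varepsilon\pi_I f_{\ell ,i,\varepsilon}$ over $E\times N_{\ell ,i-1}$, and feed it into Lemma~\ref{lem:c-bound-3} to obtain constants $c_{\ell ,i},C_{\ell ,i}$ with $\pi_I q+\varepsilon c_{\ell ,i}<\pi_I f_{\ell ,i,\varepsilon}(q)<\pi_I q+\varepsilon C_{\ell ,i}$ for $q\in N_{\ell ,i-1}$, $\varepsilon\in E$. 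The correct alignments already established in Theorem~\ref{th:3bp-i-iii} ensure that any $z$ passing through the connecting sequence lands in every successive window, so Lemma~\ref{lem:c-bound-2}, iterated along the sequence, accumulates these into $\varepsilon\sum_{i} c_{\ell ,i}<\pi_I F_{\ell ,\varepsilon}(z)-\pi_I(z)<\varepsilon\sum_{i} C_{\ell ,i}$. For $\ell \in L_1^{uu}\cup L_2^{uu}$ one then checks that $\min_\ell\sum_i c_{\ell ,i}\ge c$ and $\max_\ell\sum_i C_{\ell ,i}\le C$ with $c=0.00012$, $C=0.0076$; for $\ell \in L_1^{dd}\cup L_2^{dd}$ the accumulated sums come out negative and the same data is recorded as $c\varepsilon<\pi_I(z)-\pi_I F_{\ell ,\varepsilon}(z)<C\varepsilon$; for $\ell$ in the $ud$ and $du$ families only $|\pi_I F_{\ell ,\varepsilon}(z)-\pi_I(z)|<C\varepsilon$ is claimed, which follows from $|\sum_i c_{\ell ,i}|,|\sum_i C_{\ell ,i}|\le C$.

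The main difficulty is keeping the accumulated lower bound strictly positive, i.e. verifying $\sum_i c_{\ell ,i}\ge c>0$ rather than merely $\ge 0$: each $c_{\ell ,i}$ is small, the per-step contributions can have either sign (the energy change is concentrated in the ``wedge'' of the homoclinic excursion that passes the smaller primary, while travel along the Lyapunov orbit contributes almost nothing), and interval-arithmetic wrapping, amplified by the strong expansion in the unstable direction, tends to widen the brackets $[\sum_i c_{\ell ,i},\sum_i C_{\ell ,i}]$ until they contain $0$. Two devices control this: the subdivision of $[0,\varepsilon_0]$ into eight pieces, which shrinks the $\varepsilon$-range entering each enclosure, and the use of many short section-to-section steps in coordinates of the form \eqref{eq:derivative-form-of-local-map} (so that along the unstable direction the linearization is nearly a pure dilation $x\mapsto\lambda x$), which keeps the Lohner enclosures tight between consecutive sections. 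The strict positivity of $c$ is the quantitative core of the diffusion mechanism and is the single place where the argument could fail if the numerics were not sharp enough.
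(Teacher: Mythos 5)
Your proposal follows essentially the same route as the paper: identify the change in the local coordinate $I$ with the change of $H_{0}$ via Theorem~\ref{th:return-change-H}, obtain per-step bounds from rigorous enclosures of $\partial_{\varepsilon}\pi_{I}f_{\ell,i,\varepsilon}$ through Lemma~\ref{lem:c-bound-3}, compose them along each connecting sequence with Lemma~\ref{lem:c-bound-2}, and validate everything with interval arithmetic in CAPD. The only minor difference is one of emphasis: the paper attributes the sharpness needed for the strict lower bound $c$ chiefly to the finer subdivision of the strips for the $uu$ and $dd$ transitions ($480$ versus $64$ initial windows), in addition to the $\varepsilon$-subdivision and short section-to-section steps you cite.
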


The main difficulty in the computer assisted validation of above theorem was
obtaining the lower bound $c$. This is the reason why we needed more
subdivisions of the strips for the transitions from $\mathbf{S}^{u}$ to $%
\mathbf{S}^{u}$ and from $\mathbf{S}^{d}$ to $\mathbf{S}^{d}$ than for $%
\mathbf{S}^{u}$ to $\mathbf{S}^{d}$ and $\mathbf{S}^{d}$ to $\mathbf{S}^{u}$%
. (The smaller the sets the sharper the interval arithmetic bounds.)

We are ready to prove the main theorem.

\begin{proof}[Proof of Theorem \protect\ref{th:main-3bp}]
The{\ initial level of the energy }$h_{0}$ is the energy of the Lyapunov
orbit of the PCR3BP, which passes through $q^{\ast }$ from (\ref%
{eq:q-star-3bp}),%
\begin{equation*}
h_{0}=H_{0}\left( q^{\ast }\right) =-1.5050906397016.
\end{equation*}%
By Theorems \ref{th:3bp-i-iii}, \ref{th:3bp-C-bounds} conditions \hyperlink{cond:C1}{\textbf{C1 }}%
and \hyperlink{cond:C2}{\textbf{C2}} are satisfied, which by the Theorems \ref{thm:diffusion}, %
\ref{th:symbolic-dynamics-from-covering}, and \ref{th:diffusion-process}
gives (1), (2) and (4), respectively.

We will now show that \hyperlink{cond:C3}{\textbf{C3}} is satisfied. {Let }$\ell _{1},\ell
_{2}\in L^{uu}$ and $\ell _{1}^{\prime }\in L^{ud},$ $\ell _{2}^{\prime }\in
L^{du}$. Trajectories starting from points from the domain of $F_{\ell
_{2},\varepsilon }\circ F_{\ell _{1},\varepsilon }$ make different numbers
of turns around the Lyapunov orbit between the homoclinic excursions than
trajectories starting from the domain of $F_{\ell _{2}^{\prime },\varepsilon
}\circ F_{\ell _{1}^{\prime },\varepsilon }$ (see Figures \ref%
{fig:homoclinic_011}, \ref{fig:homoclinic_101}), hence the domains are
disjoint. The same argument can be applied when $\ell _{1},\ell _{2}\in
L^{dd}$ and $\ell _{1}^{\prime }\in L^{du},$ $\ell _{2}^{\prime }\in L^{ud}$%
, which means that condition \hyperlink{cond:C3}{\textbf{C3}} is fulfilled. By Theorem \ref%
{th:Hausdorff-dim} the Hausdorff dimension of the set of initial points for
orbits that shadow a given energy sequence is strictly {\correctiona{greater}} than 3 in the
four dimensional section $\{Y=0\}$. Due to the smooth dependence of
solutions on initial conditions, we know that cone conditions, covering
relations and energy change bounds will also hold for connecting sequences
which start and finish at sections $\{Y=\mathcal{Y}\}$, for all $|\mathcal{Y}%
|<\mathcal{Y}_{0}$, for sufficiently small $\mathcal{Y}_{0}>0$. \correctiona{This means
that in the full phase space the Hausdorff dimension of the sets of orbits that shadow an
energy sequence is strictly {\correctiona{greater than 4.}}}

Now we verify the choice of $T$ and $\eta $ from Theorem \ref{th:main-3bp}.
From Theorem \ref{thm:diffusion} we know that to diffuse the distance of 1
along $I$ we need no more than $1/\left( c\varepsilon \right) $ transitions
between strips. Here we diffuse over a shorter distance $C_{h_{0}}=10^{-6}$,
so the number of needed transitions is $C_{h_{0}}/\left( c\varepsilon
\right) $. Each transition takes less time than $(9+\frac{1}{10})\pi $ (see
Figures \ref{fig:homoclinic_011}, \ref{fig:homoclinic_101}). This means that
the time needed to diffuse over the distance $C_{h_{0}}$ is less than%
\begin{equation*}
\frac{(9+\frac{1}{10})\pi C_{h_{0}}}{c\varepsilon }<
\correctiona{\frac{1}{4\varepsilon}}=\frac{1}{\varepsilon }T.
\end{equation*}%
From Theorem \ref{th:symbolic-dynamics-from-covering} 
\correctiona{(see also Remark \ref{rem:eta-r}) }we know that we need
\begin{equation*}
\eta \geq 2ra_{I}+C=2\cdot 10^{-6}\correctiona{\cdot 1}+\correctiona{0.0076}.
\end{equation*}%
Clearly $\eta =10^{-2}$ satisfies this condition.
\end{proof}

The total CPU time of the computer assisted proof was \correctiona{186406 }seconds. \correctiona{The
proof was conducted using a parallel computation on eight threads of an iMac
desktop computer with four 3.1 GHz Intel i7 cores. Running it once takes under six and a half hours.}


\section{Propagation of discs}

In this section we introduce the notion of horizontal discs which satisfy 
$Q^{\varepsilon }$-cone condition. Such discs will be the main building
blocks used for the proofs of Theorems \ref{thm:diffusion}, \ref%
{th:symbolic-dynamics-from-covering}, \ref{th:Hausdorff-dim} and \ref%
{th:diffusion-process}.

We stress that discs are not directly used in the computer assisted proof of
Theorem \ref{th:main-3bp}. That proof relies on the construction of
connecting sequences.

\begin{definition}
\label{def:horizontal-disc} A continuous map $h:\bar{B}%
^{n_{u}}\rightarrow \mathbb{R}^{n_{u}}\times \mathbb{R}^{n_{s}+2}$ that
satisfies a graph condition that $h(x)=(x,\pi _{y ,I,\theta}h(x))$, for all $%
x\in \bar{B}^{n_{u}}$ will be referred to as a  horizontal disc. If $h(\bar{B}^{n_{u}})\subseteq N$ then $h$ is said to be a horizontal disc in $N$.
\end{definition}

Recall that in (\ref{eq:cone-Q}) we have considered $Q^{\varepsilon }:
\mathbb{R}^{n_u}\times \mathbb{R}^{n_s}\times \mathbb{R}\times \mathbb{R}
\rightarrow \mathbb{R}$ which define cones, and that we have assumed that $%
Q^{\varepsilon}$ satisfy condition (\ref{eq:cone-slopes-assumption}).

\begin{definition}
\label{def:disc-cone-cond} Given a cone $Q^{\varepsilon }:\mathbb{R}%
^{n_{u}}\times \mathbb{R}^{n_{s}+2}\rightarrow \mathbb{R}$, a horizontal disc $h:\bar{B}%
^{n_{u}}\rightarrow \mathbb{R}^{n_{u}}\times \mathbb{R}^{n_{s}+2}$ is said
to satisfy a $Q^{\varepsilon }$-cone condition if $Q^{\varepsilon
}(h(x)-h(x^{\prime }))>0$, for all $x,x^{\prime }\in \bar{B}^{n_{u}}$ with $%
x\neq x^{\prime }$. We shall refer to such $h$ as a $Q^{\varepsilon }$-disc.
\end{definition}

The image $h(\bar{B}^{n_{u}})$ of a $Q^{\varepsilon }$-disc is a $n_{u}$%
-dimensional topological disc. In the sequel we will use the same notation $h
$ for the image $h(\bar{B}^{n_{u}})$ of the map $h$ as for the map itself.
From now in the case of a $Q^{\varepsilon }$-disc, we shall add a
superscript and write $h^{\varepsilon }$.

\begin{remark}
Definition \ref{def:horizontal-disc} is an analogue of the definition of a
`horizontal disc' from \cite{Zcc}. It is slightly simplified compared to
\cite{Zcc}. A horizontal disc in the sense of \cite{Zcc}, which satisfies
the cone condition, is a $Q^{\varepsilon }$-disc in the sense of Definition %
\ref{def:horizontal-disc}; and vice versa. (See \cite[Lemma 5]{Zcc}.)
\end{remark}

\begin{remark}
\label{rem:discs-slim} An important feature is that the length of a $%
Q^{\varepsilon }$-disc $h^{\varepsilon }$ is bounded along the $\theta ,I$
directions, with the bound in $I$ of order $O(\varepsilon )$. This is
because by (\ref{eq:cone-slopes-assumption}) for any $x,x^{\prime }\in \bar{B%
}^{n_{u}}$ from $Q^{\varepsilon }(h^{\varepsilon }(x)-h^{\varepsilon
}(x^{\prime }))>0$ it follows that
\begin{eqnarray*}
\left\Vert \pi _{\theta }\left( h^{\varepsilon }(x)-h^{\varepsilon
}(x^{\prime })\right) \right\Vert  &\leq &a_{\theta }\left\Vert \pi
_{x}\left( x-x^{\prime }\right) \right\Vert \leq 2a_{\theta }, \\
\left\Vert \pi _{I}\left( h^{\varepsilon }(x)-h^{\varepsilon }(x^{\prime
})\right) \right\Vert  &\leq &\varepsilon a_{I}\left\Vert \pi _{x}\left(
x-x^{\prime }\right) \right\Vert \leq 2\varepsilon a_{I}.
\end{eqnarray*}
\end{remark}

Let us now turn to our IFS (\ref{eq:IFS}) from Section \ref{sec:IFS}. Let us
assume that for this IFS we have a finite collection of connecting sequences
indexed by $L$ as in Section \ref{sec:connecting-sequences}.

\begin{theorem}
\cite[Theorem 7]{Zcc}\label{th:hor-disc-propagation} Given $\ell \in L$, for
every connecting sequence (\ref{eq:connecting-sequence}) and for every $%
Q^{\varepsilon }$-disc $h^{\varepsilon }$ in $N_{\ell ,0}$ there exists a
topological disc $\mathscr{D}\subset h^{\varepsilon }$ such that for all $%
z\in \mathscr{D}$ holds
\begin{equation*}
(f_{\ell ,m,\varepsilon }\circ f_{\ell ,m-1,\varepsilon }\circ \ldots \circ
f_{\ell ,1,\varepsilon })\left( z\right) \in N_{\ell ,m},\qquad \text{for }%
m=1,\ldots ,k_{\ell }-1,
\end{equation*}%
and $F_{\ell ,\varepsilon }\left( \mathscr{D}\right) =(f_{\ell ,k_{\ell
},\varepsilon }\circ f_{\ell ,k_{\ell }-1,\varepsilon }\circ \ldots \circ
f_{\ell ,1,\varepsilon })\left( \mathscr{D}\right) $ is a graph of some $%
Q^{\varepsilon }$-disc $\tilde{h}^{\varepsilon }$ in $N_{\ell ,k_{\ell }}$;
i.e. $\tilde{h}^{\varepsilon }(\bar{B}^{n_{u}})=F_{\ell ,\varepsilon }\left( %
\mathscr{D}\right) .$
\end{theorem}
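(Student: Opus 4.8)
The plan is to reduce the claim to a single correct alignment with cone condition and then propagate horizontal discs one step at a time through the connecting sequence. First I would recall the one-step statement: if $(N,Q_1)\overset{f}{\Longrightarrow}(M,Q_2)$ and $h$ is a $Q_1$-disc in $N$, then there is a topological sub-disc $\mathscr{D}\subset h$ whose image $f(\mathscr{D})$ is the graph of a $Q_2$-disc $\tilde h$ in $M$, with $\tilde h(\bar B^{n_u})=f(\mathscr{D})$. This is exactly the content of \cite[Theorem 7]{Zcc} (or can be extracted from \cite[Lemma 5]{Zcc} together with the covering relation machinery): the correct alignment of windows, via the homotopy $\chi$ and the linear map $A$ with $A(\partial B^{n_u})\subset \mathbb{R}^{n_u}\setminus\bar B^{n_u}$, forces the image of the disc to stretch all the way across $M$ in the unstable direction, so that after restricting to the preimage of $M$ one obtains a full disc over $\bar B^{n_u}$; the cone condition \eqref{eq:cc} guarantees this image is again a graph over the unstable coordinate $x$ and that it satisfies the target cone $Q_2$.

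Next I would set up the induction along the connecting sequence \eqref{eq:connecting-sequence}. Start with $h^{\varepsilon}_0:=h^{\varepsilon}$, a $Q^{\varepsilon}_{\ell,0}$-disc in $N_{\ell,0}$. Applying the one-step result to $(N_{\ell,0},Q^{\varepsilon}_{\ell,0})\overset{f_{\ell,1,\varepsilon}}{\Longrightarrow}(N_{\ell,1},Q^{\varepsilon}_{\ell,1})$ produces a sub-disc $\mathscr{D}_1\subset h^{\varepsilon}_0$ with $f_{\ell,1,\varepsilon}(\mathscr{D}_1)$ the graph of a $Q^{\varepsilon}_{\ell,1}$-disc $h^{\varepsilon}_1$ in $N_{\ell,1}$. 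Iterating, at step $m$ I obtain a sub-disc $\mathscr{D}_m\subset h^{\varepsilon}_{m-1}$ and a $Q^{\varepsilon}_{\ell,m}$-disc $h^{\varepsilon}_m$ in $N_{\ell,m}$ with $f_{\ell,m,\varepsilon}(\mathscr{D}_m)=h^{\varepsilon}_m(\bar B^{n_u})$. Then I pull everything back: define $\mathscr{D}$ to be the set of points $z\in h^{\varepsilon}_0$ such that $(f_{\ell,j,\varepsilon}\circ\cdots\circ f_{\ell,1,\varepsilon})(z)\in\mathscr{D}_{j+1}$ for all $j=0,\ldots,k_\ell-1$ (equivalently, $z$ lies in the successive preimages of the $\mathscr{D}_j$'s under the composed maps). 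Since each $\mathscr{D}_{j}$ is a topological disc and $f_{\ell,j,\varepsilon}$ restricted to it is a homeomorphism onto its image $h^{\varepsilon}_j$, the set $\mathscr{D}$ is again a topological disc contained in $h^{\varepsilon}$, and by construction every $z\in\mathscr{D}$ has $(f_{\ell,m,\varepsilon}\circ\cdots\circ f_{\ell,1,\varepsilon})(z)\in N_{\ell,m}$ for $m=1,\ldots,k_\ell-1$, while $F_{\ell,\varepsilon}(\mathscr{D})=(f_{\ell,k_\ell,\varepsilon}\circ\cdots\circ f_{\ell,1,\varepsilon})(\mathscr{D})=h^{\varepsilon}_{k_\ell}(\bar B^{n_u})$ is the graph of the $Q^{\varepsilon}_{\ell,k_\ell}$-disc $\tilde h^{\varepsilon}:=h^{\varepsilon}_{k_\ell}$. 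Because of \eqref{eq:cones-same}, $Q^{\varepsilon}_{\ell,k_\ell}=Q^{\varepsilon}$, so $\tilde h^{\varepsilon}$ is a $Q^{\varepsilon}$-disc as claimed.

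One subtle point to check carefully is that at each stage the output disc $h^{\varepsilon}_m$ is genuinely \emph{full}, i.e. defined over all of $\bar B^{n_u}$, not just over a sub-ball; this is where the quantitative part of the covering relation (the condition $A(\partial B^{n_u})\subset\mathbb{R}^{n_u}\setminus\bar B^{n_u}$ together with the homotopy avoiding $M^-$ and $M^+$) is essential, and it is precisely the reason the one-step result in \cite{Zcc} is stated for windows of the normalized form $[-1,1]\times\bar B^{n_2}$ or their affine images. I would simply invoke \cite[Theorem 7]{Zcc} for the one-step statement rather than reprove it. The genuinely new content here — and the reason this theorem is singled out — is not the propagation itself but the observation, exploited later, that the resulting disc $\tilde h^{\varepsilon}$ is again a horizontal $Q^{\varepsilon}$-disc of the \emph{same} type (same cone $Q^{\varepsilon}$, hence uniformly $O(\varepsilon)$-thin in $I$ and $O(1)$-thin in $\theta$ by Remark \ref{rem:discs-slim}), so the conclusion can be fed as input into the next connecting sequence. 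Thus the main obstacle is essentially bookkeeping: verifying that the hypotheses of \cite[Theorem 7]{Zcc} are met at every step, in particular that the cones along the connecting sequence chain correctly via Lemma \ref{th:param-dep-cones} and close up at the end through \eqref{eq:cones-same}, so that concatenation of connecting sequences is possible. There is no hard analytic estimate; the work is in the careful identification of preimages and the verification that the composition remains a homeomorphism on the nested sub-discs.
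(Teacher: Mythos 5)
Your proposal is correct and matches the paper's treatment: the paper gives no independent proof of this statement, quoting it directly as \cite[Theorem 7]{Zcc}, and your reduction to the one-step correct-alignment-with-cones statement followed by induction along the connecting sequence (with the nested pulled-back preimages forming the topological disc $\mathscr{D}$, injectivity on each sub-disc coming from the cone condition, and \eqref{eq:cones-same} closing the cones at the end) is precisely the standard argument behind that citation. The only minor point worth noting is that \cite{Zcc} formulates the cone condition with a slightly stronger inequality than \eqref{eq:cc}, but the propagation argument goes through verbatim under the weaker condition used here, so nothing in your outline is affected.
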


When applying Theorem \ref{th:hor-disc-propagation} we shall say that the $%
Q^{\varepsilon }$-disc $h^{\varepsilon }$ is propagated (or moved) through
the connecting sequence to become the $Q^{\varepsilon }$-disc $\tilde{h}%
^{\varepsilon }$.\marginpar{\color{red} 28.}
\correctiona{\begin{remark}From Remark \ref{rem:discs-slim} we know that horizontal discs are `slim' along the central coordinates. 
 The sizes of successive windows in a connecting sequence can grow along central coordinates. 
 After propagating a horizontal disc though through such sequence we obtain a horizontal disc, which remains `slim' in the central coordinates. 
 We can then enclose it in another window, which is again `slim' in the central coordinates, and propagate once more through another connecting sequence. We can repeat such procedure infinitely many times.
The fact that the propagated horizontal discs do not grow along the central coordinates is the key feature that allows to topologically control the central coordinate and obtain infinite orbits. For this, we require both correct alignment of windows as well as cone conditions. The latter ensure that the discs remain `slim'.
\end{remark} }


\section{Existence of diffusing orbits\label{sec:diffusion}}

In this section we will give the proof of Theorem \ref{thm:diffusion}. We
first give a version of this theorem in terms of $Q^{\varepsilon }$-discs --
Theorem \ref{th:disk_iterate} -- from which we derive Theorem \ref%
{thm:diffusion}.

\subsection{Diffusion under conditions on $Q^{\protect\varepsilon }$-disks}

Consider an iterated function system (\ref{eq:IFS}) as described in Section %
\ref{sec:IFS}, and the strip $\mathbf{S}^{u}\subset \Sigma _{0}$ defined in (%
\ref{eq:strips-Sa-Sb}). We formulate the following condition.

\textbf{Condition \hypertarget{cond:A1}{A1}.} There exists a constant $0<c$, such that for every $%
\varepsilon \in (0,\varepsilon _{0}]$ and every $Q^{\varepsilon }$-disc $%
h^{\varepsilon }\subseteq \mathbf{S}^{u}$, the following hold:

\begin{description}
\item[(A1.1)] \hypertarget{cond:A1.1}{}There exist $F_{\ell ,\varepsilon }$ in $\mathscr{F}%
_{\varepsilon }$ and a $Q^{\varepsilon }$-disc $\tilde{h}^{\varepsilon }:%
\bar{B}^{n_{u}}\rightarrow \mathbf{S}^{u}$ satisfying
\begin{equation*}
\tilde{h}^{\varepsilon }\subseteq F_{\ell ,\varepsilon }(h^{\varepsilon });
\end{equation*}

\item[(A1.2)] \hypertarget{cond:A1.2}{}For each $z\in h^{\varepsilon }$ and $\tilde{z}%
=F_{\ell,\varepsilon }(z)\in \tilde{h}^{\varepsilon }$, we have that
\begin{equation}
\pi _{I}(\tilde{z})-\pi _{I}(z)>c\varepsilon .  \label{eqn:easy2}
\end{equation}
\end{description}

\begin{theorem}
\label{th:disk_iterate} Assume that the IFS $\mathscr{F}_{\varepsilon }$
satisfies Condition \hyperlink{cond:A1}{\bf A1}.

Then, for every $M>0$, and every $\varepsilon \in (0,\varepsilon _{0}]$,
there exist $z\in \mathbf{S}^{u}$, $m\le M/(c\varepsilon)$, and a sequence
of functions $F_{\ell _{1},\varepsilon },\ldots ,F_{\ell _{m},\varepsilon }$
in the IFS $\mathscr{F}_{\varepsilon }$, such that $\tilde{z}=(F_{\ell
_{m},\varepsilon }\circ \ldots \circ F_{\ell _{1}\varepsilon })(z)$
satisfies
\begin{equation}
\pi _{I}(\tilde{z})-\pi _{I}(z)>M.  \label{eqn:easy3}
\end{equation}
\end{theorem}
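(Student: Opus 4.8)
The plan is to iterate Condition \hyperlink{cond:A1}{\textbf{A1}} a controlled number of times, using the fact that each application of (A1.1)--(A1.2) advances the $I$-coordinate of a suitable $Q^{\varepsilon}$-disc by at least $c\varepsilon$, while keeping us inside the strip $\mathbf{S}^{u}$ where the hypothesis can be re-applied.

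First I would set up the induction. Fix $\varepsilon\in(0,\varepsilon_0]$ and $M>0$. Start with any $Q^{\varepsilon}$-disc $h_0^{\varepsilon}\subseteq\mathbf{S}^{u}$ — for instance one whose $I$-range lies near the bottom of the relevant action interval; such a disc exists because $\mathbf{S}^{u}=\bar B^{n_u}\times\bar B^{n_s}\times\mathbb{R}\times S_\theta^u$ is a product with an unbounded $I$-factor and $Q^{\varepsilon}$-discs are `slim' in $I$ (Remark \ref{rem:discs-slim}), so we can even choose $h_0^{\varepsilon}$ with $\pi_I h_0^{\varepsilon}$ arbitrarily small. Applying (A1.1) to $h_0^{\varepsilon}$ produces $F_{\ell_1,\varepsilon}$ and a $Q^{\varepsilon}$-disc $h_1^{\varepsilon}\subseteq F_{\ell_1,\varepsilon}(h_0^{\varepsilon})$ with $h_1^{\varepsilon}\subseteq\mathbf{S}^{u}$; by (A1.2), for every $z\in h_0^{\varepsilon}$ whose image under $F_{\ell_1,\varepsilon}$ lands in $h_1^{\varepsilon}$ we have $\pi_I(F_{\ell_1,\varepsilon}(z))-\pi_I(z)>c\varepsilon$. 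Since $h_1^{\varepsilon}$ is again a $Q^{\varepsilon}$-disc inside $\mathbf{S}^{u}$, Condition \hyperlink{cond:A1}{\textbf{A1}} applies to it, yielding $F_{\ell_2,\varepsilon}$ and $h_2^{\varepsilon}$, and so on. Inductively, for each $m$ we obtain discs $h_0^{\varepsilon},h_1^{\varepsilon},\ldots,h_m^{\varepsilon}\subseteq\mathbf{S}^{u}$, maps $F_{\ell_1,\varepsilon},\ldots,F_{\ell_m,\varepsilon}$ with $h_j^{\varepsilon}\subseteq F_{\ell_j,\varepsilon}(h_{j-1}^{\varepsilon})$, and the $I$-advance estimate $\pi_I(\zeta_j)-\pi_I(\zeta_{j-1})>c\varepsilon$ for compatible points.

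Next I would extract the orbit and the endpoint estimate. The key bookkeeping point is that $h_j^{\varepsilon}\subseteq F_{\ell_j,\varepsilon}(h_{j-1}^{\varepsilon})$ means the preimage $F_{\ell_j,\varepsilon}^{-1}(h_j^{\varepsilon})\cap h_{j-1}^{\varepsilon}$ is nonempty, and in fact Theorem \ref{th:hor-disc-propagation} is exactly the mechanism producing these discs: each $h_j^{\varepsilon}$ is $F_{\ell_j,\varepsilon}$ of a topological sub-disc $\mathscr{D}_j\subseteq h_{j-1}^{\varepsilon}$. Pulling back successively, $(F_{\ell_1,\varepsilon})^{-1}(\mathscr{D}_2)\cap\mathscr{D}_1$, etc., gives a nested (hence nonempty, by compactness of the nested images/continuity of the maps) family, so there is a point $z\in h_0^{\varepsilon}\subseteq\mathbf{S}^{u}$ with $z_j:=(F_{\ell_j,\varepsilon}\circ\cdots\circ F_{\ell_1,\varepsilon})(z)\in h_j^{\varepsilon}$ for all $j\le m$. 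Telescoping (A1.2),
\begin{equation*}
\pi_I(z_m)-\pi_I(z)=\sum_{j=1}^{m}\bigl(\pi_I(z_j)-\pi_I(z_{j-1})\bigr)>m\,c\varepsilon.
\end{equation*}
Choosing $m=\lceil M/(c\varepsilon)\rceil$ (so $m\le M/(c\varepsilon)+1$; if one insists on $m\le M/(c\varepsilon)$ exactly, take the first index $m$ at which the running sum exceeds $M$, which by the per-step lower bound $c\varepsilon$ occurs for some $m\le M/(c\varepsilon)$... more precisely, since partial sums increase in steps $>c\varepsilon$, the smallest $m$ with $m c\varepsilon\ge M$ satisfies $m\le \lceil M/(c\varepsilon)\rceil$, and a marginal rewording of the bound as $m\le M/(c\varepsilon)$ holds after noting $(m-1)c\varepsilon<M$). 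With $\tilde z=z_m$ we get $\pi_I(\tilde z)-\pi_I(z)>M$, as required, and $z,\tilde z\in\mathbf{S}^{u}$.

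The main obstacle — really the only delicate point — is the nonemptiness of the infinite/long chain of preimages, i.e. guaranteeing that one genuine point $z\in h_0^{\varepsilon}$ shadows the whole finite sequence of propagated discs. This is handled entirely by Theorem \ref{th:hor-disc-propagation}: each step replaces the current disc by the full $F_{\ell_j,\varepsilon}$-image of a topological sub-disc of the previous one, so the preimages $\mathscr{D}_1\supseteq (F_{\ell_1,\varepsilon})^{-1}(\mathscr{D}_2)\cap\mathscr{D}_1\supseteq\cdots$ form a decreasing chain of nonempty compact sets (compactness coming from $\bar B^{n_u}$ and continuity of the $f_{\ell,i,\varepsilon}$), whose intersection is nonempty; a point there yields the desired $z$. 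Everything else is the straightforward telescoping estimate above, plus the trivial observation that at each stage the hypothesis of Condition \hyperlink{cond:A1}{\textbf{A1}} is met because $h_j^{\varepsilon}$ is by construction a $Q^{\varepsilon}$-disc contained in $\mathbf{S}^{u}$. The constant $c$ being uniform in $\varepsilon$ and in the disc is precisely what makes the bound $m\le M/(c\varepsilon)$ hold for all $\varepsilon\in(0,\varepsilon_0]$.
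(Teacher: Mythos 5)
Your proposal is correct and is essentially the paper's own argument: iterate Condition \hyperlink{cond:A1}{\textbf{A1}} on $Q^{\varepsilon}$-discs inside $\mathbf{S}^{u}$, telescope the per-step gain $c\varepsilon$ from (A1.2), and obtain a genuine shadowing point from the (finite, nested) chain of preimage discs -- the paper tracks the nested topological discs $\mathscr{D}_{n}^{\varepsilon}\subseteq h_{0}^{\varepsilon}$ forward rather than pulling back, but this is the same mechanism. The rounding subtlety you flag around $m\le M/(c\varepsilon)$ is treated no more carefully in the paper's proof, so your write-up contains no gap relative to it.
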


\begin{proof}
Fix $\varepsilon \in (0,\varepsilon _{0}]$.

We start with any $Q^{\varepsilon }$-disk $h_{0}^{\varepsilon }:\bar{B}%
^{n_{u}}\rightarrow \mathbf{S}^{u}$. (We can let, for instance, $%
h_{0}^{\varepsilon }\left( x\right) =\left( x,y_{0},I_{0},\theta _{0}\right)
$ for some $y_{0}\in \bar{B}^{n_{s}},I_{0}\in \mathbb{R}$ and $\theta
_{0}\in S_{\theta }^{u}$.) By our assumptions, there exists a $%
Q^{\varepsilon }$-disk $h_{1}^{\varepsilon }\subseteq F_{\ell
_{1},\varepsilon }(h_{0}^{\varepsilon })$ as in \hyperlink{cond:A1.1}{(A1.1)}. The set of points $%
z_{0}\in h_{0}^{\varepsilon }$ for which $z_{1}=F_{\ell _{1},\varepsilon
}(h_{0}^{\varepsilon }(z_{0}))\in h_{1}^{\varepsilon }$ forms a $n_{u}$%
-dimensional topological disk $\mathscr{D}_{0}^{\varepsilon }\subseteq
h_{0}^{\varepsilon }$, which is the image under $h_{0}^{\varepsilon }$ of a $%
n_{u}$-dimensional disk in $\bar{B}^{n_{u}}$. By \hyperlink{cond:A1.2}{(A1.2)} we have $\pi
_{I}(z_{1})-\pi _{I}(z_{0})>c\varepsilon $.

Inductively, at the $n$-th step, there exist a topological disc $\mathscr{D}%
_{n-1}^{\varepsilon }\subseteq h_{0}^{\varepsilon }$, a $Q^{\varepsilon }$%
-disk $h_{n}^{\varepsilon }$, and some functions $F_{\ell _{1},\varepsilon
},\ldots ,F_{\ell _{n},\varepsilon }$ in the IFS $\mathscr{F}_{\varepsilon }$
such that $z_{n}=(F_{\ell _{n},\varepsilon }\circ \ldots \circ F_{\ell
_{1},\varepsilon })(z_{0})\in h_{n}^{\varepsilon }$ for every $z_{0}\in %
\mathscr{D}_{n-1}^{\varepsilon }$, with the disc $\mathscr{D}%
_{n-1}^{\varepsilon }$ satisfying
\begin{equation*}
\mathscr{D}_{n-1}^{\varepsilon }\subseteq \mathscr{D}_{n-2}^{\varepsilon
}\subseteq \ldots \subseteq \mathscr{D}_{0}^{\varepsilon },
\end{equation*}%
where $\mathscr{D}_{n-2}^{\varepsilon },\ldots ,\mathscr{D}_{0}^{\varepsilon
}$ are the discs constructed at the previous steps. Also, $\pi
_{I}(z_{n})-\pi _{I}(z_{0})>n\varepsilon \cdot c$.

For the induction step, there exists a $Q^{\varepsilon }$-disc
\begin{equation*}
h_{n+1}^{\varepsilon }\subseteq F_{\ell _{n+1},\varepsilon
}(h_{n}^{\varepsilon }).
\end{equation*}%
The set of points $z_{n+1}=F_{\ell _{n+1},\varepsilon }(z_{n})\in
h_{n+1}^{\varepsilon }$ with $z_{n}\in h_{n}^{\varepsilon }$ of the form $%
z_{n}=(F_{\ell _{n},\varepsilon }\circ \ldots \circ F_{\ell _{1},\varepsilon
})(z_{0})$ corresponds to a set of points $z_{0}\in \mathscr{D}%
_{n}^{\varepsilon }\subseteq \mathscr{D}_{n-1}^{\varepsilon }$. We also have
$\pi _{I}(z_{n+1})-\pi _{I}(z_{n})>c\varepsilon $. We obtain that
\begin{equation*}
z_{n+1}=F_{\ell _{n+1},\varepsilon }\circ F_{\ell _{n+1},\varepsilon }\circ
\ldots \circ F_{\ell _{1},\varepsilon }(z_{0})
\end{equation*}%
for $z_{0}\in \mathscr{D}_{n}^{\varepsilon }$, and $\pi _{I}(z_{n+1})-\pi
_{I}(z_{0})>c(n+1)\varepsilon $.

Repeating the iterative procedure for $m$-steps, with $m\le M/(c\eps)$,
yields an orbit along which the change in the action $I$ is more than $M$,
which concludes the proof of the theorem.

For future reference, note that the points $z_{0}\in h_{0}^{\varepsilon }$,
for which $z_{m}=(F_{\ell_{m},\varepsilon }\circ \ldots \circ
F_{\ell_{1}})(z)$ is as in the statement of the theorem, form a disc $%
\mathscr{D}_{m-1}^{\varepsilon }$, and we have $\mathscr{D}
_{m-1}^{\varepsilon }\subseteq \mathscr{D}_{m-2}^{\varepsilon }\subseteq
\ldots \subseteq \mathscr{D}_{0}^{\varepsilon }$.
\end{proof}


\subsection{Diffusion under conditions on connecting sequences}

In this section we give the proof of Theorem \ref{thm:diffusion} and make a
remark how it can be generalized to the setting when action angle
coordinates are higher dimensional.

\begin{proof}[Proof of Theorem \protect\ref{thm:diffusion}]
The result follows from an analogous construction to the proof of Theorem %
\ref{th:disk_iterate}, which is based on propagating $Q^{\varepsilon }$%
-discs from $\mathbf{S}^{u}$ to $\mathbf{S}^{u}$. There are some technical
differences though that we will point out below. In this new construction,
we choose the initial $Q^{\varepsilon }$-disc $h_{0}^{\varepsilon }$ to be $%
h_{0}^{\varepsilon }:\bar{B}^{n_{u}}\rightarrow \mathbf{S}^{u}$ (see proof
of Theorem \ref{th:disk_iterate}), defined as $h_{0}^{\varepsilon }\left(
x\right) :=\left( x,y_{0},\theta _{0},I_{0}\right) $, for some (arbitrary)
fixed $y_{0}\in \bar{B}^{n_{s}}$, $\theta _{0}\in S_{\theta }^{u}$ and for $%
I_{0}=0$.

By Theorem \ref{th:hor-disc-propagation}, assumption \hyperlink{cond:A1.1}{(A1.1)} from Theorem \ref%
{th:disk_iterate} follows from \hyperlink{cond:C1.i}{(C1.i}--\hyperlink{cond:C1.iii}{C1.iii)}. In more detail, by Remark \ref%
{rem:discs-slim} any $Q^{\varepsilon }$-disc $h^{\varepsilon }$ in $\mathbf{S%
}^{u}\cap \{I\in \left[ 0,1\right] \}$ is contained in a multi-dimensional
rectangle $\bar{B}^{n_{u}}\times \bar{B}^{n_{s}}\times \bar{B}\left( I^{\ast
},\varepsilon _{0}a_{I}\right) \cap \left[ 0,1\right] \times \bar{B}\left(
\theta ^{\ast },a_{\theta }\right) \cap \lbrack \theta ^{a,1},\theta ^{a,2}]$
for $I^{\ast }=\pi _{I}h^{\varepsilon }(0)$ and $\theta ^{\ast }=\pi
_{\theta }h^{\varepsilon }(0)$. Condition \hyperlink{cond:C1.ii}{(C1.ii)} combined with \hyperlink{cond:C1.iii}{(C1.iii)}
implies that the multi dimensional rectangle is in some set $N_{\ell ,0}$.
(In case that $(I^{\ast },\theta ^{\ast })\in \pi _{I,\theta }N_{\ell ,0}$
for just one $\ell \in L$ we use \hyperlink{cond:C1.iii}{(C1.iii)} with $\ell =\ell ^{\prime }$.)
This ensures that we can enclose $h^{\varepsilon }$ in some set $N_{\ell ,0}$, and use Theorem \ref{th:hor-disc-propagation} to obtain $\tilde{h}%
^{\varepsilon }=F_{l,\varepsilon }\left( \mathscr{D}\right) .$

We note that by \hyperlink{cond:C1.ii}{(C1.ii)} we can only ensure that $\tilde{h}^{\varepsilon }$
is in $N_{\ell ,k_{\ell }}\subset \mathbf{S}^{u}$; we can not ensure that $%
\pi _{I}\tilde{h}^{\varepsilon }\subset \left[ 0,1\right] $. This means that
it is possible that after repeatedly propagating the $Q^{\varepsilon }$-disk
$h^{\varepsilon }$ the resulting $Q^{\varepsilon }$-disc $\tilde{h}%
^{\varepsilon }$ is no longer contained in $\mathbf{S}^{u}\cap \{I\in \left[
0,1\right] \}$. In that case, we might not be able to keep propagating $%
\tilde{h}^{\varepsilon }$ further along another connecting sequence. Having $%
\tilde{h}^{\varepsilon }\not\subset \mathbf{S}^{u}\cap \{I\in \left[ 0,1%
\right] \}$ implies that, for some $z\in \tilde{h}^{\varepsilon }$ we have $%
\pi _{I}z>1$, in which case (\ref{eqn:diffusionC}) is obtained, and we do
not need to propagate further. In short, for any $Q^{\varepsilon }$-disc $%
h^{\varepsilon }$ in $\mathbf{S}^{u}\cap \{I\in \left[ 0,1\right] \}$ we can
either obtain the $Q^{\varepsilon }$-disc $\tilde{h}^{\varepsilon }$ in $%
\mathbf{S}^{u}\cap \{I\in \left[ 0,1\right] \}$, as in condition \hyperlink{cond:A1.1}{(A1.1)} from
Theorem \ref{th:disk_iterate}, which enables us to continue with another
step of the construction, or the $Q^{\varepsilon }$-disc $\tilde{h}%
^{\varepsilon }$ contains a point outside of $\{I\leq 1\}$, implying that we
have already achieved the required change in $I$ and we do not need to
propagate the $Q^{\varepsilon }$-disc any more.

Condition \hyperlink{cond:C1.iv}{(C1.iv)} is equivalent to condition \hyperlink{cond:A1.2}{(A1.2)} from Theorem \ref%
{th:disk_iterate}. It implies that the $Q^{\varepsilon }$-discs which are
propagated from $\mathbf{S}^{u}$ to $\mathbf{S}^{u}$ by our construction
must exit $\{I\leq 1\}$ after a finite number of iterates of the maps.

Now we will show how we obtain the bound on the Lebesgue measure for orbits
which diffuse the distance $\frac{1}{3}$ in $I$. Take a $Q^{\varepsilon }$%
-disc $h_{0}^{\varepsilon }(x)=\left( x,y_{0},I_{0},\theta _{0}\right) $
with $y_{0}\in \bar{B}^{n_{s}}$, $I_{0}\in \left[ 0,1/3\right] $, $\theta
\in S_{\theta }^{u}$. By the construction from our proof, we know that there
exists an $m,$ a sequence $\ell _{1},\ldots ,\ell _{m}\in L,$ a topological
disc $\mathscr{D}\subset h_{0}^{\varepsilon },$ and a $Q^{\varepsilon }$%
-disc $\tilde{h}^{\varepsilon }=F_{\ell _{m},\varepsilon }\circ \ldots \circ
F_{\ell _{1},\varepsilon }\left( \mathscr{D}\right) $ such that $\pi _{I}%
\tilde{h}^{\varepsilon }>2/3.$ Since as we propagate $h_{0}^{\varepsilon }$
with each transition we gain more than $\varepsilon c$ in $I$, we see that $%
m\leq 2/\left( 3\varepsilon c\right) $.

Let us now take two points $z_{0},z_{1}\in \mathscr{D}$ such that $F_{\ell
_{m},\varepsilon }\circ \ldots \circ F_{\ell _{1},\varepsilon }\left(
z_{1}\right) =\tilde{z}\in \tilde{h}^{\varepsilon }\left( \partial
B^{n_{u}}\right) ,$ and $F_{\ell _{m},\varepsilon }\circ \ldots \circ
F_{\ell _{1},\varepsilon }\left( z_{0}\right) =\tilde{h}^{\varepsilon
}\left( 0\right) $. From (\ref{eq:expansion-bound}), by the mean value
theorem,%
\begin{eqnarray*}
1 &=&\Vert \pi _{x}(\tilde{z}-\tilde{h}^{\varepsilon }\left( 0\right) )\Vert
\\
&=&\left\Vert \pi _{x}\left( F_{\ell _{m},\varepsilon }\circ \ldots \circ
F_{\ell _{1},\varepsilon }\left( z_{1}\right) -F_{\ell _{m},\varepsilon
}\circ \ldots \circ F_{\ell _{1},\varepsilon }\left( z_{0}\right) \right)
\right\Vert \\
&\leq &\alpha ^{m}\left\Vert z_{1}-z_{0}\right\Vert ,
\end{eqnarray*}%
which means that $B^{n_{u}}(z_{0},\alpha ^{-m})\subset \pi _{x}\mathscr{D}%
\subset h_{0}^{\varepsilon }.$ This holds for any choice of $y_{0}\in \bar{B}%
^{n_{s}}$, $I_{0}\in \left[ 0,1/3\right] $, $\theta \in S_{\theta }^{u}$,
which since $\alpha ^{-m}\leq \alpha ^{-2/\left( 3\varepsilon c\right) }$
implies the claim; the $n_{u}$ in the bound $\alpha ^{-2n_{u}/\left(
3\varepsilon c\right) }$ in the statement of the theorem comes from the fact
that for any $r>0$ we have $\mu \left( B^{n_{u}}(z_{0},r)\right)
=r^{n_{u}}\mu \left( B^{n_{u}}\right) $.
\end{proof}

\correctiona{\begin{remark}
Theorems \ref{th:disk_iterate}  and \ref{thm:diffusion} can be generalized
to a setting where the action angle coordinates are higher dimensional
$\left( I_{1},\ldots ,I_{n_{c}},\theta _{1},\ldots,\theta _{n_{c}}\right)$.

To obtain a higher dimensional version of Theorem \ref{th:disk_iterate} we can proceed as follows. We define the strip $\mathbf{S}^{u}=\bar{B}^{n_{u}}\times \bar{B} ^{n_{s}}\times \mathbb{R}^{n_{c}}\times S_{\theta }^{u}$ with $S_{\theta}^{u}=\Pi _{i=1}^{n_{c}}[\theta _{i}^{u,1},\theta _{i}^{u,2}]$. Under the conditions as in Theorem \ref{th:disk_iterate}, if (\ref{eqn:easy2}) holds for one of the action variable $I_{i_{\ast }}$, for $i_{\ast }\in \{1,\ldots ,d\}$, i.e., $\pi _{I_{i_{\ast }}}(\tilde{z})-\pi _{I_{i_{\ast }}}(z)>c\varepsilon $, then we obtain diffusion along~$I_{i_{\ast }}$.

For Theorem \ref{thm:diffusion},  to obtain diffusion
of length $\frac{1}{2}$ in one of the actions, we can proceed as  follows.
We define the strip $\mathbf{S}^{u}$ as above, and consider $I\in \left[ 0,1\right]
^{n_{c}}$. We need the condition \hyperlink{cond:C1}{\textbf{C1}}, but it is enough that (\ref%
{eqn:C1_iv}) holds for one action $I_{i_{\ast }}$ that is, $\pi _{I_{i_{\ast
}}}F_{l,\varepsilon }(z)-\pi _{I_{i_{\ast }}}(z)>c\varepsilon $. We can then
repeat the construction from the proof of Theorem \ref{thm:diffusion} taking
$h^{\varepsilon }\left( x\right) =\left( x,y_{0},I_{0},\theta _{0}\right) $
for arbitrary $y_{0}\in \bar{B}^{n_{s}},$ $I_{0}=\left( \frac{1}{2},\ldots ,
\frac{1}{2}\right) $ and any $\theta _{0}\in S_{\theta }^{u}$. As we
propagate $h^{\varepsilon }$ through successive connecting sequences we
increase in $I_{i_{\ast }}$. There is a possibility that as we propagate the
discs some of them will leave $\{I\in \left[ 0,1\right] ^{n_{c}}\}$ along
some other action than $I_{i_{\ast }}$. Nevertheless, for this to happen we
also achieve a change of order $O(1)$ in the $I_{i_{\ast }}$-direction. In
such case we obtain diffusion in the $I_{i_{\ast }}$-direction as well as in
some other action direction.
\end{remark}}



\section{Symbolic dynamics\label{sec:symbolic}}

In this section we prove Theorem~\ref{th:symbolic-dynamics-from-covering}.
We do so by first formulating a version of this theorem in terms of $%
Q^{\varepsilon }$-discs -- Theorem \ref{th:symbolic-dynamics-general} --
which we then use for the proof of Theorem~\ref%
{th:symbolic-dynamics-from-covering}. We also discuss how the results can be
generalized to higher dimensional $I$.

\subsection{Symbolic dynamics under conditions on $Q^{\protect\varepsilon }$%
-discs\label{sec:symbolic_horizontal}}

Here we introduce a theorem which establishes the symbolic dynamics under
assumptions on propagation of $Q^{\varepsilon }$-discs. As before, for $\ell
\in L$ we let $F_{\ell ,\varepsilon }=f_{\ell ,k_{\ell },\varepsilon }\circ
\ldots \circ f_{\ell ,1\varepsilon }\in \mathscr{F}_{\varepsilon }$, where $%
\mathscr{F}_{\varepsilon }$ is our IFS from (\ref{eq:IFS}). We formulate the
following condition.

\textbf{Condition \hypertarget{cond:A2}{A2}.} There exist constants $0<c<C$, such that for every $%
\varepsilon \in (0,\varepsilon _{0}]$ and every $Q^{\varepsilon }$-disk $%
h^{\varepsilon }\subseteq \mathbf{S}^{u}\cup \mathbf{S}^{d}$ \correctiona{the
following holds:}\marginpar{\color{red}31.\hfill 31.}

\begin{description}
\item[(A2.1)] \hypertarget{cond:A2.1}{}For each $\kappa \in \{u,d\}$ there exist $F_{\ell
,\varepsilon }$ in $\mathscr{F}_{\varepsilon }$ and a $Q^{\varepsilon }$%
-disk $\tilde{h}^{\varepsilon }:\bar{B}^{u}\rightarrow \mathbf{S}^{\kappa }$
with
\begin{equation*}
\tilde{h}^{\varepsilon }\subseteq F_{\ell ,\varepsilon }(h^{\varepsilon }).
\end{equation*}

\item[(A2.2)] \hypertarget{cond:A2.2}{}For $h^{\varepsilon }$, $\tilde{h}^{\varepsilon }$ as in (1),
for all pairs of points $z\in h^{\varepsilon }$, $\tilde{z}\in \tilde{h}%
^{\varepsilon }$ with $\tilde{z}=F_{\ell ,\varepsilon }(z)$ we have the
following

\begin{description}
\item[(A2.2.i)]\hypertarget{cond:A2.2.i}{}
\begin{equation}
\left\vert \pi _{I}(\tilde{z})-\pi _{I}(z)\right\vert <C\varepsilon .
\label{eq:move-ud}
\end{equation}

\item[(A2.2.ii)] \hypertarget{cond:A2.2.ii}{}If $h^{\varepsilon }\subseteq \mathbf{S}^{u}$ (resp. $%
\mathbf{S}^{d}$) and $\tilde{h}^{\varepsilon }\subseteq \mathbf{S}^{u}$
(resp. $\mathbf{S}^{d}$) we have the following
\begin{equation}  \label{eq:move-uu}
\begin{split}
c\varepsilon <&\pi _{I}(\tilde{z})-\pi _{I}(z) \\
(\text{resp. } c\varepsilon <&\pi _{I}(z)-\pi _{I}(\tilde{z})).
\end{split}%
\end{equation}
\end{description}
\end{description}

\begin{theorem}
\label{th:symbolic-dynamics-general} Assume that the IFS $\mathscr{F}%
_{\varepsilon }$ satisfies Condition \hyperlink{cond:A2}{\bf A2}. Let $\eta =2a_{I}+C$.

Then for every $\varepsilon \in (0,\varepsilon _{0}]$ and every infinite
sequence of $I$ -level sets $\left( I^{n}\right) _{n\in \mathbb{N}}\subseteq
\mathbb{R}$, with $\left\vert I^{n+1}-I^{n}\right\vert >2\eta \varepsilon $
there exists an orbit $\left( z_{n}\right) _{n\in \mathbb{N}}$ of the IFS,
and an increasing sequence $\left( k_{n}\right) _{n\in \mathbb{N}}\subseteq
\mathbb{N}$, such that for every $n\in \mathbb{N}$ we have
\begin{equation}
\left\vert \pi _{I}(z_{k_{n}})-I^{n}\right\vert <\varepsilon \eta .
\label{eq:energy-shadow}
\end{equation}
\end{theorem}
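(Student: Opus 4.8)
I would prove Theorem~\ref{th:symbolic-dynamics-general} by a greedy disc-propagation argument: build the orbit in stages, where at each stage a $Q^{\varepsilon}$-disc gets propagated through connecting sequences — using condition \hyperlink{cond:A2}{\textbf{A2}} and Theorem~\ref{th:hor-disc-propagation} — until its $I$-coordinate has moved from a neighborhood of $I^{n}$ to a neighborhood of $I^{n+1}$. The crucial point that makes this work is Remark~\ref{rem:discs-slim}: every $Q^{\varepsilon}$-disc has $\pi_I$-diameter at most $2a_I\varepsilon$, so once $\pi_I h^{\varepsilon}(0)$ is within $(\eta - 2a_I)\varepsilon = C\varepsilon$ of $I^{n}$, the whole disc lies within $\eta\varepsilon$ of $I^{n}$. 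So the target at each stage is really a target for the single value $\pi_I h^{\varepsilon}(0)$, up to the harmless $2a_I\varepsilon$ slack.

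\textbf{Step 1 (setup and invariant).} Fix $\varepsilon$ and the sequence $(I^n)$. Start with any $Q^{\varepsilon}$-disc $h_0^{\varepsilon}$ inside $\mathbf{S}^{u}\cup\mathbf{S}^{d}$ with $\pi_I h_0^{\varepsilon}(0)$ close to $I^0$ — by \hyperlink{cond:A2.1}{(A2.1)} applied once we may assume $h_0^{\varepsilon}$ sits in one of the two strips $\mathbf{S}^{\kappa}$. Maintain the inductive hypothesis: after finitely many propagations we have a $Q^{\varepsilon}$-disc $h^{\varepsilon}$ in some strip with $|\pi_I h^{\varepsilon}(0) - I^n| < C\varepsilon$, together with an orbit segment realizing it (i.e.\ $h^{\varepsilon} = F_{\ell_j,\varepsilon}\circ\cdots\circ F_{\ell_1,\varepsilon}(\mathscr{D})$ for a subdisc $\mathscr{D}$ of the previous disc), and a time-index $k_n$. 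The slim-disc bound then gives \eqref{eq:energy-shadow} at stage $n$ automatically.

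\textbf{Step 2 (the driving move).} To go from $I^n$ to $I^{n+1}$: WLOG $I^{n+1} > I^n$ (the other case is symmetric). If the current disc is in $\mathbf{S}^{u}$, apply \hyperlink{cond:A2.1}{(A2.1)} with $\kappa = u$, staying in $\mathbf{S}^u$; by \hyperlink{cond:A2.2.ii}{(A2.2.ii)} the value $\pi_I$ of every point increases by more than $c\varepsilon$, and by \hyperlink{cond:A2.2.i}{(A2.2.i)} by less than $C\varepsilon$. Iterate this. If the current disc is in $\mathbf{S}^{d}$, first use \hyperlink{cond:A2.1}{(A2.1)} with $\kappa = u$ to cross over to $\mathbf{S}^u$ — this single step changes $\pi_I h^{\varepsilon}(0)$ by at most $C\varepsilon$ (by \hyperlink{cond:A2.2.i}{(A2.2.i)}), which is harmless — and then iterate the up-move. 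Each up-move adds strictly more than $c\varepsilon > 0$ to $\pi_I h^{\varepsilon}(0)$, so after finitely many (at most $\lceil(I^{n+1}-I^n+2\eta\varepsilon)/(c\varepsilon)\rceil$) steps, $\pi_I h^{\varepsilon}(0)$ first enters the window $(I^{n+1}-C\varepsilon,\, I^{n+1}+C\varepsilon)$. Here one must check the window isn't overshot in a single step: the per-step increment is at most $C\varepsilon$, and the target window has width $2C\varepsilon$, so we cannot jump over it — this is exactly where the relation $\eta \ge 2a_I + C$ and the hypothesis $|I^{n+1}-I^n| > 2\eta\varepsilon$ are used to guarantee both that the intermediate targets are far enough apart and that the window is wide enough relative to the step size. (If $I^{n+1} < I^n$ use $\kappa = d$ and \hyperlink{cond:A2.2.ii}{(A2.2.ii)}'s second line.) Record the new time-index $k_{n+1} > k_n$ as the total number of $F_{\ell,\varepsilon}$'s applied so far.

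\textbf{Step 3 (passing to the limit / extracting the orbit).} The construction produces a nested sequence of topological discs $\mathscr{D}_0 \supseteq \mathscr{D}_1 \supseteq \cdots$ (each $\mathscr{D}_j$ the preimage under the accumulated return-map composition of the next disc, nonempty and compact by Theorem~\ref{th:hor-disc-propagation}), exactly as in the proof of Theorem~\ref{th:disk_iterate}. Their intersection is nonempty; pick $z_0$ in it. Its IFS orbit $z_n = F_{\ell_n,\varepsilon}\circ\cdots\circ F_{\ell_1,\varepsilon}(z_0)$ then satisfies $z_{k_n} \in h^{(n),\varepsilon}$ for the stage-$n$ disc, and the slim-disc estimate from Remark~\ref{rem:discs-slim} gives $|\pi_I z_{k_n} - I^n| \le |\pi_I z_{k_n} - \pi_I h^{(n),\varepsilon}(0)| + |\pi_I h^{(n),\varepsilon}(0) - I^n| < 2a_I\varepsilon + C\varepsilon = \eta\varepsilon$, which is \eqref{eq:energy-shadow}.

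\textbf{Main obstacle.} The delicate point is the bookkeeping in Step~2: ensuring that the monotone drift in $\pi_I h^{\varepsilon}(0)$ genuinely lands inside the target window rather than skipping past it, which requires the per-step upper bound $C\varepsilon$ to be no larger than the window half-width — i.e.\ that the choice $\eta = 2a_I + C$ is exactly matched to the gap hypothesis $|I^{n+1}-I^n| > 2\eta\varepsilon$ — and that the "strip-switching" step (when the current disc is in the wrong strip for the desired drift direction) perturbs $\pi_I h^{\varepsilon}(0)$ by a controlled amount only. Everything else is a routine nesting/compactness argument borrowed from the proof of Theorem~\ref{th:disk_iterate}.
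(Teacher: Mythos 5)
Your proposal follows essentially the same route as the paper's proof: propagate slim $Q^{\varepsilon}$-discs through the return maps, drive $I$ up by passing from $\mathbf{S}^{u}$ to $\mathbf{S}^{u}$ and down from $\mathbf{S}^{d}$ to $\mathbf{S}^{d}$, insert a strip-switching step when the direction changes, stop at the first entry into a $C\varepsilon$-window around $I^{n+1}$, upgrade to an $\eta\varepsilon$-estimate for the whole disc via the $2a_{I}\varepsilon$ bound of Remark~\ref{rem:discs-slim}, and extract a single orbit from the nested compact preimage discs as in the proof of Theorem~\ref{th:disk_iterate}.

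The one step that does not hold as written is the bookkeeping on $\pi_{I}h^{\varepsilon}(0)$. The base point $\tilde{h}^{\varepsilon}(0)$ of the propagated disc is the image of \emph{some} point of $h^{\varepsilon}$, not of $h^{\varepsilon}(0)$, so conditions (A2.2.i)--(A2.2.ii) only give increments of $\pi_{I}h^{\varepsilon}(0)$ in the range $(c\varepsilon-2a_{I}\varepsilon,\;C\varepsilon+2a_{I}\varepsilon)$; in particular the claimed monotone drift by more than $c\varepsilon$ and the no-overshoot bound ``at most $C\varepsilon$ per step'' are not justified for the centre, and if $2a_{I}>C$ the centre could in principle hop over the window $(I^{n+1}-C\varepsilon,\,I^{n+1}+C\varepsilon)$. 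The paper avoids this by tracking points of the disc themselves: since every point carried to the new disc changes its $I$-coordinate by strictly between $c\varepsilon$ and $C\varepsilon$, the quantities $\min_{h^{\varepsilon}}\pi_{I}$ and $\max_{h^{\varepsilon}}\pi_{I}$ increase by at least $c\varepsilon$ and at most $C\varepsilon$ respectively, so at the first step at which the maximum exceeds $I^{n+1}-C\varepsilon$ it is still below $I^{n+1}$; hence some point $z_{\ast}$ of the disc satisfies $|\pi_{I}(z_{\ast})-I^{n+1}|<C\varepsilon$, and Remark~\ref{rem:discs-slim} then gives the $\eta\varepsilon$ bound for all points of the disc. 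With this replacement (track an actual point, or the extremes of $\pi_{I}$ over the disc, rather than the graph parameterisation's centre) your argument coincides with the paper's. Your order of operations when the direction changes (switch strips first, then drift monotonically) is equally valid; the paper's extra safeguard in its Case 2.B, pushing the disc more than $C\varepsilon$ above the new target before switching, is in fact automatic under the hypothesis $|I^{n+1}-I^{n}|>2\eta\varepsilon$.
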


\begin{proof}
The main idea is the following. If we want to increase the $I$-coordinate we
move the $Q^{\varepsilon }$-disc by iterating from the energy strip $\mathbf{%
S}^{u}$ to itself. If we want to decrease the $I$-coordinate we \correctiona{propagate }
the disk from the energy strip $\mathbf{S}^{d}$ to itself. If we want to switch
from increasing the $I$-coordinate to decreasing the $I$-coordinate, or from
decreasing the $I$-coordinate to increasing the $I$-coordinate, we \correctiona{propagate }the $%
Q^{\varepsilon }$-disc from $\mathbf{S}^{u}$ to $\mathbf{S}^{d}$, or from $%
\mathbf{S}^{d}$ to $\mathbf{S}^{u}$, respectively. We provide the details
below.

\emph{Case 1.A.} Assume $I^{0}<I^{1}$. Start with any $Q^{\varepsilon }$%
-disk $h_{0}^{\varepsilon }$ in $\mathbf{S}^{u}$ such that $\pi
_{I}(h_{0}^{\varepsilon })\subseteq (I^{0}-\eta ,I^{0}+\eta )$. By
assumption \hyperlink{cond:A2.1}{(A2.1)} there is a map $F_{\ell ,\varepsilon }\in \mathscr{F}%
_{\varepsilon }$ and a $Q^{\varepsilon }$-disk $\tilde{h}^{\varepsilon }$ in
$\mathbf{S}^{u}$ such that $\tilde{h}^{\varepsilon }\subseteq F_{\ell
,\varepsilon }(h_{0}^{\varepsilon })$. By \hyperlink{cond:A2.2.i}{(A2.2.{\scriptsize I})} and \hyperlink{cond:A2.2.ii}{(A2.2.{\scriptsize II})} for each
point $z_{0}\in h_{0}^{\varepsilon }$ with $\tilde{z}=F_{\ell ,\varepsilon
}(z_{0})\in \tilde{h}^{\varepsilon }$ we have
\begin{equation*}
\varepsilon c<\pi _{I}(\tilde{z})-\pi _{I}(z_{0})<\varepsilon C.
\end{equation*}%
That is, moving the $Q^{\varepsilon }$-disc $h_{0}^{\varepsilon }$ along the
composition of maps to a $Q^{\varepsilon }$-disk $\tilde{h}^{\varepsilon }$,
each point on $h_{0}^{\varepsilon }$ that lands on $\tilde{h}^{\varepsilon }$
changes its $I$-coordinate by at least $\varepsilon c$ and at most $%
\varepsilon C$. This means that, by repeating this procedure for finitely
many times we can obtain a $Q^{\varepsilon }$-disk $h_{1}^{\varepsilon }$
such that, for some point $z_{\ast }\in h_{1}^{\varepsilon }$ we have $%
\left\vert \pi _{I}\left( z_{\ast }\right) -I^{1}\right\vert <C\varepsilon .$
Then by Remark \ref{rem:discs-slim} for any $z_{1}\in h_{1}^{\varepsilon }$,
\begin{equation}
\left\vert \pi _{I}\left( z_{1}\right) -I^{1}\right\vert \leq \left\vert \pi
_{I}\left( z_{1}\right) -\pi _{I}\left( z_{\ast }\right) \right\vert
+\left\vert \pi _{I}\left( z_{\ast }\right) -I^{1}\right\vert <2\varepsilon
a_{I}+\varepsilon C\leq \varepsilon \eta ,  \label{eq:case1A-ineq}
\end{equation}%
as required for (\ref{eq:energy-shadow}).

\emph{Case 1.B.} If $I^{0}>I^{1}$, we proceed with a similar construction
starting with any $Q^{\varepsilon }$-disk $h_{0}^{\varepsilon }$ contained
in $\mathbf{S}^{d}$, and moving the disk from $\mathbf{S}^{d}$ to $\mathbf{S}%
^{d}$, until we obtain a $Q^{\varepsilon }$-disk $h_{1}^{\varepsilon }$
satisfying $\left\vert \pi _{I}\left( z_{1}\right) -I^{1}\right\vert \leq
\varepsilon \eta $ for all $z_{1}\in h_{1}^{\varepsilon }$.

\emph{Case 2.A.} Assume that we are as in \emph{Case 1.A}, and that $%
I^{1}<I^{2}$. Then, starting with the $Q^{\varepsilon }$-disk $%
h_{1}^{\varepsilon }$ obtained at the end of the construction, we proceed in
the same way as in \emph{Case 1.A}, moving the disk $h_{1}^{\varepsilon }$
repeatedly by passing from $\mathbf{S}^{u}$ to $\mathbf{S}^{u}$ (by doing so
we increase $I$), until we obtain a $Q^{\varepsilon }$-disk $%
h_{2}^{\varepsilon }$ satisfying 
$\left\vert \pi _{I}(z_{2})-I^{2}\right\vert <\varepsilon \eta , $
for all $z_{2}\in h_{2}^{\varepsilon }$.

If we are as in \emph{Case 1.B}, and $I^{1}>I^{2}$, we proceed in a similar
fashion.

\emph{Case 2.B.} Assume that we are as in \emph{Case 1.A}, that is $%
I^{0}<I^{1}$, and that $I^{1}>I^{2}$. Consider the $Q^{\varepsilon }$-disk $%
h_{1}^{\varepsilon }$ obtained at the end of the construction of \emph{Case
1.A}. If for some $z\in h_{1}^{\varepsilon }$ we have $\pi _{I}\left(
z\right) -I^{2}<C\varepsilon $, then we \correctiona{propagate }the disc $h_{1}^{\varepsilon
}$ by passing from $\mathbf{S}^{u}$ to $\mathbf{S}^{u}$ (by doing so we
increase $I$) until we end up with a $Q^{\varepsilon }$-disc $%
h_{1}^{\varepsilon ,\ast }$ such that for any $z\in h_{1}^{\varepsilon ,\ast
}$, $\pi _{I}\left( z\right) -I^{2}>C\varepsilon $. Once this is achieved,
we choose a mapping $F_{\ell ,\varepsilon }$ in $\mathscr{F}_{\varepsilon }$
satisfying \hyperlink{cond:A2.1}{(A2.1)}, \hyperlink{cond:A2.2}{(A2.2)} with $\kappa=d$ and obtain a $Q^{\varepsilon }$%
-disc $\tilde{h}$ in $\mathbf{S}^{d}$, $\tilde{h}^{\varepsilon }\subseteq
F_{\ell ,\varepsilon }(h_{1}^{\varepsilon ,\ast })$. For any $\tilde{z}\in
\tilde{h}^{\varepsilon }$, $\tilde{z}=F_{\ell ,\varepsilon }\left( z\right) $
for some $z\in h_{1}^{\varepsilon ,\ast }$, we have
\begin{equation*}
\pi _{I}\left( \tilde{z}\right) -I^{2}=\left( \pi _{I}\left( \tilde{z}%
\right) -\pi _{I}\left( z\right) \right) +\left( \pi _{I}\left( z\right)
-I^{2}\right) >\left( -C\varepsilon \right) +\left( C\varepsilon \right) =0.
\end{equation*}%
If for all $z\in \tilde{h}^{\varepsilon }$ we have $\left\vert \pi
_{I}(z)-I^{2}\right\vert <\varepsilon \eta $, then we take $%
h_{2}^{\varepsilon }=\tilde{h}^{\varepsilon }$, and the step is finished. If
not, then we proceed with the construction from \emph{Case 1.B.} i.e.,
\correctiona{propagate }the disc $\tilde{h}^{\varepsilon }$ by passing from $\mathbf{S}^{d}$
to $\mathbf{S}^{d}$, as in Case 1.B (by doing so we decrease $I$) until we
end up with $h_{2}^{\varepsilon }$ for which for all $z_{2}\in
h_{2}^{\varepsilon }$ we have $|\pi _{I}\left( z_{2}\right)
-I^{2}|<\varepsilon \eta .$

If we are as in \emph{Case 1.B}, and $I^{1}<I^{2}$, that is $I^{0}>I^{1}$
and $I^{2}>I^{1}$, we proceed in a similar fashion, but switching from the
energy strip $\mathbf{S}^{d}$ to the energy strip $\mathbf{S}^{u}$.

Repeating these constructions for all $\{I^{n}\}_{\sigma \in \mathbb{N}}$,
we obtain at each step a $Q^{\varepsilon }$-disk $h_{n}^{\varepsilon }$ such
that for all $z_{n}\in h_{n}^{\varepsilon }$ we have 
$| \pi _{I}(z_{n})-I^{n}| <\varepsilon \eta . $ 
We reach a $Q^{\varepsilon }$-disc $h_{n}^{\varepsilon }$ by iterating the $%
Q^{\varepsilon }$-disc $h_{n-1}^{\varepsilon }$ by successively applying
some maps of the IFS $\mathscr{F}_{\varepsilon }$, i.e., $h_{n}^{\varepsilon
}\subseteq F_{\ell _{i_{n}}^{n},\varepsilon }\circ \ldots F_{\ell
_{1}^{n},\varepsilon }\left( h_{n-1}^{\varepsilon }\right) $, for some $\ell
_{1}^{n},\ldots ,\ell _{i_{n}}^{n}\in L$. This means that there exists a
topological disc $\mathscr{D}_{n}^{\varepsilon }\subset h_{0}^{\varepsilon }$
such that
\begin{equation*}
h_{n}^{\varepsilon }=\left( \left( F_{\ell _{i_{n}}^{n},\varepsilon }\circ
\ldots F_{\ell _{1}^{n},\varepsilon }\right) \circ \ldots \circ \left(
F_{\ell _{i_{1}}^{1},\varepsilon }\circ \ldots F_{\ell _{1}^{1},\varepsilon
}\right) \right) \left( \mathscr{D}_{n}^{\varepsilon }\right) .
\end{equation*}%
Since $\mathscr{D}_{n}^{\varepsilon }\subseteq \ldots \subseteq \mathscr{D}%
_{1}^{\varepsilon }\subseteq \mathscr{D}_{0}^{\varepsilon },$ using the
Nested Compact Set Theorem, we obtain an orbit $(z_{n})_{n\in \mathbb{N}}$
of the IFS together with the sequence $k_{n}=i_{1}+\ldots +i_{n}$ as in the
statement of the theorem.
\end{proof}

\subsection{Symbolic dynamics under conditions on connecting sequences\label%
{sec:symb-dyn-covering}}

In this section we give the proof of Theorem \ref
{th:symbolic-dynamics-from-covering}, and later comment how it can be
generalized to higher dimensional $I$.

\begin{proof}[Proof of Theorem~\protect\ref%
{th:symbolic-dynamics-from-covering}]
The proof follows from the same construction as the proof of Theorem~\ref%
{th:symbolic-dynamics-general}.

As in the proof of Theorem \ref{thm:diffusion}, condition \hyperlink{cond:A2.1}{(A2.1)} of Theorem %
\ref{th:symbolic-dynamics-general} follow from \hyperlink{cond:C2.i}{(C2.i}--\hyperlink{cond:C2.iii}{C2.iii)}, combined with
Theorem \ref{th:hor-disc-propagation}. Conditions \hyperlink{cond:A2.2.i}{(A2.2.{\scriptsize I})} and \hyperlink{cond:A2.2.ii}{(A2.2.{\scriptsize II})} of
Theorem \ref{th:symbolic-dynamics-general} follow from \hyperlink{cond:C2.iv}{(C2.iv)}, \hyperlink{cond:C2.v}{(C2.v)},
combined with Theorem \ref{th:hor-disc-propagation}. The shadowing of $I^{n }
$ is therefore obtained identically as in the proof of Theorem \ref%
{th:symbolic-dynamics-general}.

Conditions \hyperlink{cond:C2.i}{(C2.i}--\hyperlink{cond:C2.iii}{C2.iii)} ensure that any $Q^{\varepsilon }$-disc in $%
\mathbf{S}^{\kappa }\cap \{I\in \lbrack 0,1]\}$, for $\kappa \in \{u,d\}$,
can be propagated to $\mathbf{S}^{u}$ and to $\mathbf{S}^{d}$.

One technical detail is to show that we can obtain $Q^{\varepsilon }$-discs
whose $I$-coordinates $(\varepsilon \eta )$-shadow the given action level
sets so that these discs never leave $(\mathbf{S}^{u}\cup \mathbf{S}%
^{d})\cap \{I\in \lbrack 0,1]\}$. More precisely, we need to ensure that we
can propagate these discs through connecting sequences such that the
successive iterates of the discs under $F_{\ell ,\varepsilon }$ return to $(%
\mathbf{S}^{u}\cup \mathbf{S}^{d})$ without leaving $\{I\in \lbrack 0,1]\}$.

Assume that the $Q^{\varepsilon }$-disc $h^{\varepsilon }$ is such that $%
\pi_{I}h^{\varepsilon }\subset \left( I^{n}-\varepsilon \eta
,I^{n}+\varepsilon \eta \right) $. Since $(I_{n})_{n\in \mathbb{N}}$ $\subseteq
[2\varepsilon \eta ,1-2\varepsilon \eta ]$, it follows that
\begin{equation}
\pi _{I}(h^{\varepsilon })\subseteq (\varepsilon \eta ,1-\varepsilon \eta ).
\label{eqn:I_eta_one_minus_eta}
\end{equation}%
Let $\tilde{h}^{\varepsilon }$ be the image of $h^{\varepsilon }$ in $%
\mathbf{S}^{u}\cup \mathbf{S}^{d}$ under some appropriate mapping $F_{\ell
,\varepsilon }$ from $\mathscr{F}_{\varepsilon }$. Since $\eta =2a_{I}+C$,
we have
\begin{eqnarray}
\pi _{I}\tilde{h}^{\varepsilon } &>&\min_{z\in h^{\varepsilon }}\pi
_{I}z-\varepsilon C>\varepsilon \eta -\varepsilon C>0,
\label{eq:hor-disc-above-zero} \\
\pi _{I}\tilde{h}^{\varepsilon } &<&\max_{z\in h^{\varepsilon }}\pi
_{I}z+\varepsilon C<1-\varepsilon \eta +\varepsilon C<1.
\label{eq:hor-disc-below-one}
\end{eqnarray}

If in our construction we have a $Q^{\varepsilon }$-disc in $\mathbf{S}^{u}$
(resp. in $\mathbf{S}^{d}$), and we need to increase its $I$-coordinate from
an $(\varepsilon \eta )$-neighborhood of $I^{n}$ to an $(\varepsilon \eta )$%
-neighborhood of $I^{n+1}$, with $I^{n}<I^{n+1}$, (resp. to decrease its $I$%
-coordinate from an $(\varepsilon \eta )$-\hspace{0.0001cm}neighborhood of $%
I^{n}$ to an $(\varepsilon \eta )$-neighborhood of $I^{n+1}$), then we
propagate the $Q^{\varepsilon }$-disc from $\mathbf{S}^{u}$ to $\mathbf{S}%
^{u}$ (resp. from $\mathbf{S}^{d}$ to $\mathbf{S}^{d}$). By (\ref%
{eqn:I_eta_one_minus_eta}) we can do that without the discs having to leave $%
\{I\in (\varepsilon \eta ,1-\varepsilon \eta )\}$.

If in our construction we have a $Q^{\varepsilon }$-disc in $\mathbf{S}^{u}$
and we need to decrease its $I$-coordinate from an $(\varepsilon \eta )$%
-neighborhood of $I^{n}$ to an $(\varepsilon \eta )$-neighborhood of $%
I^{n+1} $, with $I^{n}>I^{n+1}$, we first propagate the disc to $\mathbf{S}%
^{d}$ without leaving $\{I\in \left[ 0,1\right] \}$, which is ensured by (%
\ref{eq:hor-disc-above-zero}--\ref{eq:hor-disc-below-one}). Then we continue
to propagate the disc from $\mathbf{S}^{d}$ to $\mathbf{S}^{d}$. This means
that 
the successive $Q^{\varepsilon }$-discs will be in $\{I\in \left[ 0,1\right]
\}$ and they will subsequently return to $\{I\in (\varepsilon \eta
,1-\varepsilon \eta )\}$.

By mirror arguments, if we have a $Q^{\varepsilon }$-disc in $\mathbf{S}^{d}$
and we need to increase its $I$-coordinate from an $(\varepsilon \eta )$%
-neighborhood of $I^{n}$ to an $(\varepsilon \eta )$-neighborhood of $%
I^{n+1} $, with $I^{n}<I^{n+1}$, we first propagate the disc to $\mathbf{S}%
^{u}$ without leaving $\{I\in \left[ 0,1\right] \}$, which is ensured by (%
\ref{eq:hor-disc-above-zero}--\ref{eq:hor-disc-below-one}). Then we
propagate from $\mathbf{S}^{u}$ to $\mathbf{S}^{u}$. This means that
the successive $Q^{\varepsilon }$-discs will be in $\{I\in \left[ 0,1\right]
\}$ and they will subsequently return to $\{I\in (\varepsilon \eta
,1-\varepsilon \eta )\}$.
\end{proof}

\correctiona{\begin{remark}

Theorem \ref{th:symbolic-dynamics-general} can be generalized 
to the higher dimensional case where the action angle coordinates are $\left( I_{1},\ldots ,I_{n_{c}},\theta _{1},\ldots,\theta _{n_{c}}\right)$, by taking the strip $\mathbf{S}^{\kappa }=\bar{B}%
^{n_{u}}\times \bar{B}^{n_{s}}\times \mathbb{R}^{n_{c}}\times S_{\theta
}^{\kappa }$ with $S_{\theta }^{\kappa }=\Pi _{i=1}^{n_{c}}[S_{i}^{\kappa
,1},S_{i}^{\kappa ,2}]$, for $\kappa \in \left\{ u,d\right\} $, and assuming
conditions \hyperlink{cond:A2.1}{(A2.1)}, \hyperlink{cond:A2.2.i}{(A2.2.{\scriptsize I})}, \hyperlink{cond:A2.2.ii}{(A2.2.{\scriptsize II})} from Theorem \ref{th:symbolic-dynamics-general}, with the difference that conditions  (\ref{eq:move-ud}--\ref{eq:move-uu}) are needed only for one of the components
$I_{i_{\ast }}$ of $I$. Then sequence of $I_{i_{\ast }}$-level sets $\left(
I_{i_{\ast }}^{n}\right) _{n\in \mathbb{N}}$ with $\left\vert I_{i_{\ast
}}^{n+1}-I_{i_{\ast }}^{n}\right\vert >2\eta \varepsilon $ we can find an
orbit $\left( z_{n}\right) _{n\in \mathbb{N}}$ of the IFS, which $\varepsilon \eta $ shadows the sequence of the IFS, i.e. such that for every
$n\in \mathbb{N}$ we have $\left\vert \pi _{I_{i_{\ast }}}(z_{n})-I_{i_{\ast}}^{n}\right\vert <\varepsilon \eta $.

Generalizing Theorem \ref%
{th:symbolic-dynamics-from-covering} to the setting of higher dimensional $I$ is more subtle. 
In the case when $n_c=1$, condition \hyperlink{cond:C2}{\textbf{C2}} ensures that $Q^{\varepsilon
}$-discs can be propagated as long as we have $I\in \left[ 0,1\right] $ for
all points on these disc. In higher dimensions we need an analogue of this.
We restrict to the setting of $n_{c}=2$. This is enough to demonstrate the
ideas with which we can achieve this, without getting too technical.

When $n_{c}=2$ we define the strips $\mathbf{S}^{uu},\mathbf{S}^{ud},\mathbf{%
S}^{du},\mathbf{S}^{dd}$ with
\begin{equation*}
\mathbf{S}^{i_{1}i_{2}}=\bar{B}^{u}\times \bar{B}^{s}\times \mathbb{R}%
^{2}\times S_{\theta }^{i_{1}i_{2}},\qquad S_{\theta }^{i_{1}i_{2}}=\Pi
_{k=1}^{2}[\theta _{k}^{i_{1}i_{2},1},\theta _{k}^{i_{1}i_{2},2}],\text{ for
}i_{1},i_{2}\in \left\{ u,d\right\} .
\end{equation*}%
The role of these strips will be that propagating a $Q^{\varepsilon }$-disc
from $\mathbf{S}^{i_{1}i_{2}}$ to $\mathbf{S}^{i_{1}i_{2}}$ will increase
the coordinate $I_{k}$ if $i_{k}=u$ and decrease $I_{k}$ if $i_{k}=d$, for $%
k=1,2$.) We define
\begin{equation*}
L=\bigcup\limits_{i_{1},i_{2},j_{1},j_{2}\in \left\{ u,d\right\}
}L^{i_{1}i_{2},j_{1}j_{2}}
\end{equation*}%
and make the following modification of condition \hyperlink{cond:C2}{\textbf{C2}}.

\medskip

\textbf{Condition C2'.}

\begin{itemize}
\item[(C2'.i)] For each $\ell\in
L^{i_{1}i_{2},j_{1}j_{2}}$, $i_{1},i_{2},j_{1},j_{2}\in \{u,d\},$ and each $%
\varepsilon \in (0,\varepsilon _{0}]$, there is
 a connecting sequence $(N_{\ell,0},\ldots ,N_{\ell,k_\ell})$ from $\mathbf{S}^{i_{1}i_{2}}$ to $\mathbf{S}%
^{j_{1}j_{2}}$;

\item[(C2'.ii)] On the $I,\theta $-coordinate we
have
\begin{equation*}
\bigcup\limits_{\ell\in L^{i_{1}i_{2},j_{1}j_{2}}}\pi _{I,\theta }\left(
N_{\ell,0}\right) =\left[ 0,1\right] ^{2}\times S_{\theta
}^{i_{1}i_{2}},\qquad \text{for }i_{1},i_{2},j_{1},j_{2}\in \{u,d\};
\end{equation*}

\item[(C2'.iii)] Whenever $N_{\ell,0}\cap
N_{\ell^{\prime },0}\neq \emptyset $ for $\ell,\ell^{\prime }\in
L^{i_{1}i_{2},j_{1}j_{2}},$ for every $\left( I^{\ast },\theta ^{\ast
}\right) \in \pi _{I,\theta }(N_{\ell,0}\cap N_{\ell^{\prime},0})$ the multi
dimensional rectangle $\bar{B}^{n_u}\times \bar{B}^{n_s}\times \bar{B}\left(
I^{\ast },\varepsilon _{0}a_{I}\right) \cap \left[ 0,1\right] ^{2}\times
\bar{B}\left( \theta ^{\ast },a_{\theta }\right) \cap S_{\theta
}^{i_{1}i_{2}}$ is contained in $N_{\ell,0}$ or $N_{\ell^{\prime },0}$;

\item[(C2'.iv)] There exists a constant $C>0$,
such that if $z$ passes through a connecting sequence $(N_{\ell ,0},\ldots
,N_{\ell ,k_{\ell }})$ we have
\begin{equation*}
\left\vert \pi _{I}F_{\ell ,\varepsilon }(z)-\pi _{I}(z)\right\vert
<\varepsilon \cdot C;
\end{equation*}

\item[(C2'.v)] There exists a $c>0$ such that for $z
$ passing through a connecting sequence
\begin{align*}
\varepsilon \cdot c& <\pi _{I_{1}}F_{\ell ,\varepsilon }(z)-\pi
_{I_{1}}(z),\quad \text{if }\ell \in L^{uu,uu},L^{ud,ud}, \\
\varepsilon \cdot c& <\pi _{I_{2}}F_{\ell ,\varepsilon }(z)-\pi
_{I_{2}}(z),\quad \text{if }\ell \in L^{uu,uu},L^{du,du}, \\
\varepsilon \cdot c& <\pi _{I_{1}}(z)-\pi _{I_{1}}F_{\ell ,\varepsilon
}(z),\quad \text{if }\ell \in L^{dd,dd},L^{du,du}, \\
\varepsilon \cdot c& <\pi _{I_{2}}(z)-\pi _{I_{2}}F_{\ell ,\varepsilon
}(z),\quad \text{if }\ell \in L^{dd,dd},L^{ud,ud}.
\end{align*}
\end{itemize}

Using analogous arguments to the proof Theorem \ref%
{th:symbolic-dynamics-from-covering}, for $\eta \geq 2a_{I}+C$ we can $%
\varepsilon \eta $ shadow any sequence $I^{n}=\left(
I_{1}^{n},I_{2}^{n}\right) \subset \left[ 2\eta \varepsilon ,1-2\eta
\varepsilon \right] ^{2}$, $n\in \mathbb{N}$, for which $\left\Vert
I^{n+1}-I^{n}\right\Vert _{\max }>2\eta \varepsilon $. The idea is that we
can move in any desired $I_{1}$ or $I_{2}$ direction, depending on the
choice of the strips between which we propagate the $Q^{\varepsilon }$-discs, while keeping them inside $\{I\in \left[ 0,1\right] ^{2}\}$ at the
same time.

\end{remark}} 




\section{Hausdorff dimension\label{sec:Hausdorff}}

For $V\subset \mathbb{R}^{n}$ let $\mathrm{\dim }_{H}\left( V\right) $
denote the Hausdorff dimension of $V$ (see \cite{Falconer} for the
definition and basic properties). If $\pi $ stands for the orthogonal
projection on some selection of Cartesian coordinates of $\mathbb{R}^{n}$,
it is a standard result that 
\begin{equation}
\mathrm{\dim }_{H}\left( V\right) \geq \mathrm{\dim }_{H}\left( \pi V\right)
.  \label{eq:Haus-proj-bound}
\end{equation}

\begin{proof}[Proof of Theorem \protect\ref{th:Hausdorff-dim}.]
Let $\varepsilon \in (0,\varepsilon _{0}]$ be fixed and let \hyperlink{cond:C2}{\textbf{C2}} hold.

For any given sequence of action levels $\mathcal{I}=\left( I^{n}\right)
_{n\in \mathbb{N}}\subset \left( 2\varepsilon \eta ,1-2\varepsilon \eta
\right) $ satisfying $\left\vert I^{n+1}-I^{n}\right\vert >2\varepsilon \eta 
$, let $\mathbf{z}=\mathbf{z}(\mathcal{I})=\left( z^{n}(\mathcal{I})\right)
_{n\in \mathbb{N}}$ be an orbit of the IFS that $(\eta\varepsilon)$-shadows
the sequence as in \eqref{eq:energy-shadow}. The existence of such an orbit
follows from Theorem \ref{th:symbolic-dynamics-general}. Denote 
\begin{align*}
V_{\mathcal{I}}&:= \\ & \left\{ z^{0}(\mathcal{I})\in \mathbf{S}^{u}\cup \mathbf{S}%
^{d}:z^{0}(\mathcal{I})\text{ is an initial point of }\mathbf{z}(\mathcal{I}%
),\text{ for which \eqref{eq:energy-shadow} holds}\right\} .
\end{align*}

As in the proof of Theorem \ref{th:symbolic-dynamics-general}, for every $%
Q^{\varepsilon }$-disk $h_{0}^{\varepsilon }$ of the form 
\begin{equation}  \label{eqn:disc_straight}
h_{0}^{\varepsilon }(x)=h_{y^{\ast },I^{\ast },\theta ^{\ast }}^{\varepsilon
}(x)=\left( x,y^{\ast },I^{\ast },\theta ^{\ast }\right),
\end{equation}
with fixed $y^{\ast }\in \bar{B}^{n_{s}}$, $I^{\ast }\in (I^{0}-\varepsilon
\eta ,I^{0}+\varepsilon \eta )$, and $\theta ^{\ast }\in S_{\theta }^{u}\cup
S_{\theta }^{d}$, there exists an initial point $z^{0}(\mathcal{I})$ on that
disc that is in $V_{\mathcal{I}}$. This implies that 
\begin{equation*}
\pi _{y,I,\theta }V_{\mathcal{I}}\supset \bar{B}^{n_{s}}\times
(I^{0}-\varepsilon \eta ,I^{0}+\varepsilon \eta )\times S_{\theta }^{u}\cup
S_{\theta }^{d},
\end{equation*}
hence from \eqref{eq:Haus-proj-bound} we conclude 
\begin{equation*}
\mathrm{\dim }_{H}\left( V_{\mathcal{I}}\right) \geq \mathrm{\dim }%
_{H}\left( \pi _{y,I,\theta }V_{\mathcal{I}}\right) =n_{s}+2.
\end{equation*}

We now assume the special case when $n_{u}=n_{s}=1$ and prove the second
part of Theorem \ref{th:Hausdorff-dim}. i.e., that the Hausdorff dimension
of the set of orbits which follow a prescribed sequence of actions is
strictly greater than $n_{s}+2=3$. This special case corresponds to the
setting of the PER3BP.

We will construct a family of $1$-dimensional Cantor sets inside of $V_{%
\mathcal{I}}$. We will use Lemma \ref{lem:Hausdorff-Cantor} below to show
that each Cantor set has positive Hausdorff measure. Then we will apply
Lemma \ref{lem:Hausdorff-product} below to show that $V_{\mathcal{I}}$ has
Hausdorff dimension strictly greater than $n_{s}+2=3$.

\begin{lemma}
\cite{Falconer}\label{lem:Hausdorff-Cantor} Consider a one dimensional
Cantor set $\mathcal{D}$, which is formed by creating $m_{k}-1\geq 1$ gaps
in each interval at the $k$-th step of the construction. Assume that gaps
created at the $k$-th step are of length at least $\delta _{k}$, with $%
0<\delta _{k+1}<\delta _{k}$ for each $k$. Then%
\begin{equation*}
\dim _{H}\left( \mathcal{D}\right) \geq \lim_{k\rightarrow \infty }\inf_{l>k}%
\frac{\log \left( m_{1}\cdots m_{l-1}\right) }{-\log \left( m_{l}\delta
_{l}\right) }.
\end{equation*}
\end{lemma}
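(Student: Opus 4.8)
The plan is to invoke the mass distribution principle (see \cite{Falconer}): if $\mu$ is a Borel probability measure carried by $\mathcal{D}$ and there are constants $c>0$, $\rho>0$ and $s>0$ with $\mu(U)\le c\,|U|^{s}$ for every set $U$ with $|U|\le\rho$, then $\dim_{H}(\mathcal{D})\ge s$. It then suffices to establish such a bound for an arbitrary $s$ with $s<1$ that lies strictly below the limit in the statement, and afterwards let $s$ increase to that value; one records in passing that this limit is automatically $\le 1$, so restricting to $s<1$ loses nothing.

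First I would set up the natural measure on the Cantor set. Write $N_{k}=m_{1}\cdots m_{k}$ (with $N_{0}=1$) for the number of basic intervals surviving after $k$ steps, and let $\mu$ assign mass $N_{k}^{-1}$ to each of them; since every level-$k$ basic interval contains exactly $m_{k+1}$ level-$(k+1)$ ones, this is consistent and extends (Kolmogorov/Carath\'eodory, \cite{Falconer}) to a Borel probability measure with $\mu(\mathcal{D})=1$. The only geometric input needed is a separation estimate: two distinct level-$k$ basic intervals lie at distance $\ge\delta_{k}$. Indeed, if they first split apart at step $j\le k$ (they sit in a common level-$(j-1)$ interval but in different level-$j$ ones), a gap of length $\ge\delta_{j}\ge\delta_{k}$ separates those two level-$j$ intervals and lies between the two level-$k$ intervals. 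A useful by-product is $N_{k-1}\,m_{k}\delta_{k}\le 2$: each level-$(k-1)$ interval has length $\ge(m_{k}-1)\delta_{k}\ge\frac{1}{2}m_{k}\delta_{k}$, and there are $N_{k-1}$ of them, disjoint, inside an interval of length $1$ (normalizing). Since $m_{k}\ge 2$ forces $N_{k-1}\to\infty$, it follows that $m_{k}\delta_{k}\to 0$, hence $\delta_{k}\downarrow 0$ and $-\log(m_{k}\delta_{k})>0$ for all large $k$; this also shows the limit in the statement is $\le 1$.

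The core is the scale-matching estimate. For a set $U$ with small diameter $d=|U|<\delta_{1}$, locate the unique $k$ with $\delta_{k+1}\le d<\delta_{k}$ (possible because $\delta_{k}\downarrow 0$). By the separation estimate $U$ meets at most one level-$k$ interval, so $\mu(U)\le N_{k}^{-1}$; moreover, inside that single level-$k$ interval the level-$(k+1)$ subintervals are $\ge\delta_{k+1}$ apart and $d\ge\delta_{k+1}$, so $U$ meets at most $d/\delta_{k+1}+1\le 2d/\delta_{k+1}$ of them, giving the refined bound $\mu(U)\le 2d/(N_{k}\,m_{k+1}\delta_{k+1})$. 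I would then split into two cases: if $d\ge\frac{1}{2}m_{k+1}\delta_{k+1}$ use $\mu(U)\le N_{k}^{-1}$ together with $m_{k+1}\delta_{k+1}\le 2d$; if $d<\frac{1}{2}m_{k+1}\delta_{k+1}$ use the refined bound together with $d<\frac{1}{2}m_{k+1}\delta_{k+1}$ and $s<1$. In both cases the desired inequality $\mu(U)\le 2^{s}d^{s}$ reduces, after elementary manipulation, to $N_{k}^{-1}<(m_{k+1}\delta_{k+1})^{s}$, which is exactly the content of $s<\log N_{k}/\bigl(-\log(m_{k+1}\delta_{k+1})\bigr)$; and this holds for all large $k$ — hence for all $d$ below some $\rho$ — by the choice of $s$ below the limit in the statement (applied at index $l=k+1$). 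The mass distribution principle then yields $\dim_{H}(\mathcal{D})\ge s$, and letting $s$ tend to the limit finishes the proof.

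The step I expect to be the main obstacle is this last, combinatorial one: the hypothesis furnishes only lower bounds $\delta_{k}$ on the gap lengths and no control at all on the lengths of the surviving intervals, so the diameter $d$ of a test set need not sit at any canonical scale. This is what forces the two-case split — one case where $U$ straddles a genuine level-$k$ gap, one where $U$ lives essentially inside a single level-$k$ interval and only feels the level-$(k+1)$ structure — and the careful comparison of $d$ with $m_{k+1}\delta_{k+1}$ so that the ratio $\log N_{k}/(-\log(m_{k+1}\delta_{k+1}))$ appears exactly. The construction of $\mu$, the separation estimate, and the passage to the limit are all routine.
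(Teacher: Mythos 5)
Your proof is correct. The paper does not prove this lemma at all---it is quoted directly from Falconer---and your argument is essentially the standard one from that source: the mass distribution principle applied to the natural uniform measure, locating $|U|$ between $\delta_{k+1}$ and $\delta_{k}$, counting at most one level-$k$ interval and at most $2|U|/\delta_{k+1}$ level-$(k+1)$ intervals; your two-case comparison of $|U|$ with $\tfrac12 m_{k+1}\delta_{k+1}$ is just a reformulation of the usual interpolation bound $\min\{a,b\}\le a^{1-s}b^{s}$, and both reduce to the same inequality $N_k^{-1}\le (m_{k+1}\delta_{k+1})^{s}$, valid for large $k$ by the choice of $s$ below the stated limit.
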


\begin{lemma}
\cite{Falconer}\label{lem:Hausdorff-product} Let $V$ be a Borel subset of $%
\mathbb{R}^{n}$. For $y\in \mathbb{R}^{n-1}$ let $L_{y}\subset \mathbb{R}%
^{n} $ be $L_{y}=\left\{ \left( x,y\right) :x\in \mathbb{R}\right\} $. If $%
n-1\leq s\leq n,$ then%
\begin{equation*}
\int_{\mathbb{R}^{n-1}}\mathcal{H}^{s-(n-1)}\left( V\cap L_{y}\right) dy\leq 
\mathcal{H}^{s}\left( V\right).
\end{equation*}
\end{lemma}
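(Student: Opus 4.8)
The final statement is the classical Fubini-type (slicing) inequality for Hausdorff measure, and the plan is to prove it by a direct covering argument whose only real inputs are the isodiametric inequality and Tonelli's theorem. (Throughout, $\mathcal{H}^{t}_{\delta}$ denotes the usual Hausdorff pre-measure, so that $\mathcal{H}^{t}=\lim_{\delta\downarrow 0}\mathcal{H}^{t}_{\delta}$.)

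First I would dispose of the trivial case $\mathcal{H}^{s}(V)=\infty$, and then fix notation: let $\pi:\mathbb{R}^{n}\to\mathbb{R}^{n-1}$ be the projection $(x,y)\mapsto y$, so that $L_{y}=\pi^{-1}(y)$ and a set $U$ meets $L_{y}$ precisely when $y\in\pi(U)$. Fix $\delta>0$ and take any countable cover $\{U_{i}\}_{i}$ of $V$ by open sets of diameter at most $\delta$ (restricting to open covers does not change the value of the pre-measure). For each fixed $y$, the sets $U_{i}\cap L_{y}$ with $y\in\pi(U_{i})$ form a $\delta$-cover of the slice $V\cap L_{y}$, with diameters at most $\operatorname{diam}U_{i}$; since $s-(n-1)\ge 0$, this gives the pointwise bound
\begin{equation*}
\mathcal{H}^{\,s-(n-1)}_{\delta}\!\left(V\cap L_{y}\right)\ \le\ \sum_{i\,:\,y\in\pi(U_{i})}\left(\operatorname{diam}U_{i}\right)^{s-(n-1)}.
\end{equation*}
Next I would integrate this over $y\in\mathbb{R}^{n-1}$ and exchange sum and integral by Tonelli, turning the right side into $\sum_{i}\left(\operatorname{diam}U_{i}\right)^{s-(n-1)}\,\mathcal{L}^{n-1}\!\left(\pi(U_{i})\right)$. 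The geometric input is that $\pi(U_{i})\subset\mathbb{R}^{n-1}$ has diameter at most $\operatorname{diam}U_{i}$, so by the isodiametric inequality $\mathcal{L}^{n-1}(\pi(U_{i}))\le\omega_{n-1}2^{-(n-1)}(\operatorname{diam}U_{i})^{n-1}\le(\operatorname{diam}U_{i})^{n-1}$, where $\omega_{n-1}$ is the volume of the unit ball in $\mathbb{R}^{n-1}$ and $\omega_{n-1}2^{-(n-1)}\le 1$ for $n-1\ge 1$. Substituting collapses the right side to $\sum_{i}(\operatorname{diam}U_{i})^{s}$; taking the infimum over all such covers gives $\int_{\mathbb{R}^{n-1}}\mathcal{H}^{\,s-(n-1)}_{\delta}(V\cap L_{y})\,dy\le\mathcal{H}^{s}_{\delta}(V)\le\mathcal{H}^{s}(V)$. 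Finally, letting $\delta\downarrow 0$ and using monotone convergence (the slice pre-measures increase to the slice measures as $\delta$ decreases) yields $\int_{\mathbb{R}^{n-1}}\mathcal{H}^{\,s-(n-1)}(V\cap L_{y})\,dy\le\mathcal{H}^{s}(V)$.

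The step I expect to require the most care is the measurability in $y$ needed for these integrals to make sense, namely that $y\mapsto\mathcal{H}^{\,s-(n-1)}_{\delta}(V\cap L_{y})$ and its pointwise limit are Lebesgue measurable. I would handle this by reading all integrals over $\mathbb{R}^{n-1}$ as upper integrals — which leaves every estimate above intact, since each left-hand integrand is dominated pointwise by the genuinely measurable function $\sum_{i}(\operatorname{diam}U_{i})^{s-(n-1)}\mathbf{1}_{\pi(U_{i})}$ (a countable sum of nonnegative measurables, the $\pi(U_{i})$ being open) — and then invoking the standard fact that the slice functions of the Hausdorff measure of a Borel set are Lebesgue measurable, so that the upper integral in the final inequality agrees with the ordinary one. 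Since $V=V_{\mathcal{I}}$ is Borel in the application of the lemma, this is enough.
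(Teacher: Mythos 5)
Your proof is correct, and since the paper does not prove this lemma at all (it is quoted directly from the cited reference, Falconer), there is nothing to contrast: your covering argument — slicing a $\delta$-cover, integrating with Tonelli, and bounding $\mathcal{L}^{n-1}(\pi(U_i))$ by $(\operatorname{diam}U_i)^{n-1}$ via the isodiametric inequality (a cube-containment bound would do just as well) — is exactly the standard textbook proof, including the careful use of open covers and upper integrals to sidestep measurability of the slice function. Note only that the constant $1$ relies on the unnormalized convention $\mathcal{H}^{s}=\lim_{\delta}\inf\sum(\operatorname{diam}U_i)^{s}$, which is the convention of the cited source and evidently the one intended here.
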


Let $N$ be an integer satisfying 
\begin{equation}
N>\frac{2}{c\varepsilon },  \label{eqn:N_choice}
\end{equation}%
where $c$ is as in \eqref{eq:move-uu}. This choice ensures that by taking $N$
applications of maps $F_{\ell ,\varepsilon }$ from the IFS, any $%
Q^{\varepsilon }$-disk can be moved both up and down in action by $1$. 

Consider the following compact sets 
\begin{eqnarray*}
A_{uu,uu} &=&\bigcup_{\ell _{1},\ell _{2}\in L^{uu}}F_{\ell _{1},\varepsilon
}^{-1}\circ F_{\ell _{2},\varepsilon }^{-1}(\mathbf{S}^{u}\cap \left\{ I\in %
\left[ 0,1\right] \right\} ), \\
A_{dd,dd} &=&\bigcup_{\ell _{1},\ell _{2}\in L^{dd}}F_{\ell _{1},\varepsilon
}^{-1}\circ F_{\ell _{2},\varepsilon }^{-1}(\mathbf{S}^{d}\cap \left\{ I\in %
\left[ 0,1\right] \right\} ), \\
A_{ud,du} &=&\bigcup_{\ell _{1}\in L^{ud},\ell _{2}\in L^{du}}F_{\ell
_{1},\varepsilon }^{-1}\circ F_{\ell _{2},\varepsilon }^{-1}(\mathbf{S}%
^{u}\cap \left\{ I\in \left[ 0,1\right] \right\} ), \\
A_{du,ud} &=&\bigcup_{\ell _{1}\in L^{du},\ell _{2}\in L^{ud}}F_{\ell
_{1},\varepsilon }^{-1}\circ F_{\ell _{2},\varepsilon }^{-1}(\mathbf{S}%
^{d}\cap \left\{ I\in \left[ 0,1\right] \right\} ).
\end{eqnarray*}%
By condition \hyperlink{cond:C3}{\textbf{C3}} there exists a $\delta $ satisfying%
\begin{equation}
0<\delta =\min \left\{ \mathrm{dist}\left( A_{uu,uu},A_{ud,du}\right) ,%
\mathrm{dist}\left( A_{dd,dd},A_{du,ud}\right) \right\} .  \label{eqn:gap}
\end{equation}%
By the compactness of $\left( \mathbf{S}^{u}\cup \mathbf{S}^{d}\right) \cap
\left\{ I\in \left[ 0,1\right] \right\} $ 
there exist $\lambda >1$ such that 
\begin{equation}
\lambda >\sup \{\left\Vert D\left( F_{\ell _{n},\varepsilon }\circ \ldots
\circ F_{\ell _{1},\varepsilon }\right) \left( z\right) \right\Vert \},
\label{eqn:gap_lambda}
\end{equation}%
where the supremum is taken over all \[z\in \left( F_{\ell _{n},\varepsilon
}\circ \ldots \circ F_{\ell _{1},\varepsilon }\right) ^{-1}((\mathbf{S}%
^{u}\cup \mathbf{S}^{d})\cap \left\{ I\in \left[ 0,1\right] \right\} ), \quad \mbox{ for } 
\ell _{i}\in L\mbox{ and }n\leq N.\]

Suppose that $I^{1}>I^{0}$ (the case $I^{1}<I^{0}$ is similar). As in the
proof of Theorem \ref{th:symbolic-dynamics-general}, start with a $%
Q^{\varepsilon }$-disc $h_{0}^{\varepsilon }(x)$ of the form %
\eqref{eqn:disc_straight}, with $\pi _{I}h_{0}^{\varepsilon }\subset
(I^{0}-\eta \varepsilon ,I^{0}+\eta \varepsilon )$. We can \correctiona{propagate }this disc
until we obtain an image disc $h_{1}^{\varepsilon }$ with $\pi
_{I}(h_{1}^{\varepsilon })\subseteq (I^{1}-\eta \varepsilon ,I^{1}+\eta
\varepsilon )$ in two different ways: (i) by propagating the disc only
through $\mathbf{S}^{u}$ up to $(I^{1}-\eta \varepsilon ,I^{1}+\eta
\varepsilon )$, or (ii) by propagating the disc through 
$\mathbf{S}^{d}$, 
switching back to $\mathbf{S}^{u}$, and propagating it through $\mathbf{S}%
^{u}$ up to $(I^{1}-\eta \varepsilon ,I^{1}+\eta \varepsilon )$. The choice
in \eqref{eqn:N_choice} ensures that each of these propagations can be
achieved by applying at most $N$ applications of maps $F_{\ell ,\varepsilon
} $ from the IFS. The points in $h_{0}^{\varepsilon }$ that get propagated
in these two ways yield two disjoint sub-discs $\mathcal{D}^{0}$, $\mathcal{D%
}^{1}$ of $h_{0}^{\varepsilon }$. By \eqref{eqn:gap} and %
\eqref{eqn:gap_lambda}, the distance between $\mathcal{D}^{0}$ and $\mathcal{%
D}^{1}$ is at least $\delta _{1}=\lambda ^{-1}\delta $.

Continuing this procedure, we obtain recursively a nested sequence of finite
unions of discs, such that the gaps between the discs at step $k$ are at
least $\delta _{k}=\lambda ^{-k}\delta $. The resulting Cantor set $\mathcal{%
D}\left( y^{\ast },I^{\ast },\theta ^{\ast }\right) $ depends on the choice
of the initial $Q^{\varepsilon }$-disc $h_{0}^{\varepsilon }$ given by %
\eqref{eqn:disc_straight}. From Lemma \ref{lem:Hausdorff-Cantor}%
\begin{equation*}
\dim _{H}\left( \mathcal{D}(y^{\ast },I^{\ast },\theta ^{\ast })\right) \geq
\lim_{k\rightarrow \infty }\frac{\log \left( 2^{k}\right) }{-\log \left(
2\lambda ^{-(k+1)}\delta \right) }=\log _{\lambda }2>0.
\end{equation*}

By Lemma~\ref{lem:Hausdorff-product} 
\begin{equation*}
\mathcal{H}^{s+2d+\log _{\lambda }2}\left( V_{\mathcal{I}}\right) \geq \int_{%
\bar{B}^{n_{s}}\times \left( I^{0}-\varepsilon \eta ,I^{0}+\varepsilon \eta
\right) \times \left( S_{\theta }^{u}\cup S_{\theta }^{d}\right) }\mathcal{H}%
^{\log _{\lambda }2}\left( \mathcal{D}\left( y,I,\theta \right) \right)
dy\,dI\,d\theta >0,
\end{equation*}%
so $\dim _{H}\left( V\right) \geq n_{s}+2d+\log _{\lambda }2$, proving our
claim. 
\end{proof}


\section{Stochastic behavior}

\label{sec:stochastic}

Our proof will be based on the Donsker's functional central limit theorem.
We start by recalling the result.

Consider a sequence $Y_{n}$ of independent identically distributed random
variables with $\mathbb{E}\left( Y_{n}\right) =0$ and $Var(Y_{n})=1$ and
define 
\begin{equation*}
\begin{split}
S_{n}=&Y_{1}+\ldots +Y_{n}, \\
S\left( t\right) =& S_{\left\lfloor t\right\rfloor }+\left( t-\left\lfloor
t\right\rfloor \right) \left( S_{\left\lfloor t\right\rfloor
+1}-S_{\left\lfloor t\right\rfloor }\right) \qquad \text{ for } t\in
[0,+\infty), \\
S_{n}^{\ast }(t)=&\frac{S\left( nt\right) }{\sqrt{n}}\qquad \text{ for }t\in %
\left[ 0,1\right] .
\end{split}%
\end{equation*}

\begin{theorem}
\cite{Morters}\label{th:functional-CLT}(Donsker's functional central limit
theorem) On the space $C[0,1]$ of continuous functions on the unit interval
with the metric induced by the sup-norm, the sequence $S_{n}^{\ast }$
converges in distribution\footnote{%
See Section \ref{sec:master-stochastic} for more details on the convergence.}
to a standard Brownian motion $W_{t}$.
\end{theorem}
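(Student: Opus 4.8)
The plan is to follow the standard two-part scheme for weak convergence of probability measures on the Polish space $C[0,1]$: first prove that the finite-dimensional distributions of $S_n^\ast$ converge to those of a standard Brownian motion, and then prove that the laws of $S_n^\ast$ form a tight family. Once these two ingredients are available, Prokhorov's theorem gives that $\{S_n^\ast\}$ is relatively compact in distribution; every subsequential weak limit is, by the convergence of finite-dimensional distributions, a continuous process started at $0$ with covariance $\min(s,t)$, hence a standard Brownian motion; and uniqueness of this limiting law forces $S_n^\ast \Rightarrow W_t$ without passing to subsequences.

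For the finite-dimensional distributions, fix $0 = t_0 < t_1 < \dots < t_k \le 1$ and look at the increments $\Delta_j := S_n^\ast(t_j) - S_n^\ast(t_{j-1})$. Up to an error of order $O(n^{-1/2})$ produced by the piecewise-linear interpolation defining $S(t)$, the quantity $\sqrt{n}\,\Delta_j$ is a sum of $\lfloor nt_j\rfloor - \lfloor nt_{j-1}\rfloor$ of the $Y_i$'s over a block of indices disjoint from the other blocks, so the $\Delta_j$ are (asymptotically) independent. Since the $Y_i$ are i.i.d. with mean $0$ and variance $1$, the Lindeberg--L\'evy central limit theorem applied to each block (the Lindeberg condition being automatic under a finite second moment) yields $\Delta_j \to \mathcal{N}(0, t_j - t_{j-1})$, and independence of the blocks upgrades this to joint convergence of $(\Delta_1,\dots,\Delta_k)$ to a vector of independent centered Gaussians with those variances. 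Applying the linear change of coordinates that reconstructs $(S_n^\ast(t_1),\dots,S_n^\ast(t_k))$ from the increments gives convergence to the Gaussian vector with covariance $\min(t_i,t_j)$, which is precisely the finite-dimensional law of $W_t$.

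The tightness is the crux. Because $S_n^\ast(0) = 0$, the Arzel\`a--Ascoli characterization of relatively compact subsets of $C[0,1]$ reduces tightness to showing that for every $\varepsilon > 0$
\[
\lim_{\delta \to 0}\ \limsup_{n\to\infty}\ \mathbb{P}\!\left(\sup_{|s-t|\le \delta}\left|S_n^\ast(s) - S_n^\ast(t)\right| > \varepsilon\right) = 0 .
\]
To estimate this modulus of continuity I would split $[0,1]$ into $\lceil 1/\delta\rceil$ subintervals and bound, on each of them, the maximal fluctuation of the partial sums using a maximal inequality (Kolmogorov's inequality combined with Ottaviani's inequality, or Etemadi's inequality) applied to the $\sim \delta n$ summands falling in that block; summing the resulting bounds over the $O(1/\delta)$ blocks, letting $n\to\infty$ first so the block fluctuations are governed by the CLT scaling $\sqrt{\delta n}$, and then $\delta \to 0$, drives the probability to zero. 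An equivalent and cleaner alternative is the Skorokhod embedding: realize $S_n$ as $B(\tau_1 + \dots + \tau_n)$ for a fixed Brownian motion $B$ and i.i.d. stopping times $\tau_i$ with $\mathbb{E}\tau_i = 1$, use the strong law of large numbers to get $\tfrac1n\sum_{i\le nt}\tau_i \to t$ uniformly on $[0,1]$, and then use the uniform continuity of $B$ on compacts to conclude $\sup_{t\in[0,1]}|S_n^\ast(t) - n^{-1/2}B(nt)| \to 0$ in probability, which delivers both the finite-dimensional convergence and the tightness simultaneously. The main obstacle in either route is exactly this uniform-in-$n$ control of path oscillations over short windows: a window of length $\delta$ carries order $\delta n$ summands and a crude union bound over all pairs $(s,t)$ is far too lossy, so the maximal inequalities (or the embedding) are what make the argument go through.
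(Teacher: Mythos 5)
Your outline is a correct sketch of Donsker's theorem, but note that the paper does not prove this statement at all: Theorem \ref{th:functional-CLT} is quoted verbatim from the reference \cite{Morters} and is used purely as an external tool in Section \ref{sec:stochastic}, so there is no internal argument to compare against. As for your sketch itself, both routes you describe are sound. The first is the classical Billingsley scheme: convergence of finite-dimensional distributions via the block decomposition of increments plus the Lindeberg--L\'evy CLT, and tightness via the Arzel\`a--Ascoli reduction to the modulus of continuity, controlled block-by-block with Ottaviani/Etemadi-type maximal inequalities after letting $n\to\infty$ so the block fluctuations obey the Gaussian scaling $\sqrt{\delta n}$ (you are right that this maximal-inequality step, not the fdd convergence, is the crux, and that a naive union bound fails). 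Your second route, the Skorokhod embedding $S_n = B(\tau_1+\dots+\tau_n)$ with $\mathbb{E}\tau_i=1$ followed by the strong law and uniform continuity of $B$ on compacts, is in fact the proof given in the cited reference \cite{Morters}, and it does deliver finite-dimensional convergence and tightness in one stroke. So your proposal is correct; the only mismatch with the paper is that the paper deliberately treats this as a black box, since its original contributions (Theorems \ref{th:diffusion-process-all-strip} and \ref{th:diffusion-process}) lie in constructing the sets $\Omega_\varepsilon$ and the i.i.d.\ variables $Y_n^\varepsilon$ to which Donsker's theorem is then applied.
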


\subsection{Stochastic behavior under conditions on $Q^{\protect\varepsilon }
$-disks}

Let $\Omega _{\varepsilon }\subset \mathbb{R}^{u}\times \mathbb{R}^{s}\times 
\mathbb{R}^{2}\times \mathbb{T}$ be a set of positive Lebesgue measure. (We
add the subscript $\varepsilon $ in $\Omega _{\varepsilon }$ since in the
result stated below the choice of this set will depend on $\varepsilon $.)
On $\Omega _{\varepsilon }$ we define a probability space $\left( \Omega
_{\varepsilon },\mathbb{F}_{\varepsilon },\mathbb{P}_{\varepsilon }\right) $
by taking $\mathbb{F}_{\varepsilon }$ to be the sigma field of Borel sets on 
$\Omega _{\varepsilon }$ and 
\begin{equation}
\mathbb{P}_{\varepsilon }\left( B\right) :=\frac{\left\vert B\right\vert }{%
\left\vert \Omega _{\varepsilon }\right\vert }.  \label{eq:Prob-eps}
\end{equation}

We introduce the following notation. For a point $z$ in the extended phase
space we denote by $\pi_t(z)$ the time-coordinate $t$ of the point $z$,
where $t$ is the covering space $\mathbb{R}$ of $\mathbb{T}^1$. Thus the
`angle' coordinate $\theta$ is given by $\theta=t$ (mod $2\pi$). The
quantity $\pi_t(\Phi^\varepsilon_\tau(z))-\pi_t(z)$ represents the time $%
\tau $ it takes for a point $z$ to flow to $\Phi^\varepsilon_\tau(z)$. For $%
\tilde{h}^{\varepsilon }\subseteq F_{\ell ,\varepsilon }(h^{\varepsilon })$
as in Condition \hyperlink{cond:A2.1}{(A2.1)}, for each pair of points $z\in h^{\varepsilon }$, $%
\tilde{z}=F_{\ell ,\varepsilon }(z)\in \tilde{h}^{\varepsilon}$, there is an
associated time $\pi_t(\tilde{z})-\pi_t(z)$. We denote by $\pi_t(\tilde{h}%
^{\varepsilon})-\pi_t(h^\varepsilon)$ the time interval consisting of all $%
\pi_t(\tilde{z})-\pi_t(z)$ for all $z,\tilde{z}$ as above.

Below we define some uniformity conditions for all pairs of $Q^{\varepsilon
} $-disks $h^{\varepsilon }$, $\tilde{h}^{\varepsilon}$ as in Condition \hyperlink{cond:A2}{\bf A2}.

\textbf{Condition \hypertarget{cond:A3}{A3}.}

\begin{description}
\item[(A3.1)] \hypertarget{cond:A3.1}{}There exists $T_0>0$ such that for every pair of points $z$, $%
\tilde{z}$ as in Condition \hyperlink{cond:A2}{\bf A2} we have 
\begin{equation}  \label{eqn:T_0}
\pi_t(\tilde{z})-\pi_t(z)<T_0.
\end{equation}

\item[(A3.2)] \hypertarget{cond:A3.2}{}
There exists $M_{0}>0$ such that for every $z\in \mathbf{S}^{u}\cup \mathbf{S%
}^{d}$ and $t\in [0,T_{0}]$%
\begin{equation}
|\pi _{I}\left( \Phi _{t}^{\varepsilon }(z)\right) -\pi
_{I}(z)|<M_{0}\varepsilon .  \label{eq:M1-bound}
\end{equation}
\end{description}

Condition \hyperlink{cond:A3.1}{(A3.1)} says that the first-return times to $\mathbf{S}^{u}\cup 
\mathbf{S}^{d}$ via any of the maps $F_{\ell,\varepsilon }$ in $\mathscr{F}%
_{\varepsilon }$, for any $\ell\in L$, are uniformly bounded for $z\in 
\mathbf{S}^{u}\cup \mathbf{S}^{d}$. Condition \hyperlink{cond:A3.2}{(A3.2)} says that the change in 
$I$ along the flow $\Phi^\varepsilon_t$ is uniformly bounded for the length
of time it takes a point $z\in \mathbf{S}^{u}\cup \mathbf{S}^{d}$ to return
to $\mathbf{S}^{u}\cup \mathbf{S}^{d}$. 

We recall that $\mathbf{S}^{u}\cup \mathbf{S}^{d}$ is non-compact. If we
restrict to the dynamics on a compact subset of $\mathbf{S}^{u}\cup \mathbf{S%
}^{d}$, for instance to $(\mathbf{S}^{u}\cup \mathbf{S}^{d})\cap \{I\in[0,1]%
\}$, then both assumptions in Condition \hyperlink{cond:A3}{\bf A3} are automatically satisfied.

The theorem below will be used in the subsequent section to prove Theorem~%
\ref{th:diffusion-process}.

\begin{theorem}
\label{th:diffusion-process-all-strip} Assume that the IFS $\mathscr{F}%
_{\varepsilon }$ satisfies the conditions \hyperlink{cond:A2}{\bf A2} and \hyperlink{cond:A3}{\bf A3}. Let $I=H_{0}-h_{0}$%
, and let $\mu ,\sigma ,X_{0}$ be any fixed real numbers and let $\gamma >3/2
$. Then for sufficiently small $\varepsilon $ there is a set $\Omega
_{\varepsilon }\subset \mathbb{R}^{n_{u}}\times \mathbb{R}^{n_{s}}\times 
\mathbb{R}^{2}\times \mathbb{T}$ of positive Lebesgue measure, such that $%
\pi _{y,I,\theta }\Omega _{\varepsilon }=\pi _{y,I,\theta }(\mathbf{S}%
^{u}\cup \mathbf{S}^{d})\cap \{I\in \lbrack X_{0}-\varepsilon
,X_{0}+\varepsilon ]\}$, and the stochastic processes defined as energy
paths \footnote{%
Alternatively we can define the energy paths as $X_{t}^{\varepsilon
}(z):=I(\Phi _{\gamma \varepsilon ^{-3/2}t}^{\varepsilon }(z))$, by taking $%
\gamma \in \mathbb{R}$ which satisfies $\gamma >T_{0}\sigma /c$, where $c$
and $T_{0}$ are the constants from the conditions \hyperlink{cond:A2}{\bf A2} and \hyperlink{cond:A3}{\bf A3},
respectively. We highlight this alternative in footnotes while giving the
proof.} 
\begin{eqnarray}
X_{t}^{\varepsilon } &:&\Omega _{\varepsilon }\rightarrow \mathbb{R},
\label{eq:Xt-def} \\
X_{t}^{\varepsilon }(z) &:&=I\left( \Phi _{\varepsilon ^{-\gamma
}t}^{\varepsilon }(z)\right) ,\qquad \text{for }t\in \left[ 0,1\right] , 
\notag
\end{eqnarray}%
have the following limit in distribution on $C[0,1]$ 
\begin{equation}
\lim_{\varepsilon \rightarrow 0}X_{t}^{\varepsilon }=X_{0}+\mu t+\sigma
W_{t}.  \label{eq:limit-diffusion-process}
\end{equation}
\end{theorem}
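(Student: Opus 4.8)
The plan is to realize, by an appropriate choice of connecting sequences (equivalently, by propagating $Q^{\varepsilon}$-discs as in Section \ref{sec:symbolic_horizontal}), a time‑rescaled random walk in the energy whose Donsker limit is the desired diffusion $X_{0}+\mu t+\sigma W_{t}$, and then to compare the genuine energy path with that random walk in $C[0,1]$. The only inputs needed are Theorem \ref{th:symbolic-dynamics-general} together with the propagation Theorem \ref{th:hor-disc-propagation}; these apply because \hyperlink{cond:A2}{\textbf{A2}} is assumed, and by the discussion following \hyperlink{cond:A3}{\textbf{A3}} the condition \hyperlink{cond:A3}{\textbf{A3}} holds automatically once we stay inside $(\mathbf{S}^{u}\cup\mathbf{S}^{d})\cap\{I\in[0,1]\}$.

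Concretely, fix $\gamma>3/2$ and put $\rho_{\varepsilon}=\sqrt{\varepsilon}$, $N_{\varepsilon}=\lfloor\sigma^{2}\rho_{\varepsilon}^{-2}\rfloor=\lfloor\sigma^{2}/\varepsilon\rfloor$. Let $(Y_{n})_{n\ge 1}$ be i.i.d.\ with $\mathbb{P}(Y_{n}=\pm1)=\tfrac12$ and $S_{n}=Y_{1}+\cdots+Y_{n}$, so $\mathbb{E}Y_{n}=0$, $\mathrm{Var}(Y_{n})=1$. For a word $\omega=(y_{1},\dots,y_{N_{\varepsilon}})\in\{\pm1\}^{N_{\varepsilon}}$ set the target sequence
\[
I^{n}_{\varepsilon}(\omega)=X_{0}+\mu\,\frac{n}{N_{\varepsilon}}+\rho_{\varepsilon}\sum_{k=1}^{n}y_{k},\qquad n=0,1,\dots,N_{\varepsilon}.
\]
Since $\rho_{\varepsilon}/\varepsilon\to\infty$ we have $|I^{n+1}_{\varepsilon}(\omega)-I^{n}_{\varepsilon}(\omega)|\ge\rho_{\varepsilon}-|\mu|/N_{\varepsilon}>2\eta\varepsilon$ for all small $\varepsilon$, so (after discarding, for fixed $\varepsilon$, the asymptotically negligible set of words whose path leaves $[2\eta\varepsilon,1-2\eta\varepsilon]$, or by inserting a stopping time as in Theorem \ref{th:diffusion-process}) the hypotheses of Theorem \ref{th:symbolic-dynamics-general} are met. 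Hence for every base point $(y^{*},I^{*},\theta^{*})$ with $y^{*}\in\bar B^{n_{s}}$, $I^{*}\in[X_{0}-\varepsilon,X_{0}+\varepsilon]$, $\theta^{*}\in S_{\theta}^{u}\cup S_{\theta}^{d}$, the straight disc $h^{\varepsilon}_{0}(x)=(x,y^{*},I^{*},\theta^{*})$ contains a sub‑disc $\mathscr{D}_{(y^{*},I^{*},\theta^{*})}(\omega)$ whose orbit $\varepsilon\eta$‑shadows $I^{\bullet}_{\varepsilon}(\omega)$; as in the proof of Theorem \ref{th:symbolic-dynamics-general} the connecting sequences realizing distinct $\omega$ pass through different strips $\mathbf{S}^{u}$, $\mathbf{S}^{d}$ at the branching returns, so these sub‑discs are pairwise disjoint. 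Let $\Omega_{\varepsilon}=\bigcup_{(y^{*},I^{*},\theta^{*})}\bigsqcup_{\omega}\mathscr{D}_{(y^{*},I^{*},\theta^{*})}(\omega)$; it has positive Lebesgue measure and $\pi_{y,I,\theta}\Omega_{\varepsilon}=\pi_{y,I,\theta}(\mathbf{S}^{u}\cup\mathbf{S}^{d})\cap\{I\in[X_{0}-\varepsilon,X_{0}+\varepsilon]\}$. Endow $\Omega_{\varepsilon}$ with the normalized Lebesgue measure $\mathbb{P}_{\varepsilon}$ and define $X^{\varepsilon}_{t}(z)=I(\Phi^{\varepsilon}_{t\varepsilon^{-\gamma}}(z))$ for $t\in[0,1]$.

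To identify the limit I would compare $X^{\varepsilon}$ in $C[0,1]$ with the interpolated rescaled walk $R^{\varepsilon}_{t}(\omega)=X_{0}+\mu t+\rho_{\varepsilon}S_{\nu(t)}+(\text{linear interpolation})$, where $\nu(t)$ is the number of random‑walk steps completed by real time $t\varepsilon^{-\gamma}$. Three checks are needed. First, by Donsker's theorem (Theorem \ref{th:functional-CLT}), since $\rho_{\varepsilon}\sqrt{N_{\varepsilon}}\to\sigma$ and $N_{\varepsilon}\to\infty$, the law of $\omega\mapsto X_{0}+\mu t+\rho_{\varepsilon}S_{\lfloor N_{\varepsilon}t\rfloor}$ under the uniform measure on $\{\pm1\}^{N_{\varepsilon}}$ converges on $C[0,1]$ to the law of $X_{0}+\mu t+\sigma W_{t}$. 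Second, for $z\in\mathscr{D}(\omega)$ the genuine energy path $X^{\varepsilon}(z)$ is sup‑norm close to $R^{\varepsilon}(\omega)$: the shadowing forces $|X^{\varepsilon}(z)-I^{\bullet}_{\varepsilon}(\omega)|<\eta\varepsilon\to0$ at the step times, within a step the energy moves only by $O(\rho_{\varepsilon})=o(1)$, and — this is where $\gamma>3/2$ enters — a single step costs $\sim\rho_{\varepsilon}/(c\varepsilon)=\varepsilon^{-1/2}$ returns, hence time $\lesssim T_{0}\varepsilon^{-1/2}$, which is small compared with the allotted time slot $\varepsilon^{-\gamma}/N_{\varepsilon}\sim\varepsilon^{1-\gamma}$ because $1-\gamma<-1/2$; the leftover time in each slot is filled with neutral $\mathbf{S}^{\kappa}\!\to\!\mathbf{S}^{\kappa}$ round trips (changing $I$ by $O(\varepsilon)$, reabsorbed into the $\eta\varepsilon$ tolerance), so that the $n$‑th step is completed at rescaled time $n/N_{\varepsilon}+o(1)$ uniformly in $n$, and thus $\nu(t)=\lfloor N_{\varepsilon}t\rfloor$ up to $o(1)$; the residual time‑reparametrization error contributes an energy discrepancy of order $\varepsilon^{(\gamma-1)/2}\to0$. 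Third, the push‑forward of $\mathbb{P}_{\varepsilon}$ under $z\mapsto\omega(z)$ must be compared with the uniform measure on $\{\pm1\}^{N_{\varepsilon}}$: using the uniform hyperbolicity bounds (the expansion factor $\lambda$ and its reciprocal, as in \eqref{eq:expansion-bound}) to control the Lebesgue measure of each $\mathscr{D}(\omega)$, and arranging the construction so that the ``up'' and ``down'' branches at each return cost the same number of returns, the symbol read off from a Lebesgue‑typical $z$ is, at worst, a martingale‑difference array with conditional variances $\to1$ and a Lindeberg bound, so a martingale invariance principle (containing Theorem \ref{th:functional-CLT} as its i.i.d.\ case) still yields the Brownian limit. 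Combining the three points and recalling that convergence in $C[0,1]$ is metrized by the sup norm gives \eqref{eq:limit-diffusion-process}; the footnote variant $X^{\varepsilon}_{t}(z)=I(\Phi^{\varepsilon}_{\gamma\varepsilon^{-3/2}t}(z))$ is the borderline case $\gamma=3/2$ in which the extra constant plays the role of the slack.

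The main obstacle is the third check: one genuinely needs quantitative control of the Lebesgue measures of the shadowing sub‑discs, so that the symbol sequence associated to a Lebesgue‑random initial condition is asymptotically a fair coin‑flip sequence (equivalently, so that the reparametrized energy increments satisfy the hypotheses of a functional invariance principle). The time‑reparametrization bookkeeping in the second check — making the step boundaries land at equally spaced rescaled times, which is exactly what forces $\gamma>3/2$ via the slot‑size inequality $\varepsilon^{1-\gamma}\gg\varepsilon^{-1/2}$ — is the other delicate point; by contrast the energy‑closeness of the shadowing and the Donsker input are comparatively routine.
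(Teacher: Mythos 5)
Your first two checks (the Donsker input and the time bookkeeping that forces $\gamma>3/2$) follow essentially the paper's route: the ``neutral round trips'' you describe are exactly Lemma \ref{lem:propagation-const}, and the single-step cost $\sim T_{0}\sqrt{\varepsilon}/(c\varepsilon)=O(\varepsilon^{-1/2})\ll\varepsilon^{1-\gamma}$ is the inequality (\ref{eq:place-where-gamma-appears}) in Lemma \ref{lem:discs-travel-up}. The genuine gap is in your third check, and you have correctly identified it as the main obstacle but not closed it. The shadowing construction only tells you that for each word $\omega$ and each base point $(y^{*},I^{*},\theta^{*})$ there is \emph{some} nonempty sub-disc $\mathscr{D}(\omega)$ of initial conditions realizing $\omega$; it gives no control whatsoever on the \emph{ratios} of the Lebesgue measures of these sub-discs across different words. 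The expansion bound of type (\ref{eq:expansion-bound}) yields only one-sided estimates of the form $\mu(\mathscr{D}(\omega))\gtrsim\alpha^{-\#\mathrm{returns}}$, and since the number of returns per word is of order $\varepsilon^{-3/2}$, two words can a priori have measures differing by factors exponentially large in $\varepsilon^{-3/2}$. Consequently there is no argument that the symbol process read off a Lebesgue-typical point has conditional probabilities tending to $1/2$, nor even conditionally mean-zero increments; the appeal to ``a martingale-difference array with conditional variances $\to1$ and a Lindeberg bound'' is an assertion, not a consequence of anything established, and without it neither Donsker nor a martingale invariance principle applies to the push-forward of $\mathbb{P}_{\varepsilon}$.

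The paper avoids this problem by a structural move you did not make: it does not try to estimate the measures of the naturally arising sets at all. Since the theorem only requires $\Omega_{\varepsilon}$ to have \emph{positive} measure and the prescribed $\pi_{y,I,\theta}$-projection, the sets $A^{\varepsilon}_{\omega}$ (points whose energy at the times $k\varepsilon^{-\gamma+1}$ follows the branch $\omega$ of the tree, see (\ref{eq:Ak-omega})) are \emph{shrunk} to subsets $\tilde{A}^{\varepsilon}_{\omega}$, built as finite unions of products $B^{n_{u}}(x_{0},\delta)\times B^{n_{s}+2}((y_{i},I_{i},\theta_{i}),r_{i})$, whose $x$-radius $\delta(\omega)$ is tuned (using continuity of the measure in $\delta$, openness of the sets by continuous dependence on initial conditions, and the finiteness of the number of words of length $K_{\varepsilon}$ for fixed $\varepsilon$) so that all $\tilde{A}^{\varepsilon}_{\omega}$ have \emph{exactly equal} Lebesgue measure, while keeping the full $(y,I,\theta)$-projection (\ref{eq:Aw-same-measure})--(\ref{eq:Aw-same-measure-3}). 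On $\Omega_{\varepsilon}=\bigcup_{\omega}\tilde{A}^{\varepsilon}_{\omega}$ the symbols $Y^{\varepsilon}_{n}$ of (\ref{eq:Yi-for-stochastic}) are then i.i.d.\ fair coin flips by construction, plain Donsker applies, and the comparison $X^{\varepsilon}_{t}=X_{0}+\mu t+\sigma(S^{\varepsilon,\ast}_{K_{\varepsilon}}(t)+Z_{\varepsilon}(t))$ with $Z_{\varepsilon}\to0$ finishes the proof. A secondary remark: the discarding of words whose target path leaves $[2\eta\varepsilon,1-2\eta\varepsilon]$ (or the insertion of a stopping time) that you propose is not needed here, since \hyperlink{cond:A2}{\textbf{A2}} is assumed on all of $\mathbf{S}^{u}\cup\mathbf{S}^{d}$ with $I\in\mathbb{R}$; the stopping time belongs to Theorem \ref{th:diffusion-process}, whose hypotheses are the conditions \hyperlink{cond:C2}{\textbf{C2}} valid only on $\{I\in[0,1]\}$, and trimming the word set would in any case distort the Donsker limit you invoke.
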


For the proof of Theorem \ref{th:diffusion-process-all-strip} we will need
two technical lemmas.


\begin{lemma}
\label{lem:propagation-const} Assume that the IFS $\mathscr{F}_{\varepsilon
} $ satisfies the conditions \hyperlink{cond:A2}{\bf A2} and \hyperlink{cond:A3}{\bf A3}, and let $\eta $ be a constant as
given by Theorem \ref{th:symbolic-dynamics-general}. Then there exist
constants $T>0$ and $M>1$, such that for every $k\in \mathbb{N}$, every $%
\varepsilon \in (0,\varepsilon _{0}]$ and every $Q^{\varepsilon }$-disc $%
h^{\varepsilon } $ in $\mathbf{S}^{u}\cup \mathbf{S}^{d}$ satisfying 
\begin{equation*}
\pi _{I}h^{\varepsilon }\subset \left[ I_{0}-\varepsilon \eta
,I_{0}+\varepsilon \eta \right] ,
\end{equation*}%
there exists a sequence $\ell _{1},\ldots ,\ell _{m}\in L$ ($m$ depends on $%
k,\varepsilon $ and $h^{\varepsilon }$) and a disc $D\subset \bar{B}^{u}$
such that 
\begin{equation*}
h^{\varepsilon \prime }=F_{\ell _{m},\varepsilon }\circ \ldots \circ F_{\ell
_{1},\varepsilon }\circ h^{\varepsilon }(D)
\end{equation*}%
is a $Q^{\varepsilon }$-disc satisfying 
\begin{equation*}
\begin{split}
\pi _{I}h^{\varepsilon \prime }\subset & \left( I_{0}-\varepsilon
M,I_{0}+\varepsilon M\right) ,\text{ and } \\
\pi _{t}h^{\varepsilon \prime }-\pi _{t}h^{\varepsilon }\subset & \left[
kT,(k+1)T\right] .
\end{split}%
\end{equation*}%
Moreover, for every $z\in h^{\varepsilon }\left( D\right) $ and $\tilde{z}%
=F_{l_{m},\varepsilon }\circ \ldots \circ F_{l_{1},\varepsilon }(z)\in
h^{\varepsilon \prime }$ we have%
\begin{equation*}
\pi _{I}\Phi _{t}(z)\in \left( -\varepsilon M,\varepsilon M\right) \qquad 
\text{for every }t\in \left[ 0,\pi _{t}\tilde{z}-\pi _{t}z\right] .
\end{equation*}
\end{lemma}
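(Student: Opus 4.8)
\textbf{Proof plan for Lemma \ref{lem:propagation-const}.}

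The plan is to run the symbolic-dynamics construction of Theorem \ref{th:symbolic-dynamics-general} as a bookkeeping device, targeting the fixed action level $I_0$ itself at each stage, so that the disc $h^\varepsilon$ is propagated through connecting sequences while its $I$-coordinate stays within an $O(\varepsilon)$ band around $I_0$, and at the same time the elapsed time ($\pi_t$-increment) is pushed up to the prescribed window $[kT,(k+1)T]$. First I would fix the time scale: by Condition \hyperlink{cond:A3.1}{(A3.1)} each application of a map $F_{\ell,\varepsilon}$ advances $\pi_t$ by less than $T_0$, so I would set $T$ to be a fixed multiple of $T_0$ (say $T=2T_0$, or more carefully $T\ge 2T_0$) chosen large enough that any number of individual returns needed to ``re-center'' the disc near $I_0$ can be absorbed inside one block of length $T$. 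The point is that, by \hyperlink{cond:A2.2.ii}{(A2.2.{\scriptsize II})}, each ``up'' step from $\mathbf S^u$ to $\mathbf S^u$ increases $I$ by at least $c\varepsilon$ and at most $C\varepsilon$ (resp. each ``down'' step decreases it by the same bounds), so the number of steps needed to keep $\pi_I h^\varepsilon$ inside a fixed $O(\varepsilon)$ window around $I_0$ is bounded by an absolute constant independent of $\varepsilon$; hence the associated time increment is $O(1)$, and can be tucked into a single $T$-block.

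Next I would make the construction explicit. Start with $h^\varepsilon$ with $\pi_I h^\varepsilon\subset[I_0-\varepsilon\eta,I_0+\varepsilon\eta]$. I would repeatedly apply the propagation step: if the disc is in $\mathbf S^u$, propagate from $\mathbf S^u$ to $\mathbf S^u$ (gaining $I$) until it would overshoot, then switch via a $\mathbf S^u\to\mathbf S^d$ connecting sequence and propagate $\mathbf S^d\to\mathbf S^d$ (losing $I$), etc., always bringing the $I$-coordinate back toward $[I_0-\varepsilon\eta,I_0+\varepsilon\eta]$ — exactly as in Cases 1.A, 1.B, 2.B of the proof of Theorem \ref{th:symbolic-dynamics-general}. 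Throughout this, Remark \ref{rem:discs-slim} guarantees the propagated discs remain ``slim'' in $I$ with width $\le 2a_I\varepsilon$, and the $O(\varepsilon)$ steps plus the single switch increment (bounded by $C\varepsilon$ from \hyperlink{cond:A2.2.i}{(A2.2.{\scriptsize I})}) keep $\pi_I h^{\varepsilon\prime}$ inside $(I_0-M\varepsilon,I_0+M\varepsilon)$ for a suitable absolute constant $M>1$ (e.g. $M=\eta+2C+2a_I$ or similar). The intermediate-time bound $\pi_I\Phi_t(z)\in(-\varepsilon M,\varepsilon M)$ — here I note that the statement is implicitly about $\pi_I$ relative to $I_0$, since the flow only moves $I$ by $O(\varepsilon)$ over a bounded-time excursion — follows from Condition \hyperlink{cond:A3.2}{(A3.2)}: inside each return the flow changes $I$ by less than $M_0\varepsilon$, and there are only finitely many returns per $T$-block, so enlarging $M$ if necessary (to exceed $\eta + (\text{number of returns per block})\cdot M_0$) gives the claimed uniform bound along the whole orbit segment.

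Finally I would handle the time-targeting: after re-centering near $I_0$, I keep appending further ``neutral-ish'' blocks — e.g. alternating $\mathbf S^u\to\mathbf S^u$ and $\mathbf S^d\to\mathbf S^d$ passages, or $\mathbf S^u\to\mathbf S^d\to\mathbf S^u$ loops — each of which advances $\pi_t$ by at least some fixed positive amount and at most $T_0$ per map, until the accumulated $\pi_t$-increment lands in $[kT,(k+1)T]$. Since $T$ was chosen $\ge 2T_0$ and each added map contributes less than $T_0$, we can never jump over the target interval $[kT,(k+1)T]$, so such a stopping time $m$ exists; and each such loop changes $I$ by only $O(\varepsilon)$ (again by \hyperlink{cond:A2.2.i}{(A2.2.{\scriptsize I})}), so re-centering after the time target is reached costs only $O(\varepsilon)$ more in $I$, preserving the $(I_0-M\varepsilon,I_0+M\varepsilon)$ containment (at the cost of one more constant's worth of slack in $M$, fixed once and for all). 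Theorem \ref{th:hor-disc-propagation} guarantees that at every stage there is a genuine sub-disc $D\subset\bar B^{n_u}$ whose image is the $Q^\varepsilon$-disc $h^{\varepsilon\prime}$. \textbf{The main obstacle} I anticipate is the simultaneous control of two competing quantities — keeping $\pi_I$ pinned in an $\varepsilon$-window while forcing $\pi_t$ into a prescribed $O(1)$-window — and in particular verifying that the constants $T$ and $M$ can be chosen \emph{uniformly in $k$, in $\varepsilon$, and in the disc}; this is where one must carefully check that ``number of returns per $T$-block'' is bounded by an absolute constant (using the lower bound $c\varepsilon$ on the per-step action change together with the $O(\varepsilon)$ width of the target band and the uniform first-return-time lower bound implicit in the setup).
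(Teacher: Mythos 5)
Your plan follows essentially the same route as the paper's proof: oscillate the disc between $I_0$ and a level $3\eta\varepsilon$ above it using the construction of Theorem \ref{th:symbolic-dynamics-general}, bound the number of maps per excursion by a constant $K$ independent of $\varepsilon$ (from the $c\varepsilon$ lower bound in (A2.2.{\scriptsize II})), use (A3.1) and (A3.2) to obtain $T\sim KT_0$ and $M\sim KM_0>1$, and stop at the first return whose accumulated time exceeds $kT$, which cannot overshoot $(k+1)T$. The only quantitative slip is the opening suggestion $T=2T_0$: the block length must absorb a full up--down excursion plus the $t$-diameter $2a_\theta$ of the discs (the paper takes $T=2KT_0+4a_\theta$), but your subsequent ``chosen large enough to absorb a re-centering block'' formulation already amounts to the paper's choice.
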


\begin{figure}[t]
\begin{center}
\includegraphics[width=8.5cm]{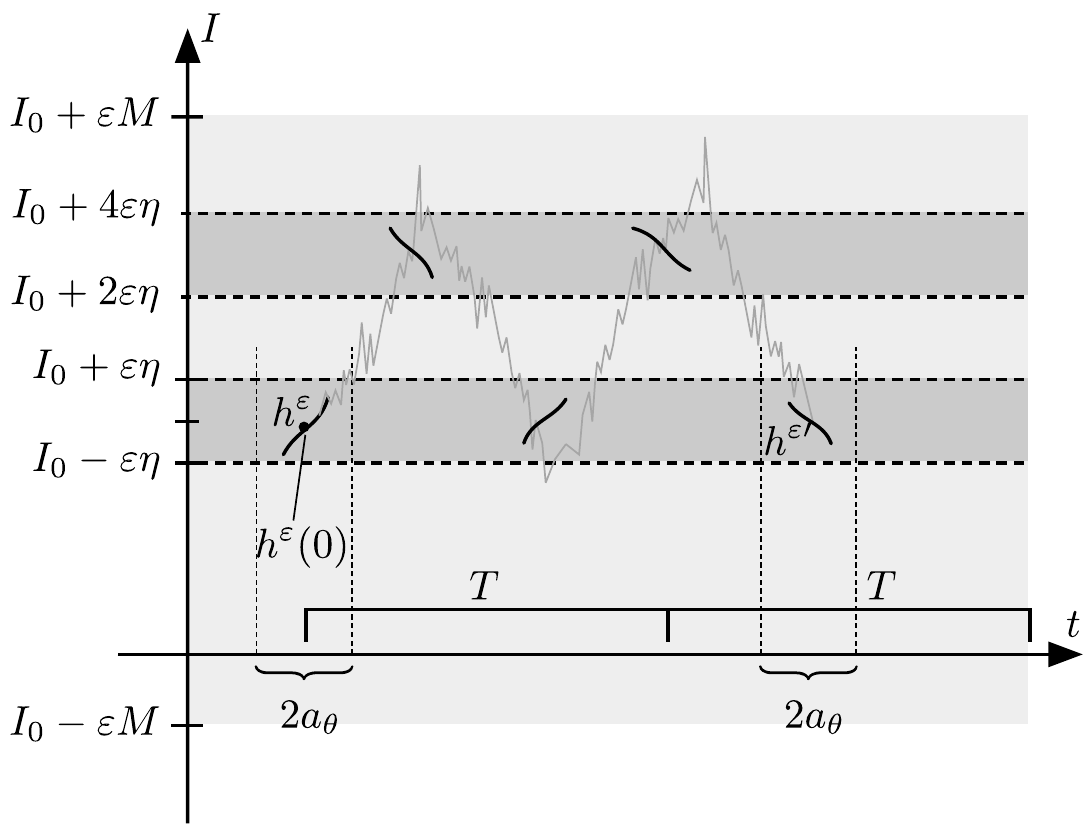}
\end{center}
\caption{The $Q^{\protect\varepsilon}$-disc $h^{\protect\varepsilon}$ is
moved to $h^{\protect\varepsilon \prime}$, by constructing successive $Q^{%
\protect\varepsilon}$-discs which travel up and down between the two regions
in darker grey. Any trajectory which passes through the full excursion stays
within the light grey area. This plot is a sketch of what is happening in
Lemma \protect\ref{lem:propagation-const} for $k=1$.}
\label{fig:discs-travel-const}
\end{figure}

\begin{proof}
The idea behind the proof is depicted in Figure~\ref{fig:discs-travel-const}.

Consider a sequence $\{I^{n}\}_{n\in \mathbb{N}}$ of $I$-level sets whose
values alternate between $I_{0}$ and $I_{0}+3\eta $, i.e., 
\begin{equation}
I_{0},I_{0}+3\eta \varepsilon ,I_{0},I_{0}+3\eta \varepsilon ,\ldots
\label{eq:energy-sequence-for-staying-put}
\end{equation}%
Note that the consecutive values are more than $(2\eta \varepsilon )$ apart.
By Theorem \ref{th:symbolic-dynamics-general} there exists a trajectory
whose $I$-coordinate $(\varepsilon \eta )$-shadows $\{I^{n}\}$. Following
the construction from the proof of Theorem~\ref{th:symbolic-dynamics-general}
we start with a $Q^{\varepsilon }$-disc $h^{\varepsilon }$ whose $I$%
-projection is contained in~$[I_{1}-\eta \varepsilon ,I_{1}+\eta \varepsilon
]$ and take successive iterates of this disc under the IFS $\mathscr{F}%
_{\varepsilon }$.


In order to move from $I^{n}$ to $I^{n+1}$, we make repeated transitions
from $\mathbf{S}^{u}$ to $\mathbf{S}^{u}$, in the case $I^{n}<I^{n+1}$, or
from $\mathbf{S}^{d}$ to $\mathbf{S}^{d}$, in the case $I^{n}>I^{n+1}$. Each
transition corresponds to one application of some return map $%
F_{\ell,\varepsilon }$ to $\mathbf{S}^{u}\cup \mathbf{S}^{d}$. From the
lower bounds from Condition \hyperlink{cond:A2.2.ii}{(A2.2.{\scriptsize II})} there exists an upper bound $K$ for
the number of transitions necessary to move from $I^{n}$ to $I^{n+1}$, where 
$K$ is independent of $\varepsilon $ and of the initial $Q^{\varepsilon }$%
-disc $h^{\varepsilon }$. By Condition \hyperlink{cond:A3.1}{(A3.1)}, the time $t$ for each
transition is upper bounded by $T_{0}$. Hence the time to move from $I^{n}$
to $I^{n+1}$ is upper bounded by $T_{1}:=KT_{0}$, where $T_{1}$ is
independent of $\varepsilon $ and of the initial $Q^{\varepsilon }$-disc $%
h^{\varepsilon }$.

By Condition \hyperlink{cond:A3.2}{(A3.2)} there exists $M_{0}>0$ such that the change in $I$ along
the flow $\Phi^\varepsilon_t$ between successive returns to $\mathbf{S}%
^{u}\cup\mathbf{S}^{d}$ is upper bounded by $M_{0}\varepsilon $. Therefore,
the change in $I$ along the flow as we move from $I^{n}$ to $I^{n+1}$ is
upper bounded by $\varepsilon KM_0$. We note that the change in $I$ by each
return map $F_{\ell,\varepsilon}$ is at most $5\eta\varepsilon$, since each
level set $I^{n}$ is visited within a distance of $\varepsilon \eta $, and
two successive level sets $I^n$ and $I^{n+1}$ are $3\eta\varepsilon$ apart.
Note that by Condition \hyperlink{cond:A3.2}{(A3.2)} we must have $5\varepsilon\eta< \varepsilon KM_0
$.

We let $M$ be an number bigger than $KM_{0}$ and bigger than one. Thus, when
we move $Q^{\varepsilon }$-discs between $I^{n}$ and $I^{n+1}$ the $I$%
-coordinate along the trajectory stays in $\left( I_{0}-\varepsilon
M,I_{0}+\varepsilon M\right) $.

For each $Q^{\varepsilon }$-disc in our construction by (\ref%
{eq:cone-slopes-assumption}) its $t$-projection has diameter less than or
equal to $2a_{\theta }$. Let $T_{2}=2T_{1}=2KT_{0}$ which is an upper bound
for the time needed to move from $I^{n}$ to $I^{n+1}$ and then to $I^{n+2}$. 
Let $T=T_{2}+4a_{\theta }$.

Our result now follows from the following construction. We start with our $%
Q^{\varepsilon }$-disc $h^{\varepsilon }$. This disc is already in $%
[I^{1}-\varepsilon \eta ,I^{1}+\varepsilon \eta ]=[I_{0}-\varepsilon \eta
,I_{0}+\varepsilon \eta ]$, so we move this disc up in $I$ (using
transitions from $\mathbf{S}^{u}$ to $\mathbf{S}^{u}$) to reach $%
[I^{2}-\varepsilon \eta ,I^{2}+\varepsilon \eta ]=[I_{0}+2\varepsilon \eta
,I_{0}+4\varepsilon \eta ]$, and then down in $I$ (using transitions from $%
\mathbf{S}^{d}$ to $\mathbf{S}^{d}$) to return to $[I^{3}-\varepsilon \eta
,I^{3}+\varepsilon \eta ]=[I_{0}-\varepsilon \eta ,I_{0}+\varepsilon \eta ]$%
. We repeat such trips until we obtain a $Q^{\varepsilon }$-disc $%
h^{\varepsilon \prime }$ with $\pi _{t}h^{\varepsilon \prime }>kT+\pi
_{t}h^{\varepsilon }(0)$, where $k$ is given in the statement of the lemma.
The last transition to reach $h^{\varepsilon \prime }$ takes less time than $%
T_{2}$ and the $t$-projection of every $Q^{\varepsilon }$-disc has diameter
no bigger than $2a_{\theta }$. This implies $\pi _{t}h^{\varepsilon \prime
}<(k+1)T+\pi _{t}h^{\varepsilon }$. From our construction we already know
that the $I$-coordinate along the trajectory of the flow for the points that
start in $h^{\varepsilon }$ and arrive in $h^{\varepsilon \prime }$ is
between $\left( I_{0}-\varepsilon M,I_{0}+\varepsilon M\right) $. We define $%
D$ as the set of points that start in $h^{\varepsilon }$ and arrive in $%
h^{\varepsilon \prime }$ through this construction. The $I$-coordinate along
the trajectory of the flow for the points that start in $h^{\varepsilon }$
and arrive in $h^{\varepsilon \prime }$ is between $\left( I_{0}-\varepsilon
M,I_{0}+\varepsilon M\right) $. This concludes the proof. 
\end{proof}


\begin{lemma}
\label{lem:discs-travel-up} Assume that the IFS $\mathscr{F}_{\varepsilon }$
satisfies the conditions \hyperlink{cond:A2}{\bf A2} and \hyperlink{cond:A3}{\bf A3}. Let $T$ and $M$ be constants as in Lemma %
\ref{lem:propagation-const}. Let $\gamma >3/2$ and $s\in \mathbb{R}$ be
fixed constants.

Then, for every sufficiently small $\varepsilon >0$, and every $%
Q^{\varepsilon }$-disc $h^{\varepsilon }$ satisfying 
\begin{align*}
\pi _{t}h^{\varepsilon }& \subset t_{0}+\left[ 0,2T\right] , \\
\pi _{I}h^{\varepsilon }& \subset \left( I_{0}-\varepsilon
M,I_{0}+\varepsilon M\right) ,
\end{align*}%
there exists $\ell _{1},\ldots ,\ell _{m}\in L$ and a set $D\subset \bar{B}%
^{u}$ such that%
\begin{equation}
\pi _{I}\{\Phi _{t_{0}+\varepsilon ^{-\gamma +1}-\pi _{t}z}^{\varepsilon
}(z):z\in h^{\varepsilon }\left( D\right) \}\subset I_{0}+s\sqrt{\varepsilon 
}+\left( -\varepsilon M,\varepsilon M\right) ,
\label{eq:flow-in-I-level-at-time}
\end{equation}%
and%
\begin{equation*}
h^{\varepsilon \prime }=F_{\ell _{m},\varepsilon }\circ \ldots \circ F_{\ell
_{1},\varepsilon }\circ h^{\varepsilon }(D)
\end{equation*}
is a $Q^{\varepsilon }$-disc satisfying \footnote{%
If we consider instead the energy paths $X_{t}^{\varepsilon }(z):=I(\Phi
_{\gamma \varepsilon ^{-3/2}t}^{\varepsilon }(z))$, then here we require
that $\pi _{t}h^{\prime }\subset t_{0}+\gamma \varepsilon ^{-1/2}+\left[ 0,2T%
\right]$, by taking $\gamma \in \mathbb{R}$ such that $\gamma >T_{0}s/c$.
This enters in (\ref{eq:place-where-gamma-appears}) during the course of the
proof.} 
\begin{align*}
\pi _{t}h^{\varepsilon \prime }& \subset t_{0}+\varepsilon ^{-\gamma +1}+ 
\left[ 0,2T\right] , \\
\pi _{I}h^{\varepsilon \prime }& \subset I_{0}+s\sqrt{\varepsilon }+\left(
-\varepsilon M,\varepsilon M\right) .
\end{align*}
\end{lemma}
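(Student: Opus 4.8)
\textbf{Proof plan for Lemma \ref{lem:discs-travel-up}.}

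The plan is to combine Lemma \ref{lem:propagation-const} (which lets us move a $Q^{\varepsilon}$-disc up and down by a bounded amount while burning a prescribed, quantized amount of time and keeping the $I$-coordinate of the underlying trajectory inside an $O(\varepsilon)$-window) with a counting argument that tracks how the net drift accumulates over many such ``up-down'' cycles. First I would fix $k$ to be the largest integer with $kT \le \varepsilon^{-\gamma+1}$, so that $k \sim \varepsilon^{-\gamma+1}/T \to \infty$ as $\varepsilon \to 0$; this is the number of time-quanta of length $T$ we have available before the flow time reaches $t_0+\varepsilon^{-\gamma+1}$. The key point is that $\gamma > 3/2$ forces $\varepsilon^{-\gamma+1} \gg \varepsilon^{-1/2}$, i.e. $k \varepsilon \gg \sqrt{\varepsilon}$: over $k$ cycles we can accumulate a net change in $I$ of order $k\varepsilon$, which dwarfs the target offset $s\sqrt{\varepsilon}$. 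Each application of Lemma \ref{lem:propagation-const} with a single time-quantum (its parameter $k=1$) takes a $Q^{\varepsilon}$-disc centered at some $I$-level to another $Q^{\varepsilon}$-disc, burns time in $[T, 2T]$ — wait, in $[kT,(k+1)T]$ with that lemma's $k$ — and along the way one may \emph{choose} whether the net effect on $I$ is an increase or a decrease of a definite size, because within the excursion we control via Condition \hyperlink{cond:A2.2.ii}{(A2.2.II)} which of $\mathbf{S}^u\to\mathbf{S}^u$ or $\mathbf{S}^d\to\mathbf{S}^d$ transitions we use.

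The core of the argument is then a discrete greedy/tracking scheme. Starting from $h^{\varepsilon}$ with $\pi_I h^{\varepsilon}\subset (I_0-\varepsilon M, I_0+\varepsilon M)$, I would repeatedly invoke Lemma \ref{lem:propagation-const} (each time with $k=1$, say), producing a chain of $Q^{\varepsilon}$-discs $h^{\varepsilon}=h_0^{\varepsilon}, h_1^{\varepsilon}, \ldots, h_N^{\varepsilon}$ where $N$ is chosen so that the total burned flow time first exceeds $\varepsilon^{-\gamma+1}$; because each step burns between $T$ and $2T$ units of time, $N$ is comparable to $\varepsilon^{-\gamma+1}/T$, hence $N\varepsilon \gg \sqrt{\varepsilon}$. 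At each step I decide whether to push $I$ up or down by a quantized amount (of size at least $c\varepsilon$ per return map, and bounded per excursion by $O(\varepsilon)$ through Condition \hyperlink{cond:A2.2}{(A2.2)}), steering the running center of the disc's $I$-projection toward the target value $I_0 + s\sqrt{\varepsilon}$. Since $N\varepsilon \gg |s|\sqrt{\varepsilon}$, and since each step changes the center by a controlled amount bounded above and below by constants times $\varepsilon$, we can land the center within $O(\varepsilon)$ of $I_0+s\sqrt{\varepsilon}$ — the discretization error is $O(\varepsilon)$, which is absorbed into the $(-\varepsilon M, \varepsilon M)$ window (enlarging $M$ if necessary, consistently with Lemma \ref{lem:propagation-const}). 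Throughout, Lemma \ref{lem:propagation-const} guarantees the trajectory's $I$-coordinate never strays outside an $O(\varepsilon)$-neighborhood of the running center, which by the triangle inequality stays within $O(\sqrt{\varepsilon})$ of $I_0$ and in particular within the range where Condition \hyperlink{cond:C2}{\textbf{C2}} / \hyperlink{cond:A2}{\textbf{A2}} continue to apply (for $\varepsilon$ small, $s\sqrt{\varepsilon}+O(\varepsilon)$ is tiny, so we stay in $\{I\in[0,1]\}$). The last invocation may slightly overshoot the time budget; the accumulated time lies in $\varepsilon^{-\gamma+1}+[0,2T]$ because one more step of Lemma \ref{lem:propagation-const} adds at most $2T$, which matches the claimed conclusion $\pi_t h^{\varepsilon\prime}\subset t_0+\varepsilon^{-\gamma+1}+[0,2T]$. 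The intermediate inclusion \eqref{eq:flow-in-I-level-at-time} for the flow at the exact time $t_0+\varepsilon^{-\gamma+1}-\pi_t z$ follows from the ``moreover'' clause of Lemma \ref{lem:propagation-const}, which pins the flowed $I$-value to within $\varepsilon M$ of the running center for \emph{every} intermediate time, not just at section crossings; the set $D$ is the subset of $\bar B^u$ of initial points that get propagated through this whole finite chain, which is a topological disc by repeated application of Theorem \ref{th:hor-disc-propagation}.

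I expect the main obstacle to be the bookkeeping of the quantization: showing that the greedy scheme can hit the target center $I_0 + s\sqrt{\varepsilon}$ to within $O(\varepsilon)$ rather than only to within the size of one ``up-down'' excursion. Each excursion via Lemma \ref{lem:propagation-const} moves the center by a bounded-below (order $\varepsilon$) but also bounded-above (order $\varepsilon$) amount; the subtlety is that we need \emph{fine} control of the final approach, which requires using single return maps $F_{\ell,\varepsilon}$ — each contributing between $c\varepsilon$ and $C\varepsilon$ to $I$ by Condition \hyperlink{cond:A2}{\textbf{A2}} — rather than full excursions, near the end of the construction. One clean way around this is: use full Lemma \ref{lem:propagation-const} excursions to get the center within, say, $10C\varepsilon$ of the target, then finish with at most a bounded number of individual $\mathbf{S}^u\to\mathbf{S}^u$ (or $\mathbf{S}^d\to\mathbf{S}^d$) return maps to close the remaining $O(\varepsilon)$ gap, each step still respecting the time-budget slack $[0,2T]$ since it costs at most $T_0 \le T$. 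I would also need to double-check that the hypothesis $\pi_t h^{\varepsilon}\subset t_0 + [0,2T]$ (rather than $[0,T]$) and the disc's own $t$-diameter being at most $2a_\theta$ are compatible with the final $t$-projection fitting inside $t_0+\varepsilon^{-\gamma+1}+[0,2T]$; this is exactly why $T$ was defined in Lemma \ref{lem:propagation-const} to include the extra $4a_\theta$ of slack, so the $t$-width of intermediate discs is already accounted for. The footnote variant (energy paths rescaled by $\gamma\varepsilon^{-3/2}$ with $\gamma > T_0 s/c$) is handled by the same argument with $\varepsilon^{-\gamma+1}$ replaced by $\gamma\varepsilon^{-1/2}$: one checks $\gamma\varepsilon^{-1/2}$ still gives $\gg \sqrt\varepsilon/\varepsilon = \varepsilon^{-1/2}$ time-quanta — indeed it gives $\asymp \varepsilon^{-1/2}$ quanta, and the constraint $\gamma > T_0 s/c$ is precisely what guarantees enough quanta to realize a drift of $s\sqrt\varepsilon$ at rate $\ge c\varepsilon$ per quantum within time $\asymp T_0 \cdot (\text{number of quanta})$, which is the content of the inequality flagged at \eqref{eq:place-where-gamma-appears} in the cited proof.
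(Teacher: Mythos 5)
Your plan is in substance the paper's own argument: the decisive point you identify---that $\gamma>3/2$ makes the drift time $O\!\left((T_{0}s/c)\,\varepsilon ^{-1/2}\right)$ negligible compared with the available time $\varepsilon ^{-\gamma +1}$, cf.\ \eqref{eq:place-where-gamma-appears}---is exactly what the paper uses, and the rest is ``reach the level $I_{0}+s\sqrt{\varepsilon}$, then hold there with Lemma \ref{lem:propagation-const} until the time budget is spent.'' Two refinements in the paper's version are worth adopting. First, Lemma \ref{lem:propagation-const} by itself only holds position (its conclusion re-centers at the same $I_{0}$), so the drift steps cannot be obtained by ``applying it with $k=1$ and choosing up or down''; the paper performs the ascent directly with the $\mathbf{S}^{u}\!\to\!\mathbf{S}^{u}$ transitions of Condition (A2.2.II) (the Theorem \ref{th:symbolic-dynamics-general} shadowing construction), which also makes your worry about the ``fine final approach'' moot, since landing within $\eta\varepsilon\le M\varepsilon$ of the target level is all that is required. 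Second, instead of iterating unit time-quanta of variable length in $[T,2T]$ and stopping at first overshoot---which does not obviously place the final disc inside a window of length only $2T$---the paper applies Lemma \ref{lem:propagation-const} once, after the ascent, with a single well-chosen $k$: the resulting time increment lies in an interval of length $T$ that slides in steps of $T$ as $k$ varies, so it can always be fitted inside $t_{0}+\varepsilon^{-\gamma+1}+[0,2T]$, and the ``moreover'' clause then gives \eqref{eq:flow-in-I-level-at-time} at the exact time $t_{0}+\varepsilon^{-\gamma+1}$, which falls inside the holding phase.
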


\begin{figure}[t]
\begin{center}
\includegraphics[width=6cm]{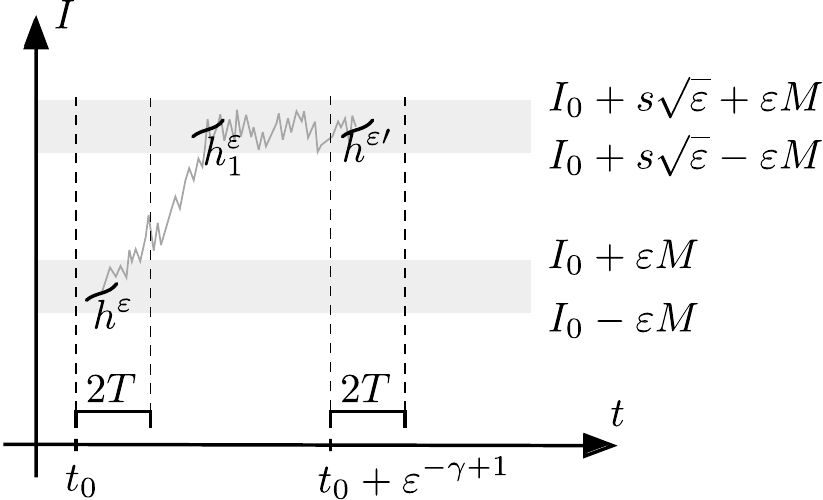}
\end{center}
\caption{A schematic plot for Lemma \protect\ref{lem:discs-travel-up} with $%
c>0$. The disc $h^{\protect\varepsilon}$ is moved to $h^{\protect\varepsilon %
\prime}$, by being moved up and then being propagated within the upper grey
region. }
\label{fig:discs-travel-up}
\end{figure}

\begin{proof}
The idea behind the proof is depicted in Figure~\ref{fig:discs-travel-up}.

Here we conduct the proof for $s>0$. For $s<0$ the proof will be analogous
and we comment on the difference at the end of the proof. If $s=0$ the
result follows directly from Lemma \ref{lem:propagation-const}.

Let $\eta $ be a constant as given by Theorem \ref%
{th:symbolic-dynamics-general}. Consider a sequence $\{I^{n}\}_{n\in \mathbb{%
N}}$ of $I$-level sets whose values alternate between $I_{0}+s\sqrt{\eps}$
and and $I_{0}+s\sqrt{\eps}+3\eta \varepsilon $, i.e., 
\begin{equation}
I^{1}=I_{0}+s\sqrt{\eps},~I^{2}=I_{0}+s\sqrt{\eps}+3\eta \varepsilon
,~I^{3}=I_{0}+s\sqrt{\eps},~I^{4}=I_{0}+s\sqrt{\eps}+3\eta \varepsilon
,~\ldots  \label{eq:energy-sequence-for-going-up}
\end{equation}

By Theorem \ref{th:symbolic-dynamics-from-covering} we know that we can $%
(\varepsilon \eta )$-shadow this sequence. To reach $I^{1}=I_{0}+c\sqrt{\eps}
$ we move $h^{\varepsilon }$ up using transitions from $\mathbf{S}^{u}$ to $%
\mathbf{S}^{u}$. We reach this level by moving $h^{\varepsilon }$ to a $%
Q^{\varepsilon }$-disc which we denote here as $h_{1}^{\varepsilon }$ (see
Figure \ref{fig:discs-travel-up}). By Condition \hyperlink{cond:A2.2.ii}{(A2.2.{\scriptsize II})}, this requires no
more than $s\sqrt{\varepsilon }/\left( c\varepsilon \right) $ applications
of maps $F_{\ell ,\varepsilon }$. Recall that $T_{0}$ is an upper bound on
the time along the flow to follow an application of a map $F_{\ell
,\varepsilon }$. The time needed to reach $h_{1}^{\varepsilon }$ is less
than or equal to $(T_{0}s/c)\varepsilon ^{-1/2}$. Since $\gamma > \frac{3}{2}%
,$ for sufficiently small $\varepsilon $ we have 
\begin{equation}
\varepsilon ^{-\gamma +1}-(T_{0}s/c)\varepsilon ^{-1/2}=\varepsilon
^{-1/2}\left( \varepsilon ^{-\gamma +\frac{3}{2}}-(T_{0}s/c)\right) >0.
\label{eq:place-where-gamma-appears}
\end{equation}%
This means that we reach $h_{1}^{\varepsilon }$ in a time less than $%
\varepsilon ^{-\gamma +1}$. We now apply Lemma \ref{lem:propagation-const}
which ensures that we can stay in $I_{0}+s\sqrt{\varepsilon }+\left(
-\varepsilon M,\varepsilon M\right) $ as we shadow $I^{n}$. By Lemma \ref%
{lem:propagation-const} we can move $h_{1}^{\varepsilon }$ to obtain a $%
Q^{\varepsilon }$-disc $h^{\varepsilon \prime }$ with%
\begin{equation*}
\pi _{t}h^{\varepsilon \prime }\subset \left[ \pi _{t}h_{1}^{\varepsilon
}(0)+kT,\pi _{t}h_{1}^{\varepsilon }(0)+(k+1)T\right] ,
\end{equation*}%
with any $k\in \mathbb{N}$ of our choosing; this interval has length $T$. We
can choose $k$ so that above interval is within the interval $%
t_{0}+\varepsilon ^{-\gamma +1}+\left[ 0,2T\right] $, which has length $2T$.
By Lemma \ref{lem:propagation-const} we also know that the trajectories of
points leading from $h_{1}^{\varepsilon }$ to $h^{\varepsilon \prime }$
remain in $I_{0}+s\sqrt{\varepsilon }+\left( -\varepsilon M,\varepsilon
M\right) $. This in particular implies that at time $t_{0}+\varepsilon
^{-\gamma +1}$ (see Figure \ref{fig:discs-travel-up}) we obtain the
condition (\ref{eq:flow-in-I-level-at-time}). This concludes the proof for $%
s>0$.

For $s<0$ the argument is almost identical, with the only difference that to
reach $I^{1}$ instead of going up we move down using transitions from $%
\mathbf{S}^{d}$ to $\mathbf{S}^{d}$.
\end{proof}

\begin{figure}[tbp]
\begin{center}
\includegraphics[height=4cm]{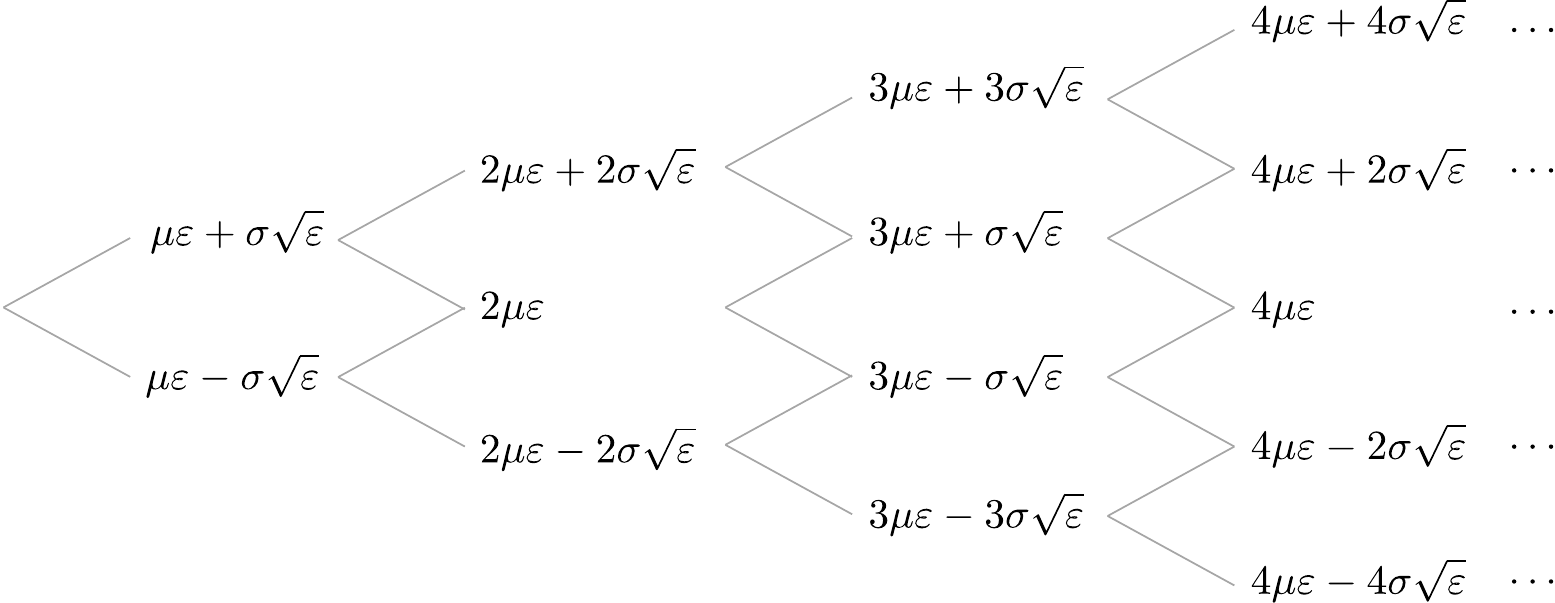}
\end{center}
\caption{The idea behind the proof of Theorem \protect\ref%
{th:diffusion-process-all-strip} is to shadow the changes of $I$ so that
they would follow a random walk from the above tree.}
\label{fig:tree}
\end{figure}




\begin{proof}[Proof of Theorem \protect\ref{th:diffusion-process-all-strip}]

The intuition behind the proof is to define $\Omega _{\varepsilon }$ as a
set of points in $\mathbf{S}^{u}\cup \mathbf{S}^{d}$ for which the $I$%
-coordinate $O(\varepsilon )$-shadows the random walk in Figure~\ref%
{fig:tree}. 

Let $M$ be the constant from Lemma \ref{lem:propagation-const} and consider
the following two sets\footnote{%
If we consider instead the energy paths $X_{t}^{\varepsilon }(z):=I(\Phi
_{\gamma \varepsilon ^{-3/2}t}^{\varepsilon }(z))$, as indicated in the
footnote for Theorem \ref{th:diffusion-process-all-strip}, then in the
reminder of the proof we have: in $A_{u}^{\varepsilon }$ and $%
A_{d}^{\varepsilon }$ we have $\gamma \varepsilon ^{-1/2}$ instead of $%
\varepsilon ^{-\gamma +1}$; in $A_{uu}^{\varepsilon }$, $A_{ud}^{\varepsilon
}$, $A_{du}^{\varepsilon }$, $A_{dd}^{\varepsilon }$ we have $2\gamma
\varepsilon ^{-1/2}$ instead of $2\varepsilon ^{-\gamma +1}$; in $A_{\kappa
\omega }^{\varepsilon }$ we have $k\gamma \varepsilon ^{-1/2}$ instead of $%
k\varepsilon ^{-\gamma +1}$, with mirror changes carried throughout the
argument.} 
\begin{align*}
A_{u}^{\varepsilon }& =\left\{ z\in \mathbb{R}^{n_{u}+n_{s}+2}\times \mathbb{%
T}:\pi _{I}\Phi _{\varepsilon ^{-\gamma +1}}^{\varepsilon }\left( z\right)
\in X_{0}+\mu \varepsilon +\sigma \sqrt{\varepsilon }+\left( -\varepsilon
M,\varepsilon M\right) \right\} , \\
A_{d}^{\varepsilon }& =\left\{ z\in \mathbb{R}^{n_{u}+n_{s}+2}\times \mathbb{%
T}:\pi _{I}\Phi _{\varepsilon ^{-\gamma +1}}^{\varepsilon }\left( z\right)
\in X_{0}+\mu \varepsilon -\sigma \sqrt{\varepsilon }+\left( -\varepsilon
M,\varepsilon M\right) \right\} .
\end{align*}%
The subscript $u$ in $A_{u}^{\varepsilon }$ is meant to suggest that for $%
z\in A_{u}^{\varepsilon }$ the trajectory moves `up' in $I$. Similarly, $d$
in $A_{d}^{\varepsilon }$ is meant to suggest that for $z\in
A_{d}^{\varepsilon }$ the trajectory moves `down' in $I$.

By Lemma \ref{lem:discs-travel-up} $A_{u}^{\varepsilon }$ and $%
A_{d}^{\varepsilon }$ are nonempty. (We apply the lemma with {$I_{0}=X_{0}$,}
$s=\mu \varepsilon /\sqrt{\varepsilon }+\sigma $ for $A_{u}^{\varepsilon }$,
and with $s=\mu \varepsilon /\sqrt{\varepsilon }-\sigma $ for $%
A_{d}^{\varepsilon }$. We restrict to $\varepsilon <(\sigma /M)^{2}$ so that 
$A_{u}^{\varepsilon }\cap A_{d}^{\varepsilon }=\emptyset $.) We obtain in
fact more. For every fixed $y_{0}\in \bar{B}^{n_{s}}$, {$I_{0}\in
(X_{0}-\varepsilon M,X_{0}+\varepsilon M)$}, and $\theta _{0}\in S_{\theta
}^{u}\cup S_{\theta }^{d}$, choose an initial $Q^{\varepsilon }$-disc $%
h^{\varepsilon }$ of the form 
\begin{equation}
h^{\varepsilon }(x)=\left( x,y_{0},I_{0},\theta _{0}\right) ,\quad x\in \bar{%
B}^{n_{u}}.  \label{eqn:straight_disc}
\end{equation}%
Applying Lemma \ref{lem:discs-travel-up} yields a point $z\in h^{\varepsilon
}$ so that $z\in A_{u}^{\varepsilon }$. For such a point $z$ we have $\pi
_{y,I,\theta }\left( z\right) =\left( y_{0},I_{0},\theta _{0}\right) $. This
implies that $\pi _{y,\theta }\left( A_{u}^{\varepsilon }\right) =\pi
_{y,\theta }\left( \mathbf{S}^{u}\cup \mathbf{S}^{d}\right) $ and $\pi
_{I}\left( A_{u}^{\varepsilon }\right) =${$(X_{0}-\varepsilon
M,X_{0}+\varepsilon M)$}. By mirror arguments $\pi _{y,\theta }\left(
A_{d}^{\varepsilon }\right) =\pi _{y,\theta }\left( \mathbf{S}^{u}\cup 
\mathbf{S}^{d}\right) $ and $\pi _{I}\left( A_{d}^{\varepsilon }\right) =${$%
(X_{0}-\varepsilon M,X_{0}+\varepsilon M)$}.

We now define the following subsets of $\mathbb{R}^{n_{u}+n_{s}+2}\times 
\mathbb{T}$: 
\begin{align*}
A_{uu}^{\varepsilon }& =A_{u}^{\varepsilon }\cap \left\{ z:\pi _{I}\Phi
_{2\varepsilon ^{-\gamma +1}}^{\varepsilon }\left( z\right) \in X_{0}+2\mu
\varepsilon +2\sigma \sqrt{\varepsilon }+\left( -M\varepsilon ,M\varepsilon
\right) \right\} , \\
A_{du}^{\varepsilon }& =A_{u}^{\varepsilon }\cap \left\{ z:\pi _{I}\Phi
_{2\varepsilon ^{-\gamma +1}}^{\varepsilon }\left( z\right) \in X_{0}+2\mu
\varepsilon +\left( -M\varepsilon ,M\varepsilon \right) \right\} , \\
A_{ud}^{\varepsilon }& =A_{d}^{\varepsilon }\cap \left\{ z:\pi _{I}\Phi
_{2\varepsilon ^{-\gamma +1}}^{\varepsilon }\left( z\right) \in X_{0}+2\mu
\varepsilon +\left( -M\varepsilon ,M\varepsilon \right) \right\} , \\
A_{dd}^{\varepsilon }& =A_{d}^{\varepsilon }\cap \left\{ z:\pi _{I}\Phi
_{2\varepsilon ^{-\gamma +1}}^{\varepsilon }\left( z\right) \in X_{0}+2\mu
\varepsilon -2\sigma \sqrt{\varepsilon }+\left( -M\varepsilon ,M\varepsilon
\right) \right\} .
\end{align*}%
The set $A_{uu}^{\varepsilon }$ is the set of points which first go up to $%
\mu \varepsilon +\sigma \sqrt{\varepsilon }$ at time $\varepsilon ^{-\gamma
+1}$, and then go up again to $2\mu \varepsilon +2\sigma \sqrt{\varepsilon }$
at time $2\varepsilon ^{-\gamma +1}$. This is the reason for the subscript $%
uu$. The set $A_{du}^{\varepsilon }$ is the set of points which first go up $%
\mu \varepsilon +\sigma \sqrt{\varepsilon }$ at time $\varepsilon ^{-\gamma
+1}$, and then go down to $2\mu \varepsilon $ at time $2\varepsilon
^{-\gamma +1}$. This is the reason for the subscript $du$. Similar
descriptions can me made for $A_{ud}^{\varepsilon }$ and $%
A_{dd}^{\varepsilon }$.

We apply Lemma \ref{lem:discs-travel-up} twice for $Q^{\varepsilon }$-discs
of the form \eqref{eqn:straight_disc}, obtaining that the sets $%
A_{uu}^{\varepsilon }$, $A_{ud}^{\varepsilon }$, $A_{du}^{\varepsilon }$, $%
A_{dd}^{\varepsilon }$ are nonempty. Due to \eqref{eqn:straight_disc}, we
have that $\pi _{y,\theta }\left( A_{\kappa \kappa ^{\prime }}^{\varepsilon
}\right) =\pi _{y,\theta }\left( \mathbf{S}^{u}\cup \mathbf{S}^{d}\right) $,
and $\pi _{I}\left( A_{\kappa \kappa ^{\prime }}^{\varepsilon }\right) =${$%
(X_{0}-\varepsilon M,X_{0}+\varepsilon M)$ }for $\kappa ,\kappa ^{\prime
}\in \{u,d\}$.

We continue to subdivide these sets in a similar manner using an inductive
procedure. First we introduce the following notation. For a given sequence
of symbols $\omega =\omega _{k}\ldots \omega _{1},$ where $\omega _{i}\in
\left\{ u,d\right\} $, we denote 
\begin{align}
\left\vert \omega \right\vert & :=k,  \notag \\
U\left( \omega \right) & :=\#\left\{ \omega _{i}:\omega _{i}=u,\,i=1,\ldots
,\left\vert \omega \right\vert \right\} ,  \notag \\
D\left( \omega \right) & :=\#\left\{ \omega _{i}:\omega _{i}=d,\,i=1,\ldots
,\left\vert \omega \right\vert \right\} ,  \notag \\
N\left( \omega \right) & :=U\left( \omega \right) -D\left( \omega \right) .
\label{eq:N-omega-stochastic}
\end{align}%
For each string $\omega $, $\left\vert \omega \right\vert $ represents the
length of $\omega $, $U\left( \omega \right) $ the number of steps `up', $%
D\left( \omega \right) $ the number of steps `down', and $N\left( \omega
\right) $ the `net' number of `up-down' steps.

For a given string $\omega =\omega _{k-1}\ldots \omega _{1}$ and $\kappa \in
\left\{ u,d\right\} $ we now inductively define $A_{\kappa \omega
}^{\varepsilon }\subset \mathbb{R}^{n_{u}+n_{s}+2}\times \mathbb{T}$ as%
\begin{equation}
A_{\kappa \omega }^{\varepsilon }=A_{\omega }^{\varepsilon }\cap \{z:\pi
_{I}\Phi _{k\varepsilon ^{-\gamma +1}}^{\varepsilon }\left( z\right) \in
X_{0}+k\mu \varepsilon +N\left( \kappa \omega \right) \sigma \sqrt{%
\varepsilon }+\left( -M\varepsilon ,M\varepsilon \right) \}.
\label{eq:Ak-omega}
\end{equation}%
By construction, the sets $A_{\omega }^{\varepsilon }$ with $\omega $ of the
same length are disjoint, i.e., if $\left\vert \omega \right\vert
=\left\vert \varpi \right\vert $ and $\omega \neq \varpi $ then $A_{\omega
}^{\varepsilon }\cap A_{\varpi }=\emptyset $. Also because we take the
points in $A_{\omega }^{\varepsilon }$ by inductively applying Lemma \ref%
{lem:discs-travel-up}, for $t\in \left[ \varepsilon k,\varepsilon \left(
k+1\right) \right] $ we obtain (see Figure \ref{fig:discs-travel-up}) 
\begin{equation}
\Phi _{\varepsilon ^{-\gamma }t}^{\varepsilon }(z)\in \Phi _{\varepsilon
^{-\gamma +1}k}^{\varepsilon }(z)+\left[ -2\sigma \sqrt{\varepsilon }%
,2\sigma \sqrt{\varepsilon }\right] .  \label{eq:Phi-t-to-k}
\end{equation}%
%
%
%
%
%
%
%
%
%

Let 
\begin{equation}
K_{\varepsilon }:=\left\lfloor \varepsilon ^{-1}\right\rfloor \in \mathbb{N}.
\label{eq:Keps-choice}
\end{equation}%
Let us consider paths of length $\left\vert \omega \right\vert
=K_{\varepsilon }$ and let us define the set $B=\left( \mathbf{S}^{u}\cup 
\mathbf{S}^{d}\right) \cap \left\{ I\in \left[ X_{0}-\varepsilon
,X_{0}+\varepsilon \right] \right\} $. The set $B$ is compact and has finite
Lebesgue measure. We will show that for a fixed $\varepsilon $ we can
restrict the sets $A_{\omega }^{\varepsilon }$ to $\tilde{A}_{\omega
}^{\varepsilon }\subset A_{\omega }^{\varepsilon }\cap B$, so that 
\begin{align}
0<\mu \left( \tilde{A}_{\omega }^{\varepsilon }\right) & =\mu \left( \tilde{A%
}_{\varpi }^{\varepsilon }\right) <\infty \quad \text{for any }\omega
,\varpi \text{ with }\left\vert \omega \right\vert =\left\vert \varpi
\right\vert =K_{\varepsilon },  \label{eq:Aw-same-measure} \\
\pi _{y,I,\theta }\left( \tilde{A}_{\omega }^{\varepsilon }\right) & =\pi
_{y,\theta }\left( \mathbf{S}^{u}\cup \mathbf{S}^{d}\right) \cap \{I\in %
\left[ X_{0}-\varepsilon ,X_{0}+\varepsilon \right] \}.
\label{eq:Aw-same-measure-3}
\end{align}

In order to show (\ref{eq:Aw-same-measure}--\ref{eq:Aw-same-measure-3}) let
us fix $\varepsilon $ and first consider a fixed $\omega $ of length $%
\left\vert \omega \right\vert =K_{\varepsilon }$. Since $M>1$, $\left[
X_{0}-\varepsilon ,X_{0}+\varepsilon \right] \subset \left(
X_{0}-M\varepsilon ,X_{0}+M\varepsilon \right) $, so we know that for every $%
(y_{0},I_{0},\theta _{0})\in \pi _{y,I,\theta }B$ there exists an $x_{0}$
such that $\left( x_{0},y_{0},I_{0},\theta _{0}\right) \in A_{\omega
}^{\varepsilon }$. By the continuity of the solutions of ODEs with respect
to the initial conditions we can choose a small neighborhood of $\left(
x_{0},y_{0},I_{0},\theta _{0}\right) $ which will also be in $A_{\omega
}^{\varepsilon }$. We can take this neighborhood to be of the form $%
B^{n_{u}}\left( x_{0},\delta \right) \times B^{n_{s}+2}\left( \left(
y_{0},I_{0},\theta _{0}\right) ,r\right) \cap B$, where $\delta ,r$ are some
small positive numbers depending on $\omega $ and $y_{0},I_{0},\theta _{0}$.
By the compactness of $\pi _{y,I,\theta }B$ we can choose a finite sequences 
$\left( y_{i},I_{i},\theta _{i}\right) $ and $r_{i}$, for $i=1,\ldots ,n$,
such that 
\begin{equation}
\pi _{y,I,\theta }B=\bigcup\limits_{i=1}^{n}B^{n_{s}+2}\left( \left(
y_{i},I_{i},\theta _{i}\right) ,r_{i}\right) \cap \pi _{y,I,\theta }B.
\label{eq:Aw-projections-condition}
\end{equation}%
Each $\left( y_{i},I_{i},\theta _{i}\right) $ has a $\delta _{i}=\delta
\left( y_{i},I_{i},\theta _{i}\right) $ associated with it. For $\delta
<\min_{i=1,\ldots ,n}\delta _{i}$ we define%
\begin{equation}
\tilde{A}_{\omega }^{\varepsilon }\left( \delta \right) :=\left(
\bigcup\limits_{i=1}^{n}B^{n_{u}}\left( x_{0},\delta \right) \times
B^{n_{s}+2}\left( \left( y_{i},I_{i},\theta _{i}\right) ,r_{i}\right)
\right) \cap B\subset A_{\omega }^{\varepsilon }.
\label{eq:A-tilde-construction}
\end{equation}%
The measure of this set depends continuously on $\delta $. There is a finite
number of $\omega $ of length $K_{\varepsilon }$. This means that we can
choose a finite sequence of $\delta \left( \omega \right) $ such that $%
\tilde{A}_{\omega }^{\varepsilon }:=\tilde{A}_{\omega }^{\varepsilon }\left(
\delta (\omega )\right) $ satisfy (\ref{eq:Aw-same-measure}). Condition (\ref%
{eq:Aw-same-measure-3}) follows from (\ref{eq:Aw-projections-condition}) and
(\ref{eq:A-tilde-construction}).

We now define the set $\Omega _{\varepsilon }$ that appears in the statement
of Theorem~\ref{th:diffusion-process-all-strip}: 
\begin{equation*}
\Omega _{\varepsilon }:=\bigcup_{\left\vert \omega \right\vert
=K_{\varepsilon }}\tilde{A}_{\omega }^{\varepsilon }.
\end{equation*}%
By construction, $\mu (\Omega _{\varepsilon })>0$. Moreover, by (\ref%
{eq:Aw-same-measure-3}), we also have 
\begin{equation*}
\pi _{y,I,\theta }\left( \Omega _{\varepsilon }\right) =\pi _{y,\theta
}\left( \mathbf{S}^{u}\cup \mathbf{S}^{d}\right) \cap \left\{ I\in \left[
X_{0}+\varepsilon ,X_{0}-\varepsilon \right] \right\} .
\end{equation*}

We now define a sequence of random variables $Y_{n}^{\varepsilon }:\Omega
_{\varepsilon }\rightarrow \mathbb{R}$, for $n=1,\ldots ,K_{\varepsilon }$ as%
\begin{equation}
Y_{n}^{\varepsilon }\left( z\right) =\left\{ 
\begin{array}{r}
1\qquad \text{if }z\in \tilde{A}_{\omega }^{\varepsilon }\text{ and }\omega
_{n}=u, \\ 
-1\qquad \text{if }z\in \tilde{A}_{\omega }^{\varepsilon }\text{ and }\omega
_{n}=d.%
\end{array}%
\right.  \label{eq:Yi-for-stochastic}
\end{equation}%
Since the sets $\tilde{A}_{\omega }^{\varepsilon }$ satisfy (\ref%
{eq:Aw-same-measure}), the random variables $Y_{n}^{\varepsilon }$ are
independent and identically distributed with $\mathbb{P}_{\varepsilon
}(Y_{n}^{\varepsilon }=1)=\mathbb{P}_{\varepsilon }(Y_{n}^{\varepsilon }=-1)=%
\frac{1}{2}$, moreover $\mathbb{E}\left( Y_{n}^{\varepsilon }\right) =0,$ $%
Var\left( Y_{n}^{\varepsilon }\right) =1$.

As in the setup for Theorem \ref{th:functional-CLT}, we define $%
S_{n}^{\varepsilon }=Y_{1}^{\varepsilon }+\ldots +Y_{n}^{\varepsilon }$ for $%
n\in \{1,\ldots ,K_{\varepsilon }\}$, define $S^{\varepsilon }\left(
t\right) =S_{\left\lfloor t\right\rfloor }^{\varepsilon }+\left(
t-\left\lfloor t\right\rfloor \right) (S_{\left\lfloor t\right\rfloor
+1}^{\varepsilon }-S_{\left\lfloor t\right\rfloor }^{\varepsilon })$ for $%
t\in \lbrack 0,K_{\varepsilon }]$, and define 
\begin{equation*}
S_{n}^{\varepsilon ,\ast }(t)=\frac{S^{\varepsilon }\left( tn\right) }{\sqrt{%
n}}\qquad \text{for }t\in \left[ 0,1\right] \text{ and }n\in \{1,\ldots
,K_{\varepsilon }\}.
\end{equation*}

Since $Y_{n}^{\varepsilon }$ have distribution independent of $\varepsilon $
(for $n\leq K_{\varepsilon }$),$\ $we see that $S_{n}^{\varepsilon ,\ast
}(t) $ also have distribution independent of $\varepsilon $ (for $n\leq
K_{\varepsilon }$), hence by Theorem \ref{th:functional-CLT}%
\begin{equation}
\lim_{\varepsilon \rightarrow 0}S_{K_{\varepsilon }}^{\varepsilon ,\ast
}(t)=W_{t},  \label{eq:limit-S-star}
\end{equation}%
where the limit is in distribution on $C[0,1].$

Our objective will be to use the process $S_{K_{\varepsilon }}^{\varepsilon
,\ast }(t)$ for the proof of convergence of $X_{t}^{\varepsilon }$. Before
that we need to prepare some auxiliary facts.

Firstly, from (\ref{eq:Yi-for-stochastic}) we see that $S_{n}^{\varepsilon
}(z)$ counts the `net' number of `up-down' steps along the trajectory
starting from $z$, so for $z\in \tilde{A}_{\omega }$ with $\left\vert \omega
\right\vert =n\leq K_{\varepsilon }$ we have (see (\ref%
{eq:N-omega-stochastic})) 
\begin{equation}
N\left( \omega \right) =S_{n}^{\varepsilon }(z).  \label{eq:N-vs-z}
\end{equation}%
For $t\in \left[ 0,1\right] $ let $k\in \mathbb{N}$ be a number such that $%
t\in \left[ \varepsilon k,\varepsilon \left( k+1\right) \right] $. We then
have%
\begin{equation}
k\varepsilon \in \left[ t-\varepsilon ,t+\varepsilon \right] ,
\label{eq:mu_k_eps}
\end{equation}%
and also, by (\ref{eq:Keps-choice}) for such $t$ and $k$ it holds that $k\in %
\left[ tK_{\varepsilon }-1,tK_{\varepsilon }+2\right] ,$ hence 
\begin{equation}
S_{k}^{\varepsilon }(z)\in S^{\varepsilon }\left( tK_{\varepsilon }\right) + 
\left[ -2,2\right] .  \label{eq:Sk-to-SK}
\end{equation}%
Since $K_{\varepsilon }=\left\lfloor \varepsilon ^{-1}\right\rfloor ,$ 
\begin{equation}
\sqrt{\varepsilon }\in \frac{1}{\sqrt{K_{\varepsilon }}}+\left[ \frac{-1}{2}%
K_{\varepsilon }^{-3/2},\frac{1}{2}K_{\varepsilon }^{-3/2}\right] .
\label{eq:Sk-eps}
\end{equation}%
We are now ready for our proof of convergence of $X_{t}^{\varepsilon }$. We
compute
\begin{align*}
&X_{t}^{\varepsilon }(z)=\pi _{I}\Phi _{\varepsilon ^{-\gamma}t}^{\varepsilon }(z)    \\ 
&\in \pi _{I}\Phi _{k\varepsilon ^{-\gamma +1}}^{\varepsilon }(z)+\left[-2\sigma \sqrt{\varepsilon },+2\sigma \sqrt{\varepsilon }\right] \hspace{3.52cm} \text{from (\ref{eq:Phi-t-to-k})} \\ 
& \subset X_{0}+k\mu \varepsilon +S_{k}^{\varepsilon }(z)\sigma \sqrt{\varepsilon }+\left[ -M\varepsilon -2\sigma \sqrt{\varepsilon },M\varepsilon+2\sigma \sqrt{\varepsilon }\right]  \quad \text{from (\ref{eq:Ak-omega}),(\ref{eq:N-vs-z})} \\ 
& \subset X_{0}+\mu t+S_{k}^{\varepsilon }(z)\sigma \sqrt{\varepsilon }+\left[-\left( M+\mu \right) \varepsilon -2\sigma \sqrt{\varepsilon },\left( M+\mu\right) \varepsilon +2\sigma \sqrt{\varepsilon }\right]  \\  
& \hspace{9.38cm} \text{from (\ref{eq:mu_k_eps})} \\ 
&\subset X_{0}+\mu t+S_{k}^{\varepsilon }(z)\sigma \sqrt{\varepsilon }+\left[-3\sigma \sqrt{\varepsilon },3\sigma \sqrt{\varepsilon }\right] \hspace{2.54cm} \text{for small }\varepsilon \\ 
&\subset X_{0}+\mu t+S_{k}^{\varepsilon }(tK_{\varepsilon })(z)\sigma \sqrt{\varepsilon }+\left[ -5\sigma \sqrt{\varepsilon },5\sigma \sqrt{\varepsilon }\right] \hspace{1.7cm} \text{from (\ref{eq:Sk-to-SK})} \\ 
&\subset X_{0}+\mu t+\sigma S_{K_{\varepsilon }}^{\varepsilon ,\ast}(t)(z)+\sigma S_{K_{\varepsilon }}^{\varepsilon ,\ast }(t)(z)\left( \frac{-1}{2K_{\varepsilon }},\frac{1}{2K_{\varepsilon }}\right) +\left[ \frac{-6\sigma }{\sqrt{K_{\varepsilon }}},\frac{6\sigma }{\sqrt{K_{\varepsilon }}}\right] \\  
& \hspace{9.38cm} \text{from (\ref{eq:Sk-eps}).}%
\end{align*}%
We have therefore shown that%
\begin{equation}
X_{t}^{\varepsilon }=X_{0}+\mu t+\sigma \left( S_{K_{\varepsilon
}}^{\varepsilon ,\ast }(t)+Z_{\varepsilon }(t)\right)  \label{eq:X-fina-sum}
\end{equation}%
where 
\begin{equation*}
\left\vert Z_{\varepsilon }(t)\right\vert \leq \frac{1}{K_{\varepsilon }}%
\left\vert S_{K_{\varepsilon }}^{\varepsilon ,\ast }(t)\right\vert +\frac{6}{%
\sqrt{K_{\varepsilon }}}.
\end{equation*}%
Since $Z_{\varepsilon }(t)$ converges to zero in probability and $%
S_{K_{\varepsilon }}^{\varepsilon ,\ast }(t)$ converges in distribution to $%
W_{t}$ we obtain that $S_{K_{\varepsilon }}^{\varepsilon ,\ast
}(t)+Z_{\varepsilon }(t)$ converges in distribution to $W_{t}$. This means
that from (\ref{eq:X-fina-sum}) we obtain (\ref{eq:limit-diffusion-process}%
), which concludes our proof. 
\end{proof}

\begin{remark}
By a modification of the proof of Theorem \ref%
{th:diffusion-process-all-strip}, by shadowing a random walk with time
dependent coefficients, we can obtain convergence to $\mu (t)+\sigma (t)W_t$
for deterministic, continuous functions $\mu (t),\sigma (t)$, $t\in[0,1]$.
\end{remark}

\subsection{Stochastic behavior under conditions on connecting sequences}


\begin{proof}[Proof of Theorem \protect\ref{th:diffusion-process}]
By Theorem \ref{th:symbolic-dynamics-from-covering} we know that we can $%
\eta \varepsilon $ shadow any sequence of $I^{\sigma }\in \left[ 2\eta
\varepsilon ,1-2\eta \varepsilon \right] $. As long as we remain in $\left\{
I\in \left[ 2\eta \varepsilon ,1-2\eta \varepsilon \right] \right\} $ we can
shadow the energy levels from the binomial process from Figure \ref{fig:tree}%
. Our result follows from the same construction as the proof of Theorem \ref%
{th:diffusion-process-all-strip} with the only difference that once $%
X_{t}^{\varepsilon }$ leaves the set $\left\{ I\in \left[ 0,1\right]
\right\} $ the assumption \hyperlink{cond:C2}{\textbf{C2}} can no longer be used to propagate $%
Q^{\varepsilon }$-discs. This is why we stop the considered processes as
soon as we exit this set.

One issue we can comment on is the existence of $T_{0}$ and $M_{0}$ from
Condition \hyperlink{cond:A3}{\bf A3}, which is part of the assumption of Theorem \ref%
{th:diffusion-process-all-strip}. The $T_{0}$ exists, since any map $%
F_{\ell,\varepsilon }$ associated with the connecting sequence with $\ell
\in L$ has a compact domain. We have a finite number of such maps, hence the
time needed for the flow to pass through these maps is finite.

For $T_{0}>0$ since the set 
\begin{equation*}
A\left( T_{0}\right) :=\left[ 0,T_{1}\right] \times \left[ 0,\varepsilon _{0}%
\right] \times \left( (\mathbf{S}^{u}\cup \mathbf{S}^{d})\cap \left\{ I\in %
\left[ 0,1\right] \right\} \right)
\end{equation*}%
is compact, we can find $M_{0}=M_{0}(T_{0})$ such that 
\begin{equation*}
\pi _{I}\Phi _{t}^{\varepsilon }(z)-\pi _{I}z\in \left[ -M_{0}\varepsilon
,M_{0}\varepsilon \right] \qquad \text{for }\left( t,\varepsilon ,z\right)
\in A(T_{0}).
\end{equation*}%
Such $M_{0}$ will be the bound for (\ref{eq:M1-bound}). 
\end{proof}


\section{Appendix}
\subsection{Proof of Lemma \protect\ref{th:param-dep-cones}}
\begin{proof}
To establish
(\ref{eq:param-dep-cone-prop}) it is enough to show that for all $%
\varepsilon \in E$ and $z_{1},z_{2}\in N$ satisfying $Q_{a}^{\varepsilon
}\left( z_{1}-z_{2}\right) >0$ we have 
\begin{align}
\frac{\left\Vert f_{\kappa }(\varepsilon ,z_{1})-f_{\kappa }(\varepsilon
,z_{2})\right\Vert }{\left\Vert f_{x}(\varepsilon ,z_{1})-f_{x}(\varepsilon
,z_{2})\right\Vert }& <b_{\kappa }\qquad \text{for }\kappa \in \left\{
y,\theta \right\} ,  \label{eq:param-cones-objective1} \\
\frac{\left\Vert f_{I}(\varepsilon ,z_{1})-f_{I}(\varepsilon
,z_{2})\right\Vert }{\left\Vert f_{x}(\varepsilon ,z_{1})-f_{x}(\varepsilon
,z_{2})\right\Vert }& <\varepsilon b_{I}.  \label{eq:param-cones-objective2}
\end{align}

Take $Q_{a}^{\varepsilon }\left( z_{1}-z_{2}\right) >0$. Let $%
z_{1}-z_{2}=\left( x,y,I,\theta \right) $. This means that $\left\Vert
x\right\Vert \neq 0$ and 
\begin{equation}
\left\Vert y\right\Vert /\left\Vert x\right\Vert \leq a_{y},\qquad
\left\Vert I\right\Vert /\left\Vert x\right\Vert \leq \varepsilon
a_{I},\qquad \left\Vert \theta \right\Vert /\left\Vert x\right\Vert \leq
a_{\theta }.  \label{eq:cone-ai-bounds}
\end{equation}%
For $\kappa \in \left\{ x,y,\theta \right\} $ we compute%
\begin{equation}
f_{\kappa }(\varepsilon ,z_{1})-f_{\kappa }(\varepsilon ,z_{2})=\left(
\int_{0}^{1}\frac{\partial f_{\kappa }}{\partial z}\left( \varepsilon
,z_{2}+s\left( z_{1}-z_{2}\right) \right) ds\right) \left(
z_{1}-z_{2}\right) .  \label{eq:cone-der-bounds}
\end{equation}%
Using (\ref{eq:cone-der-bounds}) and (\ref{eq:cone-ai-bounds}), for $\kappa
\in \left\{ y,\theta \right\} $ we have 
\begin{align*}
& \frac{\left\Vert \pi _{\kappa }\left( f_{\kappa }(\varepsilon
,z_{1})-f(\varepsilon ,z_{2})\right) \right\Vert }{\left\Vert \pi _{x}\left(
f(\varepsilon ,z_{1})-f(\varepsilon ,z_{2})\right) \right\Vert } \\
& \leq \frac{\sum_{h\in \left\{ x,y,I,\theta \right\} }\left\Vert \left[ 
\frac{\partial f_{\kappa }}{\partial h}\right] \right\Vert \left\Vert
h\right\Vert }{m\left( \frac{\partial f_{x}}{\partial x}\right) \left\Vert
x\right\Vert -\sum_{h\in \left\{ y,I,\theta \right\} }\left\Vert \left[ 
\frac{\partial f_{\kappa }}{\partial h}\right] \right\Vert \left\Vert
h\right\Vert } \\
& \leq \frac{\left\Vert \left[ \frac{\partial f_{\kappa }}{\partial x}\right]
\right\Vert +\left\Vert \left[ \frac{\partial f_{\kappa }}{\partial y}\right]
\right\Vert a_{y}+\left\Vert \left[ \frac{\partial f_{\kappa }}{\partial I}%
\right] \right\Vert \varepsilon _{0}a_{I}+\left\Vert \left[ \frac{\partial
f_{\kappa }}{\partial \theta }\right] \right\Vert a_{\theta }}{m\left( \frac{%
\partial f_{x}}{\partial x}\right) -\left\Vert \left[ \frac{\partial f_{x}}{%
\partial y}\right] \right\Vert a_{y}-\left\Vert \left[ \frac{\partial f_{x}}{%
\partial I}\right] \right\Vert \varepsilon _{0}a_{I}-\left\Vert \left[ \frac{%
\partial f_{x}}{\partial \theta }\right] \right\Vert a_{\theta }}<b_{\kappa }
\end{align*}

We now turn to establishing (\ref{eq:param-cones-objective2}). We compute%
\begin{align}
& f_{I}(\varepsilon ,z_{1})-f_{I}(\varepsilon ,z_{2})  \notag \\
& =f_{I}(0,z_{1})-f_{I}(0,z_{2})+\varepsilon \int_{0}^{1}\left( \frac{%
\partial f_{I}}{\partial \varepsilon }\left( \varepsilon u,z_{1}\right) -%
\frac{\partial f_{I}}{\partial \varepsilon }\left( \varepsilon
u,z_{2}\right) \right) du  \notag \\
& =\pi _{I}\left( z_{1}-z_{2}\right) +\varepsilon \left(
\int_{0}^{1}\int_{0}^{1}\frac{\partial f_{I}}{\partial \varepsilon \partial z%
}\left( \varepsilon u,z_{2}+s\left( z_{1}-z_{2}\right) \right) duds\right)
\left( z_{1}-z_{2}\right)   \notag
\end{align}%
This gives%
\begin{align*}
& \frac{\left\Vert f_{I}(\varepsilon ,z_{1})-f_{I}(\varepsilon
,z_{2})\right\Vert }{\left\Vert f_{x}(\varepsilon ,z_{1})-f_{x}(\varepsilon
,z_{2})\right\Vert } \\
& \leq \frac{\left\Vert I\right\Vert +\varepsilon \sum_{h\in \left\{
x,y,I,\theta \right\} }\left\Vert \left[ \frac{\partial f_{I}}{\partial
\varepsilon \partial h}\right] \right\Vert \left\Vert h\right\Vert }{m\left( 
\frac{\partial f_{x}}{\partial x}\right) \left\Vert x\right\Vert -\sum_{h\in
\left\{ y,I,\theta \right\} }\left\Vert \left[ \frac{\partial f_{x}}{%
\partial h}\right] \right\Vert \left\Vert h\right\Vert } \\
& \leq \varepsilon \frac{a_{I}+\left\Vert \left[ \frac{\partial f_{I}}{%
\partial \varepsilon \partial x}\right] \right\Vert +\left\Vert \left[ \frac{%
\partial f_{y}}{\partial \varepsilon \partial y}\right] \right\Vert
a_{y}+\left\Vert \left[ \frac{\partial f_{I}}{\partial \varepsilon \partial I%
}\right] \right\Vert a_{I}\varepsilon _{0}+\left\Vert \left[ \frac{\partial
f_{I}}{\partial \varepsilon \partial \theta }\right] \right\Vert a_{\theta }%
}{m\left( \frac{\partial f_{x}}{\partial x}\right) -\sum_{h\in \left\{
y,\theta ,I\right\} }\left\Vert \left[ \frac{\partial f_{x}}{\partial h}%
\right] \right\Vert a_{h}}<\varepsilon b_{I}.
\end{align*}

We have established (\ref{eq:param-cones-objective1}--\ref%
{eq:param-cones-objective2}), which concludes our proof.
\end{proof}

\bibliographystyle{spmpsci}
\bibliography{12-bib}

\end{document}